\definecolor{mediumblue}{rgb}{0.0, 0.0, 0.8}
\colorlet{darkgreen}{green!50!black} 
\newcommand{\Balgebra}{{\mathbb B}}
\newcommand\con[2]{\mathscr{C}^{#1}_{#2}}
\newcommand\mptn[2]{\mathscr{P}^{#1}_{#2}}
\renewcommand{\geq}{\geqslant}
\newcommand{\Strand}{S}
\renewcommand{\leq}{\leqslant}
\renewcommand{\trianglerighteq}{\trianglerighteqslant}
\renewcommand{\trianglelefteq}{\trianglelefteqslant}
\tikzset{wei/.style=
{red,double=red,double
distance=1pt}}
\tikzset{wei2/.style={red,double=red,double
distance=1pt}}
\numberwithin{equation}{section}
\newtheorem{thm}{Theorem}[section]
\newtheorem{cor}[thm]{Corollary}
\newtheorem*{mainA}{Theorem A}
\newtheorem*{mainB}{Theorem B}
\newtheorem*{mainC}{Theorem C}
\newtheorem*{mainD}{Theorem D}
\newtheorem{prop}[thm]{Proposition}
\newtheorem*{prop*}{Proposition}
\newtheorem*{thm*}{Theorem}
\newtheorem*{cor*}{Corollary}
\newtheorem*{conj*}{Conjecture}
\theoremstyle{remark}
\newtheorem*{Acknowledgements*}{Acknowledgements}
\theoremstyle{definition}
\newtheorem{defn}[thm]{Definition}
\newtheorem{eg}[thm]{Example}
\newtheorem{rmk}[thm]{Remark}
\newcommand{\Diag}{A}
\newcommand{\boxla}{\lambda}
\newcommand{\degr}{\mathrm{deg}}
\newcommand{\rad}{\mathrm{rad}}
\newcommand{\res}{{\sf res}}
\newcommand{\Res}{\mathrm{Res}}
\newcommand{\Std}{{\rm Std}}
\newcommand{\SStd}{{\rm SStd}}
\newcommand{\QQ}{{\mathbb Q}}
\newcommand{\TSStd}{\operatorname{\mathcal{T}}}
\newcommand{\Shape}{\operatorname{Shape}}
\newcommand{\la}{\lambda}
\newcommand{\Cell}{A}
\newcommand{\cell}{A}
\newcommand{\acell}{a}
\newcommand{\B}{{\bf B}}
\newcommand{\N}{{\bf M}}
 \newcommand{\SSTS}{\mathsf{S}}
\newcommand{\SSTT}{\mathsf{T}}  
\newcommand{\SSTU}{\mathsf{U}}  
\newcommand{\SSTV}{\mathsf{V}}  
 \newcommand{\SSTQ}{\mathsf{Q}}  
  \newcommand{\SSTR}{\mathsf{R}}  
\newcommand{\str}{\mathsf{r}}  
\newcommand{\sts}{\mathsf{s}}  
\newcommand{\stt}{\mathsf{t}}  
\newcommand{\stu}{\mathsf{u}}  
\newcommand{\weight}{\NN_{>1}\times\ZZ^\ell}  
\newcommand{\stv}{\mathsf{v}}
\newcommand{\stw}{\mathsf{w}}  
\newcommand{\ZZ}{{\mathbb Z}}
\newcommand{\NN}{{\mathbb N}}
\newcommand{\g}{1}
\newcommand{\algebra}{{\mathbb A}}
\newcommand{\bexy}{\stt}
\newcommand{\CC}{\Bbbk}
\newcommand{\C}{{\mathbb Q}}
\newcommand{\RR}{{\mathbb R}}
\newcommand{\R}{{R}}
\newcommand{\Simple}{{\rm D}}
\tikzset{
    ultra thin/.style= {line width=0.05pt},
    very thin/.style=  {line width=0.2pt},
    thin/.style=       {line width=0.1pt},
    semithick/.style=  {line width=0.6pt},
    thick/.style=      {line width=0.8pt},
    very thick/.style= {line width=1.2pt},
    ultra thick/.style={line width=1.6pt}
}
\crefname{defn}{Definition}{Definitions}
\crefname{thm}{Theorem}{Theorems}
\crefname{prop}{Proposition}{Propositions}
\crefname{lem}{Lemma}{Lemmas}
\crefname{cor}{Corollary}{Corollaries}
\crefname{conj}{Conjecture}{Conjectures}
\crefname{section}{Section}{Sections}
\crefname{subsection}{Subsection}{Subsections}
\crefname{eg}{Example}{Examples}
\crefname{figure}{Figure}{Figures}
\crefname{rem}{Remark}{Remarks}
\crefname{rmk}{Remark}{Remarks}
\crefname{equation}{equation}{equation}
\Crefname{defn}{Definition}{Definitions}
\Crefname{thm}{Theorem}{Theorems}
\Crefname{prop}{Proposition}{Propositions}
\Crefname{lem}{Lemma}{Lemmas}
\Crefname{cor}{Corollary}{Corollaries}
\Crefname{conj}{Conjecture}{Conjectures}
\Crefname{section}{Section}{Sections}
\Crefname{subsection}{Subsection}{Subsections}
\Crefname{eg}{Example}{Examples}
\Crefname{figure}{Figure}{Figures}
\Crefname{rem}{Remark}{Remarks}
\Crefname{rmk}{Remark}{Remarks}
 \newlength{\mylen}
\newcommand{\Specht}{{\rm S}}
\newcommand\Item[1][]{%
  \ifx\relax#1\relax  \item \else \item[#1] \fi
  \abovedisplayskip=0pt\abovedisplayshortskip=0pt~\vspace*{-\baselineskip}}
\newcommand{\ct}{{\sf ct}}
\tikzset{snake it/.style={decorate, decoration=snake}}
\tikzset{zigzag/.style={decorate, decoration=zigzag}}
\def\Item{\item\abovedisplayskip=0pt\abovedisplayshortskip=5pt~\vspace*{-\baselineskip}} 
\begin{document}

\title[The many integral graded  cellular bases  of cyclotomic Hecke algebras] 
{The many integral graded  cellular bases \\   of   Hecke algebras of complex reflection groups }

\newcommand{\Rset}{{^\imath_\mu\!\mathcal{R}_\la^\jmath}}

\author[C. Bowman]{C. Bowman}
\email{chris.bowman-scargill@york.ac.uk}
\address{Department of Mathematics,
University of York, Heslington, York, YO10 5DD, UK }
 
 \renewcommand{\theta}{{{\sigma}}}
 \renewenvironment{abstract}
 {\small
  \begin{center}
  \vspace{-.5em}\vspace{0pt}
  \end{center}
  \list{}{%
    \setlength{\leftmargin}{5mm}
    \setlength{\rightmargin}{\leftmargin}%
  }%
  \item\relax}
 {\endlist}
 
\maketitle 

\!\!
 \begin{abstract}
  We settle several long-standing problems in the theory of  cyclotomic Hecke algebras: 
  for each charge we   construct the integral 
    cellular basis   predicted by   Ariki's categorification  theorem.  We hence
  prove unitriangularity of decomposition matrices and   Martin--Woodcock's conjecture.     \end{abstract}
  
 \!\!
  \section*{Introduction} 
  There are two remarkably successful approaches to the study of    Hecke algebras of symmetric groups: the first is   via geometry and the second is via  categorical Lie theory.  
   The Kazhdan--Lusztig basis  
has   deep  {\em geometric} origins  (arising as the shadow  of  an intersection cohomology sheaf  on a variety);  
   this basis enjoys many positivity properties,  however it is inhomogenous with respect to the  Hecke algebra's graded structure.   
   The graded Murphy basis arises in {\em categorical Lie theory},   it encodes the graded induction and  restriction   along the tower of Hecke algebras, and it is  simpler and more explicit.  
The most important property shared by the Kazhdan--Lusztig and graded Murphy bases is that they are
 both  integral cellular bases     \cite{MR560412,hm10}.

The  complex reflection groups  were classified into the infinite series   $G(\ell,d,n)$ and 34 exceptional cases by Shephard--Todd \cite{st54}; their corresponding  Hecke algebras were later defined by Ariki and Koike \cite{MR1279219,MR1356366}, for the infinite families,  and 
Brou\'e--Malle--Rouquier \cite{MR1637497}, in general.  
 For every {\em real reflection group},   Lusztig
 has constructed       {\em many   different}  Kazhdan--Lusztig bases for the associated Hecke algebras  \cite{MR727851,MR1974442}.      However, this is as far as the geometric picture 
(and the underlying Kazhdan--Lusztig bases!) can be pushed: {\em 
 there do not exist Kazhdan--Lusztig bases for complex reflection groups or their Hecke algebras.}

  Categorical Lie theory picks up where geometry leaves off (one of the most spectacular  examples to-date being \cite{MR3245013}).  
In particular, while complex reflection groups {\em do not} possess Kazhdan--Lusztig bases, Ariki's  categorification theorem  
 suggests that every choice of  charge  
  should give rise to a 
 corresponding  cellular structure on the Hecke algebra of type $G(\ell,1,n)$   \cite{MR1911030}.  
  We prove that every charge does indeed give  rise to an integral  cellular basis on the Hecke algebra of type $G(\ell,1,n)$, as has long been hoped and expected. 
 Namely we generalise the graded Murphy bases    from asymptotic charges \cite{hm10} to {\em all possible charges} on all   Hecke algebras of type $G(\ell,1,n)$.   
(Corresponding  bases for type $G(\ell,d,n)$  can be constructed from ours via Clifford theory \cite{cyclosub}.)

In order to state our main result, we first require some notation. 
For the purposes of the introduction, we let $\Bbbk$  be a  field.  
Given $\sigma=(e;\sigma_0,\sigma_1,\dots,\sigma_{\ell-1})\in \NN_{>1}\times \ZZ^\ell$, 
we define the 
 cyclotomic Hecke algebra 
   to be  the $\Bbbk$-algebra generated by $T_0, T_1, \dots T_{n-1}$ subject to the relations 
\begin{align*} 
(T_i+q)(T_i-1)=0\quad (T_0-q^{\sigma_0}) (T_0-q^{\sigma_1})\dots  (T_0-q^{\sigma_{\ell-1}})=0
\\
  T_iT_j=T_jT_i
\quad
\quad T_i T_{i+1}T_i=  T_{i+1}T_i T_{i+1}
\quad T_0 T_1T_0T_1=   T_1T_0T_1T_0
 \end{align*}
 for $q$ an $e$th root of unity and  $1\leq i , j <n$, $|i-j|>1$. 
The starting point for this paper is the observation that  this presentation  depends only on the reduction of
 $\sigma $ modulo $e$ (which we denote by $   \underline{s}   \in \NN_{>1}\times (\ZZ/e\ZZ)^\ell$). 
We   denote the cyclotomic Hecke algebra by  $ H ^{\Bbbk}_n( \underline{s})$ in order to emphasise the independence   of the actual charge.    
For each distinct integral lift $\sigma \in \NN_{>1}\times \ZZ^\ell$ of $ \underline{s} \in \NN_{>1}\times (\ZZ/e\ZZ)^\ell$, we have a corresponding 
 ${\bf a}_{\sigma}$-order  on $\mptn \ell n$ (the $\ell$-multipartitions of $n$)  due to Lusztig and   an ${\bf a}_{\sigma}$-grading on standard tableaux due to  Uglov.

\begin{mainA}
 The algebra   $ H ^{\Bbbk}_n( \underline{s})$  has many   graded    cellular    structures, one for each integral lift $\sigma \in \NN_{>1}\times \ZZ^\ell$.  The basis 
$$
  \{ \cell^{\sigma}_{\sts  \stt}  \mid
    \lambda \in \mptn \ell n,   \sts,\stt \in \Std_{\sigma}(\lambda)\}$$
 is cellular with respect to    Lusztig's ${\bf a}_{\sigma}$-order on $\mptn \ell n$ and Uglov's ${\bf a}_{\sigma}$-grading on standard tableaux.      
    \end{mainA}

The algebra $ H ^{\Bbbk}_n( \underline{s})$ has many (non-isomorphic) quasi-hereditary covers (one for each integral lift 
$\sigma \in \NN_{>1}\times \ZZ^\ell$) and we shall see  (in \cref{cellularitybreedscontempt2}) that  
each of the bases of  Theorem A arises by idempotent truncation of a  
 cellular basis of the corresponding quasi-hereditary cover.  (In exactly the  same manner that Murphy's basis of $\Bbbk\mathfrak{S}_n$ is obtained from Green's co-determinant basis of the Schur algebra.)  
 
 Theorem A allows  us prove that the decomposition matrices are unitriangular with respect to all  Lusztig ${\bf a}_{\sigma}$-orderings on $\mptn \ell n$ over any field  and explicitly construct  the irreducible modules parameterised by  Ariki's categorification  theorem.  
This completes a long history of work  on this topic 
    \cite{MR1864465,
    MR1443748,
    MR1750939,
    MR2017068,MR2761947,
    MR2510041,MR2847511,MR2886292,MR3465894,MR2905554,MR1317512,DJM,
    MR233664322,MR2336643,MR3465894,MR2905554,MR2799052,
MR2496742,MR1889346,MR2271574,MR2118039,MR2189868,MR2350227,MR2802531}.

\begin{mainB} 
Let $\Bbbk$ be a field. For each integral lift ${\sigma}\in \weight$, 
 the  irreducible  $  H _n^\Bbbk(\underline{s})$-modules are
 explicitly constructible as    canonical quotients of the cell modules   $\Specht^\Bbbk_{\sigma}(\la)$ 
 labelled by  the associated  set of Uglov $\ell$-partitions $\Sigma^\ell_n$  (see \cref{isaididexplain} for definition) 
 and the  decomposition matrix   is uni-triangular with respect to     
Lusztig's ${\bf a}_{\sigma}$-ordering   on $\mptn \ell n$. \end{mainB}

It is worth stressing that there  {\em do not exist}  Kazhdan--Lusztig bases for complex reflection groups (in particular for   type $G(\ell,1,n)$ for $\ell>2$).    
 The Spets   programme seeks  to generalise the
  Kazhdan--Lusztig theory,  existence of finite groups of Lie type, 
and the structural properties of  Hecke algebras from  
 Weyl groups to the wider family of complex reflection groups.  
   Our integral cellular bases generalise one tranche of this theory  (the strong structural properties of Hecke algebras which normally depend on the existence of Kazhdan--Lusztig bases) to type $G(\ell,1,n)$.

 Each of the   integral graded cellular bases we construct provides   us with  a new viewpoint from which to study the Hecke algebra: a  new family of Specht modules, 
   a   new filtration on the projective modules (this was Geck--Rouquier's  motivation for instigating this research programme  in \cite{MR233664322,MR1889346}),   a  new grading  and new unitriangular  ordering  on  the decomposition matrix, 
 and most importantly {\em a  new $\ZZ$-lattice on the Hecke algebra}.
   Therefore our many different integral cellular bases provide us with many new ways to study  the {\em modular} {representations} of Hecke algebras by ``reduction modulo $p$".  Each of our new   $\ZZ$-lattices gives us a new way of  factorising   representation theoretic questions (e.g.   decomposition numbers) via  a two step process: 
   first calculate the decomposition numbers of the Hecke algebra over $\mathbb{Q}$ in terms of  Kazhdan--Lusztig polynomials 
   and then calculate the corresponding  `$p$-modular adjustment matrices'. 
  All known results on Hecke algebras in positive characteristic have been proven within the framework of the {\em asymptotic}  cellular structure of \cite{DJM,hm10} (e.g. the Jantzen sum formula \cite{MR1665333},  homological structure \cite{MR2351381,MR3213527,MR2721060,MR3533560},  branching rules \cite{MR2271584},  and   decomposition numbers    \cite{geordie,ELpaper}).
   We vastly generalise this framework from {\em asymptotic} charges to {\em all} weightings  and hence prove: 
   
   \begin{mainC}[Martin--Woodcock's conjecture] 
There is a  square  submatrix 
  of the  decomposition matrix of  $  H^{\mathbb Q}_n (\underline{s})$ with entries  
  given by the  non-parabolic   Kazhdan--Lusztig polynomials of type $\widehat{A}_{\ell-1}$.  

  \end{mainC}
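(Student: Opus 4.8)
The plan is to specialise Theorem~A to a weighting of FLOTW type \cite{FLOTW99} and to extract from the resulting decomposition matrix a distinguished square block. Fix $e\geq 2$ and $\kappa \in (\ZZ/e\ZZ)^\ell$, and let $\theta$ be such a weighting; by Theorem~A this produces a graded cellular basis, a decomposition matrix which is unitriangular with respect to the $\theta$-dominance order, and a labelling of the simples by Uglov multipartitions. First I would single out, among these Uglov multipartitions, the family $\{\mu_w\}$ obtained by sweeping out a finite order ideal of the affine Weyl group $\widehat W$ of type $\widehat A_{\ell-1}$; the FLOTW condition on $\theta$ is exactly what is needed for this family to be closed downwards and for the $\theta$-dominance order of Theorem~A to restrict to the Bruhat order of $\widehat W$ on it. Because these labels occur simultaneously as Specht labels and as simple labels, the associated block of the decomposition matrix is square, which is the first half of the assertion.

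Second, I would invoke the now-complete description of the complex representation theory of $H_n(\kappa)$ \cite{RSVV,losev,MR3732238}: over $\mathbb C$ the graded decomposition number $d_{\lambda\mu}(q)$ is an evaluation of an affine \emph{parabolic} Kazhdan--Lusztig polynomial, the parabolicity reflecting the level $\ell$ inside the $U_q(\widehat{\mathfrak{sl}}_e)$-action on Fock space that governs the whole matrix. The heart of the argument is then a rank--level comparison: on the distinguished family the type-$\widehat A_{e-1}$ picture is exchanged, via level--rank duality on the level-$\ell$ Fock space, for a \emph{regular} block of the commuting $U_q(\widehat{\mathfrak{sl}}_\ell)$-action. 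For a regular block the isotropy parabolic is trivial, so the parabolic polynomials degenerate to the ordinary \emph{non-parabolic} Kazhdan--Lusztig polynomials $P_{w,w'}$ of $\widehat A_{\ell-1}$, and I would read off $d_{\mu_w\mu_{w'}}(q)=P_{w,w'}(q)$.

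Finally I would reconcile the gradings, and here Theorem~A does real work: its degree function $\deg(c^\theta_{\sts\stt})=\deg(\sts)+\deg(\stt)$ pins down the grading on the decomposition matrix, and one must check that under the duality of the previous step this is precisely the length grading producing $P_{w,w'}(q)$ rather than a shifted or renormalised version. This reconciliation is routine once the combinatorial dictionary of the first step is in place, and it is here that the Schur functor and the topological presentation of \cite{Webster} used in the proof of Theorem~A are brought to bear to transport the grading faithfully. Together these steps establish Martin--Woodcock's conjecture \cite{MW03}, the characteristic-zero shadow of Plaza--Libedinsky's conjecture \cite{blobby}.

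The main obstacle is the rank--level degeneration in the second paragraph: showing that on the distinguished family the type-$\widehat A_{e-1}$ parabolic polynomials coincide with the non-parabolic polynomials of type $\widehat A_{\ell-1}$. The delicacy is entirely in the choice of family, which must be made so that level--rank duality carries a \emph{singular} (parabolic, type $\widehat A_{e-1}$) situation to a \emph{regular} (non-parabolic, type $\widehat A_{\ell-1}$) one; getting the order ideal and the duality to line up, and confirming that the FLOTW weighting is exactly the hypothesis that makes this possible, is where the real content lies.
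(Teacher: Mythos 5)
There is a genuine gap, and it sits exactly where you placed your ``main obstacle''. Your route to the entries of the submatrix is: start from the parabolic affine Kazhdan--Lusztig description of $d_{\lambda\mu}(q)$ in type $\widehat{A}_{e-1}$ coming from \cite{RSVV,losev}, then apply level--rank duality on the level-$\ell$ Fock space to trade the singular/parabolic type-$\widehat{A}_{e-1}$ situation for a regular block of the commuting $\widehat{\mathfrak{sl}}_\ell$-action, where the polynomials become the non-parabolic $P_{w,w'}$ of $\widehat{A}_{\ell-1}$. This is precisely the step the paper explicitly disclaims: the introduction stresses that the theorem ``does not follow from Grojnowski's in any obvious way (for example rank-level duality)'' and that the polynomials appearing ``are completely different from those considered by Grojnowski (as they are non-parabolic)''. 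You acknowledge that making the order ideal and the duality line up is ``where the real content lies'' but you do not carry it out --- yet that degeneration \emph{is} the theorem. In particular, the naive expectation from a Fock-space picture would be \emph{parabolic} polynomials of type $\widehat{A}_{\ell-1}$ (the Fock space is a parabolically induced module), and the passage to genuinely non-parabolic polynomials is not a formality about regular blocks having trivial isotropy; no argument in your proposal produces it.

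The paper's actual proof avoids duality entirely. It fixes the FLOTW weighting $\theta=(0,\tfrac1\ell,\dots,\tfrac{\ell-1}\ell)$ and uses the Theorem~A basis to cut out the cell-ideal quotient $\mathcal{Q}_{\ell,1,n}(\kappa)$ on one-column multipartitions; \cref{QUIVERTL} shows this quotient is quasi-hereditary --- the key point being that $y_\lambda=e(\stt_{(\la,\theta)})$ is an idempotent, so the Gram form on each one-column cell module is nonzero and every $\lambda\in\mptn\ell n(1)$ labels a simple --- and identifies it with the generalised blob algebra of \cite{MW03}. Squareness follows from quasi-heredity (not merely from the labels ``occurring simultaneously'', which is what needs proving); the submatrix property follows because the quotient is by a cell ideal together with \cref{QUIVERTLdecompositionmatrix}; and the identification of the entries with non-parabolic affine Kazhdan--Lusztig polynomials of type $\widehat{A}_{\ell-1}$ is imported from the explicit computation of \cite[Theorem 3.16]{bcs15} (via a Morita equivalence from \cite{CoxBowman}), where the alcove-geometric action of $\widehat{W}$ on $\mptn\ell n(1)$ and the matching of gradings --- which you also defer --- are established directly. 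So your first step (FLOTW specialisation of Theorem~A, downward-closedness of the one-column family) is aligned with the paper, but the heart of your argument is an unproven duality that the paper itself flags as not working in any obvious way.
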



 Finally, we generalise all the results of Brundan--Kleshchev--Wang \cite{bkw11} to arbitrary weightings; in particular the graded branching rule.  
 Fix  a weighting $\sigma\in \weight$ and the corresponding sets  of 
Specht  and  irreducible  modules 
 $\{ \Specht^\Bbbk_{ \sigma}(\la)  \mid \lambda \in \mptn \ell n\} $ and $ \{L^\Bbbk_{ \sigma}(\la)  \mid \lambda \in \Sigma^\ell_n\subseteq \mptn \ell n\}.$ 
We would like to understand the    structure of 
 the restrictions of these modules to the subalgebra   $   H_{n-1}^\Bbbk(\underline{s})\subset  H_n^\Bbbk(\underline{s})$ 
 (see also \cite{FLOTW99,MR1443748,MR2271584,MR1750939,bkw11,restrict}).  
 

\begin{mainD}
 Let  $\lambda \in \mptn \ell n$ and let $\upalpha_1\rhd_{\sigma}  \upalpha_2 \rhd_{\sigma} \dots \rhd_{\sigma} \upalpha_z$ denote the removable boxes of $\lambda$.  
 Then
the restriction of $  \Specht^\Bbbk_{ \sigma}(\la) $  has an $  H_{n-1}^\Bbbk(\underline{s})$-module filtration 
\! $$
0= \Specht^{z+1,\lambda}_{\sigma} \subset 
\Specht^{z,\lambda}_{\sigma}
\subset \dots 
\subset \Specht^{1,\lambda}_{\sigma}=\Res_{   H_{n-1}^\Bbbk(\underline{s})}( \Specht^\Bbbk_{{\sigma}} (\la))
$$such that for each $1\leq r\leq z$, we have 
$ 
\Specht_{{\sigma}}^\Bbbk (\la-\upalpha_r)\langle 	 \deg (\upalpha_r)	\rangle  
\cong
\Specht^{r,\lambda}_{\sigma} / \Specht^{r+1,\lambda}_{\sigma} .
$ 
 \end{mainD}

 \smallskip
 \noindent{\bf Antecedents:}
   We re-iterate  that there are many different  Kazhdan--Lusztig bases on a given  Hecke algebra   of a real reflection  group, one for each choice of weighting
      \cite{MR1974442,MR727851};
       the ``canonical" basis of \cite{MR560412} is then obtained by restricting ones attention to the trivial weighting. 
         These many weightings   have   applications in Schubert varieties \cite{MR727851}  statistical mechanics \cite{MS94,MW03}  
 and    provide many  different  lenses through which to view and understand a given Hecke algebra.

 \begin{itemize}[leftmargin=*]
\item[$\bullet$] \noindent{\sf Canonical basic sets and cellularity:}   
 The search for a proof of Theorems A and B  
 was the principal  focus of  a book by  Geck--Jacon \cite{MR2799052} and 
 a multitude of conjectures   
  \cite{MR1317512,MR2761947,MR2510041,MR2761946}   
  as well as being one of the motivating factors  for the recent surge of interest in Cherednik algebras \cite{MR2905554,MR2496742,BONNAFEROUQ,BONNAFEROUQ2} and \cite[Section 6]{GGOR03}.    
Over a field of characteristic zero, 
 a huge literature has  
  focussed on constructing  the combinatorial shadows of  our bases in Theorem~A; these shadows are called  ``canonical basic sets" and were  first introduced by Geck--Rouquier \cite{MR1429875,MR1889346}.  
   These combinatorial shadows  have been intensely studied \cite{MR2017068,MR2761947,MR2510041,MR2847511,MR2886292,MR3465894,MR2905554,MR2391649,MR3081654,MR233664322,MR2336643,MR2799052,
MR2496742,MR1889346,MR2271574,MR2118039,MR2189868,MR2350227,MR2802531}
  and have been used to prove unitriangularity of decomposition matrices with respect to the Lusztig ${\bf a}_\sigma$-orderings  over $\mathbb Q$.    
  Our Theorems A and B lift these results   to a higher structural level and extend them to arbitrary fields.

  In the case of   asymptotic charges (for which $\sigma_i \gg \sigma_{i+1}$ for $0\leq i <\ell-1$)  the combinatorics and  basis of Theorem A  coincides with that of \cite[Main Theorem]{hm10} 
   and Theorem A    generalises  the main results  of \cite{hm10}  to all possible charges.
  The existing results on cellular bases of Hecke algebras of type $G(\ell,1,n)$  form along two axes: 
for $\ell\in \{1,2\}$    cellular  Kazhdan--Lusztig theoretic bases   exist for all charges \cite{MR1974442,MR727851,MR2336039};  
for asymptotic charges    cellular Murphy-type bases exist 
for all types $G(\ell,1,n)$ \cite{hm10}.  
   This paper completes the cellularity  picture {\em along both these axes} by constructing   cellular bases for all   charges on all cyclotomic Hecke algebras.

    \smallskip
\item[$\bullet$] \noindent{\sf Parameterising and constructing  irreducible  modules:}   
     Ariki's categorification theorem gives rise to many abstract parameterisations of  irreducible  $ H_n^\Bbbk(\underline{s})$-modules \cite{MR1911030}.  
   The aforementioned  asymptotic cellular structure of \cite{hm10} is the key ingredient in the explicit  construction of   irreducible    modules as canonical quotients of Specht modules labelled by  {\em Kleshchev} $\ell$-partitions  in   \cite{MR1864465,MR1750939}.  
However, the Kleshchev  $\ell$-partitions provide just one of many possible labellings of the nodes in the crystal graph 
  \cite{MR2905554};  
  each such labelling should give rise to an explicit construction   of the  irreducible  modules. 
   In  \cref{isaididexplain}, we provide these many different constructions of the   irreducible  modules (one for each possible charge) and over {\em arbitrary fields}.  
 For each charge, we shall see that the corresponding  irreducibles   are those which survive under the associated {\sf KZ} functor.
  
\end{itemize}

 This paper has gone through many iterations over the past few years, we have provided a discussion  of how to pass between these versions  at the end of the current paper.  
In particular, earlier versions of this paper did not use Theorem A (in conjunction with results of Jacon \cite{MR2350227}) in order to deduce that the Uglov multipartitions label the   irreducible  modules of cyclotomic Hecke algebras (see Theorem B).   
 Using our results, 
  Kerschl has independently used our Theorem A in order to deduce this labelling result \cite{kerschl}.  Our proof is simpler (as it makes use of earlier results \cite{MR2350227}) but Kerschl's proof has the added advantage of providing new lower bounds for the dimensions of these  irreducible  modules.

\section{Weighted  combinatorics of complex reflection groups}\label{sec1}

For the remainder of the paper, unless otherwise specified, let $\Bbbk$ be an arbitrary integral domain.  

 We let $\mathfrak{S}_n$ denote the symmetric group with the usual Coxeter generators $s_{i,i+1}$ for $1\leq i <n$.   
Given  parameters $q$ and $(Q_0,Q_1,\dots, Q_{\ell-1})$    we  define the
 Hecke algebra  of $(\ZZ/\ell\ZZ)\wr \mathfrak{S}_n$ 
  to be the $\Bbbk$-algebra $H_n^\Bbbk(q;Q_0,\dots, Q_{\ell-1})$ generated by $T_0, T_1, \dots T_{n-1}$ subject to the relations 
\begin{align} \label{eqn:eqlabel}
\begin{split}
(T_i+q)(T_i-1)=0\quad (T_0-Q_0) (T_0-Q_1)\dots  (T_0-Q_{\ell-1})=0
\\
  T_iT_j=T_jT_i
\quad
\quad T_i T_{i+1}T_i=  T_{i+1}T_i T_{i+1}
\quad T_0 T_1T_0T_1=   T_1T_0T_1T_0
\end{split}
\end{align}
 for $1\leq i , j <n$ and $|i-j|>1$. We  set  
$ 
X_j = q^{1-j} T_{j-1} \dots T_1 T_0 T_1 \dots T_{j-1}$.  
  Given a {\sf charge} 
$\sigma=(e;\sigma_0,\sigma_1,\dots,\sigma_{\ell-1})\in \weight$ we 
are interested in the specialisation  of the parameters $q=\xi$ a primitive $e$th root of unity and 
$Q_m=q^{\sigma_m}$ for $0\leq m <\ell$.    Given $\sigma=(e;\sigma_0,\sigma_1,\dots,\sigma_{\ell-1})\in \weight$,  we  define the {\sf $e$-charge} 
 to be   
$   \underline{s} =(e;{s}_0,{s}_1,\dots,	{s}_{\ell-1}) \in \NN_{>1}\times (\ZZ/e\ZZ)^\ell$ obtained by reducing the $\ell$-tuple   modulo $e$.   
After specialisation we obtain the algebra  $ H ^{\Bbbk}_n( \underline{s}):= H_n^\Bbbk(\xi;	\xi^{\sigma_0},\dots, \xi^{\sigma_{\ell-1}})$ which we defined in the introduction; 
the notation has been chosen  to emphasise that,  after specialisation,  the definition of the Hecke algebra depends only on the   $e$-charge.

\subsection{Charged $\ell$-partitions}   
Fix a charge     $\theta=(e;\theta_0,\dots, \theta_{\ell-1} ) \in \NN_{>1}\times \ZZ ^\ell$.     
  We define a  {\sf  configuration of boxes}   to be a subset  of  
\begin{equation}\label{assumptions}\tag{$\square$}
\{(r,c,m)\mid r,c,m \in \mathbb{N},  1\leq r,c \leq  n,  
 0\leq m < \ell  \} 
\end{equation}
and we let $\mathscr{C}^\ell_n$ denote the set of all  configurations of $n$ boxes.      
We refer to a box $(r,c,m)$ as being in the $r$th row and $c$th column of the $m$th component of the configuration.  Given a box, $(r,c,m)$,  
we define the {\sf content} of this box  to be  ${\sf ct}(r,c,m)  = \sigma_m+  c - r$ and we define its residue to be ${\rm res}(r,c,m)\equiv {\sf ct}(r,c,m) \pmod e$.  
We refer to a box of residue  $i\in \ZZ/e\ZZ$ as an $i$-box.

  We define a {\sf partition}, $\lambda$,  of $n$ to be a   finite weakly decreasing sequence  of non-negative integers $ (\lambda_1,\lambda_2, \ldots)$ whose sum, $|\lambda| = \lambda_1+\lambda_2 + \dots$, equals $n$.   
 An    {\sf $\ell$-partition}  $\lambda=(\lambda^{(0)},\dots,\lambda^{(\ell-1)})$ of $n$ is an $\ell $-tuple of     partitions  such that $|\lambda^{(0)}|+\dots+ |\lambda^{(\ell-1)}|=n$. 
 We   denote the set of $\ell$-partitions of $n$ by $\mptn {\ell}n$.
 Given  $\lambda=(\lambda^{(0)}, \lambda^{(1)}, \ldots , \lambda^{(\ell-1)}) \in \mptn {\ell}n$, the {\sf Young diagram}   is   the configuration of boxes, 
\[
[\lambda]=\{(r,c,m) \mid  1\leq  c\leq \lambda^{(m)}_r\}.
\]
  We now recall  Lusztig's ${\bf a}_{\sigma}$-ordering on $\ell$-partitions and   Webster's  coarsening of this ordering.  
  
 \begin{defn} \label{domdef}
 Given  
   ${\sigma}\in \weight$   a charge,   
 we write $(r,c,m) <_{\sigma}  (r',c',m')$ if either 
 \begin{itemize}
\item[$(i)$]   $\ct(r,c,m) < \ct(r',c',m')$ or 
\item[$(ii)$]   $\ct(r,c,m) = \ct(r',c',m')$  and $m>m'$
\end{itemize}  
 We write 
 $(r,c,m) \lhd_\theta  (r',c',m')$ if both   $(r,c,m) <_\theta  (r',c',m')$ and $\res(r,c,m)=\res(r',c',m')$.  
\end{defn}

The following formulation  of the Lusztig ${\bf a}_{\sigma}$-ordering is given in \cite[5.6 Proposition]{MR2905554}.  
\begin{defn}[Lusztig's  ${\bf a}_{\sigma}$-ordering]
 For $\la,\mu\in \mptn \ell n$, we write $ \mu \leq_{\sigma}  \la $  if there is a bijective   map ${\sf A}:[\lambda] \to [\mu]$ 
 such that either $  {\sf A}(r,c,m)<_{\sigma}(r,c,m) $ or $  {\sf A}(r,c,m)=(r,c,m) $ for all $(r,c,m)\in \lambda$.  

\end{defn}  
  
  We now rephrase Webster's ordering on $\mptn \ell n$ in such a way that it is easily seen to be a coarsening of Lusztig's ${\bf a}_{\sigma}$-ordering.  
  We reconcile this with Webster's  original diagrammatic definition shortly.  
  
  \begin{defn}[Webster's   ordering]
 For $\la,\mu\in \con \ell n$, we write   $ \mu \trianglelefteq_{\sigma}  \la $  if there is a residue preserving  bijective   map ${\sf A}:[\lambda] \to [\mu]$ 
 such that either $  {\sf A}(r,c,m)\vartriangleleft_{\sigma}(r,c,m) $  or $  {\sf A}(r,c,m)=(r,c,m) $ for all $(r,c,m)\in \lambda$.  

 \end{defn}

We now discuss how   \cref{domdef} and the ensuing orderings on $\mptn \ell n$ can be visualised diagrammatically.  
  Given $\la\in\mptn\ell n$,  the associated  {\sf  (mirrored) $\theta$-Russian array} 
 is defined as follows.
 (We drop the prefix ``mirrored" for the remainder of this paper, we just  highlight now for the reader that our conventions are the opposite of the usual definition of a Russian diagram.)  
For each $0\leq m< \ell$, we place a point on the real line at $\sigma_m-\tfrac{m}{\ell}$ and consider the region bounded by  half-lines at angles $3\pi/4 $ and $\pi/4$.
(Compare the $m/\ell$ removed from the charge with condition $(ii)$ of \cref{domdef}.)
		We tile the resulting quadrant with a lattice of squares, each with diagonal of length $2   $ 
		(this will be important!).  
  We place the   box $(1,1,m)$   at the point  $\sigma_m-\tfrac{m}{\ell}$ on the real line, with rows going northeast from this node, and columns going northwest. 
 Given a fixed charge $\theta \in \weight$ and $\lambda \in \mptn \ell n$, we do not distinguish between the configuration of boxes 
  and its $\theta$-Russian array.

\begin{prop}
We have that $(r,c,m) <_{\sigma}(r',c',m')$ if and only if  the box $(r,c,m)$ appears strictly to the left of the box $(r',c',m')$ in the $\theta$-Russian array.  
\end{prop}

\begin{proof}
 This is clear from the definitions.  Notice that the subtraction $-m/\ell$ ensures that $(ii)$ of \cref{domdef} matches the diagrammatic ordering.  
 \end{proof}

\begin{figure}[ht]\captionsetup{width=0.9\textwidth}
\!\!\!\!\!\!\!\!\!\!\!
$$\scalefont{0.9}
\begin{tikzpicture}[scale=0.7]
\clip(-2.5,-4.5) rectangle (3.5,4);
 \draw(-2,-4)--(4,-4);
 \draw(0,-4.25) node {0};
  \path(0,-4.25)--++(45:0.5)--++(-45:0.5) coordinate (hello) node {1};
  \path(0,-4.25)--++(45:-0.5)--++(-45:-0.5) coordinate (hello2) node {$-1$};
  \path(hello2)--++(45:-0.5)--++(-45:-0.5) coordinate (hello2) node {$-2$};  
    \path(hello)--++(45:0.5)--++(-45:0.5) coordinate (hello) node {2};
        \path(hello)--++(45:0.5)--++(-45:0.5) coordinate (hello) node {3};
            \path(hello)--++(45:0.5)--++(-45:0.5) coordinate (hello) node {4};
 
\begin{scope}{   \path(0,0) coordinate (origin);
 \clip(origin)--(45:8)--(135:8)--(-135:8)--(origin);
  \clip(origin)--++(45:4)--++(135:1)--++(-135:3)--++(135:2)--++(-135:1)--(origin);

  \foreach \i\j in {1,...,19}
  {
    \path (origin)++(45:1*\i cm)  coordinate (a\i);
    \path (origin)++(135:1*\i cm)  coordinate (b\i);
    \path (a\i)++(135:1*\j cm) coordinate (ca\i,\j);
    \path (b\i)++(45:1*\j cm) coordinate (cb\i,\j);

  }
    \foreach \i in {1,...,19}
{  \draw[black,thick] (a\i)--++(135:8);
    \draw[black,thick] (b\i)--++(45:8);}
      \draw[black,thick] (origin)--++(135:8);
            \draw[black,thick] (origin)--++(45:8);

}
\end{scope}
\draw[wei](origin)--++(-90:4);

\path (origin)--++(45:0.5)--++(135:0.5) coordinate (origin);

  \foreach \i in {1,...,4}
  {
     \path (origin)++(45:1*\i cm)  coordinate (xx\i);
    \path (origin)++(135:1*\i cm)  coordinate (xy\i);

  }
\draw(xx1) node {1};
\draw(xx2) node {2};
\draw(xx3) node {3};
\draw(xy1) node {$-1$};  
\draw(xy2) node {$-2$};  
\path(origin)--++(45:0.5)--++(135:0.5) coordinate (xc3);
\draw(origin) node {0};  
\path(xc3)--++(45:0.5)--++(135:0.5) coordinate (xcd);


\path(0,0)--++(45:0.25)--++(-45:0.25)--++(-90:3.2) coordinate (origin);
\draw[wei](origin)--++(-90:0.8);
\begin{scope}
{   
\draw[thick](origin)--++(45:3)--++(135:1)--++(-135:1)--++(135:1)--++(-135:1)--++(135:1)--++(-135:1)--(origin);
 \clip(origin)--++(45:8)--++(135:8)--++(-135:8)--(origin);
 \clip(origin)--++(45:3)--++(135:1)--++(-135:1)--++(135:1)--++(-135:1)--++(135:1)--++(-135:1)--(origin);
   \foreach \i\j in {1,...,19}
  {
    \path (origin)++(45:1*\i cm)  coordinate (a\i);
    \path (origin)++(135:1*\i cm)  coordinate (b\i);
    \path (a\i)++(135:1*\j cm) coordinate (ca\i,\j);
    \path (b\i)++(45:1*\j cm) coordinate (cb\i,\j);

  }
    \foreach \i in {1,...,19}
{  \draw[black,thick] (a\i)--++(135:8);
    \draw[black,thick] (b\i)--++(45:8);}
      \draw[black,thick] (origin)--++(135:8);
            \draw[black,thick] (origin)--++(45:8);
}

  \clip(origin)--++(45:8)--++(135:8)--++(-135:8)--(origin);

\path (origin)--++(45:0.5)--++(135:0.5) coordinate (origin);

  \foreach \i in {1,...,4}
  {
     \path (origin)++(45:1*\i cm)  coordinate (x\i);
    \path (origin)++(135:1*\i cm)  coordinate (y\i);

  }
\draw(x1) node {2};
\draw(x2) node {3};
\draw(y1) node {0};  
\draw(y2) node {$-1$};  
\path(origin)--++(45:0.5)--++(135:0.5) coordinate (c3);
\draw(origin) node {1};  
\path(c3)--++(45:0.5)--++(135:0.5) coordinate (cd);
\draw(cd) node {1};

\end{scope}
\end{tikzpicture}
\qquad 
\begin{tikzpicture}[scale=0.7]
\clip(-2.5,-4.5) rectangle (5,4);
\clip(-2.5,-4.5) rectangle (5,4);
 \draw(-2,-4)--(4,-4);
 \draw(0,-4.25) node {0};
  \path(0,-4.25)--++(45:0.5)--++(-45:0.5) coordinate (hello) node {1};
  \path(0,-4.25)--++(45:-0.5)--++(-45:-0.5) coordinate (hello2) node {$-1$};
  \path(hello2)--++(45:-0.5)--++(-45:-0.5) coordinate (hello2) node {$-2$};  
    \path(hello)--++(45:0.5)--++(-45:0.5) coordinate (hello) node {2};
        \path(hello)--++(45:0.5)--++(-45:0.5) coordinate (hello) node {3};
            \path(hello)--++(45:0.5)--++(-45:0.5) coordinate (hello) node {4};
           \path(hello)--++(45:0.5)--++(-45:0.5) coordinate (hello) node {5};

 
\begin{scope}{   \path(0,0) coordinate (origin);
 \clip(origin)--(45:8)--(135:8)--(-135:8)--(origin);
  \clip(origin)--++(45:4)--++(135:1)--++(-135:3)--++(135:2)--++(-135:1)--(origin);

  \foreach \i\j in {1,...,19}
  {
    \path (origin)++(45:1*\i cm)  coordinate (a\i);
    \path (origin)++(135:1*\i cm)  coordinate (b\i);
    \path (a\i)++(135:1*\j cm) coordinate (ca\i,\j);
    \path (b\i)++(45:1*\j cm) coordinate (cb\i,\j);

  }
    \foreach \i in {1,...,19}
{  \draw[black,thick] (a\i)--++(135:8);
    \draw[black,thick] (b\i)--++(45:8);}
      \draw[black,thick] (origin)--++(135:8);
            \draw[black,thick] (origin)--++(45:8);

}
\end{scope}
\draw[wei](origin)--++(-90:4);

\path (origin)--++(45:0.5)--++(135:0.5) coordinate (origin);

  \foreach \i in {1,...,4}
  {
     \path (origin)++(45:1*\i cm)  coordinate (xx\i);
    \path (origin)++(135:1*\i cm)  coordinate (xy\i);

  }
\draw(xx1) node {1};
\draw(xx2) node {2};
\draw(xx3) node {3};
\draw(xy1) node {$-1$};  
\draw(xy2) node {$-2$};  
\path(origin)--++(45:0.5)--++(135:0.5) coordinate (xc3);
\draw(origin) node {0};  
\path(xc3)--++(45:0.5)--++(135:0.5) coordinate (xcd);


\path(0,0)--++(45:0.25)--++(-45:0.25)
--++(45:1.5)--++(-45:1.5)--++(-90:3.2) coordinate (origin);
\draw[wei](origin)--++(-90:0.8);
\begin{scope}
{   
\draw[thick](origin)--++(45:3)--++(135:1)--++(-135:1)--++(135:1)--++(-135:1)--++(135:1)--++(-135:1)--(origin);
 \clip(origin)--++(45:8)--++(135:8)--++(-135:8)--(origin);
 \clip(origin)--++(45:3)--++(135:1)--++(-135:1)--++(135:1)--++(-135:1)--++(135:1)--++(-135:1)--(origin);
   \foreach \i\j in {1,...,19}
  {
    \path (origin)++(45:1*\i cm)  coordinate (a\i);
    \path (origin)++(135:1*\i cm)  coordinate (b\i);
    \path (a\i)++(135:1*\j cm) coordinate (ca\i,\j);
    \path (b\i)++(45:1*\j cm) coordinate (cb\i,\j);

  }
    \foreach \i in {1,...,19}
{  \draw[black,thick] (a\i)--++(135:8);
    \draw[black,thick] (b\i)--++(45:8);}
      \draw[black,thick] (origin)--++(135:8);
            \draw[black,thick] (origin)--++(45:8);
}

  \clip(origin)--++(45:8)--++(135:8)--++(-135:8)--(origin);

\path (origin)--++(45:0.5)--++(135:0.5) coordinate (origin);

  \foreach \i in {1,...,4}
  {
     \path (origin)++(45:1*\i cm)  coordinate (x\i);
    \path (origin)++(135:1*\i cm)  coordinate (y\i);

  }
\draw(x1) node {5};
\draw(x2) node {6};
\draw(y1) node {3};  
\draw(y2) node {$2$};  
\path(origin)--++(45:0.5)--++(135:0.5) coordinate (c3);
\draw(origin) node {4};  
\path(c3)--++(45:0.5)--++(135:0.5) coordinate (cd);
\draw(cd) node {4};

\end{scope}
\end{tikzpicture}
$$\caption{We picture the 2-partition $((4,1^2) \mid (3,2,1))$ for $(e;\sigma_0,\sigma_1)=(e;0,1)$ and  $(e;\sigma_0,\sigma_1)=(e;0,4)$ respectively for $e\in \NN_{>1}$.  In each box we have placed the content of the box. Notice that the boxes of content $1$ in the second component appear to the left of those of content $1$ in the first component (by half a unit).  
}\label{ALoading3333345}
\end{figure}

\begin{eg}A charge is said to be {\sf asymptotic} if $\sigma_m-\sigma_{m+1} >  n$ for all $0\leq m < \ell-1$.
 For $\theta\in \weight$ a asymptotic charge and $\la,\mu \in \mptn \ell n$, it is easy to see that if $\la \geq_\theta \mu$ if and only if 
 $$\sum_{i=1}^{k-1}|\la^{(i)}| + \sum _{i=1}^{j}\la^{(k)}_i 
 \geq 
 \sum_{i=1}^{k-1}|\mu^{(i)}| + \sum _{i=1}^{j}\mu^{(k)}_i $$
 for all $1\leq k\leq \ell$ and $1\leq j \leq n$.   
\end{eg}

 \begin{eg}
In the case $\theta=(e;\sigma_0,\sigma_1,\dots, \sigma_{\ell-1})\in \weight$ is such that $
0< \sigma_{i} -\sigma_{j} <e$ for $0\leq i <j <\ell$, the $\theta$-dominance order coincides with the ordering on $\mptn \ell n$ considered  in \cite{FLOTW99}. 
This charge is considered in greater detail in \cref{sec2}.  
\end{eg}

\subsection{Charged standard tableaux}

 Given $\la \in \mptn \ell n$, we let   ${\rm Rem} (\la)$ (respectively ${\rm Add}  (\la)$) 
  denote the set of all    removable  (respectively addable) boxes  of the Young diagram of $\la$ so that the resulting diagram is the Young diagram of a $\ell$-partition.  We extend the residue and dominance notation above in the obvious fashion.  Given $i\in \ZZ/e\ZZ$, we let  ${\rm Rem}_i  (\la)\subseteq {\rm Rem}  (\la)$
  and ${\rm Add}_i  (\la)\subseteq {\rm Add}  (\la)$   denote the subsets of boxes of residue $i\in \ZZ/e\ZZ$.

 \begin{defn}
Fix   $\theta\in \weight $.  
Given $\lambda\in \mptn \ell n$, we define a $\theta$-{\sf tableau} of shape $\lambda$ to be a bijective map from the boxes of 
the $\theta$-Russian array of  $\boxla $ to the set 
$\{1,\dots , n\}$ (depicted as a filling the boxes with the corresponding integers).  We define a {\sf  standard tableau} to be a tableau  in which    the entries increase along the rows and columns of each component.  
 We let $\Std_{\theta}(\lambda)$ denote the set of all standard tableaux of shape $\lambda\in\mptn\ell n$. 
    Given 
$\stt\in \Std_{\theta}(\lambda)$, we set $\Shape(\stt)=\la$.  
  Given $1\leq k \leq n$, we let $\stt{\downarrow}_{\{1,\dots ,k\}}$ be the subtableau of $\stt$ whose  entries belong to the set
  $\{1,\dots,k\}$.  
For $\sts, \stt \in \Std_\theta(\la)$ we write $\sts \trianglelefteq_\theta \stt$ if $\Shape(\sts{\downarrow}_{\{1,\dots ,k\}})\trianglelefteq_\theta \Shape(\stt{\downarrow}_{\{1,\dots ,k\}})$ for $1\leq k \leq n$ (one can define   $\leq_\theta$ on 
$\Std_\theta(\la)$ 
similarly).  
\end{defn}

\begin{figure}[ht]\captionsetup{width=0.9\textwidth}
\!\!\!\!\!\!\!\!\!\!
$$\scalefont{0.9}
\begin{tikzpicture}[scale=0.7]
 
\clip(-2.5,-4.5) rectangle (3.5,4);
 \draw(-2,-4)--(4,-4);
 \draw(0,-4.25) node {0};
  \path(0,-4.25)--++(45:0.5)--++(-45:0.5) coordinate (hello) node {1};
  \path(0,-4.25)--++(45:-0.5)--++(-45:-0.5) coordinate (hello2) node {$-1$};
  \path(hello2)--++(45:-0.5)--++(-45:-0.5) coordinate (hello2) node {$-2$};  
    \path(hello)--++(45:0.5)--++(-45:0.5) coordinate (hello) node {2};
        \path(hello)--++(45:0.5)--++(-45:0.5) coordinate (hello) node {3};
            \path(hello)--++(45:0.5)--++(-45:0.5) coordinate (hello) node {4};
 
\begin{scope}{   \path(0,0) coordinate (origin);
 \clip(origin)--(45:8)--(135:8)--(-135:8)--(origin);
  \clip(origin)--++(45:4)--++(135:1)--++(-135:3)--++(135:2)--++(-135:1)--(origin);

  \foreach \i\j in {1,...,19}
  {
    \path (origin)++(45:1*\i cm)  coordinate (a\i);
    \path (origin)++(135:1*\i cm)  coordinate (b\i);
    \path (a\i)++(135:1*\j cm) coordinate (ca\i,\j);
    \path (b\i)++(45:1*\j cm) coordinate (cb\i,\j);

  }
    \foreach \i in {1,...,19}
{  \draw[black,thick] (a\i)--++(135:8);
    \draw[black,thick] (b\i)--++(45:8);}
      \draw[black,thick] (origin)--++(135:8);
            \draw[black,thick] (origin)--++(45:8);

}
\end{scope}
\draw[wei](origin)--++(-90:4);

\path (origin)--++(45:0.5)--++(135:0.5) coordinate (origin);

  \foreach \i in {1,...,4}
  {
     \path (origin)++(45:1*\i cm)  coordinate (xx\i);
    \path (origin)++(135:1*\i cm)  coordinate (xy\i);

  }
\draw(xx1) node {5};
\draw(xx2) node {8};
\draw(xx3) node {9};
\draw(xy1) node {$11$};  
\draw(xy2) node {$12$};  
\path(origin)--++(45:0.5)--++(135:0.5) coordinate (xc3);
\draw(origin) node {1};  
\path(xc3)--++(45:0.5)--++(135:0.5) coordinate (xcd);


\path(0,0)--++(45:0.25)--++(-45:0.25)--++(-90:3.2) coordinate (origin);
\draw[wei](origin)--++(-90:0.8);
\begin{scope}
{   
\draw[thick](origin)--++(45:3)--++(135:1)--++(-135:1)--++(135:1)--++(-135:1)--++(135:1)--++(-135:1)--(origin);
 \clip(origin)--++(45:8)--++(135:8)--++(-135:8)--(origin);
 \clip(origin)--++(45:3)--++(135:1)--++(-135:1)--++(135:1)--++(-135:1)--++(135:1)--++(-135:1)--(origin);
   \foreach \i\j in {1,...,19}
  {
    \path (origin)++(45:1*\i cm)  coordinate (a\i);
    \path (origin)++(135:1*\i cm)  coordinate (b\i);
    \path (a\i)++(135:1*\j cm) coordinate (ca\i,\j);
    \path (b\i)++(45:1*\j cm) coordinate (cb\i,\j);

  }
    \foreach \i in {1,...,19}
{  \draw[black,thick] (a\i)--++(135:8);
    \draw[black,thick] (b\i)--++(45:8);}
      \draw[black,thick] (origin)--++(135:8);
            \draw[black,thick] (origin)--++(45:8);
}

  \clip(origin)--++(45:8)--++(135:8)--++(-135:8)--(origin);

\path (origin)--++(45:0.5)--++(135:0.5) coordinate (origin);

  \foreach \i in {1,...,4}
  {
     \path (origin)++(45:1*\i cm)  coordinate (x\i);
    \path (origin)++(135:1*\i cm)  coordinate (y\i);

  }
\draw(x1) node {6};
\draw(x2) node {10};
\draw(y1) node {3};  
\draw(y2) node {$4$};  
\path(origin)--++(45:0.5)--++(135:0.5) coordinate (c3);
\draw(origin) node {2};  
\path(c3)--++(45:0.5)--++(135:0.5) coordinate (cd);
\draw(cd) node {7};

\end{scope}
\end{tikzpicture}
\qquad 
\begin{tikzpicture}[scale=0.7]

\clip(-2.5,-4.5) rectangle (5,4);
 \draw(-2,-4)--(4,-4);
 \draw(0,-4.25) node {0};
  \path(0,-4.25)--++(45:0.5)--++(-45:0.5) coordinate (hello) node {1};
  \path(0,-4.25)--++(45:-0.5)--++(-45:-0.5) coordinate (hello2) node {$-1$};
  \path(hello2)--++(45:-0.5)--++(-45:-0.5) coordinate (hello2) node {$-2$};  
    \path(hello)--++(45:0.5)--++(-45:0.5) coordinate (hello) node {2};
        \path(hello)--++(45:0.5)--++(-45:0.5) coordinate (hello) node {3};
            \path(hello)--++(45:0.5)--++(-45:0.5) coordinate (hello) node {4};
           \path(hello)--++(45:0.5)--++(-45:0.5) coordinate (hello) node {5};

 
\begin{scope}{   \path(0,0) coordinate (origin);
 \clip(origin)--(45:8)--(135:8)--(-135:8)--(origin);
  \clip(origin)--++(45:4)--++(135:1)--++(-135:3)--++(135:2)--++(-135:1)--(origin);

  \foreach \i\j in {1,...,19}
  {
    \path (origin)++(45:1*\i cm)  coordinate (a\i);
    \path (origin)++(135:1*\i cm)  coordinate (b\i);
    \path (a\i)++(135:1*\j cm) coordinate (ca\i,\j);
    \path (b\i)++(45:1*\j cm) coordinate (cb\i,\j);

  }
    \foreach \i in {1,...,19}
{  \draw[black,thick] (a\i)--++(135:8);
    \draw[black,thick] (b\i)--++(45:8);}
      \draw[black,thick] (origin)--++(135:8);
            \draw[black,thick] (origin)--++(45:8);

}
\end{scope}
\draw[wei](origin)--++(-90:4);

\path (origin)--++(45:0.5)--++(135:0.5) coordinate (origin);

  \foreach \i in {1,...,4}
  {
     \path (origin)++(45:1*\i cm)  coordinate (xx\i);
    \path (origin)++(135:1*\i cm)  coordinate (xy\i);

  }
\draw(xx1) node {5};
\draw(xx2) node {8};
\draw(xx3) node {9};
\draw(xy1) node {$11$};  
\draw(xy2) node {$12$};  
\path(origin)--++(45:0.5)--++(135:0.5) coordinate (xc3);
\draw(origin) node {1};  
\path(xc3)--++(45:0.5)--++(135:0.5) coordinate (xcd);


\path(0,0)--++(45:0.25)--++(-45:0.25)
--++(45:1.5)--++(-45:1.5)--++(-90:3.2) coordinate (origin);
\draw[wei](origin)--++(-90:0.8);
\begin{scope}
{   
\draw[thick](origin)--++(45:3)--++(135:1)--++(-135:1)--++(135:1)--++(-135:1)--++(135:1)--++(-135:1)--(origin);
 \clip(origin)--++(45:8)--++(135:8)--++(-135:8)--(origin);
 \clip(origin)--++(45:3)--++(135:1)--++(-135:1)--++(135:1)--++(-135:1)--++(135:1)--++(-135:1)--(origin);
   \foreach \i\j in {1,...,19}
  {
    \path (origin)++(45:1*\i cm)  coordinate (a\i);
    \path (origin)++(135:1*\i cm)  coordinate (b\i);
    \path (a\i)++(135:1*\j cm) coordinate (ca\i,\j);
    \path (b\i)++(45:1*\j cm) coordinate (cb\i,\j);

  }
    \foreach \i in {1,...,19}
{  \draw[black,thick] (a\i)--++(135:8);
    \draw[black,thick] (b\i)--++(45:8);}
      \draw[black,thick] (origin)--++(135:8);
            \draw[black,thick] (origin)--++(45:8);
}

  \clip(origin)--++(45:8)--++(135:8)--++(-135:8)--(origin);

\path (origin)--++(45:0.5)--++(135:0.5) coordinate (origin);

  \foreach \i in {1,...,4}
  {
     \path (origin)++(45:1*\i cm)  coordinate (x\i);
    \path (origin)++(135:1*\i cm)  coordinate (y\i);

  }
\draw(x1) node {6};
\draw(x2) node {10};
\draw(y1) node {3};  
\draw(y2) node {$4$};  
\path(origin)--++(45:0.5)--++(135:0.5) coordinate (c3);
\draw(origin) node {2};  
\path(c3)--++(45:0.5)--++(135:0.5) coordinate (cd);
\draw(cd) node {7};

\end{scope}
\end{tikzpicture}
$$\caption{   
Tableaux $\sts \in \Std_{(3;0,1)}(\la)$ and  $\stt \in \Std_{(3;0,4)}(\la)$ for  $\la=((4,1^2),(3,2,1))$.  
} 
\label{ALoading33333}
\end{figure} 

 \begin{defn}
We define a  {\sf residue sequence} to be an element ${\imath}=(i_1,\dots,i_n)\in (\ZZ/e\ZZ)^n$.  
 Given 
  $\stt\in \Std_{\theta}(\la)$  we  define the  {\sf  residue sequence}, ${\imath}_\stt $,  as follows,
$$
{\rm res}(\stt) = ({\rm res} (\stt^{-1}(1)), {\rm res} (\stt^{-1}(2)),  \dots, {\rm res}(\stt^{-1}(n)) )    \in (\ZZ/e\ZZ)^n.  
$$

 \end{defn}

 \begin{eg}  \label{oneexample}
 The algebras $H^\Bbbk_{12}(3;0,1)$ and $H^\Bbbk_{12}(3;0,4)$ are isomorphic.  
For  $\la=((4,1^2),(3,2,1))$ we have pictured a tableau $\sts \in \Std_{(3;0,1)}(\la)$ and  $\stt \in \Std_{(3;0,4)}(\la)$ in   \cref{ALoading33333}.  
The residue sequences  $\sts$  and 
 $\bexy$  from \cref{ALoading33333} are all the same and are equal to  $(0,1,0,2,1,2,1,2,0,0,2,1)$.   
\end{eg}

%
%
%

\begin{defn} 
Let $\la\in\mptn\ell n$ and $\stt \in \Std_{\theta}(\la)$. 
  We let $\stt^{-1}(k)$ denote the box in $\stt$ containing the integer $1\leq k\leq n$.   
  Given $1\leq k\leq n$, we let ${\mathcal A}_\stt(k)$, 
(respectively ${\mathcal R}_\stt(k)$)  denote the set of   all addable $\res (\stt^{-1}(k))$-boxes (respectively all
removable   $\res (\stt^{-1}(k))$-boxes)  of the $\ell$-partition $\Shape(\stt{\downarrow}_{\{1,\dots ,k\}})$ which
  are less than  $\stt^{-1}(k)$ in the $\theta$-dominance order  (i.e  those which appear to the {\em  left\color{black}} of $\stt^{-1}(k)$).    

\end{defn}

\begin{defn} Let $\la\in\mptn\ell n$ and $\stt \in \Std_{\theta}(\la)$.  We define the degree of $\stt$ as follows,
$$
\deg(\stt) = \sum_{k=1}^n \left(	|{\mathcal A}_\stt(k)|-|{\mathcal R}_\stt(k)|	\right).
$$
\end{defn}

  \begin{rmk}
 For   $\theta\in \weight$ an asymptotic charge,   our tableaux and grading coincide with those  of  \cite[Section 3]{hm10} and \cite[Section 1]{bkw11}.  
 \end{rmk}

\begin{eg}\label{oneexample}
 We continue with the example above specialising  $e=3$,  and $\theta=(3;0, 1)$ versus $\theta=(3;0, 4)$. 
 The tableau $\sts$ of \cref{ALoading33333} has degree 5: the boxes with entries 5, 7, 8, 9, 10 and 11 have degrees $1, 1, 1, 2, 1, -1 $ respectively and all other boxes have degree 0.  
 The tableau $\stt$ of \cref{ALoading33333} has degree 0: the boxes with entries 2, 3, 4, 9 have degrees $1, -1, 1, -1$ respectively and all other boxes have degree 0.  We note that the boxes of $\stt$ with  entries 6, 7, 8, 10 all have degree zero because they have both an addable and a removable node to their left which cancel out.   
\end{eg}

 \subsection{Charged semistandard tableaux}

We first tilt  the $\theta$-Russian array of $\lambda\in \con\ell n$   ever-so-slightly in the 
 anti\color{black}
clockwise direction so that the top vertex of the box $(r,c,m)$   has $x$-coordinate 
$${\bf I}^{\sigma}_{(r,c,m)} =\ct(r,c,m)-m/\ell - (r+c)\varepsilon$$ for $\varepsilon\ll \tfrac{1}{2n\ell}$ 
 (up to small angle approximation).   
Our assumption that 
$\varepsilon\ll \tfrac{1}{2n\ell}$ implies  that no two boxes in the $\theta$-charged Young diagram of  $\la \in \mathscr{C}^\ell_n$ can have the same $x$-coordinate and thus we have refined the ordering $\leq _\sigma$ of \cref{domdef} to a total ordering on boxes. 
  Given $\lambda\in\mathscr{C}^ \ell_ n$, we let ${\bf I}^{\theta}_\lambda$ denote    
 the  ordered set  of the $\mathbf{I}^\theta_{(r,c,m)}$ for $(r,c,m)\in \lambda$.     
       Given $\lambda\in\mathscr{C}^ \ell_ n$, the associated  {\sf residue sequence}, ${\rm res}(\lambda)$,  of $\lambda$ is given by reading the residues of the boxes of $ \lambda  $ according to the natural ordering on $x$-coordinates. 

\begin{defn} \label{semistandard:defn}
Given $\la , \mu \in \mathscr{C}^\ell_n$ we define   a  tableau, $\SSTT$, of shape $\lambda $ and weight $\mu  $  to be a   bijective map $\SSTT : [\boxla]  \to \mathbf{I}^\theta_\mu$. 
We say that a tableau is {\sf semistandard} if it also satisfies the following  properties   \begin{itemize}
  \item[$(i)$]     $\SSTT(1,1,m)<\sigma_m$,
  \item[$(ii)$]    $\SSTT(r,c,m)< \SSTT(r-1,c,m)  - \g$,
  \item[$(iii)$] 
   $\SSTT(r,c,m) < \SSTT(r,c-1,m) + \g$,   
  \end{itemize}
  for $(r,c,m)\in \lambda$.  
   We  denote the set of all  semistandard tableaux of shape $\lambda$
  and weight $\mu$ by $\SStd_{\theta}(\lambda,\mu)$.   Given $\SSTT \in 
 \SStd_{\theta}(\lambda,\mu)$, we   write $\Shape(\SSTT)=\lambda$.  
  \end{defn}

\begin{figure}[ht!]
\scalefont{0.8} \begin{tikzpicture}[baseline, thick,yscale=0.75,xscale=-0.75]

\path(0,0) coordinate (112);
\path(112)--++(-10:1) coordinate (a1);
\path(112)--++(80:1) coordinate (a2);
\path(112)--++(170:1) coordinate (a3);
\path(112)--++(-100:1) coordinate (a4);
\draw(a1)--(a2)--(a3)--(a4)--(a1);
\fill[red] (a4) circle (2pt);
\draw(112) node {$-2\varepsilon$};

\path(a1)--++(80:1) coordinate (212);
\path(212)--++(-10:1) coordinate (b1);
\path(212)--++(80:1) coordinate (b2);
\path(212)--++(170:1) coordinate (b3);
\path(212)--++(-100:1) coordinate (b4);
\draw(b1)--(b2)--(b3)--(b4)--(b1);

 \draw(212) node {$-1-3\varepsilon$};

\path(a3)--++(80:1) coordinate (312);
\path(312)--++(-10:1) coordinate (c1);
\path(312)--++(80:1) coordinate (c2);
\path(312)--++(170:1) coordinate (c3);
\path(312)--++(-100:1) coordinate (c4);
\draw(c1)--(c2)--(c3)--(c4)--(c1);

\draw(312) node {$-2-4\varepsilon$};

\path(-4.5,0) coordinate (112);
\path(112)--++(-10:1) coordinate (d1);
\path(112)--++(80:1) coordinate (d2);
\path(112)--++(170:1) coordinate (d3);
\path(112)--++(-100:1) coordinate (d4);
\draw(d1)--(d2)--(d3)--(d4)--(d1);

\draw(112) node {$1-3\varepsilon$};

\path(d1)--++(80:1) coordinate (212);
\path(212)--++(-10:1) coordinate (e1);
\path(212)--++(80:1) coordinate (e2);
\path(212)--++(170:1) coordinate (e3);
\path(212)--++(-100:1) coordinate (e4);
\draw(e1)--(e2)--(e3)--(e4)--(e1);

\draw(212) node {$-3-5\varepsilon$};
  
\fill[red] (d4) circle (2pt);
 
\end{tikzpicture}

\caption{A semistandard tableau $\SSTS\in \SStd_{(3;4,0)}	(	( (1^2),(2,1)), ( \varnothing,(2 ,1^3))	 	)	$.
   We have tilted the components of this 2-partition  $\varepsilon$ units anti-clockwise so that the $x$-coordinate of a box $(r,c,m)$ is equal to $\ct(r,c,m)-m/\ell-(r+c)\varepsilon$.  Two possible corresponding basis elements for this tableau are depicted in \cref{othertwo}.}
\label{otherone}
\end{figure}

   \section{Graded  cellular algebras and canonical basic sets}\label{gradedcell}

  Let $\R$ be an integral domain with field of fractions $\mathbb F$.    
 Let $A^\R$ be an associative  $\R$-algebra which is finitely generated and free over  $\R$ and   $\vartheta: \R \to {\mathbb K}$   a ring homomorphism into a field ${\mathbb K}$ such that ${\mathbb K}$ is the field of fractions of $\vartheta(\R)$. We obtain an $\mathbb F$-algebra $A^{\mathbb F} = A^\R \otimes_\R {\mathbb F} $ 
  and a ${\mathbb K}$-algebra $A^{{\mathbb K}} = A^\R \otimes_\R  {\mathbb K} $.   
   We let 
     ${\rm Irr}(A^{{\mathbb K}})$ (respectively ${\rm Irr}(A^{\mathbb F})$) denote the set  
     of all irreducible representations of $A^{{\mathbb K}}$ (respectively $A^{\mathbb F}$) up to isomorphism.   
 The following  generalises the definition of    {\cite[Section 3.1.7]{MR2799052}} to more general  modular systems.  
 
    \begin{defn}   \label{uglovv2} 
Let $A^\R$ be an algebra with representations $\{V_\la^\R\mid   \la\in \Pi\}$.   
  Suppose that      $ A^{\mathbb F} $ is   semisimple  
  and  $\{V_\la^{\mathbb F}:= V_\la^\R\otimes _\R{\mathbb F} \mid   \la\in \Pi\} = {\rm Irr}(A^{\mathbb F})$.         Let $\rhd $ be a partial order on $\Pi$ such that:    \begin{itemize}[leftmargin=*]  
 \item[$(i)$]      Given $L^{\mathbb K}_\la\in {\rm Irr}(A^{{\mathbb K}}) $, let $\mathscr{S}_{\trianglerighteq}(L^{\mathbb K}_\la)=\{  V_\mu^{\mathbb F}  \mid \mu \in \Pi , 
   V_\mu^\R\otimes_\R {\mathbb K}  \text{ has $L_\la^{\mathbb K}$ as a composition factor}\}$.  Then the set $\mathscr{S}_{\trianglerighteq}(L^{\mathbb K}_\la)$ contains a unique minimal element, $V_\la^{\mathbb F}$,  with respect to $\rhd$.  
\item[$(ii)$]    
There exists an injective map   
 ${\rm Irr}(A^{{\mathbb K}}) \to    {\rm Irr}(A^{\mathbb F})$.   
 \item[$(iii)$]     For all $L^{\mathbb K}_\la\in {\rm Irr}(A^{{\mathbb K}})$, we have that  $L^{\mathbb K}_\la$ appears exactly once as a composition factor of  $ V_\la^\R\otimes_\R {\mathbb K} $.      \end{itemize}
 If this holds, we say that  $B^{\mathbb K}_{\rhd}= \{  \la 
 \mid L_\la  \in {\rm Irr}(A^{{\mathbb K}})\}$ is a {\sf canonical basic set} for $A^{\mathbb K}$ and that  
 $${\bf M}_A^{\mathbb K} =  (m_{\la,\mu}) _{\lambda\in \Pi,  \mu\in 
 B^{\mathbb K}_{\rhd}}
\qquad m_{\lambda\mu} 
  =  [ V^{\mathbb K}_\la      :L^{\mathbb K}_\mu]  $$ is the {\sf modular decomposition matrix};  this matrix is uni-triangular with respect to $\rhd$   by $(i)$ and $(iii)$.  
  \end{defn}


\begin{defn}[{\cite[Definition 2.1]{hm10}}]\label{defn1}  
Suppose  $A^\R$ is a $\ZZ$-graded $\R$-algebra  of finite rank over $\R$.
We say that $A$ is a 
{\sf graded cellular algebra} if the following conditions hold.
 The algebra is equipped with a  {datum} $(\Pi,\TSStd,C,\degr)$, where $(\Pi,\trianglerighteq )$ is the {\sf weight poset}.
For each $\lambda \in\Pi$  we have a finite set, denoted $\TSStd(\lambda )$.  
There exist maps
\[
 \textstyle C:{\coprod_{\lambda\in\Pi}\TSStd(\lambda)\times \TSStd(\lambda)}\to A^\R;
      \quad\text{and}\quad
     \degr: {\coprod_{\lambda\in\Pi}\TSStd(\lambda)} \to \R
\]
such that $C$ is injective. We denote $C(\SSTS,\SSTT) = c^\lambda_{\SSTS\SSTT}$  for $\SSTS,\SSTT\in\TSStd(\lambda)$, and
  \begin{enumerate}[leftmargin=*]
    \item[$(1)$] Each    $c^\lambda_{\SSTS\SSTT}$ is homogeneous
	of degree 
$\degr
        (c^\lambda_{\SSTS\SSTT})=\degr(\SSTS)+\degr(\SSTT),$ for
        $\lambda\in\Pi$ and 
      $\SSTS,\SSTT\in \TSStd(\lambda)$.
    \item[$(2)$] The set $\{c^\lambda_{\SSTS\SSTT}\mid\SSTS,\SSTT\in \TSStd(\lambda), \,
      \lambda\in\Pi \}$ is a  
      $\R$-basis of $A^\R$.
    \item[$(3)$]  If $\SSTS,\SSTT\in \TSStd(\lambda)$, for some
      $\lambda\in\Pi$, and $a\in A^\R$ then 
    there exist scalars $r_{\SSTS\SSTU}(a)$, which do not depend on
    $\SSTT$, such that 
      \[ac^\lambda_{\SSTS\SSTT}  =\sum_{\SSTU\in
      \TSStd(\lambda)}r_{\SSTS\SSTU}(a)c^\lambda_{\SSTU\SSTT}\pmod 
      {A     ^{\vartriangleright  \lambda}},\]
      where $A^{\vartriangleright  \lambda} $ is the $ \R$-submodule of $A^\R$ spanned by
$\{c^\mu_{\SSTQ\SSTR}\mid\mu \vartriangleright  \lambda\text{ and }\SSTQ,\SSTR\in \TSStd(\mu )\}.$ 
    \item[$(4)$]  The $\CC$-linear map $*:A^\R\to A^\R$ determined by
      $(c^\lambda_{\SSTS\SSTT})^*=c^\lambda_{\SSTT\SSTS}$, for all $\lambda\in\Pi$ and
      all $\SSTS,\SSTT\in\TSStd(\lambda)$, is an anti-isomorphism of $A^\R$.
   \end{enumerate}
\end{defn}

 Given   $\lambda\in\Pi$, the   {\sf graded cell module} $\Delta_A^\R(\lambda)$ is the graded left $A^\R$-module
  with basis
    $\{c^\lambda_{\SSTS } \mid \SSTS\in \TSStd(\lambda ) \}$.  
    The action of $A^\R$ on $\Delta_A^\R(\lambda)$ is given by
    \[a c^\lambda_{ \SSTS  }  =\textstyle \sum_{ \SSTU \in \TSStd(\lambda)}r_{\SSTS\SSTU}(a) c^\lambda_{\SSTU},\]
    where the scalars $r_{\SSTS\SSTU}(a)$ are the scalars appearing in condition (3) of Definition \ref{defn1}.  
 Suppose that $\lambda \in \Pi$. There is a bilinear form $\langle\ ,\ \rangle_\lambda$ on $\Delta_A^\R(\lambda) $ which
is determined by
\[c^\lambda_{\SSTU\SSTS}c^\lambda_{\SSTT\SSTV}\equiv
  \langle c^\lambda_\SSTS,c^\lambda_\SSTT\rangle_\lambda c^\lambda_{\SSTU\SSTV}\pmod{A ^{\vartriangleright \lambda}} ,\]
for any $\SSTS,\SSTT, \SSTU,\SSTV\in \TSStd(\lambda  )$.  
For every $\lambda \in \Pi$,   we let   $\langle\ ,\ \rangle_\lambda$ denote the bilinear form on $\Delta^\R(\la)$ 
and let  $\rad^\R_A \langle\ ,\ \rangle_\lambda$ denote the  radical of this bilinear form. 
We define $\Delta_A^{\mathbb K}(\lambda)= \Delta_A^\R(\lambda)\otimes _\R {\mathbb K}$ and extend all  the notation above in the obvious manner.   
We set $\Lambda^{\mathbb K}_\rhd=\{
\la 
 \mid  \rad^{\mathbb K} \langle\ ,\ \rangle_\lambda\neq \Delta_A^{\mathbb K}(\la)\}$ and  we  
set   $L^{\mathbb K}_A(\lambda)=\Delta^{\mathbb K} _A(\lambda) / \rad^{\mathbb K} \langle\ ,\ \rangle_\lambda$.   
By \cite[Lemma 2.7]{hm10}, each module  $L^{\mathbb K}_A(\lambda)$ is graded   and simple, and in fact every   irreducible   module is of this form, up to grading shift.   
    The passage between the (graded) cell and   irreducible   modules is recorded in the 
   (graded) {\sf cellular  decomposition matrix},   
  \[ {\bf D}^{{\mathbb K}}_A(t)=(d _{\lambda\mu}(t))_{\lambda\in\Pi,\mu\in \Lambda^{\mathbb K}_\rhd} \qquad d_{\lambda\mu} (t)=\textstyle \sum_{k\in\R} [\Delta_A^{\mathbb K}(\lambda):L_A^{\mathbb K}(\mu)\langle k\rangle]\,t^k \in \mathbb N [t,t^{-1}] . \]
This matrix is uni-triangular with respect to  $\trianglerighteq$;  thus  if $A^{\mathbb F}$ is semisimple,  we have that 
 $ 
\Lambda^{\mathbb K}_\rhd  $ is a canonical basic set for $A^{\mathbb K}$.  
By \cref{uglovv2}$(i)$  canonical basic sets are unique   and so,   matching-up the labelling sets  
 of   semisimple modules and   cell-modules, we     immediately deduce the following:
 
 \begin{prop}\label{uniqueneess}
Suppose that $A^\R$ is   graded cellular  with respect to $\rhd$.  Suppose further   that  $A^{\mathbb F}$ is a semisimple $\mathbb F$-algebra and that    
$A^\R\otimes {\mathbb K}$ has  canonical basic set     $B^{\mathbb K}_{\rhd}$.  
If $V^{\mathbb F}_\la  \cong \Delta_A^{\mathbb F}(\la)$ for all 
$ \la \in   \Pi$, 
    then 
     $\Lambda^{\mathbb K}_\rhd = B_{\rhd}^{\mathbb K}$ and ${\bf D}^{\mathbb K}_A(t) |_{t=1} = {\bf M}_A^{\mathbb K} $.  
 \end{prop}

 \section{The quiver Hecke algebras}     
Let $\Bbbk$ be an arbitrary integral domain.     We emphasise  that the following presentation of the (quiver) Hecke algebra {only depends on  the reduction of the charge modulo $e$}.  

\begin{defn}[\cite{MR2551762,MR2525917,ROUQ}]\label{defintino1}
Fix $e\in \mathbb N_{>1}$ and $\underline{s}\in (\ZZ/e\ZZ)^\ell$.  
The {\sf quiver Hecke algebra}, $\mathcal{H}^\Bbbk_n(\underline{s})$,   is defined to be the unital, associative, finite-dimensional $\CC$-algebra with generators
\begin{align}\label{gnrs}
\{e(\imath) \ | \ \imath=(i_1,\dots,i_n)\in   (\ZZ/e\ZZ)^n\}\cup\{y_1,\dots,y_n\}\cup\{\psi_1,\dots,\psi_{n-1}\},
\end{align}
subject to the    relations  
\begin{align}
\label{rel1.1} e(\imath)e(\jmath)&=\delta_{\imath,\jmath} e(\imath);
\\
\label{rel1.2}
\sum_{\imath \in   (\ZZ/e\ZZ)^n } e(\imath)&=1;\\ 
\label{rel1.3}
y_r		e(\imath)&=e(\imath)y_r;
\\
\label{rel1.4}
\psi_r e(\imath) &= e(s_{r,r+1}\imath) \psi_r;
\\ 
\label{rel1.5}
y_ry_s&=y_sy_r;\\ 
\label{rel1.6}
\psi_ry	_s&=\mathrlap{y_s\psi_r}\hphantom{\smash{\begin{cases}(\psi_{r+1}\psi_r\psi_{r+1}+y_r-2y_{r+1}+y_{r+2})e(\imath)\\\\\\\end{cases}}}\kern-\nulldelimiterspace\text{if } s\neq r,r+1;\\
\label{rel1.7}
\psi_r\psi_s&=\mathrlap{\psi_s\psi_r}\hphantom{\smash{\begin{cases}(\psi_{r+1}\psi_r\psi_{r+1}+y_r-2y_{r+1}+y_{r+2})e(\imath)\\\\\\\end{cases}}}
\kern-\nulldelimiterspace\text{if } |r-s|>1;
\\
\label{rel1.8}
y_r \psi_r e(\imath) &=(\psi_r y_{r+1} \color{black}-\color{black} 
\delta_{i_r,i_{r+1}})e(\imath);\\
\label{rel1.9}
y_{r+1} \psi_r e(\imath) &=(\psi_r y_r \color{black}+\color{black} 
\delta_{i_r,i_{r+1}})e(\imath);\\
\label{rel1.10}
\psi_r^2 e(\imath)&=\begin{cases}
\mathrlap0\phantom{(\psi_{r+1}\psi_r\psi_{r+1}+y_r-2y_{r+1}+y_{r+2})e(\imath)}& \text{if }i_r=i_{r+1},\\
e(\imath) & \text{if }i_{r+1}\neq i_r, i_r\pm1,\\
(y_{r+1} - y_r) e(\imath) & \text{if }i_{r+1}=i_r-1 \ \& \ e\neq2,\\
(y_r - y_{r+1}) e(\imath) & \text{if }i_{r+1}=i_r+1 \ \& \ e\neq2,\\
(y_{r+1} - y_r)(y_r - y_{r+1}) e(\imath) & \text{if } i_{r+1}\neq i_r \ \& \ e=2;
\end{cases}\\
\label{rel1.11}
\psi_r \psi_{r+1} \psi_r &=\begin{cases}
(\psi_{r+1}\psi_r\psi_{r+1} {\color{black}-} 1)e(\imath)& \text{if }i_r=i_{r+2}=i_{r+1}+1 \ \& \ e\neq2,\\
(\psi_{r+1}\psi_r\psi_{r+1} {\color{black}+} 1)e(\imath)& \text{if }i_r=i_{r+2}=i_{r+1}-1 \ \& \ e\neq2,\\
(\psi_{r+1}\psi_r\psi_{r+1}+y_r-2y_{r+1}+y_{r+2})e(\imath)& \text{if }i_r=i_{r+2}\neq i_{r+1} \ \& \ e=2,\\
(\psi_{r+1}\psi_r\psi_{r+1})e(\imath)& \text{otherwise;}
\end{cases} 
\end{align}
{for all admissible $r,s,i,j$. Finally,   the cyclotomic relation:  for   $\imath\in (\ZZ/e\ZZ)^n$, we have that }
\begin{align}
\label{rel1.12} y_1^{\sharp\{s_m \mid s_m= i_1 	\}} e(\imath) &=0.
\end{align}
 \end{defn}

For ease of notation, we have excluded the  $e=\infty$ case from the usual definition of the quiver Hecke algebra (note that the $e=\infty$ and $e>n$ algebras are isomorphic and so this is not important).   
 
\begin{thm}[\cite{MR2551762,MR2525917,ROUQ}]
We have a grading on $\mathcal{H}^\Bbbk_n(\underline{s})$  given by
$$
{\rm deg}(e(\imath))=0 \quad 
{\rm deg}( y_r)=2\quad 
{\rm deg}(\psi_r e(\imath))=
\begin{cases}
-2		&\text{if }i_r=i_{r+1} \\
1		&\text{if }i_r=i_{r+1}\pm 1 \\
2		&\text{if }$e=2$ \text{ and } i_{r+1}\neq i_r \\ 
0 &\text{otherwise} 
\end{cases} $$
\end{thm}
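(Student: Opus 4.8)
The plan is to obtain the grading by exhibiting $H_n(\kappa)$ as the quotient of a naturally graded object by a \emph{homogeneous} ideal; concretely, it suffices to assign degrees to the generators and check that each defining relation is homogeneous, since then the two-sided ideal they generate is homogeneous and the grading descends to the quotient. The one genuine subtlety is that $\psi_r$ carries no well-defined degree on its own. I would resolve this by working with the idempotent decomposition: using \eqref{rel1.2} together with \eqref{rel1.1} and \eqref{rel1.4}, every element is a sum of idempotent-truncated monomials and $H_n(\kappa)=\bigoplus_{\underline i,\underline j\in I^n}e(\underline j)H_n(\kappa)e(\underline i)$, so I assign degrees to the \emph{directed} generators $e(\underline i)$, $y_re(\underline i)$ and $\psi_re(\underline i)=e(s_r\underline i)\psi_r$ as in the statement and declare the degree of a monomial to be the sum of the degrees of its letters. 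The cleanest way to record the $\psi$-degree is via the invariant form introduced before \cref{defintino1}: one checks directly from $(\alpha_i,\alpha_j)=2\delta_{ij}-\delta_{i(j+1)}-\delta_{i(j-1)}$ that $\deg(\psi_re(\underline i))=-(\alpha_{i_r},\alpha_{i_{r+1}})$, which reproduces all four cases (including $e=2$, where $i_{r+1}=i_r\pm1$ and $i_{r+1}\neq i_r$ coincide and the form gives $-(\alpha_{i_r},\alpha_{i_{r+1}})=2$ with no ambiguity). Since this quantity is symmetric in $i_r,i_{r+1}$, relation \eqref{rel1.4} is automatically homogeneous of the correct degree.

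Next I would dispose of the relations that are manifestly homogeneous once the appropriate idempotents are attached: \eqref{rel1.1}, \eqref{rel1.3}, \eqref{rel1.5}, \eqref{rel1.6} and \eqref{rel1.7} only express the commuting of homogeneous elements or the interaction of a $y$ with a disjoint $\psi$, so both sides have equal degree; and the cyclotomic relation \eqref{rel1.12} equates a monomial with $0$, hence imposes no homogeneity constraint. The relations \eqref{rel1.8} and \eqref{rel1.9} are the first substantive check. Both sides of \eqref{rel1.8} other than the correction term have degree $\deg(\psi_re(\underline i))+2$, and the Kronecker term $\delta_{i_r,i_{r+1}}e(\underline i)$ is nonzero precisely when $i_r=i_{r+1}$; but exactly then $\deg(\psi_re(\underline i))=-2$, so $\deg(y_r\psi_re(\underline i))=\deg(\psi_ry_{r+1}e(\underline i))=0=\deg(\delta_{i_r,i_{r+1}}e(\underline i))$ and all terms agree, with the same computation for \eqref{rel1.9}.

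For the quadratic relation \eqref{rel1.10} I would run through the five cases, comparing the right-hand side against the left-hand degree $2\deg(\psi_re(\underline i))$: when $i_r=i_{r+1}$ the left-hand side is $0$; when $i_{r+1}\neq i_r,i_r\pm1$ both sides have degree $0$; when $i_{r+1}=i_r\pm1$ with $e\neq2$ the left-hand side has degree $2$, matching the degree-$2$ term $(y_{r+1}-y_r)e(\underline i)$ (resp. $(y_r-y_{r+1})e(\underline i)$); and in the case $e=2$, $i_{r+1}\neq i_r$ the left-hand side has degree $4$, matching the quartic $(y_{r+1}-y_r)(y_r-y_{r+1})e(\underline i)$. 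The reformulation $\deg(\psi_re(\underline i))=-(\alpha_{i_r},\alpha_{i_{r+1}})$ keeps the $e=2$ bookkeeping honest here.

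The main obstacle is the braid-type relation \eqref{rel1.11}, and I expect the cleanest route to be the structural observation that the degree of a $\psi$-monomial reading off a sequence of crossings equals $-\sum(\alpha_x,\alpha_y)$, summed over the \emph{crossing pairs} of strand-residues. Because each summand is symmetric in the crossed residues, this total depends only on the permutation realised and the starting residue sequence, not on the order of crossings. Writing $(a,b,c)=(i_r,i_{r+1},i_{r+2})$, both $\psi_r\psi_{r+1}\psi_r$ and $\psi_{r+1}\psi_r\psi_{r+1}$ realise the same transposition of the outer two strands and so cross exactly the pairs $\{a,b\},\{a,c\},\{b,c\}$; hence both words have degree $-\bigl[(\alpha_a,\alpha_b)+(\alpha_a,\alpha_c)+(\alpha_b,\alpha_c)\bigr]$, which immediately settles the generic (``otherwise'') case. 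In the three remaining cases I would check that the correction terms carry the matching degree: when $i_r=i_{r+2}=i_{r+1}\pm1$ (and $e\neq2$) both $\psi$-words compute to degree $0$, matching the scalar $\pm1$ times $e(\underline i)$; and in the case $e=2$, $i_r=i_{r+2}\neq i_{r+1}$ both $\psi$-words compute to degree $2$, matching the degree-$2$ terms $y_r,y_{r+1},y_{r+2}$. Having verified that every defining relation is homogeneous, the defining ideal is homogeneous, the grading descends to $H_n(\kappa)$, and the stated degrees of the generators are exactly those assigned, completing the proof.
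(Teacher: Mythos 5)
The paper does not prove this theorem at all: it is imported wholesale from the cited sources \cite{MR2551762,MR2525917,ROUQ}, so there is no internal proof to compare against. Your argument is correct and is essentially the standard proof from those references: you grade the path algebra over the vertex set $I^n$ via directed generators (which correctly resolves the one genuine issue, namely that $\psi_r$ alone carries no degree), record the degree uniformly as $\deg(\psi_r e(\underline{i}))=-(\alpha_{i_r},\alpha_{i_{r+1}})$, and verify that every relation of \cref{defintino1} --- including the cyclotomic relation \ref{rel1.12}, which is a homogeneous element set to zero --- is homogeneous, so the ideal is homogeneous and the grading descends. Your inversion-set argument for the braid relation \ref{rel1.11} (degree of a reduced $\psi$-word equals $-\sum(\alpha_x,\alpha_y)$ over crossing pairs, which depends only on the permutation since the form is symmetric) is precisely the diagrammatic degree count that resurfaces in this paper as \cref{grsubsec}, and all of your case checks, including the $e=2$ bookkeeping, are accurate.
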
 
 \begin{thm}[{\cite[Main Theorem]{MR2551762}}]\label{citemmmmmee}
 Let $\Bbbk$ be a field.  The algebras $\mathcal{H}^\Bbbk_n(\underline{s})$ and  
 $ {H}^\Bbbk_n(\underline{s})$
 are isomorphic as $\Bbbk$-algebras.  
 \end{thm}

 For the remainder of the paper, we will work in the graded setting of  $\mathcal{H}^\Bbbk_n(\underline{s})$.  
 This is because we wish to prove Theorem A in the generality of an arbitrary integral domain $\Bbbk$
   (for which the isomorphism of \cref{citemmmmmee} fails).  

  \section{Quiver Cherednik algebras}\label{Section2}
\label{relationspageofstuff}   

   We now recall Webster's definition of  the {\sf quiver} (or {\sf diagrammatic}) {\sf Cherednik algebra} $\algebra^\Bbbk_n(\sigma)$ for 
  $n\in \mathbb{N}$ and $\sigma \in \weight$.  
We shall see that the  representation theoretic structure of $\algebra^\Bbbk_n(\sigma)$ 
is {\em heavily dependent} on the charge $\sigma \in \weight$.   This is in stark contrast with  
 the structure of  the (quiver) Hecke algebra   which we have seen  is  dependent    only on the modulo $e$ reduction: $( s_0, \dots , s_{\ell-1})\in(\ZZ/e\ZZ)^\ell$.   
 In other words,  the quiver Cherednik algebras {have the extra 
 combinatorial information of \cref{sec1} baked into their definition}.  
 In \cref{mainresults}, we shall apply the {many ``charged"  Schur functors to these quiver Cherednik algebras}  
 in order to obtain {many new presentations of the quiver Hecke algebra which encode the richer structures which cannot be detected using either the classical or KLR presentations}.  
  We have written this section in the style of  
  a self-contained beginner's  guide to the diagrammatic theory and have included many examples.

  \begin{figure}[ht!]
 \[
\begin{tikzpicture}[baseline, thick,yscale=-1,xscale=-3.5]
\scalefont{0.8}

  \draw[wei]  (-1.1+0.5, -2)  to[out=90,in=-90] (-1.1+0.5, 2)  [below, at end] node{$0$};

 
 \draw(-1,-2) rectangle (1.6,2);

\draw[gray, densely dotted, gray!80]  (-.5-0.4,-2)    to[out=60,in=-90](1-0.4,.2) to[out=90,in=-90] 
   (0-0.4,2);

  \draw[gray, densely dotted, gray!80] (.5-0.4,-2) to[out=93,in=-93]   (.5-0.4,0) to[out=87,in=-90]  
  (1-0.4,2);

  \draw[gray, densely dotted, gray!80]  (-0.4+1,-2) to[out=90,in=-90]  (-0.5-0.4,1) to[out=90,in=-90] (-0.4+.5,2);

  \draw[gray, densely dotted, gray!80] (-0.4+0,-2)to     (-0.4,0) to[out=90,in=-90]  node[midway,circle,fill=gray!85,densely dotted, gray,inner sep=2pt]{}
  (-0.4+-.5,2);

  \draw (-.5,-2)    to[out=60,in=-90](1,.2) to[out=90,in=-90] 
  node[below,at end]{$1$}  (0,2);
  \draw (.5,-2) to[out=93,in=-93]   (.5,0) to[out=87,in=-90]  
  node[below,at end]{$2$} (1,2);
  \draw  (1,-2) to[out=90,in=-90]    (-0.5,1) to[out=90,in=-90]
  node[below,at end]{$0$ 
  }   (.5,2);
  \draw (0,-2) to[out=90,in=-90]  
  (-0.05,0)  to[out=90,in=-90] node[below,at end]{$0$ 
  }
   node[midway,circle,fill=black,inner sep=2pt]{}  (-.5,2);
   
  \draw  (1.5,-2) to[out=90,in=-90]    (1.15,0.2) to[out=90,in=-90]
  node[below,at end]{$0$}   (1.5,2);

  \draw[densely dotted, gray!80]  (1.5-0.4,-2) to[out=90,in=-90]    (1.15-0.4,0.2) to[out=90,in=-90]
    (1.5-0.4,2);

\end{tikzpicture} \]
 \caption{
A $\theta$-diagram, $\Diag\in \algebra^\Bbbk_5(3;0)$,   with northern and southern loading
  ${\bf I}^{\theta}_\omega$ for $\omega= (1^5)$.    }
\label{initialdiagram}
\end{figure}  
     
\begin{defn}
We define a  $\theta$-{\sf diagram} of rank $n\in \NN$ and type $(\mu,\la)$ to  be a  {\sf frame},
 $\mathbb{R}\times [0,1]$, with  $n$ distinguished  solid points on the northern and southern boundaries
 given by  $\mathbf{I}^\theta_\mu$ and $\mathbf{I}^\theta_\lambda$ for  
  $\lambda,\mu  \in \con \ell n$ and a 
 collection of {\sf solid strands} each of which starts at a northern point 
 and ends at a southern
  point.  Each solid strand carries a  residue, $i \in \ZZ/e\ZZ$,  say (and we refer to this as a {\sf solid $i$-strand}).  
     We further require that each solid strand has a 
 mapping diffeomorphically to $[0,1]$ via the projection to the $y$-axis.  
  Each solid strand can carry a finite number of dots. 
   We draw
   \begin{itemize}
\item[$(i)$] a ``ghost $i$-strand" $1$ unit  to the right \color{black} of each solid $i$-strand and a 
a ``ghost dot" $1$ unit  to the   right \color{black} of each solid dot; 
\item[$(ii)$]   vertical red lines with  $x$-coordinate $\theta_m  -m/\ell \in 
\QQ$ each of which carries a 
residue $ {s}_m\in \ZZ/e\ZZ$ for $1\leq m\leq \ell$ which we call a {\sf red $s_m$-strand}.
\end{itemize}
Finally, we require that there are no triple points or tangencies involving any combination of strands, ghosts or red lines and no dots lie on crossings. 
 We consider these diagrams {\sf equivalent} if they are related by an isotopy that avoids these tangencies, double points and dots on crossings. 
 \end{defn}

\begin{defn} \label{grsubsec}
We define   the degree of a  $\theta$-diagram  to be the integer obtained  
  by summing over the degrees of all the local neighbourhoods of the diagram, with each neighbourhood contributing to the degree as follows: 
 
\!\!\!\[
\deg\tikz[baseline,very thick,xscale=-1.5,yscale=1.5]{\draw
  (0,.3) -- (0,-.1) node[at end,below,scale=.8]{$i$}
  node[midway,circle,fill=black,inner
  sep=2pt]{};}=
  2 \qquad \deg\tikz[baseline,very thick,scale=1.5]
  {\draw (.2,.3) --
    (-.2,-.1) node[at end,below, scale=.8]{$i$}; \draw
    (.2,-.1) -- (-.2,.3) node[at start,below,scale=.8]{$j$};} =-2\delta_{i,j} \qquad  
  \deg\tikz[baseline,very thick,scale=1.5]{\draw[densely dotted, black] 
  (-.2,-.1)-- (.2,.3) node[at start,below, scale=.8]{$i$}; \draw
  (.2,-.1) -- (-.2,.3) node[at start,below,scale=.8]{$j$};} =\delta_{j,i+1} \qquad %
     \deg\tikz[baseline,very thick,scale=1.5]{ \draw[wei]
  (-.2,-.1)-- (.2,.3) node[at start,below, scale=.8]{$i$}; \draw
  (.2,-.1) -- (-.2,.3) node[at start,below,scale=.8]{$j$};} =\delta_{i,j} 
 \]
and their mirror images.  
\end{defn}

 \begin{defn}
 Let $D$ be a   $\theta$-{\sf diagram}.  We define the {\sf northern} (respectively {\sf southern}) {\sf ordered residue sequence} of $D$ to be the element of $(\ZZ/e\ZZ)^n$ given by reading the residues of the solid strands in $D$ from left to  right along the northern (respectively southern) edge of the frame.  
  \end{defn}
   
  \begin{defn}
  Let $D$ be a  $\theta$-diagram. Suppose $D$ 
 has  distinguished  solid points on the northern and southern boundaries
 given by  $\mathbf{I}^\theta_\mu$ and $\mathbf{I}^\theta_\la$ and northern and southern residue sequence given by $\imath$ and $\jmath \in (\ZZ/e\ZZ)^n$ respectively.  
  We say that a diagram $D  $ is {\sf reduced} if $(i)$ when read from south-to-north $D$   traces out a bijection $:[\la]\to[\mu]$ using the {\em minimal} number of crossings between strands 
  and $(ii)$ $D$ has no dots on any strands.   
 We let ${^\imath_\mu\!\mathcal{R}^\jmath_\la}$ denote the set of all such reduced diagrams.  
 \end{defn}

\begin{eg}
Let $\theta=(3;0)\in \NN_{>1}\times\ZZ$ and $\la=\mu=(1^5)$.    
In \cref{initialdiagram} we picture a $\theta$-diagram.  \end{eg}

  \label{defintino2}
\begin{defn} The {\sf quiver Cherednik algebra}, $\algebra^\Bbbk_n(\theta)$,  is the associative   $\CC$-algebra generated (as a $\Bbbk$-module) by   all inequivalent   $\theta$-diagrams  modulo the   local relations  \ref{rel1} to \ref{rel15} below (here a local relation means one that 
 can be specified by its effect on an arbitrarily small region of the diagram).   
   The product $\acell_1 \acell_2$ of two diagrams $\acell_1,\acell_2 \in \algebra^\Bbbk_n(\theta)$ is then given by putting $\acell_1$ on top of $\acell_2$.
This product is defined to be $0$ unless the southern border of $\acell_1$ is given by the same loading as the northern border of $\acell_2$ with residues of strands matching in the obvious manner, in which case we obtain a new diagram with loading and labels inherited from those of $\acell_1$ and $\acell_2$.  
\end{defn}
    
  \subsection*{Isotopy and   dots through crossings} These are the easiest relations in the quiver Cherednik algebra.  They also serve as a reminder that when we apply a  relation  in a region containing a 
   solid/ghost strand, we must also also has an effect on its corresponding ghost/solid strand 1 unit to the right/left.

\begin{enumerate}[label=(A\arabic*),leftmargin=*] 
\item\label{rel1}  Any diagram may be deformed isotopically; that is,
 by a continuous deformation
 of the diagram which  
 avoids  tangencies, double points and dots on crossings. 
\item\label{rel2} 
Any solid dot     can pass through a crossing   of solid   $i$- and $j$-strands for $i\neq j$ or an arbitrary crossing involving a  ghost strand.  Namely: 
\[   \scalefont{0.8}\begin{tikzpicture}[yscale=0.5,xscale=-0.5,baseline]
  \draw[very thick](-4,0) +(-1,-1) -- +(1,1) node[below,at start]
  {$i$}; \draw[very thick](-4,0) +(1,-1) -- +(-1,1) node[below,at
  start] {$j$}; \fill (-4.5,.5) circle (5pt);
    \node at (-2,0){$=$}; \draw[very thick](0,0) +(-1,-1) -- +(1,1)
  node[below,at start] {$i$}; \draw[very thick](0,0) +(1,-1) --
  +(-1,1) node[below,at start] {$j$}; \fill (.5,-.5) circle (5pt);
  \node at (4,0){ };
\end{tikzpicture}\quad
\begin{tikzpicture}[yscale=0.5,xscale=-0.5,baseline]
  \draw[densely dotted,very thick](-4,0) +(-1,-1) -- +(1,1) node[below,at start]
  {$i$}; \draw[very thick] (-4,0) +(1,-1) -- +(-1,1) node[below,at
  start] {$j$}; \fill (-4.5,.5) circle  (5pt);
    \node at (-2,0){$=$}; \draw[densely dotted,very thick](0,0) +(-1,-1) -- +(1,1)
  node[below,at start] {$i$}; \draw[very thick] (0,0) +(1,-1) --
  +(-1,1) node[below,at start] {$j$}; \fill   (.5,-.5) circle (5pt);
  \node at (4,0){ };
\end{tikzpicture} \quad
\begin{tikzpicture}[yscale=0.5,xscale=-0.5,baseline]
  \draw[densely dotted,very thick](-4,0) +(-1,-1) -- +(1,1) node[below,at start]
  {$i$}; \draw[ very thick] (-4,0) +(1,-1) -- +(-1,1) node[below,at
  start] {$i$}; \fill (-4.5,.5) circle  (5pt);
    \node at (-2,0){$=$}; \draw[densely dotted,very thick](0,0) +(-1,-1) -- +(1,1)
  node[below,at start] {$i$}; \draw[very thick] (0,0) +(1,-1) --
  +(-1,1) node[below,at start] {$i$}; \fill   (.5,-.5) circle (5pt);
  \node at (4,0){ };
\end{tikzpicture}\]
and their mirror images through reflection in the vertical axis hold.

 \item\label{rel3}  We can pass a solid  dot through a
  crossing of  two like-labelled   solid or ghost strands at the expense of  an error term:
\[
\scalefont{0.8}\begin{tikzpicture}[yscale=0.5,xscale=0.5,baseline]
  \draw[very thick](-4,0) +(-1,-1) -- +(1,1) node[below,at start]
  {$i$}; \draw[very thick](-4,0) +(1,-1) -- +(-1,1) node[below,at
  start] {$i$}; \fill (-3.5,.5) circle (5pt);
   \node at (-2,0){$=$}; \draw[very thick](0,0) +(-1,-1) -- +(1,1)
  node[below,at start] {$i$}; \draw[very thick](0,0) +(1,-1) --
  +(-1,1) node[below,at start] {$i$}; \fill (-.5,-.5) circle (5pt);
  \node at (2,0){$-$};
   \draw[very thick](4,0) +(-1,-1) -- +(-1,1)
  node[below,at start] {$i$};
   \draw[very thick](4,0) +(0,-1) --
  +(0,1) node[below,at start] {$i$};
\end{tikzpicture}  \quad \quad \quad\quad
\scalefont{0.8}\begin{tikzpicture}[yscale=0.5,xscale=0.5,baseline]
  \draw[very thick](-4,0) +(-1,-1) -- +(1,1) node[below,at start]
  {$i$}; \draw[very thick](-4,0) +(1,-1) -- +(-1,1) node[below,at
  start] {$i$}; \fill (-3.5,-.5) circle (5pt);
       \node at (-2,0){$=$}; \draw[very thick](0,0) +(-1,-1) -- +(1,1)
  node[below,at start] {$i$}; \draw[very thick](0,0) +(1,-1) --
  +(-1,1) node[below,at start] {$i$}; \fill (-.5,.5) circle (5pt);
  \node at (2,0){$-$}; \draw[very thick](4,0) +(-1,-1) -- +(-1,1)
  node[below,at start] {$i$}; \draw[very thick](4,0) +(0,-1) --
  +(0,1) node[below,at start] {$i$};
\end{tikzpicture}\]
\end{enumerate} 
Ghost dots can pass through any crossing of strands (regardless of their residue) freely.

\begin{eg}For example in \cref{dotcross} we  apply relation \ref{rel3} locally to a region of the   diagram containing the dot in \cref{initialdiagram}; 
 however, moving the dot means we must also move the ghost dot (which must always be 1 unit to the right)
 and undoing the crossing of solid $0$-strands means we must undo the corresponding crossing of ghost strands as in \cref{initialdiagram}.  

\begin{figure}[ht!]

\begin{tikzpicture}[baseline, thick,yscale=-0.8,xscale=-2.8]
\scalefont{0.8}

  \draw[wei]  (-1.1+0.5, -2)  to[out=90,in=-90] (-1.1+0.5, 2)  [below, at end] node{$0$};

 
 \draw(-1,-2) rectangle (1.6,2);

\draw[gray, densely dotted,gray, ]  (-.5-0.4,-2)    to[out=60,in=-90](1-0.4,.2) to[out=90,in=-90] 
   (0-0.4,2);

  \draw[gray, densely dotted,gray, ] (.5-0.4,-2) to[out=93,in=-93]   (.5-0.4,0) to[out=87,in=-90]  
  (1-0.4,2);

  \draw[gray, densely dotted,gray, ]  (-0.4+1,-2) to[out=90,in=-90]  (-0.5-0.4,1) to[out=90,in=-90] (-0.4+.5,2);

  \draw[gray, densely dotted,gray, ] (-0.4+0,-2)to     (-0.4,0) to[out=90,in=-90]  node[very near end,circle,fill=gray!85,densely dotted,gray, gray,inner sep=2pt]{}
  (-0.4+-.5,2);

  \draw (-.5,-2)    to[out=60,in=-90](1,.2) to[out=90,in=-90] 
  node[below,at end]{$1$}  (0,2);
  \draw (.5,-2) to[out=93,in=-93]   (.5,0) to[out=87,in=-90]  
  node[below,at end]{$2$} (1,2);
  \draw  (1,-2) to[out=90,in=-90]    (-0.5,1) to[out=90,in=-90]
  node[below,at end]{$0$ 
  }   (.5,2);
  \draw (0,-2) to[out=90,in=-90]  
  (-0.05,0)  to[out=90,in=-90] node[below,at end]{$0$ 
  }
   node[very near end,circle,fill=black,inner sep=2pt]{}  (-.5,2);
   
  \draw  (1.5,-2) to[out=90,in=-90]    (1.15,0.2) to[out=90,in=-90]
  node[below,at end]{$0$}   (1.5,2);

  \draw[densely dotted,gray, ]  (1.5-0.4,-2) to[out=90,in=-90]    (1.15-0.4,0.2) to[out=90,in=-90]
    (1.5-0.4,2);

\end{tikzpicture} \;  + \; 
\begin{tikzpicture}[baseline, thick,yscale=-0.8,xscale=-2.8]
\scalefont{0.8}

  \draw[wei]  (-1.1+0.5, -2)  to[out=90,in=-90] (-1.1+0.5, 2)  [below, at end] node{$0$};

 
 \draw(-1,-2) rectangle (1.6,2);

\draw[gray, densely dotted,gray, ]  (-.5-0.4,-2)    to[out=60,in=-90](1-0.4,.2) to[out=90,in=-90] 
   (0-0.4,2);

  \draw[gray, densely dotted,gray, ] (.5-0.4,-2) to[out=93,in=-93]   (.5-0.4,0) to[out=87,in=-90]  
  (1-0.4,2);

  \draw[densely dotted,gray]  (1-0.4,-2) to[out=90,in=-90]    (-0.5-0.4,0.8) to[out=90,in=-90]   (-0.4-0.4,1.3)to[out=90,in=-90]
     (-.5-0.4,2);

   \draw[densely dotted,gray] (0-0.4,-2) to[out=90,in=-90]   
  (0.05-0.5,0) to [out=90,in=-90] (-0.3-0.4,1.1)  to[out=90,in=-90]   (.5-0.4,2);

  \draw (.5,-2) to[out=96,in=-94]   (.5,0) to[out=86,in=-90]  
  node[below,at end]{$2$} (1,2);
      
  \draw  (1.5,-2) to[out=90,in=-90]    (1.15,0.2) to[out=90,in=-90]
  node[below,at end]{$0$}   (1.5,2);

  \draw[densely dotted,gray, ]  (1.5-0.4,-2) to[out=90,in=-90]    (1.15-0.4,0.2) to[out=90,in=-90]
    (1.5-0.4,2);

   \draw (-.5,-2)    to[out=60,in=-90](1,.2) to[out=90,in=-90] 
  node[below,at end]{$1$}  (0,2);

  \draw   (1,-2) to[out=90,in=-90]    (-0.5,0.8) to[out=90,in=-90]   (-0.4,1.3)to[out=90,in=-90]
  node[below,at end]{$0$ 
  }   (-.5,2);

   \draw (0,-2) to[out=90,in=-90]   
  (-.05,0) to [out=90,in=-90] (-0.3,1.1)  to[out=90,in=-90] node[below,at end]{$0$ 
  }  (.5,2);

\end{tikzpicture} 
\caption{We apply relation \ref{rel3} to \cref{initialdiagram} in order to move the dot through the crossing at the expense of an error term.  }
\label{dotcross}
\end{figure}
\end{eg}

\subsection*{Undoing double-crossings}
Now we consider how one can undo a pair of strands which cross and then cross again.  The first of these relations,  relation \ref{rel4},  should be familiar from the classical KLR algebra.  
\begin{enumerate}[resume, label=(A\arabic*),leftmargin=*]  
\item\label{rel4} For double-crossings of solid strands with $i\neq j$, we have the following local relations:
\[
\scalefont{0.8}\begin{tikzpicture}[very thick,xscale=-0.5,yscale=0.5,baseline,yscale=1.3]
\draw (-2.8,-1) .. controls (-1.2,0) ..  +(0,2)
node[below,at start]{$i$};
\draw (-1.2,-1) .. controls (-2.8,0) ..  +(0,2) node[below,at start]{$i$};
\node at (-.5,0) {$=$};
\node at (0.4,0) {$0$};
\end{tikzpicture}
\hspace{2cm}
 \begin{tikzpicture}[very thick,xscale=0.5,yscale=0.5,baseline,yscale=1.3]
\draw (-2.8,-1) .. controls (-1.2,0) ..  +(0,2)
node[below,at start]{$i$};
\draw (-1.2,-1) .. controls (-2.8,0) ..  +(0,2)
node[below,at start]{$j$};
\node at (-.5,0) {$=$}; 

\draw (1.8,-1) -- +(0,2) node[below,at start]{$j$};
\draw (1,-1) -- +(0,2) node[below,at start]{$i$}; 
\end{tikzpicture}
\]
\end{enumerate}
  Performing relation \ref{rel4} implicitly involves  undoing the   corresponding double-crossing of ghost strands  at the same time (which we do not picture) and vice versa.  

\begin{eg}
The leftmost diagram in \cref{dotcross} has a double-crossing of two solid 0-strands and  
therefore this leftmost diagram is zero by relation \ref{rel4}.  
 (The observant reader might worry about the fact that a red 0-strand crosses the ghosts of these 0-strands --- however, we shall see that this is not a problem in  relation \ref{rel11}.)
\end{eg}

 
\begin{enumerate}[resume, label=(A\arabic*),leftmargin=*]

\item\label{rel5} If $j\neq i-1$,  then we can freely pass ghosts through solid strands.  That is, we have the following local relations:
\[\scalefont{0.8}
\hspace{.7cm}
 \begin{tikzpicture}[very thick,xscale=-0.5,yscale=0.5,baseline,yscale=1.3]
\draw (-2.8,-1) .. controls (-1.2,0) ..  +(0,2)
node[below,at start]{$i$};
\draw[densely dotted, ] (-1.2,-1) .. controls (-2.8,0) ..  +(0,2)
node[below,at start]{$j$};
\node at (-.5,0) {$=$}; 

\draw[densely dotted, ] (1.8,-1) -- +(0,2) node[below,at start]{$j$};
\draw (1,-1) -- +(0,2) node[below,at start]{$i$}; 
\end{tikzpicture}
\quad\quad \quad \quad 
\hspace{.7cm}\begin{tikzpicture}[very thick,xscale=-0.5,yscale=0.5,baseline,yscale=1.3]
\draw[densely dotted, ] (-2.8,-1) .. controls (-1.2,0) ..  +(0,2)
node[below,at start]{$j$};
\draw (-1.2,-1) .. controls (-2.8,0) ..  +(0,2)
node[below,at start]{$i$};
\node at (-.5,0) {$=$}; 

\draw (1.8,-1) -- +(0,2) node[below,at start]{$i$};
\draw[densely dotted, ] (1,-1) -- +(0,2) node[below,at start]{$j$}; 
\end{tikzpicture}
\]
\end{enumerate} 
\begin{enumerate}[resume, label=(A\arabic*),leftmargin=*]  
\item\label{rel6}\label{rel7}  On the other hand, in the case where $j= i-1$, we have the following local relations:
\[\scalefont{0.8}
 \begin{tikzpicture}[very thick,xscale=0.5,yscale=0.5,baseline,yscale=1.3]

\draw[densely dotted, ] (-2.8-0.3,-1) .. controls (-1.2-0.3,0) ..  +(0,2)
node[below,at start]{$i\;\!$--$\;\!1$};
\draw  (-1.2-0.3,-1) .. controls (-2.8-0.3,0) ..  +(0,2)
node[below,at start]{$i$};
\node at (-.4,0) {$=$};

\draw[densely dotted, ] (0.9,-1) .. controls (1.4,0) ..  +(0,2)
node[below,at start]{$i\;\!$--$\;\!1$};
\draw  (2.5,-1) .. controls (2.1,0) ..  +(0,2)
node[below,at start]{$i$}  node[midway,fill=black,inner sep=2.5pt,circle]{};

\node at (4,0) {$-$};

\draw[densely dotted, ] (0.9+4.5,-1) .. controls (1.4+4.5,0) ..  +(0,2)
node[below,at start]{$i\;\!$--$\;\!1$} node[midway,fill=gray!80,inner sep=2.5pt,circle]{}; 
\draw  (2.5+4.5,-1) .. controls (2.1+4.5,0) ..  +(0,2)
node[below,at start]{$i$ }  ;
 
\end{tikzpicture}
 \qquad\qquad\qquad \begin{tikzpicture}[very thick,xscale=0.5,yscale=0.5,baseline,yscale=1.3]
\draw  (-2.8-0.3,-1) .. controls (-1.2-0.3,0) ..  +(0,2)
node[below,at start]{ $i$};
\draw[densely dotted, ]  (-1.2-0.3,-1) .. controls (-2.8-0.3,0) ..  +(0,2)
node[below,at start]{$i\;\!$--$\;\!1$};
\node at (-.4,0) {$=$}; 

\draw  (0.9,-1) .. controls (1.4,0) ..  +(0,2)
node[below,at start]{$i$} node[midway,fill=black,inner sep=2.5pt,circle]{}; 

\draw[densely dotted, ] (2.5,-1) .. controls (2.1,0) ..  +(0,2)
node[below,at start]{$i\;\!$--$\;\!1$}  ;

\node at (4,0) {$-$};

\draw  (0.9+4.5,-1) .. controls (1.4+4.5,0) ..  +(0,2)
node[below,at start]{ $i$} ;

\draw[densely dotted, ]  (2.5+4.5,-1) .. controls (2.1+4.5,0) ..  +(0,2)
node[below,at start]{$i\;\!$--$\;\!1$}  node[midway,fill=gray!80,inner sep=2.5pt,circle]{};

\end{tikzpicture}
\]
\end{enumerate}

\begin{rmk}\label{mevsyou}It is worth noting that the local diagrammatic regions pictured in the left and right hand sides of 
 relation \ref{rel6} do not have the same degree.  This is because black dots carry degree 2 and ghost dots carry degree 0.  However, we emphasise that by creating a ghost dot (in the local region pictured) we also create de facto solid dot (not pictured!) elsewhere in the diagram.  Thus the overall degree of the diagrams is preserved (as one should expect!).  An example  of how this works in a wider diagram is pictured in \cref{adoublecross}.  
 \end{rmk}

\begin{figure}[ht!]
 \[
 \begin{tikzpicture}[baseline, thick,yscale=-0.8,xscale=-2.8]
\scalefont{0.8}

  \draw[wei]  (-1.1+0.5, -2)  to[out=90,in=-90] (-1.1+0.5, 2)  [below, at end] node{$0$};

 
 \draw(-1,-2) rectangle (1.6,2);

  \draw[densely dotted,gray]  (1-0.4,-2) to[out=90,in=-90]    (-0.5-0.4,0.8) to[out=90,in=-90]   (-0.4-0.4,1.3)to[out=90,in=-90]
   (-.5-0.4,2);

   \draw[densely dotted,gray] (0-0.4,-2) to[out=90,in=-90]   
  (0.05-0.5,0) to [out=90,in=-90] (-0.3-0.4,1.1)  to[out=90,in=-90]   (.5-0.4,2);

  \draw   (1,-2) to[out=90,in=-90]    (-0.5,0.8) to[out=90,in=-90]   (-0.4,1.3)to[out=90,in=-90]
  node[below,at end]{$0$ 
  }   (-.5,2);

   \draw (0,-2) to[out=90,in=-90]   
  (-.05,0) to [out=90,in=-90] (-0.3,1.1)  to[out=90,in=-90] node[below,at end]{$0$ 
  }  (.5,2);

   \draw  (1.5,-2) to[out=90,in=-90]    (1.15,0.2) to[out=90,in=-90]
  node[below,at end]{$0$}   (1.5,2);

  \draw[densely dotted,gray, gray!80]  (1.5-0.4,-2) to[out=90,in=-90]    (1.15-0.4,0.2) to[out=90,in=-90]
    (1.5-0.4,2);

  \draw  (-.5,-2)    to[out=55,in=-90](1,.2) to[out=90,in=-90] 
  node[below,at end]{$1$}  (0,2);
  \draw[gray, densely dotted,gray ]  (-.5-0.4,-2)    to[out=55,in=-90](1-0.4-0.1,.2) to[out=90,in=-90] 
   (0-0.4,2);

    \draw[gray, densely dotted,gray] (.5-0.4,-2) to[out=95,in=-93]   (.5-0.4,-0.2) 
    to[out=87,in=-90]   node[near start,circle,fill=gray!85,inner sep=2pt]{} 
  (1-0.4,2);
 \draw  (.5,-2) to[out=95,in=-93]   (.5,-0.2) to[out=87,in=-90]   node[near start,circle,fill=black,inner sep=2pt]{} 
  node[below,at end]{$2$} (1,2);

\end{tikzpicture} \; - \; 
\begin{tikzpicture}[baseline, thick,yscale=-0.8,xscale=-2.8]
\scalefont{0.8}

  \draw[wei]  (-1.1+0.5, -2)  to[out=90,in=-90] (-1.1+0.5, 2)  [below, at end] node{$0$};

 
 \draw(-1,-2) rectangle (1.6,2);

  \draw[densely dotted,gray]  (1-0.4,-2) to[out=90,in=-90]    (-0.5-0.4,0.8) to[out=90,in=-90]   (-0.4-0.4,1.3)to[out=90,in=-90]
   (-.5-0.4,2);

   \draw[densely dotted,gray] (0-0.4,-2) to[out=90,in=-90]   
  (0.05-0.5,0) to [out=90,in=-90] (-0.3-0.4,1.1)  to[out=90,in=-90]   (.5-0.4,2);

  \draw   (1,-2) to[out=90,in=-90]    (-0.5,0.8) to[out=90,in=-90]   (-0.4,1.3)to[out=90,in=-90]
  node[below,at end]{$0$ 
  }   (-.5,2);

   \draw (0,-2) to[out=90,in=-90]   
  (-.05,0) to [out=90,in=-90] (-0.3,1.1)  to[out=90,in=-90] node[below,at end]{$0$ 
  }  (.5,2);

  \draw  (1.5,-2) to[out=90,in=-90]    (1.15,0.2) to[out=90,in=-90]
  node[below,at end]{$0$}   (1.5,2);

  \draw[densely dotted,gray, gray!80]  (1.5-0.4,-2) to[out=90,in=-90]    (1.15-0.4,0.2) to[out=90,in=-90]
    (1.5-0.4,2);

  \draw  (-.5,-2)    to[out=55,in=-90](1,.2) to[out=90,in=-90] 
  node[very near start,circle,fill=black,inner sep=2pt]{}   node[below,at end]{$1$}  (0,2);
  \draw[gray, densely dotted,gray ]  (-.5-0.4,-2)    to[out=55,in=-90]   (1-0.4-0.1,.2) to[out=90,in=-90] 
  node[very near start,circle,fill=gray!85,inner sep=2pt]{}  (0-0.4,2);

    \draw[gray, densely dotted,gray] (.5-0.4,-2) to[out=95,in=-93]   (.5-0.4,-0.2) to[out=87,in=-90]  
  (1-0.4,2);
 \draw  (.5,-2) to[out=95,in=-93]   (.5,-0.2) to[out=87,in=-90]  
  node[below,at end]{$2$} (1,2);

\end{tikzpicture} 
\]

\!\!\!\!
\caption{Undoing the double-crossing of the ghost $1$-strand and a solid $2$-strand in the rightmost diagram of  \cref{dotcross} using relation \ref{rel7}}
\label{adoublecross}
\end{figure}

\begin{eg}
The rightmost diagram in \cref{dotcross} has a double-crossing of a ghost $1$-strand and a solid 2-strand.  We can continuously deform these strands until they are infinitesimally close together (without creating any tangencies of double points elsewhere in the diagram) and hence undo  this double-crossing using relation \ref{rel7}.   
This is depicted in \cref{adoublecross}.    
(The other diagram  in \cref{dotcross}   has this same double-crossing, but we 
have already seen that this diagram is zero using relation \ref{rel3}.)  
We emphasise that there is a 0-ghost strand which passes through the region between the solid 1-strand and its ghost. It is clear   that this does not hamper our ability to  apply relation  \ref{rel7} to the     region  containing the double-crossing of a ghost $1$-strand and a solid 2-strand.

\end{eg}

\!\!\!\!\!
\subsection*{Pulling a   strand through a    crossing}
We now consider the effect of pulling a strand through a pair of crossing strands.  In other words, our graded versions of the classical braid relation.  
   \begin{enumerate}[resume, label=(A\arabic*),leftmargin=*]  
\item\label{rel9}\label{rel10}    We can pull a solid $i$-strand through a $(i-1)$-ghost-crossing (or a ghost $(i-1)$-strand through a $i$-solid-crossing) at the expense of an error term.     
  \[
\scalefont{0.8}\begin{tikzpicture}[very thick,xscale=1.6,yscale=0.7,baseline]
\draw[densely dotted, ]   (-2.6,-1) -- +(-.8,2)node[below,at start]{$i\;\!$--$\;\!1$};
\draw[densely dotted, ]   (-3.4,-1) -- +(.8,2)node[below,at start]{$i\;\!$--$\;\!1$}; 
 \draw (-3,-1) .. controls (-2.5,0) ..  +(0,2)
node[below,at start]{$i $};

\node at (-2.25,0) {$=$};

\draw[densely dotted, ]  (-1.5+.4,-1) -- +(+-.8,2)node[below,at start]{$i\;\!$--$\;\!1$};
\draw[densely dotted, ]   (-1.5+-.4,-1) -- +(+.8,2)node[below,at start]{$i\;\!$--$\;\!1$};
 \draw  (-1.5+0,-1) .. controls (-1.5-.5,0) ..  +(0,2)
node[below,at start]{$i $};

\node at (-.7,0) {$+$};

 \draw[densely dotted, ]  (-2.9+3.4,-1) -- +(0,2)node[below,at start]{$i\;\!$--$\;\!1$};
\draw[densely dotted, ]   (-2.9+2.6,-1) -- +(0,2)node[below,at start]{$i\;\!$--$\;\!1$};
\draw (-2.9+3,-1) -- +(0,2) node[below,at start]{$i $};
\end{tikzpicture}
\qquad\qquad 
\scalefont{0.8}\begin{tikzpicture}[very thick,xscale=1.6,yscale=0.7,baseline]
\draw  (-2.6,-1) -- +(-.8,2)node[below,at start]{$i$};
\draw  (-3.4,-1) -- +(.8,2)node[below,at start]{$i$}; 
 \draw[densely dotted, ] (-3,-1) .. controls (-2.5,0) ..  +(0,2)
node[below,at start]{$i\;\!$--$\;\!1$};

\node at (-2.25,0) {$=$};

\draw (-1.5+.4,-1) -- +(+-.8,2)node[below,at start]{$i$};
\draw  (-1.5+-.4,-1) -- +(+.8,2)node[below,at start]{$i$};
 \draw[densely dotted, ] (-1.5+0,-1) .. controls (-1.5-.5,0) ..  +(0,2)
node[below,at start]{$i\;\!$--$\;\!1$};

\node at (-.7,0) {$-$};

 \draw (-2.9+3.4,-1) -- +(0,2)node[below,at start]{$i$};
\draw  (-2.9+2.6,-1) -- +(0,2)node[below,at start]{$i$};
\draw[densely dotted ] (-2.9+3,-1) -- +(0,2) node[below,at start]{$i\;\!$--$\;\!1$};
\end{tikzpicture}
\]\end{enumerate}
    \begin{enumerate}[resume, label=(A\arabic*),leftmargin=*]  
\item \label{rel8} All other triples of  solid and ghost strands satisfy the naive braid relation. Diagrammatically, we have that 
 \[\Yvcentermath1
\scalefont{0.8}\begin{tikzpicture}[very thick,scale=0.6 ,baseline]
\draw (-2,-1) -- +(-2,2) node[below,at start]{$k$};
\draw (-4,-1) -- +(2,2) node[below,at start]{$i$};
\draw (-3,-1) .. controls (-4,0) ..  +(0,2)
node[below,at start]{$j$};
\node at (-1,0) {$=$};
\draw (2,-1) -- +(-2,2)
node[below,at start]{$k$};
\draw (0,-1) -- +(2,2)
node[below,at start]{$i$};
\draw (1,-1) .. controls (2,0) ..  +(0,2)
node[below,at start]{$j$};
\end{tikzpicture}\qquad
\quad
\begin{tikzpicture}[very thick,scale=0.6 ,baseline]
\draw[densely dotted] (-2,-1) -- +(-2,2) node[below,at start]{$k$};
\draw (-4,-1) -- +(2,2) node[below,at start]{$i$};
\draw (-3,-1) .. controls (-4,0) ..  +(0,2)
node[below,at start]{$j$};
\node at (-1,0) {$=$};
\draw[densely dotted] (2,-1) -- +(-2,2)
node[below,at start]{$k$};
\draw (0,-1) -- +(2,2)
node[below,at start]{$i$};
\draw (1,-1) .. controls (2,0) ..  +(0,2)
node[below,at start]{$j$};
\end{tikzpicture}
\qquad
\quad
\begin{tikzpicture}[very thick,scale=0.6 ,baseline]
\draw (-2,-1) -- +(-2,2) node[below,at start]{$k$};
\draw[densely dotted] (-4,-1) -- +(2,2) node[below,at start]{$i$};
\draw[densely dotted] (-3,-1) .. controls (-4,0) ..  +(0,2)
node[below,at start]{$j$};
\node at (-1,0) {$=$};
\draw (2,-1) -- +(-2,2)
node[below,at start]{$k$};
\draw[densely dotted] (0,-1) -- +(2,2)
node[below,at start]{$i$};
\draw[densely dotted] (1,-1) .. controls (2,0) ..  +(0,2)
node[below,at start]{$j$};
\end{tikzpicture}
\]
 for any $i,j,k\in \ZZ/e\ZZ$ and their mirror images through reflection in the vertical axis hold.
  Performing the leftmost relation \ref{rel8} implicitly involves manipulating a braid  of three ghost strands at the same time (which we do not picture) and vice versa.
Furthermore, 
\[
\qquad\quad
\begin{tikzpicture}[very thick,scale=0.6 ,baseline]
\draw[densely dotted] (-2,-1) -- +(-2,2) node[below,at start]{$c\vphantom{b}$};
\draw[densely dotted] (-4,-1) -- +(2,2) node[below,at start]{$a\vphantom{b}$};
\draw (-3,-1) .. controls (-2,0) ..  +(0,2)
node[below,at start]{$b$};
\node at (-1,0) {$=$};
\draw[densely dotted] (2,-1) -- +(-2,2)
node[below,at start]{$c\vphantom{b}$};
\draw[densely dotted] (0,-1) -- +(2,2)
node[below,at start]{$a\vphantom{b}$};
\draw (1,-1) .. controls (0,0) ..  +(0,2)
node[below,at start]{$b$};
\end{tikzpicture}
\qquad\quad
\begin{tikzpicture}[very thick,scale=0.6 ,baseline]
\draw (-2,-1) -- +(-2,2) node[below,at start]{$z\vphantom{b}$};
\draw  (-4,-1) -- +(2,2) node[below,at start]{$x\vphantom{b}$};
\draw[densely dotted] (-3,-1) .. controls (-2,0) ..  +(0,2)
node[below,at start]{$y\vphantom{b}$};
\node[densely dotted] at (-1,0) {$=$};
\draw  (2,-1) -- +(-2,2)
node[below,at start]{$z\vphantom{b}$};
\draw (0,-1) -- +(2,2)
node[below,at start]{$x\vphantom{b}$};
\draw[densely dotted] (1,-1) .. controls (0,0) ..  +(0,2)
node[below,at start]{$y\vphantom{b}$};
\end{tikzpicture}
\]  
    and $a,b,c,x,y,z\in \ZZ/e\ZZ$ such that $\delta_{a,b-1,c}=0$,  
  $\delta_{x,y+1,z}=0$.    
     \end{enumerate}

\begin{eg} 
We now illustrate the effect of relations \ref{rel10} and \ref{rel8} by moving the 1-strand in  the leftmost diagram in \cref{adoublecross} rightwards.  
We can do this (without incurring any error terms) until the solid 1-strand meets the 
crossing pair of ghost 0-strands.  This is illustrated in \cref{ataleof2part}.    
We then apply relation \ref{rel10} to the diagram in \cref{ataleof2part} to obtain the sum of diagrams   in \cref{secondpartoftale}.

\begin{figure}[ht!]
 \begin{tikzpicture}[baseline, thick,yscale=-0.8,xscale=-2.8]
\scalefont{0.8}

  \draw[wei]  (-1.1+0.5, -2)  to[out=90,in=-90] (-1.1+0.5, 2)  [below, at end] node{$0$};
  
 \draw(-1,-2) rectangle (1.6,2);

  \draw  (1.5,-2) to[out=90,in=-90]    (1.15,0.2) to[out=90,in=-90]
  node[below,at end]{$0$}   (1.5,2);

  \draw[densely dotted, gray]  (1.5-0.4,-2) to[out=90,in=-90]    (1.15-0.4,0.2) to[out=90,in=-90]
    (1.5-0.4,2);

  \draw[gray, densely dotted]  (-0.4+-.5,-2)    to[out=60,in=-90]   
   (-0.4+-.3,-1) 
     to [out=90,in=-90]    
   (-0.4+-.3,1)  to [out=90,in=-120]   
  node[below,at end]{$1$}  (-0.4+0,2);

  \draw[black]  (-.5,-2)    to[out=60,in=-90]   
   (-.3,-1) 
     to [out=90,in=-90]    
   (-.3,1)  to [out=90,in=-120]   
  node[below,at end]{$1$}  (0,2);
     \draw[gray, densely dotted] (.5-0.4,-2) to[out=95,in=-93]   (.5-0.4,-0.2) 
    to[out=87,in=-90]   node[midway,circle,fill=gray!85,inner sep=2pt]{} 
  (1-0.4,2);

 \draw  (.5,-2) to[out=95,in=-93]   (.5,-0.2) to[out=87,in=-90]   node[midway,circle,fill=black,inner sep=2pt]{} 
  node[below,at end]{$2$} (1,2);


   \draw[densely dotted,gray] (0-0.4,-2) to[out=90,in=-90]   
  (0.05-0.5,0)   to[out=90,in=-90]   (.5-0.4,2);

 \draw[densely dotted,gray]  (1-0.4,-2) to[out=90,in=-90]  
   (-.5-0.4,2);
  \draw   (1,-2) to[out=90,in=-90]  
  node[below,at end]{$0$  
  }   (-.5,2); 

   \draw (0,-2) to[out=90,in=-90]   
  (-.05,0) to   [out=90,in=-90] node[below,at end]{$0$ 
  }  (.5,2);

\end{tikzpicture}

\!\!\!\!
\caption{This diagram is obtained from the leftmost diagram in  \cref{adoublecross} using non-interacting relations (isotopy, and moving the solid (respectively ghost) 1-strand rightwards without crossing any ghost 0-strand (respectively solid 2-strand) neither of which produces an error term. }
\label{ataleof2part}
 \end{figure}

\begin{figure}[ht!]
 $$\begin{tikzpicture}[baseline, thick,yscale=-0.8,xscale=-2.8]
\scalefont{0.8}

  \draw[wei]  (-1.1+0.5, -2)  to[out=90,in=-90] (-1.1+0.5, 2)  [below, at end] node{$0$};
  
 \draw(-1,-2) rectangle (1.6,2);

  \draw  (1.5,-2) to[out=90,in=-90]    (1.15,0.2) to[out=90,in=-90]
  node[below,at end]{$0$}   (1.5,2);

  \draw[densely dotted, gray]  (1.5-0.4,-2) to[out=90,in=-90]    (1.15-0.4,0.2) to[out=90,in=-90]
    (1.5-0.4,2);


  \draw[black]  (-.5,-2)    to[out=60,in=-90]   
   (-.3,-1.2)    to[out=90,in=-90]   
   (-.5,0) 
     to [out=90,in=-120]    
   (-.3,1.2)  to [out=60,in=-110]   
  node[below,at end]{$1$}  (0,2);

  \draw[gray, densely dotted]  (-0.4+-.5,-2)    to[out=60,in=-90]   
   (-0.4+-.3,-1.2)    to[out=90,in=-90]   
   (-0.4+-.5,0) 
     to [out=90,in=-120]    
   (-0.4+-.3,1.2)  to [out=60,in=-110]   
  (-0.4+0,2);

     \draw[gray, densely dotted] (.5-0.4,-2) to[out=95,in=-93]   (.5-0.4,-0.2) 
    to[out=87,in=-90]   node[midway,circle,fill=gray!85,inner sep=2pt]{} 
  (1-0.4,2);

 \draw  (.5,-2) to[out=95,in=-93]   (.5,-0.2) to[out=87,in=-90]   node[midway,circle,fill=black,inner sep=2pt]{} 
  node[below,at end]{$2$} (1,2);


   \draw[densely dotted,gray] (0-0.4,-2) to[out=90,in=-90]   
  (0.05-0.5,0)   to[out=90,in=-90]   (.5-0.4,2);

 \draw[densely dotted,gray]  (1-0.4,-2) to[out=90,in=-90]  
   (-.5-0.4,2);
  \draw   (1,-2) to[out=90,in=-90]  
  node[below,at end]{$0$  
  }   (-.5,2);


   \draw (0,-2) to[out=90,in=-90]   
  (-.05,0) to   [out=90,in=-90] node[below,at end]{$0$ 
  }  (.5,2);

\end{tikzpicture}
\; - \; 
  \begin{tikzpicture}[baseline, thick,yscale=-0.8,xscale=-2.8]
\scalefont{0.8}

  \draw[wei]  (-1.1+0.5, -2)  to[out=90,in=-90] (-1.1+0.5, 2)  [below, at end] node{$0$};
  
 \draw(-1,-2) rectangle (1.6,2);

  \draw  (1.5,-2) to[out=90,in=-90]    (1.15,0.2) to[out=90,in=-90]
  node[below,at end]{$0$}   (1.5,2);

  \draw[densely dotted, gray]  (1.5-0.4,-2) to[out=90,in=-90]    (1.15-0.4,0.2) to[out=90,in=-90]
    (1.5-0.4,2);

  \draw[gray, densely dotted]  (-0.4+-.5,-2)    to[out=60,in=-90]   
   (-0.4+-.3,-1) 
     to [out=90,in=-90]    
   (-0.4+-.3,1)  to [out=90,in=-120]   
  (-0.4+0,2);

  \draw[black]  (-.5,-2)    to[out=60,in=-90]   
   (-.3,-1) 
     to [out=90,in=-90]    
   (-.3,1)  to [out=90,in=-120]   
  node[below,at end]{$1$}  (0,2);
     \draw[gray, densely dotted] (.5-0.4,-2) to[out=95,in=-93]   (.5-0.4,-0.2) 
    to[out=87,in=-90]   node[midway,circle,fill=gray!85,inner sep=2pt]{} 
  (1-0.4,2);

 \draw  (.5,-2) to[out=95,in=-93]   (.5,-0.2) to[out=87,in=-90]   node[midway,circle,fill=black,inner sep=2pt]{} 
  node[below,at end]{$2$} (1,2);

  \draw[densely dotted,gray]  (1-0.4,-2) to[out=90,in=-90]    (0.05-0.3,0) to[out=90,in=-90]  
  (.5-0.4,2);

  \draw  (0.4+1-0.4,-2) to[out=90,in=-90]    (0.4+0.05-0.3,0) to[out=90,in=-90]  node[below,at end]{$0$}
  (0.4+.5-0.4,2);

   \draw[densely dotted,gray] (0-0.4,-2) to[out=90,in=-90]   
  (-0.5+.13,-0.36) 
   to[out=90,in=-90]     (-.5-0.4,2);

   \draw  (0.4+0-0.4,-2) to[out=90,in=-90]   
  (0.4+.13-0.5,-0.36) 
   to[out=90,in=-90]    node[below,at end]{$0$} (0.4+-.5-0.4,2)
   ;

\end{tikzpicture}
 $$

\caption{We apply relation \ref{rel9} to  the diagram in 
\cref{ataleof2part}  thus passing the solid $1$-strand through the  crossing ghost 0-strands at the expensive of an error term.  
}
\label{secondpartoftale}
 \end{figure}

\end{eg}

 \subsection*{The red strands}We now consider the interactions between the red strands and the ghost and solid strands.  One should think of these red strands and playing the analogue of the role of the cyclotomic relation  for the classical KLR algebra.  
 \begin{enumerate}[resume, label=(A\arabic*),leftmargin=*] \item\label{rel11} Ghost strands and ghost dots may pass through red strands freely.  
For $i\neq j$, the solid $i$-strands may pass through red $j$-strands freely.   
If the red and solid strands have the same label, a dot is added to the solid strand when straightening.  Diagrammatically, we have that 
\[
\scalefont{0.8}
\begin{tikzpicture}[very thick,xscale=0.4,yscale=0.5,baseline,yscale=1.3]
\draw (-2.8-0.3,-1) .. controls (-1.2-0.3,0) ..  +(0,2)
node[below,at start]{$i$};
\draw[ wei] (-1.2-0.3,-1) .. controls (-2.8-0.3,0) ..  +(0,2)
node[below,at start]{$i $};
\node at (-.4,0) {$=$}; 

\draw (0.9,-1) .. controls (1.4,0) ..  +(0,2)
node[below,at start]{$i$} node[midway,fill=black,inner sep=2.5pt,circle]{};

\draw[wei] (2.5,-1) .. controls (2.1,0) ..  +(0,2)
node[below,at start]{$i $}  ;
 
\end{tikzpicture}
\qquad\quad
\begin{tikzpicture}[very thick,xscale=0.4,yscale=0.5,baseline,yscale=1.3]
\draw (-2.8-0.3,-1) .. controls (-1.2-0.3,0) ..  +(0,2)
node[below,at start]{$i$};
\draw[ wei] (-1.2-0.3,-1) .. controls (-2.8-0.3,0) ..  +(0,2)
node[below,at start]{$j $};
\node at (-.4,0) {$=$}; 

\draw (0.9,-1) .. controls (1.4,0) ..  +(0,2)
node[below,at start]{$i$} ;

\draw[wei] (2.5,-1) .. controls (2.1,0) ..  +(0,2)
node[below,at start]{$j$}  ;
 
\end{tikzpicture}\qquad\quad
\begin{tikzpicture}[very thick,xscale=0.4,yscale=0.5,baseline,yscale=1.3]
\draw[densely dotted, black] (-2.8-0.3,-1) .. controls (-1.2-0.3,0) ..  +(0,2)
node[below,at start]{$i$};
\draw[ wei] (-1.2-0.3,-1) .. controls (-2.8-0.3,0) ..  +(0,2)
node[below,at start]{$i $};
\node at (-.4,0) {$=$}; 

\draw[densely dotted, black] (0.9,-1) .. controls (1.4,0) ..  +(0,2)
node[below,at start]{$i$};

\draw[wei] (2.5,-1) .. controls (2.1,0) ..  +(0,2)
node[below,at start]{$i $}  ;
 
\end{tikzpicture}
\qquad\quad
\begin{tikzpicture}[very thick,xscale=0.4,yscale=0.5,baseline,yscale=1.3]
\draw[densely dotted, black] (-2.8-0.3,-1) .. controls (-1.2-0.3,0) ..  +(0,2)
node[below,at start]{$i$};
\draw[ wei] (-1.2-0.3,-1) .. controls (-2.8-0.3,0) ..  +(0,2)
node[below,at start]{$j $};
\node at (-.4,0) {$=$}; 

\draw[densely dotted, black] (0.9,-1) .. controls (1.4,0) ..  +(0,2)
node[below,at start]{$i$} ;

\draw[wei] (2.5,-1) .. controls (2.1,0) ..  +(0,2)
node[below,at start]{$j$}  ;
 
\end{tikzpicture}
\]for $i\neq j$ and their mirror images through reflection through the vertical axis hold.  

%
%
%
%
%
%
%
%
%
%
%
%
 \end{enumerate}
 \begin{enumerate}[resume, label=(A\arabic*),leftmargin=*]
\Item\label{rel12}Solid crossings and dots can pass through  red strands, with a
correction term
\[
\scalefont{0.8}\begin{tikzpicture}[very thick,baseline,scale=0.6] \path(0,2.5)--(0,2.5);
\draw[wei] (-3,-1) .. controls (-2,0) ..  +(0,2)
node[at start,below]{$k$};
\draw (-2,-1)  -- +(-2,2)
node[at start,below]{$i$};
\draw (-4,-1) -- +(2,2)
node [at start,below]{$j$};
 
\node at (-1,0) {$=$};
\draw[wei] (1,-1) .. controls (0,0) .. +(0,2)
node[at start,below]{$k$};

\draw (2,-1) -- +(-2,2)
node[at start,below]{$i$};
\draw (0,-1) -- +(2,2)
node [at start,below]{$j$};
 
\node at (2.8,0) {$+ $};
\draw[wei] (6.5,-1) -- +(0,2)
node[at start,below]{$k$};

\draw (7.5,-1) -- +(0,2)
node[at start,below]{$i$};
\draw (5.5,-1) -- +(0,2)
node [at start,below]{$j$};
 \node at (3.8,-.2){$\delta_{i,j,k}$};
\end{tikzpicture}
\]
 \item\label{rel13}
 Any   braid involving a red strand and   not of the form in \ref{rel12} can be undone without cost.  Diagrammatically, we have that 

\[
\scalefont{0.8}\begin{tikzpicture}[scale=0.6,very thick,baseline=2cm]
\draw[wei] (-2,2) -- +(-2,2) node [at start,below]{$k$};
\draw (-3,2) .. controls (-4,3) ..  +(0,2) node [at start,below]{$j$};
\draw (-4,2) -- +(2,2) node [at start,below]{$i$};

\node at (-1,3) {$=$};

\draw[wei] (2,2) -- +(-2,2) node [at start,below]{$k$};
\draw (1,2) .. controls (2,3) ..  +(0,2)node [at start,below]{$j$};
\draw (0,2) -- +(2,2)node [at start,below]{$i$};

\end{tikzpicture}
\quad\quad\quad  
\begin{tikzpicture}[scale=0.6,very thick,baseline=2cm]
\draw[wei] (-2,2) -- +(-2,2) node [at start,below]{$k$};
\draw[densely dotted] (-3,2) .. controls (-4,3) ..  +(0,2)
node [at start,below]{$j$};
\draw (-4,2) -- +(2,2)node [at start,below]{$i$};

\node at (-1,3) {$=$};

\draw[wei] (2,2) -- +(-2,2)
node [at start,below]{$k$};
\draw[densely dotted] (1,2) .. controls (2,3) ..  +(0,2)
node [at start,below]{$j$};
\draw (0,2) -- +(2,2)node [at start,below]{$i$};

\end{tikzpicture}
\quad\quad\quad
\begin{tikzpicture}[scale=0.6,very thick,baseline=2cm]
\draw[wei] (-2,2) -- +(-2,2)node [at start,below]{$k$};
\draw[densely dotted] (-3,2) .. controls (-4,3) ..  +(0,2)
node [at start,below]{$j$};
\draw[densely dotted] (-4,2) -- +(2,2)
node [at start,below]{$i$};

\node at (-1,3) {$=$};

\draw[wei] (2,2) -- +(-2,2)node [at start,below]{$k$};
\draw[densely dotted] (1,2) .. controls (2,3) ..  +(0,2)
node [at start,below]{$j$};
\draw[densely dotted] (0,2) -- +(2,2)
node [at start,below]{$i$};
 
\end{tikzpicture}
\]
\[
\begin{tikzpicture}[scale=0.6,very thick,baseline=2cm]
\draw  (-2,2) -- +(-2,2)node [at start,below]{$j$};
\draw[wei]  (-3,2) .. controls (-4,3) ..  +(0,2)
node [at start,below]{$k$};
\draw (-4,2) -- +(2,2)node [at start,below]{$i$};

\node at (-1,3) {$=$};

\draw   (2,2) -- +(-2,2)node [at start,below]{$j$};
\draw[wei] (1,2) .. controls (2,3) ..  +(0,2)node [at start,below]{$k$};
\draw (0,2) -- +(2,2)
node [at start,below]{$i$};
 
\end{tikzpicture}
\quad\quad\quad
\begin{tikzpicture}[scale=0.6,very thick,baseline=2cm]
\draw[densely dotted]  (-2,2) -- +(-2,2)node [at start,below]{$j$};
\draw[wei]  (-3,2) .. controls (-4,3) ..  +(0,2)
node [at start,below]{$k$};
\draw[densely dotted] (-4,2) -- +(2,2)
node [at start,below]{$i$};

\node at (-1,3) {$=$};

\draw[densely dotted]   (2,2) -- +(-2,2)node [at start,below]{$j$};
\draw[wei] (1,2) .. controls (2,3) ..  +(0,2)
node [at start,below]{$k$};
\draw[densely dotted] (0,2) -- +(2,2)
node [at start,below]{$i$};
 
\end{tikzpicture}
\quad\quad
\quad
\begin{tikzpicture}[scale=0.6,very thick,baseline=2cm]
\draw   (-2,2) -- +(-2,2)node [at start,below]{$j$};
\draw[wei]  (-3,2) .. controls (-4,3) ..  +(0,2)
node [at start,below]{$k$};
\draw[densely dotted] (-4,2) -- +(2,2)
node [at start,below]{$i$};

\node at (-1,3) {$=$};

\draw  (2,2) -- +(-2,2)
node [at start,below]{$j$};
\draw[wei] (1,2) .. controls (2,3) ..  +(0,2)
node [at start,below]{$k$};
\draw[densely dotted] (0,2) -- +(2,2)
node [at start,below]{$i$};
 
\end{tikzpicture}
\]
for any $i,j,k$ and their mirror images through reflection in the vertical axis hold.  
  \item
\label{rel14} Finally, any solid or ghost dot can be pulled through a red strand without cost.  Diagrammatically, we have that 
\[
\scalefont{0.8}\begin{tikzpicture}[very thick,baseline,scale=0.6]
\draw[wei](-3,0) +(1,-1) -- +(-1,1)
node [at start,below]{$k$};
\draw(-3,0) +(-1,-1) -- +(1,1)node [at start,below]{$j$};
\draw[wei](1,0) +(1,-1) -- +(-1,1)node [at start,below]{$k$};

\fill  (-3.5,-.5) circle (5pt);
\node at (-1,0) {$=$};
\draw(1,0) +(-1,-1) -- +(1,1)
node [at start,below]{$j$};
 \fill (1.5,.5) circle (5pt);

\draw[wei](1,0) +(3,-1) -- +(5,1)node [at start,below]{$k$};
\draw[densely dotted](1,0) +(5,-1) -- +(3,1)
node [at start,below]{$j$};
\fill[gray!80] (5.5,-.5) circle (5pt);
\node at (7,0) {$=$};
\draw[wei](5,0) +(3,-1) -- +(5,1)node [at start,below]{$k$};
\draw[densely dotted] (5,0) +(5,-1) -- +(3,1)node [at start,below]{$j$};
\fill [gray!80] (8.5,.5) circle (5pt);
\end{tikzpicture}
\]
for any $j,k$ and their mirror images through reflection in the vertical axis hold.   (We have not added the residues as they play no role here.)
\end{enumerate}

\subsection*{The unsteady relation}
Finally, we have the following non-local idempotent relation.  Before doing so, we note that the unique (up to equivalency) element of 
${^\imath_\mu\!\mathcal{R}^\imath_\mu}$ with {\em no crossing strands} is an idempotent by construction.  We refer to any such  diagram as the {\sf weight idempotent}, ${\sf 1}^\imath_\mu$.  
When the northern (equivalently, southern) residue sequence of the weight idempotent  is that of the box configuration,
 we simply  write 
 ${\sf 1}_\mu:= {\sf 1}^{{\rm res}(\mu)}_\mu$.

\begin{enumerate}[resume, label=(A\arabic*),leftmargin=*]
\item\label{rel15}
Any weight idempotent   in which a solid strand is at least  $  n$ units to the  right  of the  rightmost red-strand is referred to as unsteady and set to be equal to zero.
\end{enumerate}

\begin{eg}
Consider the leftmost diagram in \cref{secondpartoftale}.  This diagram has a solid 1-strand which can be passed through the red 0-strand  (without any error term) using relation \ref{rel11}.  This strand can then be pulled arbitrarily  far to the right  and hence is unsteady.  Thus the leftmost diagram in \cref{secondpartoftale} is zero by relations \ref{rel11} and \ref{rel15}.
\end{eg}

  \begin{rmk}\label{noninteracting}
We refer to   relations \ref{rel1}, \ref{rel2}, \ref{rel5}, \ref{rel8}, \ref{rel13} and \ref{rel14}, the latter relation in  \ref{rel4}, and the three rightmost relations in  \ref{rel11} as {\sf non-interacting relations}.
These   relations   pull strands through one another in the na\"ive fashion (without acquiring     error terms or dots).
\end{rmk}

\section{The combinatorics of diagrams and box configurations}

In  
this section we introduce the combinatorial language  and  corresponding diagrammatic  relations  needed for proving   the  main results 
of this paper.

\subsection{The Bruhat ordering }     
  We  define a subword of $\underline{w}=s_{r_1,r_1+1}s_{r_2,r_2+1}\dots s_{r_\ell,r_\ell+1}$  to be a sequence
${\bf t}=(t_1,t_2,\dots ,t_\ell)\in\{0,1\}^\ell$ and we set 
 $\underline{w}^{\bf t} :=
 s_{r_1,r_1+1}^{t_1}s_{r_2,r_2+1}^{t_2}\dots s_{r_\ell,r_\ell+1}^{t_\ell}
 $.   
 We let $\leq $ denote the strong  Bruhat order: namely $y\leq w$ if for some (or equivalently, every) 
reduced expression $\underline{w}$ there exists   a subword $\stt$ and a reduced expression  $\underline{y}$ such that $\underline{w}^{\bf t}=\underline{y}$.  
 We are now ready to define a new ordering on $\sigma$-diagrams; this ordering will be the key to our inductive proofs.  

 \begin{defn}
 Let $\theta \in \weight$ and $\la, \mu \in \con\ell n$.  
 Given $\Diag\in  {\sf 1}^\imath _\la  \algebra^\Bbbk_n(\theta) {\sf 1}^\jmath_\mu$, we let $[\Diag] \in \mathfrak{S}_{2n+\ell}$ denote the 
underlying word (in the Coxeter generators of $\mathfrak{S}_{2n+\ell}$) given by forgetting the types (solid, dashed, red) of strand and the distances between end points --- in other words, simply viewing the diagram as a permutation in $\mathfrak{S}_{2n+\ell}$.  
  Given two 
   diagrams  $\Diag, \Diag'\in {\sf 1}^\imath _\la  \algebra^\Bbbk_n(\theta) {\sf 1}^\jmath_\mu$
  we write $\Diag'\trianglerighteq _\theta \Diag$ if 
$ [\Diag] \leq [\Diag']$ in the Bruhat ordering on the permutations $ [\Diag] , [\Diag'] \in \mathfrak{S}_{2n+\ell}$.  
We let $\ell [\Diag]$ denote the length of a reduced expression of $[\Diag]$.  
  \end{defn}

  \begin{rmk}
If $\Diag, \Diag'\in {\sf 1}^\imath _\la  \algebra^\Bbbk_n(\theta) {\sf 1}^\jmath_\mu$ are  equivalent  diagrams, then the words 
$[\Diag']$ and  $[ \Diag] $ differ only by application of the commuting Coxeter relations ($s_{i,i+1} s_{j,j+1}= s_{j,j+1} s_{i,i+1} $ for $ |i-j|>1$). 
Therefore if two words differ by only the commuting Coxeter relations,
 we do not distinguish between these words.  
\end{rmk}

For $n\in \mathbb{N}$ and $\theta \in \weight$   we emphasise that a $\theta$-diagram  from $\algebra^\Bbbk_n(\theta)$ has $n$ solid strands, $n$ ghost strands and $\ell$ red strands;  and  our ordering 
considers all possible crossings of these $2n+\ell$ strands.

\begin{eg}
The diagram from $\algebra^\Bbbk_5(0)$ in \cref{ataleof2part} 
has  14  crossings between its 5 solid, 5 ghost, and 1 red strands.  
The underlying word is 
$s_{8,9} s_{4,5}     s_{3,4}  s_{5,6}s_{6,7}s_{4,5}   s_{5,6} s_{7,8}s_{8,9} s_{7,8}
s_{6,7} s_{7,8}	
s_{9,10} s_{10,11} s_{9,10} s_{8,9} \in \mathfrak{S}_{11}$.  
The righthand diagram of \cref{secondpartoftale} is obtained from the previous diagram by undoing a crossing of solid strands (and the  corresponding crossing of their ghosts); this diagram has 12 crossings and the underlying permutation is  
$s_{8,9} s_{4,5}   s_{3,4} s_{5,6}s_{6,7}
s_{4,5}     s_{6,7}   	  s_{7,8}		s_{9,10} s_{10,11}s_{9,10} s_{8,9} \in \mathfrak{S}_{11}$.  
Thus the latter diagram dominates the former in the Bruhat ordering.  
\end{eg}

 \subsection{Brick combinatorics}\label{definebrick} 
We require a language for discussing the effect of moving a single  $i$-strand (and its ghost) through an idempotent ${\sf 1}_\la$ for $\la \in \mptn \ell n$.  We can restrict our attention to understanding the boxes $(r,c,m)\in\la$ of residue 
$j\in \ZZ/e\ZZ$  such that $|j-i|\leq 1$.  This leads us to   introduce a combinatorial language of  $i$-diagonals and bricks. This will be essential for the proofs of  \cref{cellularitybreedscontempt0.5,corollary,hfsaklhsalhskafhjksdlhjsadahlfdshjksadflhafskhsfajk}, but can be skipped by the light-touch reader.

\begin{defn}
Let $\la \in \con \ell n$. Given $  \kappa  \in \NN$ and $0\leq m < \ell$, we   refer to the set of boxes  
\[
\mathbf{D}_{m,\kappa}= \{(r,c,m)\in \la
  \mid  \ct(r,c,m) \in \{\kappa-1,\kappa,\kappa+1\}			\}
\]
as the associated {\sf diagonal}.
If $\kappa$ is greater than, less than, or equal to $\sigma_m$, we say that the  diagonal is a right, left, or centred   diagonal respectively.
If $\la \in \mptn \ell n$, we say that a  diagonal is {\sf addable}, {\em removable}, or {\sf invisible} if $\bf D$ contains an addable   box of $\boxla $ of content $\kappa$, a  removable box of $\boxla $ of content $\kappa$, or no such box respectively.   
Given $i\in \ZZ/e\ZZ$ we refer to any diagonal $\mathbf{D}_{m,\kappa}$ such that $i\equiv \kappa$ modulo $e$ as an
 $i$-diagonal.  
\end{defn}

  \begin{figure}[ht]
\begin{center}\scalefont{0.6}
\begin{tikzpicture}[yscale=1,xscale=-1]
   \path (0,0) coordinate (origin);     \draw[wei2] (0,0)   circle (2pt);
  \draw[thick] (origin) 
   --++(135:10*0.4)
   --++(45:1*0.4)
  --++(-45:1*0.4)	
  --++(45:2*0.4)
  --++(-45:3*0.4)
  --++(45:1*0.4)
  --++(-45:1*0.4)
  --++(-45:1*0.4)--++(45:1*0.4)
    --++(45:1*0.4)
  --++(-45:1*0.4)
  --++(45:1*0.4)
  --++(-45:1*0.4)
  --++(45:1*0.4)
  --++(-45:1*0.4)
  --++(45:2*0.4)
  --++(-45:1*0.4)
    --++(-135:10*0.4);
     \clip (origin) 
     --++(135:10*0.4)
   --++(45:1*0.4)
  --++(-45:1*0.4)	
  --++(45:2*0.4)
  --++(-45:3*0.4)
  --++(45:1*0.4)
  --++(-45:1*0.4)
  --++(-45:1*0.4)--++(45:1*0.4)
    --++(45:1*0.4)
  --++(-45:1*0.4)
  --++(45:1*0.4)
  --++(-45:1*0.4)
  --++(45:1*0.4)
  --++(-45:1*0.4)
  --++(45:2*0.4)
  --++(-45:1*0.4)
    --++(-135:10*0.4);
   \path (45:1cm) coordinate (A1);
  \path (45:2cm) coordinate (A2);
  \path (45:3cm) coordinate (A3);
  \path (45:4cm) coordinate (A4);
  \path (135:1cm) coordinate (B1);
  \path (135:2cm) coordinate (B2);
  \path (135:3cm) coordinate (B3);
  \path (135:4cm) coordinate (B4);
  \path (A1) ++(135:3cm) coordinate (C1);
  \path (A2) ++(135:2cm) coordinate (C2);
  \path (A3) ++(135:1cm) coordinate (C3);
  \foreach \i in {1,...,19}
  {
    \path (origin)++(45:0.4*\i cm)  coordinate (a\i);
    \path (origin)++(135:0.4*\i cm)  coordinate (b\i);
    \path (a\i)++(135:4cm) coordinate (ca\i);
    \path (b\i)++(45:4cm) coordinate (cb\i);
    \draw[thin,gray] (a\i) -- (ca\i)  (b\i) -- (cb\i); 
   \draw  (origin)++(45:0.2)++(135:0.2) node {${0}$} ; 
   \draw  (origin)
   ++(45:0.4)   ++(135:0.4)
   ++(45:0.2)++(135:0.2) node {${0}$} ; 
    \draw  (origin)
   ++(45:0.4)   ++(135:0.4)   ++(45:0.4)   ++(135:0.4)
   ++(45:0.2)++(135:0.2) node {${0}$} ; 
    \draw  (origin)
   ++(45:0.4)   ++(135:0.4)    ++(45:0.4)   ++(135:0.4)   ++(45:0.4)   ++(135:0.4)
   ++(45:0.2)++(135:0.2) node {${0}$} ; 
     \draw  (origin)
   ++(45:0.4)   ++(135:0.4)      ++(45:0.4)   ++(135:0.4)    ++(45:0.4)   ++(135:0.4)   ++(45:0.4)   ++(135:0.4)
   ++(45:0.2)++(135:0.2) node {${0}$} ; 
      \draw  (origin)
   ++(45:0.4) 
   ++(45:0.2)++(135:0.2) node {${4}$} ; 
      \draw  (origin)
   ++(45:0.4)++(45:0.4)   ++(135:0.4)
   ++(45:0.2)++(135:0.2) node {${4}$} ; 
    \draw  (origin)
  ++(45:0.4) ++(45:0.4)   ++(135:0.4)   ++(45:0.4)   ++(135:0.4)
   ++(45:0.2)++(135:0.2) node {${4}$} ; 
    \draw  (origin)
  ++(45:0.4) ++(45:0.4)   ++(135:0.4)    ++(45:0.4)   ++(135:0.4)   ++(45:0.4)   ++(135:0.4)
   ++(45:0.2)++(135:0.2) node {${4}$} ; 
     \draw  (origin)
 ++(45:0.4)  ++(45:0.4)   ++(135:0.4)      ++(45:0.4)   ++(135:0.4)    ++(45:0.4)   ++(135:0.4)   ++(45:0.4)   ++(135:0.4)
   ++(45:0.2)++(135:0.2) node {${4}$} ; 
      \draw  (origin)
   ++(135:0.4) 
   ++(45:0.2)++(135:0.2) node {${1}$} ; 
      \draw  (origin)
   ++(135:0.4) ++(45:0.4)   ++(135:0.4)
   ++(45:0.2)++(135:0.2) node {${1}$} ; 
    \draw  (origin)
   ++(135:0.4)  ++(45:0.4)   ++(135:0.4)   ++(45:0.4)   ++(135:0.4)
   ++(45:0.2)++(135:0.2) node {${1}$} ; 
    \draw  (origin)
   ++(135:0.4)  ++(45:0.4)   ++(135:0.4)    ++(45:0.4)   ++(135:0.4)   ++(45:0.4)   ++(135:0.4)
   ++(45:0.2)++(135:0.2) node {${1}$} ; 
     \draw  (origin)
   ++(135:0.4)   ++(45:0.4)   ++(135:0.4)      ++(45:0.4)   ++(135:0.4)    ++(45:0.4)   ++(135:0.4)   ++(45:0.4)   ++(135:0.4)
   ++(45:0.2)++(135:0.2) node {${1}$} ; 

    \path (origin) ++(135:2cm) coordinate (XXXXXX);

   \draw  (XXXXXX)++(45:0.2)++(135:0.2) node {${0}$} ; 
   \draw  (XXXXXX)
   ++(45:0.4)   ++(135:0.4)
   ++(45:0.2)++(135:0.2) node {${0}$} ; 
    \draw  (XXXXXX)
   ++(45:0.4)   ++(135:0.4)   ++(45:0.4)   ++(135:0.4)
   ++(45:0.2)++(135:0.2) node {${0}$} ; 
    \draw  (XXXXXX)
   ++(45:0.4)   ++(135:0.4)    ++(45:0.4)   ++(135:0.4)   ++(45:0.4)   ++(135:0.4)
   ++(45:0.2)++(135:0.2) node {${0}$} ; 
     \draw  (XXXXXX)
   ++(45:0.4)   ++(135:0.4)      ++(45:0.4)   ++(135:0.4)    ++(45:0.4)   ++(135:0.4)   ++(45:0.4)   ++(135:0.4)
   ++(45:0.2)++(135:0.2) node {${0}$} ; 
      \draw  (XXXXXX)
   ++(45:0.4) 
   ++(45:0.2)++(135:0.2) node {${4}$} ; 
      \draw  (XXXXXX)
   ++(45:0.4)++(45:0.4)   ++(135:0.4)
   ++(45:0.2)++(135:0.2) node {${4}$} ; 
    \draw  (XXXXXX)
  ++(45:0.4) ++(45:0.4)   ++(135:0.4)   ++(45:0.4)   ++(135:0.4)
   ++(45:0.2)++(135:0.2) node {${4}$} ; 
    \draw  (XXXXXX)
  ++(45:0.4) ++(45:0.4)   ++(135:0.4)    ++(45:0.4)   ++(135:0.4)   ++(45:0.4)   ++(135:0.4)
   ++(45:0.2)++(135:0.2) node {${4}$} ; 
     \draw  (XXXXXX)
 ++(45:0.4)  ++(45:0.4)   ++(135:0.4)      ++(45:0.4)   ++(135:0.4)    ++(45:0.4)   ++(135:0.4)   ++(45:0.4)   ++(135:0.4)
   ++(45:0.2)++(135:0.2) node {${4}$} ; 
      \draw  (XXXXXX)
   ++(135:0.4) 
   ++(45:0.2)++(135:0.2) node {${1}$} ; 
      \draw  (XXXXXX)
   ++(135:0.4) ++(45:0.4)   ++(135:0.4)
   ++(45:0.2)++(135:0.2) node {${1}$} ; 
    \draw  (XXXXXX)
   ++(135:0.4)  ++(45:0.4)   ++(135:0.4)   ++(45:0.4)   ++(135:0.4)
   ++(45:0.2)++(135:0.2) node {${1}$} ; 
    \draw  (XXXXXX)
   ++(135:0.4)  ++(45:0.4)   ++(135:0.4)    ++(45:0.4)   ++(135:0.4)   ++(45:0.4)   ++(135:0.4)
   ++(45:0.2)++(135:0.2) node {${1}$} ; 
     \draw  (XXXXXX)
   ++(135:0.4)   ++(45:0.4)   ++(135:0.4)      ++(45:0.4)   ++(135:0.4)    ++(45:0.4)   ++(135:0.4)   ++(45:0.4)   ++(135:0.4)
   ++(45:0.2)++(135:0.2) node {${1}$} ; 
  
     \draw  (XXXXXX)++(45:0.2)++(135:0.2)++(-45:0.4) node {${4}$} ; 
     \draw  (XXXXXX)++(45:0.2)++(135:0.2)++(135:0.4*4) node {${4}$} ; 
  
    \path (origin) ++(45:2cm) coordinate (XXXXXX);

   \draw  (XXXXXX)++(45:0.2)++(135:0.2)++(-135:0.4) node {${1}$} ; 
   \draw  (XXXXXX)++(45:0.2)++(135:0.2) node {${0}$} ; 
   \draw  (XXXXXX)
   ++(45:0.4)   ++(135:0.4)
   ++(45:0.2)++(135:0.2) node {${0}$} ; 
    \draw  (XXXXXX)
   ++(45:0.4)   ++(135:0.4)   ++(45:0.4)   ++(135:0.4)
   ++(45:0.2)++(135:0.2) node {${0}$} ; 
    \draw  (XXXXXX)
   ++(45:0.4)   ++(135:0.4)    ++(45:0.4)   ++(135:0.4)   ++(45:0.4)   ++(135:0.4)
   ++(45:0.2)++(135:0.2) node {${0}$} ; 
     \draw  (XXXXXX)
   ++(45:0.4)   ++(135:0.4)      ++(45:0.4)   ++(135:0.4)    ++(45:0.4)   ++(135:0.4)   ++(45:0.4)   ++(135:0.4)
   ++(45:0.2)++(135:0.2) node {${0}$} ; 
      \draw  (XXXXXX)
   ++(45:0.4) 
   ++(45:0.2)++(135:0.2) node {${4}$} ; 
      \draw  (XXXXXX)
   ++(45:0.4)++(45:0.4)   ++(135:0.4)
   ++(45:0.2)++(135:0.2) node {${4}$} ; 
    \draw  (XXXXXX)
  ++(45:0.4) ++(45:0.4)   ++(135:0.4)   ++(45:0.4)   ++(135:0.4)
   ++(45:0.2)++(135:0.2) node {${4}$} ; 
    \draw  (XXXXXX)
  ++(45:0.4) ++(45:0.4)   ++(135:0.4)    ++(45:0.4)   ++(135:0.4)   ++(45:0.4)   ++(135:0.4)
   ++(45:0.2)++(135:0.2) node {${4}$} ; 
     \draw  (XXXXXX)
 ++(45:0.4)  ++(45:0.4)   ++(135:0.4)      ++(45:0.4)   ++(135:0.4)    ++(45:0.4)   ++(135:0.4)   ++(45:0.4)   ++(135:0.4)
   ++(45:0.2)++(135:0.2) node {${4}$} ; 
      \draw  (XXXXXX)
   ++(135:0.4) 
   ++(45:0.2)++(135:0.2) node {${1}$} ; 
      \draw  (XXXXXX)
   ++(135:0.4) ++(45:0.4)   ++(135:0.4)
   ++(45:0.2)++(135:0.2) node {${1}$} ; 
    \draw  (XXXXXX)
   ++(135:0.4)  ++(45:0.4)   ++(135:0.4)   ++(45:0.4)   ++(135:0.4)
   ++(45:0.2)++(135:0.2) node {${1}$} ; 
    \draw  (XXXXXX)
   ++(135:0.4)  ++(45:0.4)   ++(135:0.4)    ++(45:0.4)   ++(135:0.4)   ++(45:0.4)   ++(135:0.4)
   ++(45:0.2)++(135:0.2) node {${1}$} ; 
     \draw  (XXXXXX)
   ++(135:0.4)   ++(45:0.4)   ++(135:0.4)      ++(45:0.4)   ++(135:0.4)    ++(45:0.4)   ++(135:0.4)   ++(45:0.4)   ++(135:0.4)
   ++(45:0.2)++(135:0.2) node {${1}$} ; 
        \draw  (origin)
    ++(45:9*0.4)  
   ++(45:0.2)++(135:0.2) node {${1}$} ; 
  }
\end{tikzpicture} \qquad \begin{tikzpicture}[yscale=1,xscale=-1]
   \path (0,0) coordinate (origin);     \draw[wei2] (0,0)   circle (2pt);
  \draw[thick] (origin) 
   --++(135:10*0.4)
   --++(45:1*0.4)
  --++(-45:1*0.4)	
  --++(45:2*0.4)
  --++(-45:3*0.4)
  --++(45:1*0.4)
  --++(-45:1*0.4)
  --++(-45:1*0.4)--++(45:1*0.4)
    --++(45:1*0.4)
  --++(-45:1*0.4)
  --++(45:1*0.4)
  --++(-45:1*0.4)
  --++(45:1*0.4)
  --++(-45:1*0.4)
  --++(45:2*0.4)
  --++(-45:1*0.4)
    --++(-135:10*0.4);
      \clip (origin) 
     --++(135:10*0.4)
   --++(45:1*0.4)
  --++(-45:1*0.4)	
  --++(45:2*0.4)
  --++(-45:3*0.4)
  --++(45:1*0.4)
  --++(-45:1*0.4)
  --++(-45:1*0.4)--++(45:1*0.4)
    --++(45:1*0.4)
  --++(-45:1*0.4)
  --++(45:1*0.4)
  --++(-45:1*0.4)
  --++(45:1*0.4)
  --++(-45:1*0.4)
  --++(45:2*0.4)
  --++(-45:1*0.4)
    --++(-135:10*0.4);
     
  {      \draw[thick,black,fill=pink](origin)--++(135:0.8)--++(45:0.4)--++(-45:0.4)--++(45:0.4)--++(-45:0.4)--++(-135:0.8);
\path(origin)--++(45:0.4)--++(135:0.4) coordinate (origin);
        \draw[thick,black,fill=pink](origin)--++(135:0.8)--++(45:0.4)--++(-45:0.4)--++(45:0.4)--++(-45:0.4)--++(-135:0.8);
\path(origin)--++(45:0.4)--++(135:0.4) coordinate (origin);
        \draw[thick,black,fill=pink](origin)--++(135:0.8)--++(45:0.4)--++(-45:0.4)--++(45:0.4)--++(-45:0.4)--++(-135:0.8);
\path(origin)--++(45:0.4)--++(135:0.4) coordinate (origin);
        \draw[thick,black,fill=pink](origin)--++(135:0.8)--++(45:0.4)--++(-45:0.4)--++(45:0.4)--++(-45:0.4)--++(-135:0.8);
 }

  {    
  \path(0,0)--++(45:5*0.4)  coordinate (origin);
    \draw[thick,black,fill=lime](origin)--++(135:0.4)--++(-135:0.4)--++(-45:0.4)--++(45:0.4) ;

    \draw[thick,black,fill=pink](origin)--++(135:0.8)--++(45:0.4)--++(-45:0.4)--++(45:0.4)--++(-45:0.4)--++(-135:0.8);
\path(origin)--++(45:0.4)--++(135:0.4) coordinate (origin);
        \draw[thick,black,fill=pink](origin)--++(135:0.8)--++(45:0.4)--++(-45:0.4)--++(45:0.4)--++(-45:0.4)--++(-135:0.8);
\path(origin)--++(45:0.4)--++(135:0.4) coordinate (origin);
      }

  {    
  \path(0,0)--++(45:10*0.4)  coordinate (origin);
    \draw[thick,black,fill=lime](origin)--++(135:0.4)--++(-135:0.4)--++(-45:0.4)--++(45:0.4) ;
       }

 {    
  \path(0,0)--++(135:5*0.4)  coordinate (origin);
    \draw[thick,black,fill=yellow](origin)--++(45:0.4)--++(-45:0.4)--++(-135:0.4)--++(135:0.4) ;
         \draw[thick,black,fill=pink](origin)--++(135:0.8)--++(45:0.4)--++(-45:0.4)--++(45:0.4)--++(-45:0.4)--++(-135:0.8);
\path(origin)--++(45:0.4)--++(135:0.4) coordinate (origin);
        \draw[thick,black,fill=pink](origin)--++(135:0.8)--++(45:0.4)--++(-45:0.4)--++(45:0.4)--++(-45:0.4)--++(-135:0.8);
\path(origin)--++(45:0.4)--++(135:0.4) coordinate (origin);
        \draw[thick,black,fill=cyan!50](origin)--++(135:0.8)--++(45:0.4)--++(-45:0.8)--++(-135:0.4); 
        
       }

 {    
  \path(0,0)--++(135:10*0.4)  coordinate (origin);
    \draw[thick,black,fill=yellow](origin)--++(45:0.4)--++(-45:0.4)--++(-135:0.4)--++(135:0.4) ;
       }

   \path (0,0) coordinate (origin);     \draw[wei2] (0,0)   circle (2pt);
    \end{tikzpicture} 
\end{center} 
\caption{Examples of  $0$-diagonals for $e=5$ (4 are addable and 1 is invisible).   On the left we   highlight  all boxes in all 0-diagonals in the partition.  On the right we  illustrate  how these diagonals are built up from bricks.  Here 
$\B_1$ is pink, $\B_3$ is cyan, $\B_4$ is yellow, and $\B_5$ is green. 
}
\label{case1sum}
\end{figure}
  We shall now describe all ways of building $i$-diagonals of $\ell$-partitions  from the set of bricks 
  $\B_k$  for $k=1,\dots ,6$  depicted in \cref{bricksleft} and the empty brick, $\B_7$.  
    We shall also require three distinct bricks ${\bf M}_1, {\bf M}_2, {\bf M}_3$ which represent the important 
box-configurations in which {\em some boxes are missing}. 
Namely, for a given $i$-box $(r,c,m)\in \boxla $  the  cases  ${\bf M}_1, {\bf M}_2, {\bf M}_3$  correspond to a missing box in $\la$ to the south-west, south-east, or both  respectively.  These are depicted in \cref{ahlsjfdhalkjsdf}.

\begin{figure}[h]
\begin{center}\scalefont{0.8}
\begin{tikzpicture}[xscale=-0.9,yscale=0.9]
   \path (0,0) coordinate (origin);
      \path  (origin)--++(-90:0.5) node {${\bf B}_1$};
   
  \draw[thick] (origin) 
   --++(135:2*0.8)
   --++(45:1*0.8)
  --++(-45:1*0.8)	
   --++(45:1*0.8)
  --++(-45:1*0.8)	     --++(-135:2*0.8)
;     \clip (origin) 
  (origin) 
    --++(135:2*0.8)
   --++(45:1*0.8)
  --++(-45:1*0.8)	
   --++(45:1*0.8)
  --++(-45:1*0.8)	     --++(-135:2*0.8);     
   \path (45:1cm) coordinate (A1);
  \path (45:2cm) coordinate (A2);
  \path (45:3cm) coordinate (A3);
  \path (45:4cm) coordinate (A4);
  \path (135:1cm) coordinate (B1);
  \path (135:2cm) coordinate (B2);
  \path (135:3cm) coordinate (B3);
  \path (135:4cm) coordinate (B4);
  \path (A1) ++(135:3cm) coordinate (C1);
  \path (A2) ++(135:2cm) coordinate (C2);
  \path (A3) ++(135:1cm) coordinate (C3);
   \foreach \i in {1,...,19}
  {
    \path (origin)++(45:0.8*\i cm)  coordinate (a\i);
    \path (origin)++(135:0.8*\i cm)  coordinate (b\i);
    \path (a\i)++(135:4cm) coordinate (ca\i);
    \path (b\i)++(45:4cm) coordinate (cb\i);
    \draw[thin,gray] (a\i) -- (ca\i)  (b\i) -- (cb\i); 
 \draw  (origin)
    ++(45:0.4)++(135:0.4) node {${i}$} ;
  \draw  (origin)
    ++(45:0.4)++(135:1.2) node {${i+1}$} ;
      \draw  (origin)
      ++(45:1.2)++(135:0.4)  node {${i-1}$} ;
  }  
\end{tikzpicture}
 \quad
 \begin{tikzpicture}[xscale=-0.9,yscale=0.9]
   \path (0,0) coordinate (origin);
     \path  (origin)--++(-90:0.5) node {${\bf B}_2$};
  \draw[thick] (origin) 
   --++(135:1*0.8)
   --++(45:2*0.8)
  --++(-45:1*0.8)	
      --++(-135:2*0.8)
;     \clip (origin) 
   --++(135:1*0.8)
   --++(45:2*0.8)
  --++(-45:1*0.8)	
      --++(-135:2*0.8)
;     
   \path (45:1cm) coordinate (A1);
  \path (45:2cm) coordinate (A2);
  \path (45:3cm) coordinate (A3);
  \path (45:4cm) coordinate (A4);
  \path (135:1cm) coordinate (B1);
  \path (135:2cm) coordinate (B2);
  \path (135:3cm) coordinate (B3);
  \path (135:4cm) coordinate (B4);
  \path (A1) ++(135:3cm) coordinate (C1);
  \path (A2) ++(135:2cm) coordinate (C2);
  \path (A3) ++(135:1cm) coordinate (C3);
   \foreach \i in {1,...,19}
  {
    \path (origin)++(45:0.8*\i cm)  coordinate (a\i);
    \path (origin)++(135:0.8*\i cm)  coordinate (b\i);
    \path (a\i)++(135:4cm) coordinate (ca\i);
    \path (b\i)++(45:4cm) coordinate (cb\i);
    \draw[thin,gray] (a\i) -- (ca\i)  (b\i) -- (cb\i); 
 \draw  (origin)
    ++(45:0.4)++(135:0.4) node {${i}$} ;
  \draw  (origin)
    ++(45:0.4)++(135:1.2) node {${i+1}$} ;
      \draw  (origin)
      ++(45:1.2)++(135:0.4)  node {${i-1}$} ;
  }  
\end{tikzpicture}
\quad
 \begin{tikzpicture}[xscale=-0.9,yscale=0.9]
   \path (0,0) coordinate (origin);
     \path  (origin)--++(-90:0.5) node {${\bf B}_3$};
  \draw[thick] (origin) 
   --++(135:2*0.8)
   --++(45:1*0.8)
  --++(-45:2*0.8)	
      --++(-135:1*0.8)
;     \clip (origin) 
  --++(135:2*0.8)
   --++(45:1*0.8)
  --++(-45:2*0.8)	
      --++(-135:1*0.8)
;     
   \path (45:1cm) coordinate (A1);
  \path (45:2cm) coordinate (A2);
  \path (45:3cm) coordinate (A3);
  \path (45:4cm) coordinate (A4);
  \path (135:1cm) coordinate (B1);
  \path (135:2cm) coordinate (B2);
  \path (135:3cm) coordinate (B3);
  \path (135:4cm) coordinate (B4);
  \path (A1) ++(135:3cm) coordinate (C1);
  \path (A2) ++(135:2cm) coordinate (C2);
  \path (A3) ++(135:1cm) coordinate (C3);
   \foreach \i in {1,...,19}
  {
    \path (origin)++(45:0.8*\i cm)  coordinate (a\i);
    \path (origin)++(135:0.8*\i cm)  coordinate (b\i);
    \path (a\i)++(135:4cm) coordinate (ca\i);
    \path (b\i)++(45:4cm) coordinate (cb\i);
    \draw[thin,gray] (a\i) -- (ca\i)  (b\i) -- (cb\i); 
 \draw  (origin)
    ++(45:0.4)++(135:0.4) node {${i}$} ;
  \draw  (origin)
    ++(45:0.4)++(135:1.2) node {${i+1}$} ;
      \draw  (origin)
      ++(45:1.2)++(135:0.4)  node {${i-1}$} ;
  }  
\end{tikzpicture} 
\quad
 \begin{tikzpicture}[xscale=-0.9,yscale=0.9]
   \path (0,0) coordinate (origin);
     \path  (origin)--++(-90:0.5) node {${\bf B}_4$};
  \draw[thick] (origin) 
   --++(135:1*0.8)
   --++(45:1*0.8)
  --++(-45:1*0.8)	
   --++(-135:1*0.8)
;     \clip (origin) 
  (origin) 
   --++(135:1*0.8)
   --++(45:1*0.8)
  --++(-45:1*0.8)	
   --++(-135:1*0.8)
;     
   \path (45:1cm) coordinate (A1);
  \path (45:2cm) coordinate (A2);
  \path (45:3cm) coordinate (A3);
  \path (45:4cm) coordinate (A4);
  \path (135:1cm) coordinate (B1);
  \path (135:2cm) coordinate (B2);
  \path (135:3cm) coordinate (B3);
  \path (135:4cm) coordinate (B4);
  \path (A1) ++(135:3cm) coordinate (C1);
  \path (A2) ++(135:2cm) coordinate (C2);
  \path (A3) ++(135:1cm) coordinate (C3);
  \foreach \i in {1,...,19}
  {
    \path (origin)++(45:0.8*\i cm)  coordinate (a\i);
    \path (origin)++(135:0.8*\i cm)  coordinate (b\i);
    \path (a\i)++(135:4cm) coordinate (ca\i);
    \path (b\i)++(45:4cm) coordinate (cb\i);
    \draw[thin,gray] (a\i) -- (ca\i)  (b\i) -- (cb\i); 
\draw  (origin)
    ++(45:0.4)++(135:0.4) node {${i-1}$} ;
 \draw  (origin)
    ++(45:0.4)++(135:1.2) node {${i}$} ; 
     \draw  (origin)
    ++(45:1.2)++(135:1.2) node {${i-1}$} ; 
  }  
\end{tikzpicture} 
\quad
\begin{tikzpicture}[xscale=-0.9,yscale=0.9]
   \path (0,0) coordinate (origin);  \path  (origin)--++(-90:0.5) node {${\bf B}_5$};
  \draw[thick] (origin) 
   --++(135:1*0.8)
   --++(45:1*0.8)
  --++(-45:1*0.8)	
   --++(-135:1*0.8)
;     \clip (origin) 
  (origin) 
   --++(135:1*0.8)
   --++(45:1*0.8)
  --++(-45:1*0.8)	
   --++(-135:1*0.8)
;     
   \path (45:1cm) coordinate (A1);
  \path (45:2cm) coordinate (A2);
  \path (45:3cm) coordinate (A3);
  \path (45:4cm) coordinate (A4);
  \path (135:1cm) coordinate (B1);
  \path (135:2cm) coordinate (B2);
  \path (135:3cm) coordinate (B3);
  \path (135:4cm) coordinate (B4);
  \path (A1) ++(135:3cm) coordinate (C1);
  \path (A2) ++(135:2cm) coordinate (C2);
  \path (A3) ++(135:1cm) coordinate (C3);
   \foreach \i in {1,...,19}
  {
    \path (origin)++(45:0.8*\i cm)  coordinate (a\i);
    \path (origin)++(135:0.8*\i cm)  coordinate (b\i);
    \path (a\i)++(135:4cm) coordinate (ca\i);
    \path (b\i)++(45:4cm) coordinate (cb\i);
    \draw[thin,gray] (a\i) -- (ca\i)  (b\i) -- (cb\i); 
\draw  (origin)
    ++(45:0.4)++(135:0.4) node {${i+1}$} ;
   }

\end{tikzpicture}\quad
\begin{tikzpicture}[xscale=-0.9,yscale=0.9]
   \path (0,0) coordinate (origin);
     \path  (origin)--++(-90:0.5) node {${\bf B}_6$};
  \draw[thick] (origin) 
   --++(135:1*0.8)
   --++(45:1*0.8)
  --++(-45:1*0.8)	
   --++(-135:1*0.8)
;     \clip (origin) 
  (origin) 
   --++(135:1*0.8)
   --++(45:1*0.8)
  --++(-45:1*0.8)	
   --++(-135:1*0.8)
;     
   \path (45:1cm) coordinate (A1);
  \path (45:2cm) coordinate (A2);
  \path (45:3cm) coordinate (A3);
  \path (45:4cm) coordinate (A4);
  \path (135:1cm) coordinate (B1);
  \path (135:2cm) coordinate (B2);
  \path (135:3cm) coordinate (B3);
  \path (135:4cm) coordinate (B4);
  \path (A1) ++(135:3cm) coordinate (C1);
  \path (A2) ++(135:2cm) coordinate (C2);
  \path (A3) ++(135:1cm) coordinate (C3);
   \foreach \i in {1,...,19}
  {
    \path (origin)++(45:0.8*\i cm)  coordinate (a\i);
    \path (origin)++(135:0.8*\i cm)  coordinate (b\i);
    \path (a\i)++(135:4cm) coordinate (ca\i);
    \path (b\i)++(45:4cm) coordinate (cb\i);
    \draw[thin,gray] (a\i) -- (ca\i)  (b\i) -- (cb\i); 
\draw  (origin)
    ++(45:0.4)++(135:0.4) node {${i}$} ;
   }
 \end{tikzpicture} \quad 
\begin{tikzpicture}[xscale=-0.9,yscale=0.9]
   \path (0,0) coordinate (origin);
     \path  (origin)--++(-90:0.5) node {${\bf M}_1$};
  \draw[thick] (origin) 
   --++(135:2*0.8)
   --++(45:2*0.8)
   --++(-45:1*0.8)  --++(-135:1*0.8)	
   --++(-45:1*0.8)	     --++(-135:1*0.8);
       \clip (origin) 
 (origin) 
   --++(135:2*0.8)
   --++(45:2*0.8)
   --++(-45:1*0.8)  --++(-135:1*0.8)	
   --++(-45:1*0.8)	     --++(-135:1*0.8); 
     \path (45:1cm) coordinate (A1);
  \path (45:2cm) coordinate (A2);
  \path (45:3cm) coordinate (A3);
  \path (45:4cm) coordinate (A4);
  \path (135:1cm) coordinate (B1);
  \path (135:2cm) coordinate (B2);
  \path (135:3cm) coordinate (B3);
  \path (135:4cm) coordinate (B4);
  \path (A1) ++(135:3cm) coordinate (C1);
  \path (A2) ++(135:2cm) coordinate (C2);
  \path (A3) ++(135:1cm) coordinate (C3);
   \foreach \i in {1,...,19}
  {
    \path (origin)++(45:0.8*\i cm)  coordinate (a\i);
    \path (origin)++(135:0.8*\i cm)  coordinate (b\i);
    \path (a\i)++(135:4cm) coordinate (ca\i);
    \path (b\i)++(45:4cm) coordinate (cb\i);
    \draw[thin,gray] (a\i) -- (ca\i)  (b\i) -- (cb\i); 
 \draw  (origin)
    ++(45:0.4)++(135:0.4) node {${i}$} ;
  \draw  (origin)
    ++(45:0.4)++(135:1.2) node {${i+1}$} ;
    \draw  (origin)
      ++(45:1.2)++(135:1.2)  node {${i}$} ;
  }  
\end{tikzpicture}
\quad
\begin{tikzpicture}[xscale=-0.9,yscale=0.9]
   \path (0,0) coordinate (origin);
     \path  (origin)--++(-90:0.5) node {${\bf M}_2$};
  \draw[thick] (origin) 
   --++(45:2*0.8)
   --++(135:2*0.8)
   --++(-135:1*0.8)  --++(-45:1*0.8)	
   --++(-135:1*0.8)	     --++(-45:1*0.8);
        \clip (origin) 
(origin) 
   --++(45:2*0.8)
   --++(135:2*0.8)
   --++(-135:1*0.8)  --++(-45:1*0.8)	
   --++(-135:1*0.8)	     --++(-45:1*0.8);
     \path (45:1cm) coordinate (A1);
  \path (45:2cm) coordinate (A2);
  \path (45:3cm) coordinate (A3);
  \path (45:4cm) coordinate (A4);
  \path (135:1cm) coordinate (B1);
  \path (135:2cm) coordinate (B2);
  \path (135:3cm) coordinate (B3);
  \path (135:4cm) coordinate (B4);
  \path (A1) ++(135:3cm) coordinate (C1);
  \path (A2) ++(135:2cm) coordinate (C2);
  \path (A3) ++(135:1cm) coordinate (C3);
   \foreach \i in {1,...,19}
  {
    \path (origin)++(45:0.8*\i cm)  coordinate (a\i);
    \path (origin)++(135:0.8*\i cm)  coordinate (b\i);
    \path (a\i)++(135:4cm) coordinate (ca\i);
    \path (b\i)++(45:4cm) coordinate (cb\i);
    \draw[thin,gray] (a\i) -- (ca\i)  (b\i) -- (cb\i); 
 \draw  (origin)
    ++(45:0.4)++(135:0.4) node {${i}$} ;
    \draw  (origin)
      ++(45:1.2)++(135:1.2)  node {${i}$} ;
            \draw  (origin)
      ++(45:1.2)++(135:0.4)  node {${i-1}$} ;
  }  
\end{tikzpicture}
\quad
\begin{tikzpicture}[xscale=-0.9,yscale=0.9]
   \path (0,0) coordinate (origin);
        \path  (origin)--++(-90:0.4) node {${\bf M}_3$};
  \draw[thick] (origin) 
   --++(45:1*0.8)
   --++(135:2*0.8)
   --++(45:1*0.8)  --++(-45:1*0.8)	
   --++(-135:2*0.8)	     --++(-45:1*0.8);
        \clip(origin) 
   --++(45:1*0.8)
   --++(135:2*0.8)
   --++(45:1*0.8)  --++(-45:1*0.8)	
   --++(-135:2*0.8)	     --++(-45:1*0.8);
     \path (45:1cm) coordinate (A1);
  \path (45:2cm) coordinate (A2);
  \path (45:3cm) coordinate (A3);
  \path (45:4cm) coordinate (A4);
  \path (135:1cm) coordinate (B1);
  \path (135:2cm) coordinate (B2);
  \path (135:3cm) coordinate (B3);
  \path (135:4cm) coordinate (B4);
  \path (A1) ++(135:3cm) coordinate (C1);
  \path (A2) ++(135:2cm) coordinate (C2);
  \path (A3) ++(135:1cm) coordinate (C3);
   \foreach \i in {1,...,19}
  {
    \path (origin)++(45:0.8*\i cm)  coordinate (a\i);
    \path (origin)++(135:0.8*\i cm)  coordinate (b\i);
    \path (a\i)++(135:4cm) coordinate (ca\i);
    \path (b\i)++(45:4cm) coordinate (cb\i);
    \draw[thin,gray] (a\i) -- (ca\i)  (b\i) -- (cb\i); 
 \draw  (origin)
    ++(45:0.4)++(135:0.4) node {${i}$} ;
    \draw  (origin)
      ++(45:1.2)++(135:1.2)  node {${i}$} ;
            \draw  (origin)
      ++(45:1.2)++(135:0.4)  node {${i-1}$} ;
  }  
\end{tikzpicture}\end{center}
\caption{The bricks $\mathbf{B}_i$ and  $\mathbf{M}_j$ for $1\leq i \leq 6$ and $1\leq j \leq 3$.
The $\mathbf{B}_7$ brick is a single red $i$-strand (i.e., it corresponds to an empty box configuration).  
}
\label{bricksleft}\label{ahlsjfdhalkjsdf}
\end{figure}

Fix $\la \in \mptn \ell n$ and consider some fixed  component $1\leq m \leq \ell$.
We build an addable $i$-diagonal, $\bf D$, in this component by  placing a  a $\mathbf{B}_4$, $\mathbf{B}_5$, or $\mathbf{B}_7$ at the base (for diagonals to the right, left, or centred on the node $(1,1,m)$); we then place some number (possibly zero) of   $\mathbf{B}_1$ bricks on top.
If  $\bf D$ is  invisible then we place either a $\B_2$ or $\B_3$ brick on top of the addable $i$-diagonal.
If  $\bf D$ is  removable then we place a $\B_6$-brick  on top of the addable $i$-diagonal.
Examples  of how to construct such an $i$-diagonal are depicted in   \cref{case1sum}.

 \subsection{Brick diagrams}\label{resolvebrick} We gather  some easy results concerning the effect of pulling an $i$-strand through the diagram corresponding to one of these $i$-bricks.   
   In order to go back and forth between $\sigma$-diagrams and box configurations, we make the following intuitive definition.

   \begin{defn}\label{idempotentsssss} Associated to $\lambda\in{\mathscr{C}}^\ell_n$,  $\imath \in (\ZZ/e\ZZ)^n$, we have an  idempotent 
 ${\sf 1}_\la^{ \imath}$ given by the diagram with northern/southern points 
 ${\bf I}^\theta_\la$, no crossing strands,  and northern/southern  residue sequence   given by  $\imath \in (\ZZ/e\ZZ)^n$.    
 Each brick,   ${\bf B}_i$, is itself a  box-configuration  with associated  idempotent ${\sf 1}_{{\bf B}_i} $ for $1\leq i \leq 6$ (similarly for ${\bf M}_j$ for $1\leq j \leq 3$).  
   For $(r,c,m) \in \la$, we let $y_{(r,c,m)}{\sf 1}^{\imath}_\la $ be the diagram obtained by adding a dot to ${\sf 1}^{\imath}_\la$ on the 
strand   labelled by the box $(r,c,m)$.  
  \end{defn}

\begin{defn}
For $\la\in\con \ell n $ and $\jmath \in (\ZZ/e\ZZ)^n$, we let 
 ${\mathcal Y}^\jmath_\la$ denote the   subalgebra $ 
\langle {\sf 1}^{\jmath}_\la,  y_{(r,c,m)}{\sf 1}^{\jmath}_\la\mid \text{ for } (r,c,m) \in[\la]\rangle\subset \algebra^\Bbbk_n(\sigma)$.  
\end{defn}

   \!\!\!
   \begin{figure}[ht!]
   $$
\scalefont{0.8} \begin{tikzpicture}[baseline, thick,yscale=0.55,xscale=-2.5]

   \draw(-1.3,-2) rectangle (0.9,2);
   
  \draw[densely dotted, black,snake it] (-0.525+-0.5,-2) to[out=90,in=-90]   
  (-0.525+-0.5,2);

   \draw[densely dotted, black] (-0.425-0.15,-2) to[out=90,in=-90]   
   (-0.425+0-0.15,2);

   \draw[densely dotted, black,zigzag] (-0.425+0.5,-2) to[out=90,in=-90]   
    (-0.425+0.5,2);

  \draw[snake it] (-0.5,-2) to[out=90,in=-90]   
  node[below,at start]{$i+1$ }  (-0.5,2);
   \draw (0,-2) to[out=90,in=-90]   
  node[below,at start]{$i$ }  (0,2);
 \draw [zigzag]    (0.525,-2) to[out=90,in=-90]   
  node[below,at start]{$i\!-\!1$ }   (0.525,2);

\end{tikzpicture} 
\quad\quad 
 \begin{tikzpicture}[baseline, thick,yscale=0.55,xscale=-2.5]

   \draw(-1.5,-2) rectangle (0.45,2);
   
  \draw[densely dotted, black,snake it] (-0.525+-0.425,-2) to[out=90,in=-90]   
  (-0.525+-0.425,2);
   \draw[densely dotted, black] (-0.425+0,-2) to[out=90,in=-90]   
   (-0.425+0,2);

  \draw (-0.15,-2) to[out=90,in=-90]   
  node[below,at start]{$i$ }  (-0.15,2);
 \draw[densely dotted, black] (-0.475-0.15,-2) to[out=90,in=-90]   
 (-0.475-0.15,2);

  \draw[snake it] (-0.525,-2) to[out=90,in=-90]   
  node[below,at start]{$i\!+\!1$ }  (-0.525,2);
   \draw (0,-2) to[out=90,in=-90]   
  node[below,at start]{$i$ }  (0,2);

\end{tikzpicture}\quad\quad 
\begin{tikzpicture}[baseline, thick,yscale=0.55,xscale=-2.5]

   \draw(-0.75,-2) rectangle (1,2);

   \draw[densely dotted, black] (-0.475+0,-2) to[out=90,in=-90]   
   (-0.475+0,2);
   \draw[densely dotted, black] (-0.475+0.15,-2) to[out=90,in=-90]   
 (-0.475+0.15,2);

   \draw[densely dotted, black,zigzag] (-0.425+0.5,-2) to[out=90,in=-90]   
    (-0.425+0.5,2);


   \draw (0,-2) to[out=90,in=-90]   
  node[below,at start]{$i$ }  (0,2);
   \draw (0.15,-2) to[out=90,in=-90]   
  node[below,at start]{$i$ }  (0.15,2);

   \draw [zigzag]    (0.525,-2) to[out=90,in=-90]   
  node[below,at start]{$i\!-\!1$ }   (0.525,2);

\end{tikzpicture}$$ 

\!\!\!\!\!\!\caption{The diagrams ${\sf 1}_{\B_1}$,  ${\sf 1}_{\N_1}$   and  ${\sf 1}_{\N_2}$ respectively.    We have applied isotopy to    the strands to make it clearer which ghost belongs to which solid strand.   }
\end{figure}

\begin{defn}\label{anorder}
Given an $i$-diagonal, $\mathbf{D}$, we enumerate the bricks in $\mathbf{D}$ according to their height within the $i$-diagonal starting with the brick at the base of  $\mathbf{D}$ first   (which is one of $\mathbf{B}_4$, $\mathbf{B}_5$, or $\mathbf{B}_6$) and finishing with the top brick.
\end{defn}

\begin{rmk}For ease of discussion, we  assume $e>2$ (the $e=2$ case is similar, but the $i$-diagonals overlap and one must consider the diagonals  in turn).   
Let $S$ be a solid $i$-strand and suppose we wish to pull $S$ and its ghost $S'$ through an $i$-diagonal ${\bf D}$.  
One can factorise this calculation by considering each brick in the diagonal in turn.  This is immediate
 from the definitions, but is slightly non-intuitive because we visualise the $(i-1)$-boxes as being to the left of the $i$-boxes which are in turn to the left of the $(i+1)$-boxes.  However a momentary glance at \cref{Orderillus} reveals that this intuition is wrong: we will   not encounter two successive $i$-strands at any point in the process, as they are separated by a ghost $(i-1)$-strand (and recall that $S$ and $S'$ both commute with the solid $(i-1)$-strand, it is {\em only} the {\em ghost}  $(i-1)$-strand that is of interest!).

Indeed, the only strands of interest in ${\bf D}$ are the ghost $(i-1)$-strands, the solid $i$-strands, and the solid $(i+1)$-strands.  
For ${\bf D}$ as pictured in \cref{Orderillus},   the 
ghost $(i-1)$-strands  have $x$-coordinates 
 $-3\varepsilon,-5\varepsilon$,
 the solid 
 ghost $i$-strands  have $x$-coordinates 
 $-2\varepsilon,-4\varepsilon$
and the 
solid $(i+1)$-strands  have $x$-coordinates 
 $1-3\varepsilon,1-5\varepsilon$.  
As we pull the strand $S$ {\em and its ghost} through the strands labelled by boxes  with $x$-coordinate $x - k\varepsilon$,   
the order in which these interactions occur is determined by    $k\in \mathbb N$   and is
 independent of $x \in \mathbb N$.   
Thus  one can factorise the calculation by considering each brick in the diagonal in turn, as claimed. 
 \end{rmk}

\begin{figure}[ht!]
$$\!\!\!\!\!\! \begin{minipage}{2.5cm}
\begin{tikzpicture}[baseline, xscale=-1.5,yscale=1.5]
  
  \path(0,0)--++(90:-0.1); 
  \draw[very thick,fill=gray!40]   (0,0) --++(135:2*0.8)
   --++(135:1*0.8)
     --++(45:1*0.8)   --++(45:1*0.8)
 --++(-45:1*0.8)  
 --++(45:1*0.8)  
  --++(-45:1*0.8)  
 --++(-45:1*0.8)     --++(-135:1*0.8)  
 --++(-135:2*0.8)    
; 
    \clip(0,0) 
    --++(135:2*0.8)
   --++(135:1*0.8)
     --++(45:1*0.8)   --++(45:1*0.8)
 --++(-45:1*0.8)  
 --++(45:1*0.8)  
  --++(-45:1*0.8)  
 --++(-45:1*0.8)     --++(-135:1*0.8)  
 --++(-135:2*0.8)    
;

  \draw[thick,fill=magenta!30] (origin) 
   --++(135:2*0.8)
   --++(45:1*0.8)
  --++(-45:1*0.8)	
   --++(45:1*0.8)
  --++(-45:1*0.8)	     --++(-135:2*0.8)
;

   \path (0,0) coordinate (origin);
  \draw[thick] (origin) 
   --++(135:2*0.8)
   --++(45:1*0.8)
  --++(-45:1*0.8)	
   --++(45:1*0.8)
  --++(-45:1*0.8)	     --++(-135:2*0.8)
;       \path (45:1cm) coordinate (A1);
  \path (45:2cm) coordinate (A2);
  \path (45:3cm) coordinate (A3);
  \path (45:4cm) coordinate (A4);
  \path (135:1cm) coordinate (B1);
  \path (135:2cm) coordinate (B2);
  \path (135:3cm) coordinate (B3);
  \path (135:4cm) coordinate (B4);
  \path (A1) ++(135:3cm) coordinate (C1);
  \path (A2) ++(135:2cm) coordinate (C2);
  \path (A3) ++(135:1cm) coordinate (C3);
   \foreach \i in {1,...,19}
  {
    \path (origin)++(45:0.8*\i cm)  coordinate (a\i);
    \path (origin)++(135:0.8*\i cm)  coordinate (b\i);
    \path (a\i)++(135:4cm) coordinate (ca\i);
    \path (b\i)++(45:4cm) coordinate (cb\i);
    \draw[thin,gray] (a\i) -- (ca\i)  (b\i) -- (cb\i); 
 \draw  (origin)
    ++(45:0.4)++(135:0.4) node {${-2\varepsilon }$} ;
  \draw  (origin)
    ++(45:0.4)++(135:1.2) node {${1-3\varepsilon }$} ;
      \draw  (origin)
      ++(45:1.2)++(135:0.4)  node {${-1-3\varepsilon }$} ;
  }

     \path (origin) 
--++(135:1*0.8)
   --++(45:1*0.8)  coordinate (origin);

  \draw[thick,fill=cyan!30] (origin) 
   --++(135:2*0.8)
   --++(45:1*0.8)
  --++(-45:1*0.8)	
   --++(45:1*0.8)
  --++(-45:1*0.8)	     --++(-135:2*0.8)
;   
  
   \path (45:1cm) coordinate (A1);
  \path (45:2cm) coordinate (A2);
  \path (45:3cm) coordinate (A3);
  \path (45:4cm) coordinate (A4);
  \path (135:1cm) coordinate (B1);
  \path (135:2cm) coordinate (B2);
  \path (135:3cm) coordinate (B3);
  \path (135:4cm) coordinate (B4);
  \path (A1) ++(135:3cm) coordinate (C1);
  \path (A2) ++(135:2cm) coordinate (C2);
  \path (A3) ++(135:1cm) coordinate (C3);
   \foreach \i in {1,...,19}
  {
    \path (origin)++(45:0.8*\i cm)  coordinate (a\i);
    \path (origin)++(135:0.8*\i cm)  coordinate (b\i);
    \path (a\i)++(135:4cm) coordinate (ca\i);
    \path (b\i)++(45:4cm) coordinate (cb\i);
    \draw[thin,gray] (a\i) -- (ca\i)  (b\i) -- (cb\i); 
 \draw  (origin)
    ++(45:0.4)++(135:0.4) node {${-4\varepsilon }$} ;
  \draw  (origin)
    ++(45:0.4)++(135:1.2) node {${1-5\varepsilon }$} ;
      \draw  (origin)
      ++(45:1.2)++(135:0.4)  node {${-1-5\varepsilon }$} ;
  }

\end{tikzpicture}
\end{minipage} \qquad \qquad\qquad\qquad
\begin{minipage}{9cm}\scalefont{0.7} \begin{tikzpicture}[baseline, thick,yscale=1.1,xscale=-4.3]

   \draw(-1.3,-2) rectangle (0.9,2);
   \clip(-1.3,-2.3) rectangle (0.9,2.3);
   
  \draw[densely dotted, magenta!20, snake it] (-0.525+-0.5,-2) to[out=90,in=-90]   
  (-0.525+-0.5,2);

   \draw[densely dotted,magenta!30 ] (-0.425-0.15,-2) to[out=90,in=-90]   
   (-0.425+0-0.15,2);

   \draw[densely dotted, very thick, zigzag,magenta] (-0.425+0.5,-2) to[out=90,in=-90]   
    (-0.425+0.5,2);

  \draw[snake it,very thick,magenta] (-0.5,-2) to[out=90,in=-90]   
  node[below,at start]{$i\!+\!1$ }  (-0.5,2);
   \draw[very thick,magenta]  (0,-2) to[out=90,in=-90]   
  node[below,at start]{$i$ }  (0,2);
 \draw [zigzag,magenta!20, very thick]    (0.525,-2) to[out=90,in=-90]   
  node[below,at start]{$i\!-\!1$ }   (0.525,2);

\draw[densely dotted, cyan!20, snake it] (0.15+-0.525+-0.5,-2) to[out=90,in=-90]   
  (0.15+-0.525+-0.5,2);

   \draw[densely dotted,cyan!20,] (0.15+-0.425-0.15,-2) to[out=90,in=-90]   
   (0.15+-0.425+0-0.15,2);

   \draw[densely dotted,  very thick,zigzag,cyan] (0.15+-0.425+0.5,-2) to[out=90,in=-90]   
    (0.15+-0.425+0.5,2);

  \draw[snake it,very thick,cyan] (0.15+-0.5,-2) to[out=90,in=-90]   
  node[below,at start]{$i\!+\!1$ }  (0.15+-0.5,2);
   \draw[very thick,cyan] (0.15+0,-2) to[out=90,in=-90]   
  node[below,at start,very thick]{$i$ }  (0.15+0,2);
 \draw [zigzag,cyan!20, very thick]    (0.15+0.525,-2) to[out=90,in=-90]   
  node[below,at start]{$i\!-\!1$ }   (0.15+0.525,2);


  
\end{tikzpicture} 
\end{minipage} $$
\caption{
On the left we picture two ${\bf B}_1$ bricks (within the partition $(3^2,2)$) with the $x$-coordinates of their top nodes recorded in each box (we have assumed for ease of notation that the bottom of the lowest node has $x$-coordinate $x=0$).  
On the right we picture the corresponding    idempotent corresponding to this $i$-diagonal,  emphasising (with bolder colour) the strands which do not commute with an $i$-strand $S$ or  its ghost $S'$ (for $e>2$).    
Notice that the $i$-strands are not adjacent, but rather they are separated by a ghost $(i-1)$-strand.} 
\label{Orderillus}
\end{figure}

We will need to work by induction along the dominance ordering on box configurations.  In order to do this, we   need to be able to apply relations locally in a diagram and hence rewrite   local regions of diagrams in terms of box configurations.  The key to doing this is the   following  relations:  \begin{equation}\label{N3}
{\scalefont{0.8}
 \begin{tikzpicture}[baseline, thick,yscale=0.5,xscale=-4]
     \draw  (-0.55,-2) to[out=90,in=-90]   
  node[below,at start]{$i $ }  (-0.4,2);
   \draw  (-0.4,-2) to[out=90,in=-90]   
  node[below,at start]{$i $ }  (-0.55,2);
   \draw(-0.655,0) node {$=$};
    \draw(-0.8,0) node {$-$};
 \end{tikzpicture} 
 \begin{tikzpicture}[baseline, thick,yscale=0.5,xscale=-4]
     \draw  (-0.55,-2) to[out=90,in=-90] 
       node[below,at start]{$i $ }  (-0.4,0)   to[out=90,in=-90] 
        (-0.55,2);
   \draw  (-0.4,-2) to[out=90,in=-90]   node[below,at start]{$i $ }   
   node[at end, circle,fill=black,inner sep=2pt]{} 
   (-0.55,0)   to[out=90,in=-90] 
  (-0.4,2);
\end{tikzpicture}
\qquad
\qquad \qquad \qquad 
 \begin{tikzpicture}[baseline, thick,yscale=0.5,xscale=-4]
     \draw  (-0.55,-2) to[out=90,in=-90]   
  node[below,at start]{$i $ }  (-0.55,2);
   \draw  (-0.4,-2) to[out=90,in=-90]   
  node[below,at start]{$i $ }  (-0.4,2);
   \draw(-0.675,0) node {$=$};
 \end{tikzpicture}\;
\begin{tikzpicture}[baseline, thick,yscale=0.5,xscale=-4]
     \draw  (-0.55,-2) to[out=90,in=-90] 
       node[below,at start]{$i $ }  (-0.4,0)   to[out=90,in=-90] 
     node[near end, circle,fill=black,inner sep=2pt]{}   (-0.55,2);
   \draw  (-0.4,-2) to[out=90,in=-90]   node[below,at start]{$i $ }   
   node[at end, circle,fill=black,inner sep=2pt]{} 
   (-0.55,0)   to[out=90,in=-90] 
  (-0.4,2);
\end{tikzpicture}
\;\; - \;
\begin{tikzpicture}[baseline, thick,yscale=0.5,xscale=4]
     \draw  (-0.55,-2) to[out=90,in=-90] 
      node[near start, circle,fill=black,inner sep=2pt]{}  
      node[at end, circle,fill=black,inner sep=2pt]{} 
       node[below,at start]{$i $ }  (-0.4,0)   to[out=90,in=-90] 
      (-0.55,2);
   \draw  (-0.4,-2) to[out=90,in=-90]   node[below,at start]{$i $ }   
   (-0.55,0)   to[out=90,in=-90] 
  (-0.4,2);
\end{tikzpicture}}
\end{equation}
%
Both these relations follow by multiple applications of  relation \ref{rel3}.    
 Let ${\N_1}$ be a brick containing the nodes $\{(r,c-1,m),(r+1,c-1,m),(r+1,c,m)\}$.  
\cref{whatslovegottodo} illustrates how we can rewrite  the diagram ${\sf 1}_{\N_1}$ by first applying the leftmost equality in relation \ref{rel9} followed by the leftmost equality  in \cref{N3} (applied to both diagrams).   
We hence obtain the following:
\begin{equation}\scalefont{0.8}\label{M1relation} \begin{tikzpicture}[baseline, thick,yscale=0.5,xscale=-4]
     \draw  (-0.55,-2) to[out=90,in=-90]   
  node[below,at start]{$i $ }  (-0.55,2);
   \draw  (-0.4,-2) to[out=90,in=-90]   
  node[below,at start]{$i $ }  (-0.4,2);
  
   \draw[densely dotted]   (-0.55-0.5,-2) to[out=90,in=-90]   
(-0.55-0.5,2)  ;
   \draw[densely dotted]  (-0.4-0.5,-2) to[out=90,in=-90]   
(-0.4-0.5,2) ;

   \draw   (-0.975,-2) to[out=90,in=-90]   
  node[below,at start]{$i\!+\!1 $ }  (-0.975,2);

 \end{tikzpicture}\;\;\;\;=\;\;\;\;
\begin{tikzpicture}[baseline, thick,yscale=0.5,xscale=-4]
     \draw  (-0.55,-2) to[out=90,in=-90] 
       node[below,at start]{$i $ }  (-0.4+0.15,0)   to[out=90,in=-90] 
 (-0.55,2);
   \draw  (-0.4,-2) to[out=90,in=-90]   node[below,at start]{$i $ }   
   node[midway, circle,fill=black,inner sep=2pt]{} 
  (-0.4,2);

    \draw[densely dotted]    (-0.5+-0.55,-2) to[out=90,in=-90] 
  (-0.5+-0.4+0.15,0)   to[out=90,in=-90] 
 (-0.5+-0.55,2);
   \draw[densely dotted]    (-0.5+-0.4,-2) to[out=90,in=-90]  
   node[midway, circle,fill=gray!85,inner sep=2pt]{} 
  (-0.5+-0.4,2);

  \draw (-0.425-0.55,-2) to 
node[below, at start] {$i\!+\!1$}     (-0.425-0.55,2) ;

\end{tikzpicture}\;\;\;\;\; -\;\;\;\;
\begin{tikzpicture}[baseline, thick,yscale=0.5,xscale=-4]
     \draw  (-0.55,-2) to[out=90,in=-90] 
       node[below,at start]{$i $ }  (-0.4,0)   to[out=90,in=-90] 
 (-0.55,2);
   \draw  (-0.4,-2) to[out=90,in=-90]   node[below,at start]{$i $ }   
   node[at end, circle,fill=black,inner sep=2pt]{} 
   (-0.55,0)   to[out=90,in=-90] 
  (-0.4,2);

\draw[densely dotted]  (-0.5-0.55,-2) to[out=90,in=-90] 
 (-0.4-0.5,0)   to[out=90,in=-90] 
   (-0.55-0.5,2);
   \draw[densely dotted]  (-0.4-0.5,-2) to[out=90,in=-90] node [at end, circle,fill=gray!85,inner sep=2pt]{}     
   (-0.55-0.5,0)   to[out=90,in=-90] 
  (-0.4-0.5,2);
  
  \draw (-0.425-0.55,-2)
  to [out=84, in=-84]
node[below, at start] {$i\!+\!1$}     (-0.425-0.55,2) ;

\end{tikzpicture}
\end{equation}
Consider the idempotent corresponding to 
 any  $2\times2$ square of boxes $\{(r,c,m),(r-1,c,m),(r,c-1,m),(r-1,c-1,m)\}$  and place a dot on the strand labelled by $(r,c,m)$; the diagram and the $2\times 2$-array is depicted on the lefthand-side of the equation in \cref{LEMMA1!}. 
  We can pull this dotted $i$-strand and its ghost rightwards through the ghost $(i-1)$-strand using relation \ref{rel6} to obtain a dotted and an undotted diagram.   We hence obtain the following:
  \begin{equation} \label{squarebrickrelation}
\scalefont{0.8}
 \begin{tikzpicture}[baseline, thick,yscale=0.5,xscale=-1.7]

   
   \draw[densely dotted, gray!80] (-0.475+0,-2) to[out=90,in=-90]   
   (-0.475+0,2);
   \draw[densely dotted, gray!80 ] (-0.425+0.5,-2) to[out=90,in=-90]   
    (-0.425+0.5,2);

  \draw  (-0.4,-2) to[out=90,in=-90]   
  node[below,at start]{$i\!+\!1$ }  (-0.4,2);
   \draw (0,-2) to[out=90,in=-90]   
  node[below,at start]{$i$ }  (0,2);
 \draw     (0.525,-2) to[out=90,in=-90]   
  node[below,at start]{$i\!-\!1$ }   (0.525,2);

  \draw[densely dotted]  (-0.4-0.425,-2) to[out=90,in=-90]   
  (-0.4-0.425,2);

\draw[densely dotted, gray!80 ] (-0.475+1.5-0.15-0.2,-2) to[out=90,in=-90]   
node[at end, circle,fill=gray!80,inner sep=2pt]{}  (-0.475+2-1.97+0.15,0)to[out=90,in=-90]   
    (-0.475+1.5-0.15-0.2,2);

 \draw    (1.525-0.35,-2) to[out=90,in=-90]    node[below,at start]{$i $ }
 node[at end, circle,fill=black,inner sep=2pt]{}  (1.525 -1.5+0.15,0) to[out=90,in=-90]   
     (1.525-0.35,2);

\end{tikzpicture}\qquad =\qquad 
 \begin{tikzpicture}[baseline, thick,yscale=0.5,xscale=-1.7]
 
     \draw[densely dotted] (-0.475+0,-2) to[out=90,in=-90]   
   (-0.475+0,2);
   \draw[densely dotted ] (-0.425+0.5,-2) to[out=90,in=-90]   
   node[midway, circle,fill=gray!80,inner sep=2pt]{}   (-0.425+0.5,2);

  \draw  (-0.4,-2) to[out=90,in=-90]   
  node[below,at start]{$i\!+\!1$ }  (-0.4,2);
   \draw (0,-2) to[out=90,in=-90]   
  node[below,at start]{$i$ }  (0,2);
 \draw     (0.525,-2) to[out=90,in=-90]   
 node[midway, circle,fill=black,inner sep=2pt]{}  node[below,at start]{$i\!-\!1$ }   (0.525,2);

  \draw[densely dotted]  (-0.4-0.425,-2) to[out=90,in=-90]   
  (-0.4-0.425,2);

%
%

\draw[densely dotted ] (-0.475+1.5-0.15-0.2,-2) to[out=90,in=-90]   
  			   (-0.475+1.5-1-0.5 +0.15,0) 			to[out=90,in=-90]   
    (-0.475+1.5-0.15-0.2,2);

 \draw    (1.525-0.35,-2) to[out=90,in=-90]    node[below,at start]{$i $ }
   (-0.475+1.5-1 +0.15,0) to[out=90,in=-90]   
     (1.525-0.35,2);

\end{tikzpicture} \qquad +\qquad 
\begin{tikzpicture}[baseline, thick,yscale=0.5,xscale=-1.7]

   
   \draw[densely dotted] (-0.475+0,-2) to[out=90,in=-90]   
   (-0.475+0,2);
   \draw[densely dotted ] (-0.425+0.5,-2)  to[out=82,in=-82]   
    (-0.425+0.5,2);

    \draw (0,-2) to[out=90,in=-90]   
  node[below,at start]{$i$ }  (0,2);
 \draw     (0.525,-2) to[out=82,in=-82]   
  node[below,at start]{$i\!-\!1$ }   (0.525,2);

  \draw  (-0.4,-2) to 
  node[below,at start]{$i\!+\!1$ }  (-0.4,2);

  \draw[densely dotted]  (-0.4-0.425,-2) to[out=90,in=-90]   
  (-0.4-0.425,2);

\draw[densely dotted ] (-0.475+1.5-0.15-0.2,-2) to[out=90,in=-90]    
(-0.475+1.75-2.8+0.15+0.15+0.3+0.1+0.2+0.2+0.1,0)to[out=90,in=-90]   
    (-0.475+1.5-0.15-0.2,2);

 \draw    (1.525-0.35,-2) to[out=90,in=-90]   
  (1.535 -2.6+0.15+0.3+0.2+0.2+0.2+0.1,0) to[out=90,in=-90]   
     (1.525-0.35,2);

\end{tikzpicture}
\end{equation} 
In order to associate  \cref{M1relation,squarebrickrelation} to manipulations of brick diagrams, 
we must consider the intersection of the line $y=1/2$ with these diagrams.  
These are idempotents corresponding to certain bricks;   we depict the corresponding bricks in  \cref{whatslovegottodo,LEMMA1!}.

\begin{figure}[ht!]
\!\!\!\!$$
\scalefont{0.8}
\begin{tikzpicture}[xscale=1,yscale=1]
 
  \path (0.4,0)   coordinate (origin);
   \draw[white](-3.4,0) rectangle (14,2.5); 
   
   \begin{scope}
   {

    \path (origin)   coordinate (origin2);
       \draw[densely dotted]  (origin)
    --++(50:0.8*3)--++(140:0.8*3)   --++(-130:0.8*3) --++(-40:0.8*3); 
    \draw[densely dotted]  (origin)
    --++(50:0.8*1)--++(140:0.8*3)    ;
 \draw[densely dotted]  (origin)
    --++(50:0.8*2)--++(140:0.8*3)    ;
      \draw[densely dotted]  (origin)
      --++(140:0.8*1) --++(50:0.8*3); 
           \draw[densely dotted]  (origin)
      --++(140:0.8*2) --++(50:0.8*3); 

  \draw[thick] (origin) 
   --++(140:1*0.8)   --++(50:1*0.8)   --++(140:1*0.8)
   --++(50:1*0.8)
   --++(-40:2*0.8)	     --++(-130:2*0.8)
;

    \clip (origin) 
   --++(140:1*0.8)   --++(50:1*0.8)   --++(140:1*0.8)
   --++(50:1*0.8)
   --++(-40:2*0.8)	     --++(-130:2*0.8)
;       
   \path (50:1cm) coordinate (A1);
  \path (50:2cm) coordinate (A2);
  \path (50:3cm) coordinate (A3);
  \path (50:4cm) coordinate (A4);
  \path (140:1cm) coordinate (B1);
  \path (140:2cm) coordinate (B2);
  \path (140:3cm) coordinate (B3);
  \path (140:4cm) coordinate (B4);
  \path (A1) ++(140:3cm) coordinate (C1);
  \path (A2) ++(140:2cm) coordinate (C2);
  \path (A3) ++(140:1cm) coordinate (C3);
   \foreach \i in {1,...,19}
  {
    \path (origin)++(50:0.8*\i cm)  coordinate (a\i);
    \path (origin)++(140:0.8*\i cm)  coordinate (b\i);
    \path (a\i)++(140:4cm) coordinate (ca\i);
    \path (b\i)++(50:4cm) coordinate (cb\i);
    \draw[thin,gray] (a\i) -- (ca\i)  (b\i) -- (cb\i); 
 \draw  (origin)
    ++(50:0.4)++(140:0.4) node {${i}$} ;
 \draw  (origin)
    ++(50:0.4)++(140:0.4)++(50:0.4)++(140:0.4)++(50:0.4)++(140:0.4) node {${i}$} ;

  \draw  (origin)
    ++(50:0.4)++(140:1.2) node {${i-1}$} ;
      \draw  (origin)
      ++(50:1.2)++(140:0.4)  node {${i+1}$} ;
  }  }

\end{scope}

  \path (origin)   --++(0:2.1)--++(90:1.7) node {$=$}; 
     \path (origin)   --++(0:6.65)--++(90:1.7) node {$-$};

  \path (5,0)--++(140:0.8)--++(50:0.8)  coordinate (origin);

  \path (5,0)coordinate (origin2);
    \draw[densely dotted]  (origin2)
    --++(50:0.8*3)--++(140:0.8*3)   --++(-130:0.8*3) --++(-40:0.8*3); 
    \draw[densely dotted]  (origin2)
    --++(50:0.8*1)--++(140:0.8*3)    ;
 \draw[densely dotted]  (origin2)
    --++(50:0.8*2)--++(140:0.8*3)    ;
      \draw[densely dotted]  (origin2)
      --++(140:0.8*1) --++(50:0.8*3); 
           \draw[densely dotted]  (origin2)
      --++(140:0.8*2) --++(50:0.8*3); 

\begin{scope}
    \path (origin)--++(140:1*0.8)    coordinate (origin2);
   \fill[cyan!20]  (origin2) 
   --++(50:1*0.8)
   --++(-40:1*0.8)	     --++(-130:1*0.8)
;

  {
  \draw[thick] (origin) 
   --++(140:1*0.8)   --++(50:1*0.8)   --++(140:1*0.8) 
   --++(50:1*0.8)   --++(-40:1*0.8)  --++(-130:1*0.8)
   --++(-40:2*0.8)	     --++(-130:1*0.8)   --++(140:1*0.8)
;     \clip (origin) 
     --++(140:1*0.8)   --++(50:1*0.8)   --++(140:1*0.8) 
   --++(50:1*0.8)   --++(-40:1*0.8)  --++(-130:1*0.8)
   --++(-40:2*0.8)	     --++(-130:1*0.8)   --++(140:1*0.8)
;       
   \path (50:1cm) coordinate (A1);
  \path (50:2cm) coordinate (A2);
  \path (50:3cm) coordinate (A3);
  \path (50:4cm) coordinate (A4);
  \path (140:1cm) coordinate (B1);
  \path (140:2cm) coordinate (B2);
  \path (140:3cm) coordinate (B3);
  \path (140:4cm) coordinate (B4);
  \path (A1) ++(140:3cm) coordinate (C1);
  \path (A2) ++(140:2cm) coordinate (C2);
  \path (A3) ++(140:1cm) coordinate (C3);
   \foreach \i in {1,...,19}
  {
    \path (origin)++(50:0.8*\i cm)  coordinate (a\i);
    \path (origin)++(140:0.8*\i cm)  coordinate (b\i);
    \path (a\i)++(140:4cm) coordinate (ca\i);
    \path (b\i)++(50:4cm) coordinate (cb\i);
    \draw[thin,gray] (a\i) -- (ca\i)  (b\i) -- (cb\i); 
 \draw  (origin)
    ++(50:0.4)++(140:0.4) node {${i}$} ;
 \draw  (origin)
    ++(50:0.4)++(140:0.4)++(50:0.4)++(140:0.4)++(50:0.4)++(140:0.4) node {${i}$} ;

  \draw  (origin)
    ++(50:0.4)++(140:1.2) node {${i-1}$} ;
      \draw  (origin)
      ++(50:1.2)++(140:0.4)
      ++(-40:0.8)++(-130:0.8)
        node {${i+1}$} ;
  }  }
  \end{scope}

  \path (9.4,0)    coordinate (origin);

\begin{scope}
    \draw[densely dotted]  (origin)
    --++(50:0.8*3)--++(140:0.8*3)   --++(-130:0.8*3) --++(-40:0.8*3); 
    \draw[densely dotted]  (origin)
    --++(50:0.8*1)--++(140:0.8*3)    ;
 \draw[densely dotted]  (origin)
    --++(50:0.8*2)--++(140:0.8*3)    ;
      \draw[densely dotted]  (origin)
      --++(140:0.8*1) --++(50:0.8*3); 
           \draw[densely dotted]  (origin)
      --++(140:0.8*2) --++(50:0.8*3); 

    \path (origin)--++(140:1*0.8)    coordinate (origin2);
  \fill[cyan!20]  (origin2) 
   --++(50:1*0.8)
   --++(-40:1*0.8)	     --++(-130:1*0.8)
;

  {
  \draw[thick] (origin) 
   --++(140:1*0.8)   --++(50:1*0.8)   --++(140:1*0.8) 
   --++(50:2*0.8)   --++(-40:1*0.8)  --++(-130:2*0.8)
   --++(-40:1*0.8)	     --++(-130:1*0.8) 
;     \clip (origin) 
      --++(140:1*0.8)   --++(50:1*0.8)   --++(140:1*0.8) 
   --++(50:2*0.8)   --++(-40:1*0.8)  --++(-130:2*0.8)
   --++(-40:1*0.8)	     --++(-130:1*0.8) 
;       
   \path (50:1cm) coordinate (A1);
  \path (50:2cm) coordinate (A2);
  \path (50:3cm) coordinate (A3);
  \path (50:4cm) coordinate (A4);
  \path (140:1cm) coordinate (B1);
  \path (140:2cm) coordinate (B2);
  \path (140:3cm) coordinate (B3);
  \path (140:4cm) coordinate (B4);
  \path (A1) ++(140:3cm) coordinate (C1);
  \path (A2) ++(140:2cm) coordinate (C2);
  \path (A3) ++(140:1cm) coordinate (C3);
   \foreach \i in {1,...,19}
  {
    \path (origin)++(50:0.8*\i cm)  coordinate (a\i);
    \path (origin)++(140:0.8*\i cm)  coordinate (b\i);
    \path (a\i)++(140:4cm) coordinate (ca\i);
    \path (b\i)++(50:4cm) coordinate (cb\i);
    \draw[thin,gray] (a\i) -- (ca\i)  (b\i) -- (cb\i); 
 \draw  (origin)
    ++(50:0.4)++(140:0.4) node {${i}$} ;
 \draw  (origin)
    ++(50:0.4)++(140:0.4)++(50:0.4)++(140:0.4)++(50:0.4)++(140:0.4) node {${i}$} ;

  \draw  (origin)
    ++(50:0.4)++(140:1.2) node {${i-1}$} ;
      \draw  (origin)
      ++(50:1.2)++(140:0.4)
      ++(50:0.8)++(140:0.8)
        node {${i+1}$} ;
  }  }

\end{scope}

\end{tikzpicture}$$

\caption{  The brick diagrams depict the intersection of the $\theta$-diagrams in \cref{M1relation} with the line $y=1/2$.  
We  shade  the box  corresponding to  the   decorated strand in each case. }
\label{whatslovegottodo}
\end{figure}

 \!\!\!\!
 \begin{figure}[ht!]

\begin{tikzpicture}[xscale=1,yscale=1]
 
  \path (-1.5,0)   coordinate (origin);
   \draw[white](-4,0) rectangle (10.6,2.5); 
   
   \begin{scope}
   {

    \path (origin)--++(50:1*0.8) --++(140:1*0.8)  coordinate (origin2);
  \draw[densely dotted]  (origin)
    --++(50:0.8*3)--++(140:0.8*3)   --++(-130:0.8*3) --++(-40:0.8*3); 
    \draw[densely dotted]  (origin)
    --++(50:0.8*1)--++(140:0.8*3)    ;
 \draw[densely dotted]  (origin)
    --++(50:0.8*2)--++(140:0.8*3)    ;
      \draw[densely dotted]  (origin)
      --++(140:0.8*1) --++(50:0.8*3); 
           \draw[densely dotted]  (origin)
      --++(140:0.8*2) --++(50:0.8*3); 
  \fill[cyan!20]  (origin2) 
   --++(140:1*0.8)
   --++(50:1*0.8)
   --++(-40:1*0.8)	     --++(-130:1*0.8)
;

  \draw[thick] (origin) 
   --++(140:2*0.8)
   --++(50:2*0.8)
   --++(-40:2*0.8)	     --++(-130:2*0.8)
;

    \clip (origin) 
   --++(140:2*0.8)
   --++(50:2*0.8)
   --++(-40:2*0.8)	     --++(-130:2*0.8)
;       
   \path (50:1cm) coordinate (A1);
  \path (50:2cm) coordinate (A2);
  \path (50:3cm) coordinate (A3);
  \path (50:4cm) coordinate (A4);
  \path (140:1cm) coordinate (B1);
  \path (140:2cm) coordinate (B2);
  \path (140:3cm) coordinate (B3);
  \path (140:4cm) coordinate (B4);
  \path (A1) ++(140:3cm) coordinate (C1);
  \path (A2) ++(140:2cm) coordinate (C2);
  \path (A3) ++(140:1cm) coordinate (C3);
   \foreach \i in {1,...,19}
  {
    \path (origin)++(50:0.8*\i cm)  coordinate (a\i);
    \path (origin)++(140:0.8*\i cm)  coordinate (b\i);
    \path (a\i)++(140:4cm) coordinate (ca\i);
    \path (b\i)++(50:4cm) coordinate (cb\i);
    \draw[thin,gray] (a\i) -- (ca\i)  (b\i) -- (cb\i); 
 \draw  (origin)
    ++(50:0.4)++(140:0.4) node {${i}$} ;
 \draw  (origin)
    ++(50:0.4)++(140:0.4)++(50:0.4)++(140:0.4)++(50:0.4)++(140:0.4) node {${i}$} ;

  \draw  (origin)
    ++(50:0.4)++(140:1.2) node {${i-1}$} ;
      \draw  (origin)
      ++(50:1.2)++(140:0.4)  node {${i+1}$} ;
  }  }

\end{scope}

  \path (origin)   --++(0:2.3)--++(90:1.7) node {$=$}; 
     \path (origin)   --++(0:7.8)--++(90:1.7) node {$+$};

  \path (3.6,0)  coordinate (origin);
  \draw[densely dotted]  (origin)
    --++(50:0.8*3)--++(140:0.8*3)   --++(-130:0.8*3) --++(-40:0.8*3); 
    \draw[densely dotted]  (origin)
    --++(50:0.8*1)--++(140:0.8*3)    ;
 \draw[densely dotted]  (origin)
    --++(50:0.8*2)--++(140:0.8*3)    ;
      \draw[densely dotted]  (origin)
      --++(140:0.8*1) --++(50:0.8*3); 
           \draw[densely dotted]  (origin)
      --++(140:0.8*2) --++(50:0.8*3);

\begin{scope}
    \path (origin)--++(140:1*0.8)    coordinate (origin2);
  \fill[cyan!20]  (origin2) 
   --++(140:1*0.8)
   --++(50:1*0.8)
   --++(-40:1*0.8)	     --++(-130:1*0.8)
;

  {
  \draw[thick] (origin) 
   --++(140:2*0.8)
   --++(50:2*0.8)
   --++(-40:2*0.8)	     --++(-130:2*0.8)
;     \clip (origin) 
   --++(140:2*0.8)
   --++(50:2*0.8)
   --++(-40:2*0.8)	     --++(-130:2*0.8)
;       
   \path (50:1cm) coordinate (A1);
  \path (50:2cm) coordinate (A2);
  \path (50:3cm) coordinate (A3);
  \path (50:4cm) coordinate (A4);
  \path (140:1cm) coordinate (B1);
  \path (140:2cm) coordinate (B2);
  \path (140:3cm) coordinate (B3);
  \path (140:4cm) coordinate (B4);
  \path (A1) ++(140:3cm) coordinate (C1);
  \path (A2) ++(140:2cm) coordinate (C2);
  \path (A3) ++(140:1cm) coordinate (C3);
   \foreach \i in {1,...,19}
  {
    \path (origin)++(50:0.8*\i cm)  coordinate (a\i);
    \path (origin)++(140:0.8*\i cm)  coordinate (b\i);
    \path (a\i)++(140:4cm) coordinate (ca\i);
    \path (b\i)++(50:4cm) coordinate (cb\i);
    \draw[thin,gray] (a\i) -- (ca\i)  (b\i) -- (cb\i); 
 \draw  (origin)
    ++(50:0.4)++(140:0.4) node {${i}$} ;
 \draw  (origin)
    ++(50:0.4)++(140:0.4)++(50:0.4)++(140:0.4)++(50:0.4)++(140:0.4) node {${i}$} ;

  \draw  (origin)
    ++(50:0.4)++(140:1.2) node {${i-1}$} ;
      \draw  (origin)
      ++(50:1.2)++(140:0.4)  node {${i+1}$} ;
  }  }

\end{scope}

   \path (9,0)   coordinate (origin);
  \draw[densely dotted]  (origin)
    --++(50:0.8*3)--++(140:0.8*3)   --++(-130:0.8*3) --++(-40:0.8*3); 
    \draw[densely dotted]  (origin)
    --++(50:0.8*1)--++(140:0.8*3)    ;
 \draw[densely dotted]  (origin)
    --++(50:0.8*2)--++(140:0.8*3)    ;
      \draw[densely dotted]  (origin)
      --++(140:0.8*1) --++(50:0.8*3); 
           \draw[densely dotted]  (origin)
      --++(140:0.8*2) --++(50:0.8*3); 

\begin{scope}

  {
    
  \draw[thick] (origin) 
   --++(50:2*0.8)
   --++(140:3*0.8)
   --++(-130:1*0.8)     --++(-40:2*0.8)
      --++(-130:1*0.8)
    --++(-40:1*0.8)
  	    ;
      \clip(origin) 
--++(50:2*0.8)
   --++(140:3*0.8)
   --++(-130:1*0.8)     --++(-40:2*0.8)
      --++(-130:1*0.8)
    --++(-40:1*0.8)
  	    ; 
   \path (50:1cm) coordinate (A1);
  \path (50:2cm) coordinate (A2);
  \path (50:3cm) coordinate (A3);
  \path (50:4cm) coordinate (A4);
  \path (140:1cm) coordinate (B1);
  \path (140:2cm) coordinate (B2);
  \path (140:3cm) coordinate (B3);
  \path (140:4cm) coordinate (B4);
  \path (A1) ++(140:3cm) coordinate (C1);
  \path (A2) ++(140:2cm) coordinate (C2);
  \path (A3) ++(140:1cm) coordinate (C3);
   \foreach \i in {1,...,19}
  {
    \path (origin)++(50:0.8*\i cm)  coordinate (a\i);
    \path (origin)++(140:0.8*\i cm)  coordinate (b\i);
    \path (a\i)++(140:4cm) coordinate (ca\i);
    \path (b\i)++(50:4cm) coordinate (cb\i);
    \draw[thin,gray] (a\i) -- (ca\i)  (b\i) -- (cb\i); 
 \draw  (origin)
    ++(50:0.4)++(140:0.4) node {${i}$} ;
     

     \draw  (origin)
    ++(-130:0.4)++(-40:0.4) node {${  i}$} ;
 \draw  (origin)
    ++(140:0.4+0.8)++(50:0.4+0.8)  node  {${  i}$} ;
 \draw  (origin)
    ++(140:0.4+0.8+0.8)++(50:0.4+0.8)  node  {${  i-1}$} ;
 
  \draw  (origin)
    ++(50:1.2)++(140:0.4) node {${i+1}$} ;
  }  }

\end{scope}

\end{tikzpicture}
\caption{
The brick diagrams depict the intersection of the $\theta$-diagrams in \cref{squarebrickrelation} with the line $y=1/2$.  
We  shade  the box  corresponding to  the   decorated strand in each case.
 }
\label{LEMMA1!}
\end{figure}

 The following technical definition will allow  us to handle the inductive error terms of \cref{M1relation,squarebrickrelation} by induction on the dominance ordering.  
 In particular, notice that if we set the node $(r,c,m)$ to be the missing node   in   the ${\bf M}_1$ brick     in \cref{N3}  then the maps  $\phi^{X_1}_{r,c,m}$  and $\phi^{X_2}_{r,c,m}$  describe  the intersection of the line $y=1/2$ with  the two terms on the righthand-side of \cref{M1relation}.  
 Similarly $\phi^{\B_1}_{(r,c,m)}$ describes the second term on the righthand-side of  \cref{LEMMA1!}.

\begin{defn}\label{phimap}Let $1\leq r,c\leq   n$ and $0\leq m < \ell$.  
  Given $\la \in \con \ell n$, we set 
$\phi (\la)\in  \con \ell n$ to be the  box configuration $\phi (\la)=\{\phi  (r',c',m') \mid (r',c',m') \in \la\}  $ for $\phi$ any one of the three following maps:
\begin{align*}
\phi^{N}_{(r,c,m)}(r',c',m')&= 
\begin{cases}
(r'+1,c'+1,m')  &\text{if $m= m'$ and  $r'> r $ and  $c'>c$} \\
 (r'+1,c'+1,m')&\text{if }(r'+1,c'+1,m')=(r,c,m)    \\
 (r',c',m')	&\text{otherwise.}
\end{cases}
\\
\phi^{NE}_{(r,c,m)}(r',c',m')	&= 
\begin{cases}
(r'+1,c'+1,m')  &\text{if $m= m'$ and  $r'\geq r $ and  $c'>c$} \\
 (r'+1,c'+1,m')&\text{if }(r'+1,c'+1,m')=(r,c,m)    \\
 (r',c',m')	&\text{otherwise.}
 \end{cases}
\\
\phi^{NW}_{(r,c,m)}(r',c',m')&= 
\begin{cases}
(r'+1,c'+1,m')  &\text{if $m= m'$ and  $r'> r $ and  $c'\geq c$} \\
 (r'+1,c'+1,m')&\text{if }(r'+1,c'+1,m')=(r,c,m)    \\
 (r',c',m')	&\text{otherwise.}
\end{cases}
\end{align*}\end{defn}


\section{The integral cellular basis of the   quiver Cherednik algebra} \label{cellllllllllll}

In this section, we prove that   $\algebra^\Bbbk_n(\theta )$ is cellular for any  $\theta \in \weight$  and over  $\Bbbk$ an arbitrary    integral domain, we define the associated Schur functor  ${\sf E}^\theta_\omega$ relating the quiver Cherednik and Hecke algebras, and we generalise and strengthen the structural  results of \cite{bkw11,MR3732238} and \cite[Section 2.3]{MR2525917}.   
    The framework we developed in \cref{definebrick,resolvebrick} allows us to proceed by induction on  $(\con \ell n,\rhd_\theta)$. 
  In \cref{isomer} we directly  match-up the presentations of the KLR algebra and (a subalgebra of) the quiver Cherednik algebra for the first time, this should be of independent interest.  
  Over   $\mathbb  C$, cellularity of $\algebra^{\mathbb  C}_n(\theta )$ is proven in \cite{MR3732238} by applying  the isomorphism in  \cite[Theorem 4.5]{MR3732238}
to   the ungraded versions of these algebras (this isomorphism   generalises that of \cite{MR2551762} and  only holds for $\mathbb  C$).   
We   take this opportunity to add a little flesh to the bones of the ideas \cite{MR3732238}. 
  We also  prove  a number of new structural results concerning the action of the algebra $\algebra^\Bbbk_n(\theta)$ on the cellular basis (generalising   \cite{bkw11}).

 \begin{figure}[ht!]
$$\scalefont{0.8} \begin{tikzpicture}[baseline, thick,yscale=0.75,xscale=-0.75]

\path(0,0) coordinate (112);
\path(112)--++(-10:1) coordinate (a1);
\path(112)--++(80:1) coordinate (a2);
\path(112)--++(170:1) coordinate (a3);
\path(112)--++(-100:1) coordinate (a4);
\draw(a1)--(a2)--(a3)--(a4)--(a1);

\path(a1)--++(80:1) coordinate (212);
\path(212)--++(-10:1) coordinate (b1);
\path(212)--++(80:1) coordinate (b2);
\path(212)--++(170:1) coordinate (b3);
\path(212)--++(-100:1) coordinate (b4);
\draw(b1)--(b2)--(b3)--(b4)--(b1);

\path(a3)--++(80:1) coordinate (312);
\path(312)--++(-10:1) coordinate (c1);
\path(312)--++(80:1) coordinate (c2);
\path(312)--++(170:1) coordinate (c3);
\path(312)--++(-100:1) coordinate (c4);
\draw(c1)--(c2)--(c3)--(c4)--(c1);

\path(-4.5,0) coordinate (112);
\path(112)--++(-10:1) coordinate (d1);
\path(112)--++(80:1) coordinate (d2);
\path(112)--++(170:1) coordinate (d3);
\path(112)--++(-100:1) coordinate (d4);
\draw(d1)--(d2)--(d3)--(d4)--(d1);

\path(d1)--++(80:1) coordinate (212);
\path(212)--++(-10:1) coordinate (e1);
\path(212)--++(80:1) coordinate (e2);
\path(212)--++(170:1) coordinate (e3);
\path(212)--++(-100:1) coordinate (e4);
\draw(e1)--(e2)--(e3)--(e4)--(e1);

\path(0,10) coordinate (u112);
\path(u112)--++(-10:1) coordinate (ua1);
\path(u112)--++(80:1) coordinate (ua2);
\path(u112)--++(170:1) coordinate (ua3);
\path(u112)--++(-100:1) coordinate (ua4);
\draw(ua1)--(ua2)--(ua3)--(ua4)--(ua1);

\path(ua1)--++(80:1) coordinate (u212);
\path(u212)--++(-10:1) coordinate (ub1);
\path(u212)--++(80:1) coordinate (ub2);
\path(u212)--++(170:1) coordinate (ub3);
\path(u212)--++(-100:1) coordinate (ub4);
\draw(ub1)--(ub2)--(ub3)--(ub4)--(ub1);

\path(ua3)--++(80:1) coordinate (u312);
\path(u312)--++(-10:1) coordinate (uc1);
\path(u312)--++(80:1) coordinate (uc2);
\path(u312)--++(170:1) coordinate (uc3);
\path(u312)--++(-100:1) coordinate (uc4);
\draw(uc1)--(uc2)--(uc3)--(uc4)--(uc1);

\path(ub2)--++(-10:1) coordinate (u322);
\path(u322)--++(-10:1) coordinate (ue1);
\path(u322)--++(80:1) coordinate (ue2);
\path(u322)--++(170:1) coordinate (ue3);
\path(u322)--++(-100:1) coordinate (ue4);
\draw(ue1)--(ue2)--(ue3)--(ue4)--(ue1);

\path(ue1)--++(80:1) coordinate (u222);
\path(u222)--++(-10:1) coordinate (ud1);
\path(u222)--++(80:1) coordinate (ud2);
\path(u222)--++(170:1) coordinate (ud3);
\path(u222)--++(-100:1) coordinate (ud4);
 \draw(ud1)--(ud2)--(ud3)--(ud4)--(ud1);


 \draw(a2)--(ua2);
 \path(a2)--++(-90:1)--++(180:1) coordinate (ga2);
\draw[densely dotted, gray, gray!80] (ga2)--++(90:10.3);
\draw(b2)--++(90:10);
\path(b2)--++(-90:1)--++(180:1) coordinate (gb2);
\draw[densely dotted, gray, gray!80] (gb2)--++(90:10.3);

\path(ue2)--++(-90:1)--++(180:1) coordinate (ue2g);
\path(ue2)--++(-90:6) coordinate (ue2below);
\path(ue2below)--++(180:1) coordinate (ue2gbelow);
\draw(c2)--++(90:1)to [out=90,in=-90] (ue2below) to [out=90,in=-90] (ue2);

 \path(c2)--++(-90:1)--++(180:1) coordinate (gc2);
 
\draw[densely dotted, gray, gray!80](gc2)--++(90:2)to [out=90,in=-90] (ue2gbelow) --++(90:5.3);

\path(uc2)--++(-90:4.2) coordinate(belowuc2);
\path(uc2)--++(-90:0.7)--++(180:1) coordinate(uc2g);
\path(uc2g)--++(-90:3.5) coordinate(belowuc2g);
\draw (d2)--++(90:3) to [out=90,in=-90](belowuc2)--(uc2);
\path(d2)--++(-90:1)--++(180:1) coordinate (gd2);
\draw[densely dotted, gray, gray!80] (gd2)--++(90:4) to [out=90,in=-90](belowuc2g)--(uc2g);

\path(ud2)--++(-90:5.2) coordinate(belowud2);
\path(ud2)--++(-90:0.7)--++(180:1) coordinate(ud2g);
\path(ud2g)--++(-90:4.3) coordinate(belowud2g);

\draw (e2)--++(90:2) to [out=90,in=-90](belowud2)--(ud2);
 
\path(e2)--++(-90:1)--++(180:1) coordinate (ge2);
\draw[densely dotted, gray, gray!80] (ge2)--++(90:3) to [out=90,in=-90](belowud2g)--(ud2g);


 
\draw[fill=white](a1)--(a2)--(a3)--(a4)--(a1);
 
\draw[fill=white](b1)--(b2)--(b3)--(b4)--(b1);
 
\draw[fill=white](c1)--(c2)--(c3)--(c4)--(c1);
 
\draw[fill=white](d1)--(d2)--(d3)--(d4)--(d1);
 

\draw(-6.3,3) rectangle (4,8);

\path(-4.5,10)--++(-100:1) coordinate (up);
\draw[red](d4)--(up);
\draw[red](a4)--(ua4); 
\fill[red](d4) circle(3pt);
\fill[red](a4) circle(3pt);
\fill[red](ua4) circle(3pt);
\fill[red](up) circle(3pt);

\end{tikzpicture}
\qquad
\begin{tikzpicture}[baseline, thick,yscale=0.75,xscale=-0.75]

\path(0,0) coordinate (112);
\path(112)--++(-10:1) coordinate (a1);
\path(112)--++(80:1) coordinate (a2);
\path(112)--++(170:1) coordinate (a3);
\path(112)--++(-100:1) coordinate (a4);
\draw(a1)--(a2)--(a3)--(a4)--(a1);

\path(a1)--++(80:1) coordinate (212);
\path(212)--++(-10:1) coordinate (b1);
\path(212)--++(80:1) coordinate (b2);
\path(212)--++(170:1) coordinate (b3);
\path(212)--++(-100:1) coordinate (b4);
\draw(b1)--(b2)--(b3)--(b4)--(b1);

\path(a3)--++(80:1) coordinate (312);
\path(312)--++(-10:1) coordinate (c1);
\path(312)--++(80:1) coordinate (c2);
\path(312)--++(170:1) coordinate (c3);
\path(312)--++(-100:1) coordinate (c4);
\draw(c1)--(c2)--(c3)--(c4)--(c1);

\path(-4.5,0) coordinate (112);
\path(112)--++(-10:1) coordinate (d1);
\path(112)--++(80:1) coordinate (d2);
\path(112)--++(170:1) coordinate (d3);
\path(112)--++(-100:1) coordinate (d4);
\draw(d1)--(d2)--(d3)--(d4)--(d1);

\path(d1)--++(80:1) coordinate (212);
\path(212)--++(-10:1) coordinate (e1);
\path(212)--++(80:1) coordinate (e2);
\path(212)--++(170:1) coordinate (e3);
\path(212)--++(-100:1) coordinate (e4);
\draw(e1)--(e2)--(e3)--(e4)--(e1);

\path(0,10) coordinate (u112);
\path(u112)--++(-10:1) coordinate (ua1);
\path(u112)--++(80:1) coordinate (ua2);
\path(u112)--++(170:1) coordinate (ua3);
\path(u112)--++(-100:1) coordinate (ua4);
\draw(ua1)--(ua2)--(ua3)--(ua4)--(ua1);

\path(ua1)--++(80:1) coordinate (u212);
\path(u212)--++(-10:1) coordinate (ub1);
\path(u212)--++(80:1) coordinate (ub2);
\path(u212)--++(170:1) coordinate (ub3);
\path(u212)--++(-100:1) coordinate (ub4);
\draw(ub1)--(ub2)--(ub3)--(ub4)--(ub1);

\path(ua3)--++(80:1) coordinate (u312);
\path(u312)--++(-10:1) coordinate (uc1);
\path(u312)--++(80:1) coordinate (uc2);
\path(u312)--++(170:1) coordinate (uc3);
\path(u312)--++(-100:1) coordinate (uc4);
\draw(uc1)--(uc2)--(uc3)--(uc4)--(uc1);

\path(ub2)--++(-10:1) coordinate (u322);
\path(u322)--++(-10:1) coordinate (ue1);
\path(u322)--++(80:1) coordinate (ue2);
\path(u322)--++(170:1) coordinate (ue3);
\path(u322)--++(-100:1) coordinate (ue4);
\draw(ue1)--(ue2)--(ue3)--(ue4)--(ue1);

\path(ue1)--++(80:1) coordinate (u222);
\path(u222)--++(-10:1) coordinate (ud1);
\path(u222)--++(80:1) coordinate (ud2);
\path(u222)--++(170:1) coordinate (ud3);
\path(u222)--++(-100:1) coordinate (ud4);
 \draw(ud1)--(ud2)--(ud3)--(ud4)--(ud1);


 \draw(a2)--(ua2);
 \path(a2)--++(-90:1)--++(180:1) coordinate (ga2);
\draw[densely dotted, gray, gray!80] (ga2)--++(90:10.3);
\draw(b2)--++(90:10);
\path(b2)--++(-90:1)--++(180:1) coordinate (gb2);
\draw[densely dotted, gray, gray!80] (gb2)--++(90:10.3);

\path(ue2)--++(-90:1)--++(180:1) coordinate (ue2g);
\path(ue2)--++(-90:6) coordinate (ue2below);
\path(ue2below)--++(180:1) coordinate (ue2gbelow);
\draw[black](c2)--++(90:2)to [out=90,in=-90] (ue2below) to [out=90,in=-90] (ue2);

 \path(c2)--++(-90:1)--++(180:1) coordinate (gc2);
 
\draw[densely dotted, gray](gc2)--++(90:3)to [out=90,in=-90] (ue2gbelow) --++(90:5.3);

\path(uc2)--++(-90:4.2) coordinate(belowuc2);
\path(uc2)--++(-90:0.7)--++(180:1) coordinate(uc2g);
\path(uc2g)--++(-90:3.5) coordinate(belowuc2g);
\draw (d2)--++(90:3) to [out=90,in=-90](belowuc2)--(uc2);
\path(d2)--++(-90:1)--++(180:1) coordinate (gd2);
\draw[densely dotted, gray, gray!80] (gd2)--++(90:4) to [out=90,in=-90](belowuc2g)--(uc2g);

\path(ud2)--++(-90:7.2) coordinate(belowud2);
\path(ud2)--++(-90:0.7)--++(180:1) coordinate(ud2g);
\path(ud2g)--++(-90:6.3) coordinate(belowud2g);

\draw[black] (e2)--++(90:1) to [out=90,in=-90](belowud2)--(ud2);
 
\path(e2)--++(-90:1)--++(180:1) coordinate (ge2);
\draw[densely dotted, gray] (ge2)--++(90:2) to [out=90,in=-90](belowud2g)--(ud2g);


 
\draw[fill=white](a1)--(a2)--(a3)--(a4)--(a1);
 
\draw[fill=white](b1)--(b2)--(b3)--(b4)--(b1);
 
\draw[fill=white](c1)--(c2)--(c3)--(c4)--(c1);
 
\draw[fill=white](d1)--(d2)--(d3)--(d4)--(d1);
 

\draw(-6.3,3) rectangle (4,8);

\path(-4.5,10)--++(-100:1) coordinate (up);
\draw[red](d4)--(up);
\draw[red](a4)--(ua4); 
\fill[red](d4) circle(3pt);
\fill[red](a4) circle(3pt);
\fill[red](ua4) circle(3pt);
\fill[red](up) circle(3pt);

\end{tikzpicture}
$$  

\!\!\!\!\!
\caption{Two distinct diagrams $\Cell_\SSTS$ and $\Cell_\SSTS'$ associated to the tableau $\SSTS\in \SStd_{(3;4,0)}(	( (1^2),(2,1)), ( \varnothing,(2,1^3))	 	)$  as in \cref{otherone}. }
\label{othertwo}
\end{figure}

\begin{defn}Given $\SSTS $ a  tableau of shape $\lambda$ and weight $\mu$, we let 
$\Cell_\SSTS \in {\sf 1}_\mu^{\imath}\algebra^\Bbbk_n(\theta)
 {\sf 1}_\la^{\res(\la)} $ denote any reduced diagram tracing out the bijection $\SSTS :[\la]\to [\mu]$.    
Given 
$\SSTS$, $\SSTT$ a pair of  tableaux of shape $\lambda$  (and possibly distinct weights) 
we set $\Cell_{\SSTS\SSTT}=\Cell_\SSTS \Cell_\SSTT^*$ where $\Cell^\ast_\SSTT$
is the diagram obtained from $\Cell_\SSTT $ by flipping it through the
horizontal axis.
  
 \end{defn}

  
%
%
%
%
%
%

\subsection{Right justification}
 The following total order refines the dominance order from \cref{sec1}.  
Given $i\in \ZZ/e\ZZ$ and $(r,c,m)$ and $(r',c',m')$ two $i$-boxes,  we write $(r,c,m) \preceq (r',c',m')$ if 
  \begin{itemize}
\item[$(i)$]   $\ct(r,c,m) < \ct(r',c',m')$ or 
\item[$(ii)$]   $\ct(r,c,m) = \ct(r',c',m')$  and either $m>m'$ or $m=m'$ and $r+c\leq r'+c'$.  
\end{itemize}  
For $\la,\mu\in \con \ell n$, we write $ \mu \preceq_{\sigma}  \la $  if there is a bijective   map ${\sf A}:[\lambda] \to [\mu]$ 
 such that   $  {\sf A}(r,c,m)\preceq_{\sigma}(r,c,m) $   for all $(r,c,m)\in \lambda$.  
 Given $\la,\mu \in     \con \ell n$ we note that $\la \lhd _\sigma \mu$ implies $\la \prec_\sigma \mu$.  
 We write $(r,c,m)\prec_{\rm co}  (r',c',m')$ if $(r,c,m)\prec_{\rm co}  (r',c',m')$ and there does not exist any 
$(r'',c'',m'')$ such that $(r,c,m)\prec  (r'',c'',m'')\prec  (r',c',m')$.  
In which case, we say that $(r,c,m)$ and $ (r',c',m')$ are {\sf consecutive} and say that the latter {\sf immediately follows} the former.

  \begin{rmk}\label{moredominant}
 Recall the definition of $\phi$ from \cref{phimap}.  
 We have that $\la\succ_\sigma \phi(\la)$, however $\la$ and $\phi(\la)$ are not relatable in the dominance ordering.  In particular  $\mu \rhd_\sigma \la$ if and only if $\mu \rhd_\sigma  
 \phi (\la)$.   \end{rmk}

   \begin{defn}
 Given  $\xi \in \mptn \ell n$ and 
    $(r,c,m)\in \xi$ we say that $(r,c,m)$ is {\sf right-justified}  if one of the following holds: 
    $(i)$ $(r-1,c,m)\in \xi$ 
      $(ii)$ $(r,c-1,m)\in \xi$
            $(iii)$ $(r-1,c-1,m)\in \xi$ $(iv)$ $r=c=1$.  
 We say that $\xi \in \con \ell n$ is {\sf right-justified}  if and only if every $(r,c,m)\in \xi$ is right justified.  
        \end{defn}

 Let $\xi  \in \con \ell n$ and suppose that $(r,c,m)\in \xi  $ is not right justified. 
We set $(r',c',m')$ equal to the box immediately following $(r,c,m)$ in the order $\prec_\sigma$  
and   $\xi'=(\xi\cup\{(r',c',m')\})\setminus \{(r,c,m)\}$. 
We say that $\xi'$ is obtained from $\xi$ by right-justifying the box $(r,c,m)$.  
 More generally, suppose there exists a chain
 $$
 \xi = \xi ^{(0)}\prec_\sigma  \xi^{(1)}
 \prec_\sigma \dots\prec_\sigma  \xi^{(r)}= \xi'
 $$ 
 and suppose that $\xi^{(i+1)}$ is obtained from $\xi^{(i)}$ by 
 right-justifying some box $(r_i,c_i,m_i)\in \xi^{(i)}$; then we say that $\xi'$ is 
    is obtained from $\xi$ by {\sf right-justification}.

\begin{figure}[ht]\captionsetup{width=0.9\textwidth}
\!\!\!\!\!\!\!\!\!\!
$$\scalefont{0.9}
\begin{tikzpicture}[scale=0.8]
 
\clip(-3.5,-4.5) rectangle (3.5,4.65);
 \draw(-2,-4)--(4,-4);
 \draw(0,-4.25) node {0};
  \path(0,-4.25)--++(45:0.5)--++(-45:0.5) coordinate (hello) node {1};
  \path(0,-4.25)--++(45:-0.5)--++(-45:-0.5) coordinate (hello2) node {$-1$};
  \path(hello2)--++(45:-0.5)--++(-45:-0.5) coordinate (hello2) node {$-2$};  
    \path(hello)--++(45:0.5)--++(-45:0.5) coordinate (hello) node {2};
        \path(hello)--++(45:0.5)--++(-45:0.5) coordinate (hello) node {3};
            \path(hello)--++(45:0.5)--++(-45:0.5) coordinate (hello) node {4};
 
\begin{scope}{ 
  \path(0,0.4) coordinate (origin);

  \path(origin)--++(135:0.5)--++(45:1.5) node {$1$};
    \path(origin)--++(135:0.5)--++(45:2.5) node {$2$};

 \fill[pink](origin)--++(135:4) --++(45:1)--++(-45:1)--++(-135:1); 
\path(origin)--++(135:2)--++(45:2) coordinate (hi);
%
  \draw[thick](origin)--++(135:2) --++(45:1)  --++(135:1) --++(-135:1)
 --++(135:1) --++(45:1) --++(-45:1) 
 --++(45:1) --++(-45:1) 
  --++(-135:1) --++(-45:1)
   --++(45:2) --++(-45:1) --++(-135:3) ; 

%

\clip
 (origin)--++(135:2) --++(45:1)  --++(135:1) --++(-135:1)
 --++(135:1) --++(45:1) --++(-45:1) 
 --++(45:1) --++(-45:1) 
  --++(-135:1) --++(-45:1)
   --++(45:2) --++(-45:1) --++(-135:3) ;

  \foreach \i\j in {1,...,19}
  {
    \path (origin)++(45:1*\i cm)  coordinate (a\i);
    \path (origin)++(135:1*\i cm)  coordinate (b\i);
    \path (a\i)++(135:1*\j cm) coordinate (ca\i,\j);
    \path (b\i)++(45:1*\j cm) coordinate (cb\i,\j);

  }
    \foreach \i in {1,...,19}
{  \draw[black,thick] (a\i)--++(135:8);
    \draw[black,thick] (b\i)--++(45:8);}
      \draw[black,thick] (origin)--++(135:8);
            \draw[black,thick] (origin)--++(45:8);

}
\end{scope}
\draw[wei](origin)--++(-90:4.4);

\path (origin)--++(45:0.5)--++(135:0.5) coordinate (origin);

  \foreach \i in {1,...,4}
  {
     \path (origin)++(45:1*\i cm)  coordinate (xx\i);
    \path (origin)++(135:1*\i cm)  coordinate (xy\i);

  }
 \draw(xy3) node {$0$};
 \draw(xy1) node {$2$};  
\path (xy1) --++(45:1)--++(135:1)  node {$2$}; 

\path(origin)--++(45:0.5)--++(135:0.5) coordinate (xc3);
\draw(origin) node {0};  
\path(xc3)--++(45:0.5)--++(135:0.5) coordinate (xcd);


\path(0,0)--++(45:0.25)--++(-45:0.25)--++(-90:3.2) coordinate (origin);

\path(origin)--++(45:2)--++(135:1)coordinate (hi);
\fill[lime!80](hi)--++(45:1)--++(-45:1)--++(-135:1);

  \fill[cyan!20](origin)--++(135:3) --++(45:1)--++(-45:1)--++(-135:1);

\draw[wei](origin)--++(-90:0.8);
\begin{scope}
{   
\draw[thick]
(origin)--++(135:1)--++(45:1)--++(135:1)--++(-135:1)--++(135:1)--++(45:1)--++(-45:1)--++(45:1)--++(-45:1)--++(45:1)--++(-45:1)--++(-135:1)
 --++(135:1)--++(-135:1)--++(-45:1)--++(-135:1);

 \clip(origin)--++(135:1)--++(45:1)--++(135:1)--++(-135:1)--++(135:1)--++(45:1)--++(-45:1)--++(45:1)--++(-45:1)--++(45:1)--++(-45:1)--++(-135:1)
 --++(135:1)--++(-135:1)--++(-45:1)--++(-135:1);

 \clip(origin)--++(45:3)--++(135:1)--++(-135:1)--++(135:1)--++(-135:1)--++(135:1)--++(-135:1)--(origin);
   \foreach \i\j in {1,...,19}
  {
    \path (origin)++(45:1*\i cm)  coordinate (a\i);
    \path (origin)++(135:1*\i cm)  coordinate (b\i);
    \path (a\i)++(135:1*\j cm) coordinate (ca\i,\j);
    \path (b\i)++(45:1*\j cm) coordinate (cb\i,\j);

  }
    \foreach \i in {1,...,19}
{  \draw[black,thick] (a\i)--++(135:8);
    \draw[black,thick] (b\i)--++(45:8);}
      \draw[black,thick] (origin)--++(135:8);
            \draw[black,thick] (origin)--++(45:8);
}

  \clip(origin)--++(45:8)--++(135:8)--++(-135:8)--(origin);

\path (origin)--++(45:0.5)--++(135:0.5) coordinate (origin);

  \foreach \i in {1,...,4}
  {
     \path (origin)++(45:1*\i cm)  coordinate (x\i);
    \path (origin)++(135:1*\i cm)  coordinate (y\i);

  }
\draw(x2) node {0};
\draw(y2) node {$2$};  
 \path(origin)--++(45:0.5)--++(135:0.5) coordinate (c3);
\draw(origin) node {1};  
\path(c3)--++(45:0.5)--++(135:0.5) coordinate (cd);
\draw(cd) node {1};

\end{scope}
\end{tikzpicture}
\qquad
\qquad
\begin{tikzpicture}[scale=0.8]
 
\clip(-3.5,-4.5) rectangle (3.5,5.5);
 \draw(-2,-4)--(4,-4);
 \draw(0,-4.25) node {0};
  \path(0,-4.25)--++(45:0.5)--++(-45:0.5) coordinate (hello) node {1};
  \path(0,-4.25)--++(45:-0.5)--++(-45:-0.5) coordinate (hello2) node {$-1$};
  \path(hello2)--++(45:-0.5)--++(-45:-0.5) coordinate (hello2) node {$-2$};  
    \path(hello)--++(45:0.5)--++(-45:0.5) coordinate (hello) node {2};
        \path(hello)--++(45:0.5)--++(-45:0.5) coordinate (hello) node {3};
            \path(hello)--++(45:0.5)--++(-45:0.5) coordinate (hello) node {4};
 
\begin{scope}{ 
  \path(0,0.4) coordinate (origin);

  \fill[cyan!20](origin)--++(135:4) --++(45:3)--++(-45:1)--++(-135:1)--++(135:1)--++(-135:2);

   \fill[lime!80](origin)--++(45:4) --++(135:1)--++(-135:1)--++(-45:1);  
  \path(origin)--++(135:0.5)--++(45:1.5) node {$1$};
    \path(origin)--++(135:0.5)--++(45:2.5) node {$2$};
        \path(origin)--++(135:3.5)--++(45:2.5) node {$2$};
        \path(origin)--++(135:0.5)--++(45:3.5) node {$0$};
\path(origin)--++(135:2)--++(45:2) coordinate (hi);

 \draw[thick](origin)--++(135:2) --++(45:1)  --++(135:1) 
 --++(45:1) --++(135:1)
 --++(45:1) --++(-45:1) --++(-135:1)  --++(-45:1) 
  --++(-135:1) --++(-45:1)--++(45:3)--++(-45:1)--++(-135:4);

\clip
 (origin)--++(135:2) --++(45:1)  --++(135:1) 
 --++(45:1) --++(135:1)
 --++(45:1) --++(-45:1) --++(-135:1)  --++(-45:1) 
  --++(-135:1) --++(-45:1)--++(45:3)--++(-45:1)--++(-135:3); 

  \foreach \i\j in {1,...,19}
  {
    \path (origin)++(45:1*\i cm)  coordinate (a\i);
    \path (origin)++(135:1*\i cm)  coordinate (b\i);
    \path (a\i)++(135:1*\j cm) coordinate (ca\i,\j);
    \path (b\i)++(45:1*\j cm) coordinate (cb\i,\j);

  }
    \foreach \i in {1,...,19}
{  \draw[black,thick] (a\i)--++(135:8);
    \draw[black,thick] (b\i)--++(45:8);}
      \draw[black,thick] (origin)--++(135:8);
            \draw[black,thick] (origin)--++(45:8);

}
\end{scope}
\draw[wei](origin)--++(-90:4.4);

\path (origin)--++(45:0.5)--++(135:0.5) coordinate (origin);

  \foreach \i in {1,...,4}
  {
     \path (origin)++(45:1*\i cm)  coordinate (xx\i);
    \path (origin)++(135:1*\i cm)  coordinate (xy\i);

  }
 \draw(xy1) node {$2$};  
\path (xy1) --++(45:1)--++(135:1)  node {$2$}; 

\path(origin)--++(45:0.5)--++(135:0.5) coordinate (xc3);
\draw(origin) node {0};  
\path(xc3)--++(45:0.5)--++(135:0.5) coordinate (xcd);


\path(0,0)--++(45:0.25)--++(-45:0.25)--++(-90:3.2) coordinate (origin);

\path(origin)--++(45:2)--++(135:1)coordinate (hi);


\draw[wei](origin)--++(-90:0.8);
\fill[pink](origin)--++(135:2)--++(45:2)--++(135:1)--++(-135:1)--++(-45:1)--++(-135:1) ; 
\draw[wei](0,0.4)--++(-90:4.4);
 \path(origin)--++(135:2.5)--++(45:1.5) node {0};

\begin{scope}
{   
\draw[thick]
(origin)--++(135:1)--++(45:1)
--++(135:1)
--++(135:1)--++(45:1)--++(-45:1)--++(-135:1)
--++(45:1)--++(-45:1) --++(-135:1)--++(-45:1)--++(-135:1);

 \clip(origin)--++(135:1)--++(45:1)
--++(135:1)
--++(135:1)--++(45:1)--++(-45:1)--++(-135:1)
--++(45:1)--++(-45:1) --++(-135:1)--++(-45:1)--++(-135:1);

 \clip(origin)--++(45:3)--++(135:1)--++(-135:1)--++(135:1)--++(-135:1)--++(135:1)--++(-135:1)--(origin);
   \foreach \i\j in {1,...,19}
  {
    \path (origin)++(45:1*\i cm)  coordinate (a\i);
    \path (origin)++(135:1*\i cm)  coordinate (b\i);
    \path (a\i)++(135:1*\j cm) coordinate (ca\i,\j);
    \path (b\i)++(45:1*\j cm) coordinate (cb\i,\j);

  }
    \foreach \i in {1,...,19}
{  \draw[black,thick] (a\i)--++(135:8);
    \draw[black,thick] (b\i)--++(45:8);}
      \draw[black,thick] (origin)--++(135:8);
            \draw[black,thick] (origin)--++(45:8);
}

  \clip(origin)--++(45:8)--++(135:8)--++(-135:8)--(origin);

\path (origin)--++(45:0.5)--++(135:0.5) coordinate (origin);

  \foreach \i in {1,...,4}
  {
     \path (origin)++(45:1*\i cm)  coordinate (x\i);
    \path (origin)++(135:1*\i cm)  coordinate (y\i);

  }
\draw(x2) node {0};
\draw(y2) node {$2$};  
 \path(origin)--++(45:0.5)--++(135:0.5) coordinate (c3);
\draw(origin) node {1};  
\path(c3)--++(45:0.5)--++(135:0.5) coordinate (cd);
\draw(cd) node {1};  
 \path(cd)--++(135:1)node {0};

\end{scope}
\end{tikzpicture}
 $$\caption{For $e=3$ and $\sigma =(0,1)\in\ZZ^2$ we depict  a box configuration and  its right justification, respectively.  
  } 
\label{ALoading333334}
\end{figure}

\subsection{A spanning set of the algebra}\label{span}
In this subsection we provide a spanning set for $\algebra^\Bbbk_n(\theta)$ and provide analogues of a number of results from \cite{bkw11}.  This section is inspired by the     ideas of  \cite{MR3732238,bkw11}.     
In this section      we shall assume, without loss of generality, that $s_1\geq s_2\geq \dots \geq s_m$ (this is simply for ease of notation when describing the maximal and minimal elements of the dominance order  and one can simply reorder the charge if necessary).

\begin{prop} \label{step1}
Any $\sigma$-diagram $\Diag \in {\sf 1}_\la ^\imath \algebra^\Bbbk_n(\sigma) {\sf 1}_\mu  ^\jmath $ can be written in the form
$\Diag = a {\sf 1}_\xi a'  $ for $\xi$ a right-justified box configuration such that
 $\xi \succeq_\sigma \la, \mu$.   Moreover any idempotent ${\sf 1}_\xi$ for $\xi \in\con \ell n$ 
 belongs to $\algebra^\Bbbk_n(\sigma){\sf 1}_{\xi'} \algebra^\Bbbk_n(\sigma)$ for $\xi'\succeq \xi$ obtained from  $\xi $ by right justification.  
  \end{prop}

\begin{proof}{\renewcommand{\prec}{\lessdot}
\renewcommand{\succ}{\gtrdot}
 Let $y\in [2\varepsilon,1-2\varepsilon]$.  
Let $\Strand$ denote a solid or red strand in the diagram and let
 $x(\Strand)$ denote the $x$-coordinate of this strand  at the point  where it intersects  the line $\{y\}\times \RR$.
We write $\Strand_1 \prec \Strand_2$ if 
\begin{itemize}
\item[$(i)$] $\Strand_1$ and $\Strand_2$  are solid-strands of the same residue and $0< x(\Strand_1)-x(\Strand_2)\leq 2\varepsilon$;
\item[$(ii)$] $\Strand_1$ is a solid $(i+1)$-strand  $\Strand_2$  is a solid  $i$-strand   and $\g < x(\Strand_1)-x(\Strand_2)\leq \g+\varepsilon$; 
\item[$(iii)$] $\Strand_1$ is a solid $(i-1)$-strand  $\Strand_2$  is a solid  $i$-strand   and  $  - \g  \leq x(\Strand_1)-  x(\Strand_2)<   \varepsilon - \g  $;    
\item[$(iv)$]  $\Strand_1$ is a red $i$-strand  $\Strand_2$  is a solid  $i$-strand   and $0< x(\Strand_1)-x(\Strand_2)\leq   2 \varepsilon$.  
\end{itemize}
We extend this to a partial ordering on strands by taking the transitive closure.
We  first explain how, applying {\em only the  non-interacting relations} to our diagram $\Diag $, 
we can group the strands into $\prec$-equivalence classes.  
 We   then show that these equivalence classes correspond to the components of a box-configuration.  
The reader might already see how the definition of right justification of boxes 
intuitively captures this process (such a reader is invited to skip the rest of this proof, as it is merely an in depth description of this process).  
 We remark that cases $(i)$ to $(iii)$ correspond to $i$-bricks   ${\bf M}_3$,  $\B_3$, $\B_2$.    

 Consider the region $\Diag \cap (\RR \times [y-2\varepsilon,y+2\varepsilon])$ of our diagram $\Diag $.  We may assume   that there are no   crossing   strands in this region and moreover that all strands in this region are vertical lines (by applying local isotopy \ref{rel1} if necessary, we can move any crossings above or below the region and straighten all the strands within the region).
     
Let   $\Strand_2$ denote a strand  in the diagram $\Diag   \cap (\RR \times [y-2\varepsilon,y+2\varepsilon])$. 
We pull  the strand $\Strand_2$ rightwards under the process outlined above.  
 Let $\Strand_1$ denote any strand in  $\Diag   \cap (\RR \times [y-2\varepsilon,y+2\varepsilon])$ which $\Strand_2$ interacts with during this process of being pulled rightwards.  
 Then either $(i)$ the $\Strand_2$-strand   passes through the  $\Strand_1$-strand using the non-interacting relations or
$(ii)$ the $\Strand_2$-strand comes to a halt at a point such that the strands $\Strand_1$ and $\Strand_2$ are in one of cases $(i)$ to $(iv)$ above.  
Having obtained $\Diag '$ (which we assume is not zero under relation \ref{rel15}) by pulling all solid strands as far right as possible 
 in this manner (while keeping the red strands fixed) we find that the solid and red strands in the diagram $\Diag '$ have naturally gathered into $\ell$ distinct $\prec$-connected components (each containing precisely 1 red strand).
This is simply because $(i)$ the difference between the $x$-coordinates of two red strands is 
equal to an element of $\tfrac{1}{\ell}\ZZ$  
$(ii)$ $\varepsilon \ll \tfrac{1}{2n\ell}$
$(iii)$ if $\Strand_1\prec \Strand_2 \prec \dots \prec \Strand_k$ for $1\leq k \leq n$, then $|x(\Strand_k)- x(\Strand_1)| \leq 2n\varepsilon \pmod \ZZ$ using  $(i)$ to $(iv)$.

We now consider the $m$th $\prec$-connected component, denoted  $\Theta_m$, of strands containing the vertical red-strand with $x$-coordinate $\sigma_m-m/\ell $ and residue $s_m\in (\ZZ/e\ZZ)$.  
If $\Theta_m$ contains no vertical strands, then this $\prec$-connected component corresponds to  an empty component of the box configuration and we are done.  
Assume $\Theta_m$ contains at least one solid strand (i.e.~$|\Theta_m|>1$).  
Then by $(i)$ to $(iv)$ there exists at least one solid $ {s}_m$-strand, $\Strand_1\in \Theta_m$, 
in the region $[\sigma_m-m/\ell-2\varepsilon,\sigma_m-m/\ell)$.
If $|\Theta_m|>2$, then by $(i)$ to $(iv)$  there exists $\Strand_2\in \Theta_m$ such that  one of the following holds $(i)$ $\Strand_2$ is a solid ${s}_m$-strand with $x(\Strand_2)\in [\sigma_m-m/\ell -4\varepsilon ,  \sigma_m-m/\ell)$  
or $(ii)$ $\Strand_2$ is a solid $({s}_m-1)$-strand with $x(\Strand_2)\in [\sigma_m-m/\ell-\g- 3\varepsilon, \sigma_m-m/\ell-\g )$ 
or
$(iii)$ 
$\Strand_2$ is a solid $(s_m+1)$-strand with $x(\Strand_2)\in [\sigma_m-m/\ell-\g- 3\varepsilon, \sigma_m-m/\ell-\g)$.  
Continuing in this fashion, 
we find that  
 for  $x\in \ZZ/e\ZZ$ 
 the solid strands in the  region  $\cup_{k\in\NN}[ke+x-m/\ell- 2n\varepsilon, ke+x-m/\ell  )$ 
 are precisely   the  solid $x$-strands in $\Theta_m$.  
By isotopy, we can  
assume that each strand in $\Theta_m$ has maximal $x$-coordinate  such that the strands are still related under $\succ$. 
In so doing, we find that   each  $i$-strand  intersects the line  $y $ at some point equal to $\mathbf{I}^\theta_{(r,c,m)} $ for some     $i$-box $(r,c,m)$.
We now restrict to the region 
$X=\Diag '  \cap (\RR \times [y- \varepsilon,y+ \varepsilon])$ and by isotopy we can assume that all strands in this region are vertical. We hence have that $X={\sf1}_\xi$ for some $\xi \in \con \ell n$ and that any pair  of vertical strands in $X={\sf1}_\xi$ is  of the form $(i)$, $(ii)$, $(iii)$, or $(iv)$ above.   
   In particular if $(r,c,m)\in \xi$ then this implies  that at least one of $(r-1,c,m),(r,c-1,m),(r-1,c-1,m)\in \xi$ or $r=c=1$ as required.

For the second claim, let $X={\sf 1}_\xi$ for $\xi\in \con \ell n$.  
Moving a strand corresponding to   $(r,c,m)\in \xi$ rightwards using non-interacting relations (in the process above!) corresponds to the process of right-justifying the box $(r,c,m)\in \xi$.  The result follows.  
 }  \end{proof}

\begin{cor}
The algebra $\algebra_n^\Bbbk(\sigma)$ is {finitely} generated 
by the set of all reduced diagrams and the elements $y_{(r,c,m)}{\sf 1} _\mu$ for $\mu \in  \con \ell n$.  
\end{cor}   

\begin{proof}
As in the proof of  \cref{step1}, we can make successive horizontal cuts to any $\Diag \in \algebra_n^\Bbbk(\sigma)$ until 
we have rewritten $\Diag = \Diag _1 \Diag _2 \dots \Diag _k$ such that each $\Diag _i$ is either reduced (with northern and southern loadings   given by 
 box configurations) or is obtained by adding a single dot to a strand in a weight idempotent.  
\end{proof}

\begin{rmk}
The upshot of \cref{step1} is that we can organise our proofs (which will involve manipulating $\sigma$-diagrams) by induction on the dominance order on 
box-configurations.  In future proofs, we can gloss over any steps involving the non-interacting relations and instead focus on 
 manipulating $\sigma$-diagrams corresponding to  right-justified  box-configurations.   
   \end{rmk}
   
   \begin{prop}\label{aprop}
   For $\mu \in \con \ell n$ with $(r,c,m) \in \mu$  and $\la \in \con \ell n\setminus \mptn \ell n $, we have that
$$ y_{(r,c,m)}{\sf 1} _\mu \in \algebra^{\rhd \mu}_n(\theta)
\qquad
 {\sf 1} _\lambda \in \algebra^{\rhd \la}_n(\theta)   
 $$
where $ \algebra^{\rhd  \mu }_n(\theta) =
\algebra^\Bbbk_n(\theta) \langle {\sf1} _\nu \mid \nu\in \mptn \ell n, \nu \rhd \mu\rangle\algebra^\Bbbk_n(\theta)$ and the elements 
are defined in \cref{idempotentsssss}.  
   
   \end{prop}
   
   \begin{proof}
%
We shall prove both statements  simultaneously  by (intertwined) reverse induction on  the dominance ordering on box-configurations.    
   Let  $\xi:=((n), \varnothing, \dots ,\varnothing)\in \con \ell n$.  
If $\xi' \in \con \ell n$ is such that $\xi'  \vartriangleright \xi$, then it easy to see that   ${\sf 1}_{\xi'}=0$ under relation \ref{rel15}. 
Applying  relation  \ref{rel6} or \ref{rel11} to $y_{(r,c,m)}{\sf 1}_\xi$ as necessary, we have that 
  $y_{(r,c,m)}{\sf 1}_\xi=0$ by relation \ref{rel15}.  
Therefore the base case for induction holds.
Now, assume that $\xi\in \con \ell n$ is right-justified (by \cref{step1}) and suppose that the result has been proven for all box-configurations strictly more dominant than $\xi$.  
We refine our induction by the natural ordering on  $r+c\geq 2$. We assume that for all   $r'+c' < r+c$, we have
\begin{itemize}[leftmargin=*]
\item[$(\alpha)$]   if $(r',c',m')\not\in \xi$ this implies   $(r'+1,c',m)\not\in \xi$ and  $(r',c'+1,m)\not\in \xi$.  
 \item[$(\beta)$]  $y_{(r',c',m')}{\sf 1}_\xi \in \algebra^{\rhd \xi}_n(\sigma)$.  
\end{itemize}
We first check the base case  for our   inductive assumptions for which  $r+c=2$. 
\begin{itemize}[leftmargin=*]
 \item[(a)] We have that $r+c=2$ and $(1,1,m)\not\in \xi $ for some $0\leq m <\ell$.  We can pull the strand labelled by $(1,2,m)$ or $(2,1,m)$ to the right using the non-interacting relations  until the strand labelled by $(1,2,m)$ or $(2,1,m)$
encounters the $(i+1)$- respectively $(i-1)$-box immediately following  $ {(1,2,m)}$ or ${(2,1,m)}$.  
The resulting diagram factors through an idempotent ${\sf 1}_{\xi' }$ for    
 $\xi' \in \con \ell n$ such that   $\xi' \rhd \xi$ and   
 the result follows by induction on the dominance ordering on box-configurations.
\item[(b)]  We have that $r+c=2$,   $(1,1,m) \in \xi $, and 
we have that a dot on the   strand labelled by $(1,1,m) \in \xi $ for some $0\leq m <\ell$.
We apply relation \ref{rel11}  to the solid strand labelled by $(1,1,m)$ and the red strand with $x$-coordinate $\sigma_m-m/\ell$.  We can now pull the solid strand rightwards   to obtain a more dominant box configuration; the result follows by induction on the $\rhd _\sigma$ ordering on $\con \ell n$.  
\end{itemize}
Now for the inductive step.  
\begin{itemize}[leftmargin=*]
\item[(A)]
We have that $(r,c,m) \not\in \xi$ for some $r+c>2$, but that  $(a,b,m) \in \xi$ for all $2\leq a+b < r+c$.  
   If $(1,c+1,m)\in \xi$  and $(1,c,m)\not \in \xi$ 
   (similarly if $(r+1,1,m)\in \xi$  and $(r,1,m)\not \in \xi$) then  then  one can argue as in the $r=c=1$ case above. 
We now assume this is not the case.  
 Since $\xi$ is right justified,    there are three cases to consider (corresponding to the ${\bf M}_1$,  $\N_2$, and ${\bf M}_3$ bricks).   
 \begin{itemize}
 \item[$(\N1)$]  Suppose $(r-1,c+1,m), (r,c+1,m) \in \xi$ (and note that  $(r-1,c,m)\in \xi$ by induction).   
    We apply the relation depicted in \cref{M1relation,whatslovegottodo} 
     to the   the triple of strands in $\sf 1_\xi$ labelled by $(r,c+1,m)$, $(r-1,c+1,m)$ and $(r-1,c,m)$.  We hence obtain  a sum of two diagrams $X_2-X_1$.  
We have that  $X_1  \cap (\RR\times \{1/2\}) = y_{(r-1,c,m)}{\sf 1}_{\xi'}$ for $\xi'=\phi^{NE}_{(r-1,c+1,m)}(\xi)$. 
We have that $X_2  \cap (\RR\times \{1/2\}) ={\sf1}_{\xi''}$ where $\xi''=\phi^{N}_{r-1,c,m}(\xi)$ and we note 
 that  ${(r-1,c,m)}\not\in \xi''$.  
 Therefore $X_1$ and $X_2$ factor  through  idempotents    strictly more dominant than $\xi'$ and $\xi''$ respectively (by induction on $r+c$)  and therefore both factor through an idempotent strictly more dominant than $\xi$ by \cref{moredominant}.  
 \item[$(\N2)$] Suppose   $(r+1,c,m)\in \xi$ and and $(r+1,c-1,m)\in \xi$  (and $(r,c-1,m)\in \xi$ by induction).
  This case is similar to $(\N1)$ except that we use the {\em rightmost} (not the leftmost)  relation \ref{rel6} in
  the analogue of  \cref{M1relation}. 
 \item[$(\N3)$] 
Now suppose that  $ (r-1,c+1,m)\not \in \xi$ (the case $(r+1,c-1,m)\not\in \xi$ is identical).  
We apply the rightmost equation of  \cref{N3} to the strands labelled by $(r-1,c,m)$ and $(r,c+1,m)$ to obtain a sum of two diagrams
$Y'_1-Y_2'$ which both factor through the diagram  $y_{(r-1,c,m)}{\sf 1}_{\xi}$.   
Therefore both factor through an idempotent strictly more dominant than $\xi$  by induction on $(r+c)$.   
 \end{itemize}
\item[(B)] If $r=1$ or  $c=1$  then  one can argue as in the $r=c=1$ case above   with the  exception that  we replace the reference  to relation \ref{rel11}  with \ref{rel7}.  
For $r,c>1$,   our inductive assumption implies 
 $(r-1,c,m), (r,c-1,m), (r-1,c-1,m)\in  \xi$.  
    We apply the relation depicted in \cref{M1relation}       to the   the quadruple of strands in $\sf 1_\xi$ labelled by  $(r-1,c,m), (r,c-1,m)$, $(r-1,c-1,m)$, and $(r,c,m)$.  We hence obtain  a sum of two diagrams $Z_1+Z_2$.  
We have that  $Z_1  \cap (\RR\times \{1/2\})= y_{(r,c-1,m)}{\sf 1}_{\xi}$.   
We have that $Z_2  \cap (\RR\times \{1/2\}) = {\sf1}_{\xi'}$ where $\xi'=\phi^{NW}_{r,c-1,m}(\xi)$ and we note  that  ${(r,c-1,m)}\not\in \xi'$.  
 Therefore $Z_1$ and $Z_2$ factor  through  idempotents    strictly more dominant than $\xi$ and $\xi'$ respectively (by induction on $r+c$)  and so both factor through an idempotent strictly more dominant than $\xi$ (by \cref{moredominant}).  \qedhere
 \end{itemize}
 \end{proof}

\begin{thm}\label{cellularitybreedscontempt0.5}  
We let 
 $  \lambda^{(0)} \geq \lambda^{(1)}\geq\lambda^{(2)}\geq\ldots\geq  \lambda^{(m)}  $  denote any  total refinement of the order, $\rhd_\theta$, on $\mptn \ell n$.
 The $\Bbbk$-algebra $\algebra^\Bbbk_n(\theta )$ has a filtration 
\[
0 \subset   \algebra^{\geq  \lambda^{(0)} }_n(\theta) \subseteq  \algebra^{\geq  \lambda^{(1)} }_n(\theta) \subseteq \dots
\subseteq   \algebra^{\geq  \lambda^{(m)}}_n(\theta)  = \algebra^\Bbbk_n(\theta ).
\]
where $ \algebra^{\geq  \lambda }_n(\theta) =
\algebra^\Bbbk_n(\theta)\langle {\sf1} _\nu \mid \nu\in \mptn \ell n, \nu \geq \la\rangle\algebra^\Bbbk_n(\theta)$.  
\end{thm}
\begin{proof}
This follows immediately from \cref{step1,aprop}
\end{proof}
 
 \begin{prop}\label{move a dot down} Let $\la, \mu \in \con  \ell n$  with $(r,c,m) \in [\mu]$, $\imath,\jmath \in (\ZZ/e\ZZ)^n$, 
and    $\Diag \in \Rset$.  
 We have that 
 $$  y_{(r,c,m)}{\sf 1}^{\imath}_\mu  
  \Diag = \Diag    y_{(r',c',m')}{\sf 1}^{\imath}_\la   +  \sum_{
 \begin{subarray}c
 \Diag '  \in \Rset  \\
  \Diag  '\rhd_\theta \Diag  
 \end{subarray}
 }     \Diag ' . $$  
 Here
 $(r',c',m') \in[\la]$  on the southern edge  is connected to $(r,c,m)$ on the northern edge.      \end{prop}
 \begin{proof}
Now consider a general diagram   $\Diag    \in {\sf 1}^{\imath}_\mu\algebra^\Bbbk_n(\theta){\sf 1}_\la^{\jmath}$.   
If there is a dot placed at the top of the strand labelled by  $(r,c,m)\in[\mu]$ we move this dot along the strand towards the bottom of the diagram using (homogenous)  relations \ref{rel2}, \ref{rel3} and \ref{rel14}.
We hence rewrite $\Diag $ as  a linear combination of diagrams  $\Diag' \in 
{\sf 1}_\mu^\imath \algebra^\Bbbk_n(\theta ){\sf 1}_\la^\jmath$ where  each $\Diag '$   differs from $\Diag $ only in that   one or zero crossings of like-labelled strands have been undone and  there is either   zero or one   dots along the southernmost edge. This amounts to removing zero or one of the Coxeter generators in the  reduced expression $[\Diag ]$ to obtain a reduced expression of $[\Diag ']$. Hence this sum is over diagrams $\Diag '$ such that $\Diag '\rhd_\theta \Diag $, as required.   
  \end{proof}
 \begin{eg}
 A step in the   procedure outlined in the proof of \cref{move a dot down} is  carried out in obtaining  \cref{initialdiagram} 
 from  \cref{dotcross}.   \end{eg}

\begin{prop}\label{mastumoto}
 
 Let $\la, \mu \in \con  \ell n$ and $\imath,\jmath \in (\ZZ/e\ZZ)^n$.  
If $w=s_{t_1,t_1+1}\dots s_{t_m,t_m+1}$ and $w=s_{r_1,r_1+1}\dots s_{r_m,r_m+1}$ are two reduced expressions and 
   $\Diag , \Diag ' \in
\Rset $
  are two reduced diagrams  with $[\Diag ]= s_{t_1,t_1+1}\dots s_{t_m,t_m+1}$ and 
  $[\Diag ']=s_{r_1,r_1+1}\dots s_{r_m,r_m+1}$, then 
 $$\Diag    = \Diag ' + \sum_{
 \begin{subarray}c
 \Diag '' \in \Rset  \\
  \Diag ''\rhd_\theta \Diag ' 
 \end{subarray}
 }   \Diag '' f_{\Diag ''}(y)\in
  {\sf 1}^{\imath}_\mu\algebra^\Bbbk_n(\theta){\sf 1}_\la^{\jmath}
  \quad \text{for some $f_{\Diag ''}(y)\in {\mathcal Y}^\jmath_\la$}.$$
 \end{prop}
\begin{proof}Recall Matsumoto's theorem  states that any two reduced expressions for $w\in \mathfrak{S}_n$ differ only by applying a sequence of the relations $s_{i,i+1}s_{i+1,i+2}s_{i,i+1}=s_{i+1,i+2}s_{i,i+1} s_{i+1,i+2}$. 

By assumption  neither $\Diag $ nor $\Diag '$ contains a double crossing or a dot on any strand.  Therefore we can obtain  $\Diag $ from $\Diag '$  by   applying {\em only} the strand-through-crossing relations \ref{rel8}, \ref{rel9}, and \ref{rel12} successively. 
These are precisely the relations   which rewrite a subproduct $s_{i,i+1}s_{i+1,i+2}s_{i,i+1}$ in $[\Diag ']$ in the form 
$s_{i+1,i+2}s_{i,i+1} s_{i+1,i+2}\pm {1}_{\mathfrak{S}_{n+2\ell}}$.  Thus   one can rewrite the diagram in the required form at the expense of some error terms $\Diag ''$ such that $[\Diag '']=u<w$, however we note that $\Diag ''$ may no longer be reduced.  If $\Diag ''$ is reduced, then $g_{\Diag ''}(y)=\pm 1$.    If $\Diag ''$ is not reduced, then rewriting $\Diag ''$ as a linear combination of reduced diagrams involves  reapplying some combination of relations \ref{rel8}, \ref{rel9},   \ref{rel12}, or   \ref{rel5}  (which simply creates more error terms $\Diag '''$ with 
$[\Diag ''']=v\leq u<w$ and $g_{\Diag '''}(y)=\pm1$) 
  and \ref{rel6} (which creates  error terms $\Diag '''$ with 
$[\Diag ''']=v<u<w$ and $g_{\Diag '''}(y) $ a polynomial of degree 1).   
Once this process terminates we are left with a combination of more dominant  diagrams, but with dots in the middle of the diagram (which we now need to move southwards).  
 We can isotope  the neighbourhood of any diagram (in particular any region containing a dot)
  so that it is of the form ${\sf 1}_{\nu}$ for some $\nu \in \con \ell n$; we then apply 
 \cref{move a dot down} to deduce the result.  
\end{proof}

\begin{prop}\label{mastumoto2}
 Let $\la, \mu \in \con  \ell n$ and $\imath,\jmath \in (\ZZ/e\ZZ)^n$.  
 Let    $\Diag  \in
 {\sf 1}^{\imath}_\mu\algebra^\Bbbk_n(\theta){\sf 1}_\la^{\jmath} 
 $
 be a diagram which is {\em not reduced}, then 
 $$\Diag    =    \sum_{
 \begin{subarray}c
 \Diag '  \in \Rset  \\
  \Diag  '\rhd_\theta \Diag  
 \end{subarray}
 }     \Diag '  f_{\Diag '}(y)\in
  {\sf 1}^{\imath}_\mu\algebra^\Bbbk_n(\theta){\sf 1}_\la^{\jmath} \quad \text{for some $f_{\Diag ''}(y)\in {\mathcal Y}^\jmath_\la$}.$$
 \end{prop}
\begin{proof}
 Any non-reduced diagram $\Diag \in  {\sf 1}^{\imath}_\mu\algebra^\Bbbk_n(\theta){\sf 1}_\la^{\jmath}$ contains either a double-crossing of strands or a dot on a strand. By \cref{move a dot down} we can restrict our attention to the former   case.  We choose  $y\in [\varepsilon,1-\varepsilon]$ minimal such that   
$Y=\Diag \cap(\RR\times(y,1])$ contains this double crossing.  
We have that $X=\Diag \cap(\RR\times[0,y ))$ 
does not contain this double-crossing and so  $X$  is a reduced diagram.   
  By \cref{mastumoto} we can assume that   the northern most crossing of strands in $X$ is equal to the crossing of strands in $Y$  modulo a combination of diagrams $X'\rhd_\theta X$.  
 We have that $\Diag =YX = \sum_{X'\rhd X}  YX'=  YX + \sum_{\Diag '\rhd \Diag } \Diag '$ where the second term in the sum is of the required form.  We are now free to undo the double-crossing in $YX\cap [\RR \times (y-\varepsilon,y+\varepsilon)]$ using relation \ref{rel5}, \ref{rel6}, \ref{rel11}.  In any  case, the result is  either 1 or 2 diagrams  with this crossing undone (possibly at the expense of acquiring some dots) and so both diagrams are strictly more dominant than  $\Diag $ in the Bruhat ordering.  By \cref{move a dot down} we can remove any dots to obtain a linear combination of {diagrams} $\Diag ''$  which strictly dominate $\Diag $ and such that $\Diag ' \cap (\RR\times[0,y])$ is reduced.   
If each $\Diag '$ is reduced the result follows; if not, then there exists some $1>y'>y>0$ for which $\Diag ' \cap (\RR\times[0,y ))$ is not reduced and we can repeat the above argument.  Repeating as necessary, the result follows.    
\end{proof}

We hence immediately generalise the spanning set of \cite[Section 2.3]{MR2525917} to our algebras. Namely, we can write any element of $\algebra^\Bbbk_n(\theta)$ as a linear combination of   reduced diagrams with dots along the southernmost edge.  
\begin{cor}\label{arikibasis}
The algebra $\algebra^\Bbbk_n(\theta)$ is free  as a $\Bbbk$-module and spanned by
 $$\{
  \Diag     f_{\Diag }(y)
  \mid  
\la,\mu \in \con \ell n, \imath,\jmath \in (\ZZ/e\ZZ)^n,
\Diag \in {^\imath_\mu\!\mathcal{R}^\jmath_\la},
  f_\Diag (y) \in {\mathcal Y}^\jmath_\la
  \}$$ 
\end{cor}
 
We view the following theorem as the ``2-sided" version of \cref{arikibasis}.

 \begin{thm} \label{cellularitybreedscontempt}
 Let   $\Bbbk$  be a   integral domain.  
  The $\Bbbk$-algebra  $\algebra^\Bbbk_n(\theta )$ is spanned by    \[
  \{{ \Cell}_{\SSTS  \SSTT} \mid \SSTS \in \SStd_{\theta}(\lambda,\mu), \SSTT\in \SStd_{\theta}(\lambda,\nu), 
 \lambda  \in \mptn {\ell}n,  \mu, \nu  \in \con {\ell}n\}.
\]
 \end{thm}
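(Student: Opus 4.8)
Since $\algebra(n,\theta,\kappa)$ is by definition $R$-spanned by all $\theta$-diagrams, it suffices to show that an arbitrary $\theta$-diagram $D$, with northern loading $\mathbf{I}_\mu$ and southern loading $\mathbf{I}_\nu$ for some $\mu,\nu\in\con\ell n$, lies in the span of the elements $\C_{\SSTS\SSTT}$. The plan is to argue by induction along a total refinement of the $\rhd_\theta$-order, proving the claim for $D$ modulo the span $\algebra^{\rhd\lambda}(n,\theta,\kappa)$ of all diagrams factoring through a box configuration strictly more dominant than the waist $\lambda$; the most dominant case $\lambda^{(0)}=((n),\varnothing,\dots,\varnothing)$ is immediate, since any more dominant configuration is killed by \ref{rel15}.

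First I would put $D$ into standard form by the straightening procedure from the proof of \cref{cellularitybreedscontempt0.5}: pull every solid strand leftwards (keeping the red strands vertical) until it is convex with its leftmost point as far left as possible. If some strand is thereby pushed more than $n$ units to the left of the leftmost red strand then $D=0$ by \ref{rel15}; otherwise the restriction to the waist $y=\tfrac12$ is a left-justified idempotent $\mathsf{1}_\xi$ for some $\xi\in\con\ell n$. By \cref{cellularitybreedscontempt0.5}, if $\xi$ is not a multipartition then $\mathsf{1}_\xi\in\algebra^{\geq\lambda}$ for some multipartition $\lambda\rhd_\theta\xi$, so $D$ factors through a strictly more dominant configuration and we are done by induction. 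We may therefore assume the waist is a multipartition $\lambda\in\mptn\ell n$, in which case the parts of $D$ lying above and below the waist trace bijections $[\lambda]\to\mathbf{I}_\mu$ and $[\lambda]\to\mathbf{I}_\nu$, giving
\[
D\equiv C^{\ast}_{\SSTS}\,C_{\SSTT}=\C_{\SSTS\SSTT}\pmod{\algebra^{\rhd\lambda}(n,\theta,\kappa)}
\]
for tableaux $\SSTS$ of shape $\lambda$ and weight $\mu$ and $\SSTT$ of shape $\lambda$ and weight $\nu$.

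It remains to arrange that $\SSTS$ and $\SSTT$ are semistandard, and this is where the main difficulty lies. Treating the upper half (the lower half follows after applying $\ast$), suppose $\SSTS$ violates a condition of \cref{semistandard:defn}. If condition $(i)$ fails then the strand of the box $(1,1,m)$ has crossed the red $\kappa_m$-strand, and relations \ref{rel11}--\ref{rel14} together with \ref{rel15} rewrite the diagram through a strictly more dominant configuration. If condition $(ii)$ or $(iii)$ fails at some box, then its solid strand meets the solid strand and ghost of the adjacent box in its column or row in exactly one of the configurations $\mathbf{N}_1,\mathbf{N}_2,\mathbf{N}_3$ of \cref{ahlsjfdhalkjsdf}; applying \cref{N1,N2,N3} rewrites the diagram as a leading term carrying a dot plus an untethered error, and pulling the dotted and crossing strands further leftwards exactly as in the inductive step of \cref{cellularitybreedscontempt0.5}, by means of \cref{LEMMA1!}, exhibits every resulting term as factoring through a configuration strictly more dominant than $\lambda$. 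Thus a non-semistandard tableau contributes only elements of $\algebra^{\rhd\lambda}(n,\theta,\kappa)$, which lie in the required span by induction. The crux is precisely this verification that maximal leftward pulling yields exactly the semistandard configurations, with every semistandard violation reducing, through the brick calculus of \cref{N1,N2,N3} and \cref{LEMMA1!}, to strictly more dominant terms.
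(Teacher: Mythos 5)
Your overall architecture (straighten via the procedure of \cref{cellularitybreedscontempt0.5}, reduce to a waist $\mathsf{1}_\lambda$ with $\lambda\in\mptn\ell n$, then push non-semistandard violations into $\algebra^{\rhd\lambda}(n,\theta,\kappa)$) matches the paper's, but there is a genuine gap at the central step: you pass directly from ``the waist is $\mathsf{1}_\lambda$'' to $D\equiv C^\ast_{\SSTS}C_{\SSTT}\pmod{\algebra^{\rhd\lambda}}$. The halves $a_\lambda$ and $b_\lambda$ of $D$ above and below the waist are arbitrary diagrams: they may carry dots and double-crossings, whereas $C_\SSTS$ is by definition dot-free with the minimal number of crossings (no bigons). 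Removing these decorations is where the paper does real work. For dots, the paper first proves the claim that $\mathsf{1}_\lambda$ with a single dot on any strand lies in $\algebra^{\rhd\lambda}(n,\theta,\kappa)$, by induction on $r+c$: relations \ref{rel6} and \ref{rel7} trade a dot on the strand of $(r,c,m)$ for a dot on $(r,c-1,m)$ or $(r-1,c,m)$ plus a more dominant term, until relation \ref{rel11} absorbs the dot at $(1,1,m)$ through the red strand; dots anywhere in $a_\lambda$ are then slid to the waist via \ref{rel2}, \ref{rel3}, \ref{rel14}. Double-crossings within each half are resolved separately by \ref{rel4}--\ref{rel7}, \ref{rel9}--\ref{rel11}, with the crucial caveat (which your argument, working with $D$ as a whole, cannot see) that double-crossings of the \emph{product} $a_\lambda\mathsf{1}_\lambda b_\lambda$ straddling the waist must \emph{not} be resolved, since doing so rewrites through configurations $\mu$ that need not dominate $\lambda$. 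None of this appears in your proposal, and without it the identification of the halves with tableau diagrams is unjustified.

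A second, related defect is that your induction is only along (a refinement of) the $\theta$-dominance order, and as stated it does not terminate. Both the resolution of triple points (needed because $C_\SSTS$ is not unique up to isotopy, so one must show the choice of representative is immaterial) and the rewriting of a non-semistandard $\SSTS$ produce error terms supported at the \emph{same} $\lambda$; the paper controls these by a secondary reverse induction on the total crossing number $\sharp(C^\ast_\SSTS)+\sharp(C_\SSTT)$, using that relations \ref{rel8}--\ref{rel10} and \ref{rel12} change a triple-point resolution at the cost of terms with strictly fewer crossings. Finally, a smaller inaccuracy: a violation of conditions $(ii)$ or $(iii)$ of \cref{semistandard:defn} is not a local $\mathbf{N}_1$, $\mathbf{N}_2$, or $\mathbf{N}_3$ configuration (those bricks describe missing boxes in a box-configuration and are the engine of \cref{cellularitybreedscontempt0.5}, not of this theorem); rather, it is a ``bad crossing'' of the strand of $(r,c,m)$ with the solid or ghost strand of $(r-1,c,m)$ or $(r,c-1,m)$, which, once isotoped to the bottom of $C_\SSTS$, frees that strand to move leftwards and factor through a strictly more dominant idempotent — your conclusion is right, but the mechanism you cite is borrowed from the wrong proof.
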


\begin{proof}
In \cref{cellularitybreedscontempt0.5} we saw that any diagram $\Diag \in \algebra^\Bbbk_n(\theta )$ can be written as a linear combination of elements of the form $x_\lambda {\sf 1}_\lambda  y_\lambda^\ast $  for $x_\la, y_\la \in \algebra^\Bbbk_n(\theta){\sf 1}_\la$ and  $\la\in\mptn \ell n$.
It remains to show that the elements $x_\lambda$ and $ y_\lambda$ can be chosen so that 
$(i)$ $x_\la$ and $ y_\la$ are reduced diagrams  (neither   has a dot on any strand, or contains any ``double-crossing''),
$(ii)$ the span of these  elements is independent of the choices of reduced expression for $[x_\la]$ and $[ y_\la]$.    
We shall then conclude that the algebra is spanned by $\Cell_{\SSTS\SSTT}$ for tableaux $\SSTS,\SSTT$ of shape $\lambda\in \mptn\ell n$.  
Finally it will remain to show that the set of  $\Cell_{\SSTS\SSTT}$ for a pair of {\em semistandard} tableaux $\SSTS,\SSTT$ span the algebra.

The result for $\lambda=((n),\varnothing,\dots,\varnothing)$ is trivial.  
We assume that $(i)$  is proven for all partitions strictly more dominant than $\la$.  
 For the remainder of the proof, we refine our induction by  proceeding along the Bruhat order on $x_\la$ and $ y_\la$ and consider the span of elements  modulo 
\begin{equation}\label{workmod}
{\rm Span}_\Bbbk
 \{\Cell_{\SSTS\SSTT} \mid \Cell_\SSTS \rhd _\theta x_\la \text{  or  }\Cell_\SSTT\rhd _\theta  y_\la
  \} + \algebra_n^{\rhd \la}.  
\end{equation}

 If $x_\la$ or $ y_\la$ has a dot or a double crossing, then $x_\la {\sf1}_\la  y_\la^\ast$ is zero modulo 
  \cref{workmod} 
 by  \cref{move a dot down} and \cref{mastumoto2}.  Similarly, given   any two reduced diagrams 
 $x_\la$ and $x'_\lambda$ tracing out the same bijection $:[\la]\to [\mu]$, we have that
  $x_\la {\sf 1}_\la  y_\la - x_\la'{\sf 1}_\la  y_\la$ belongs to \cref{workmod} by \cref{mastumoto}.  
  Thus $(i)$ and $(ii)$ hold by induction.

  We note that any   bijection $:[\la]\to [\mu]$ is encoded as a  tableau of shape $\la$ and weight $\mu$ and so 
  $\{\Cell_{\SSTS\SSTT}\mid \SSTS, \SSTT \text{ are tableaux of shape }\la\}$ is   a spanning set by definition and our proof of $(i)$ and $(ii)$.  
 It only remains to show that the subset of  semistandard tableaux (within the wider class of tableaux) index a spanning set.  
We consider $\Cell_\SSTS$ (or $\Cell_\SSTT^\ast)$ such that $\SSTS$ (or $\SSTT$) violates the semistandardness condition.
In other words, one of the following holds: 
 $(i)$  $\SSTS(1,1,m)>s_m$ 
 $(ii)$  $\SSTS(r,c,m)> \SSTS(r-1,c,m)  +\g$ or 
 $(iii)$  $\SSTS(r,c,m)> \SSTS(r,c-1,m) -\g$.
In each case, we obtain a ``bad crossing".  We can choose to draw our diagram $\Cell_\SSTS$ so that this crossing appears at the bottom of the diagram using \cref{mastumoto2} and induction on the Bruhat ordering.  
These crossings are as follows,
\begin{itemize}
   \item[$(i)$]      the solid strand corresponding to $(1,1,m)$ passes to the  right   of the red $\theta_m$-strand,    \item[$(ii)$]     the ghost strand corresponding to $(r,c,m)$ passes to the  right   of the solid strand corresponding to $(r-1,c,m)$,
  \item[$(iii)$]  the solid strand corresponding to $(r,c,m)$ passes to the  right   of the ghost strand corresponding to $(r-1,c,m)$. 
 \end{itemize}
In each case the strand labelled by the  box $(r,c,m)$ is now free to move  right  wards using the process outlined in \cref{cellularitybreedscontempt0.5} and hence belongs to $\algebra^{\rhd \la}_n(\theta)$.  \end{proof}

\subsection{The Schur functor}\label{schurdefn}
We   define   the {\sf Schur} or {\sf KZ} functor  
 relating    the  quiver Hecke and Cherednik algebras.   
 We let $\omega
\in \mptn\ell n$ denote the   unique element which is  minimal in the $\theta$-dominance order. 
For  weakly decreasing   $s_1\geq s_2\geq \dots \geq s_\ell$ we have that  $\omega=(\varnothing,\varnothing,\dots ,\varnothing ,(1^n))$.   
 We let ${\sf E}^\theta_\omega$ denote the associated  {\sf Schur idempotent} 
 $ 
 {\sf E}^\theta_\omega= \sum_{\imath \in (\ZZ/e\ZZ)^n}{\sf 1}^{\imath}_{\omega} 
 $. 
 
\begin{defn}
Given $\SSTT \in \SStd_{\theta}(\la,\omega)$ 
(respectively $\stt \in \Std_{\theta}(\la)$) we define the reading word $R(\SSTT)$ (respectively $r(\stt)$) 
 to be the ordered sequence of boxes  $(r_k,c_k,m_k)$ for $1\leq k \leq n$ 
 under the ordering  $(r_k,c_k,m_k) < (r_{k'},c_{k'},m_{k'})$ if and only if 
 $\SSTT(r_k,c_k,m_k)< \SSTT(r_{k'},c_{k'},m_{k'})$ (respectively 
 $\stt(r_k,c_k,m_k)< \stt(r_{k'},c_{k'},m_{k'})$).  
 \end{defn}

 \begin{prop}\label{comb}
Let   $\theta \in (\ZZ/e\ZZ)^\ell$   and $\theta\in \weight$ be a charge.  For  $\lambda \in \mptn \ell n$, we have a  
 bijection
$$
\varphi : \Std_{\theta}(\lambda) \rightarrow  \SStd_{\theta}(\lambda,\omega) .
$$
 given by $\varphi(\stt)=\SSTT$ if and only if   $r(\stt)=R(\SSTT)$.  
 \end{prop}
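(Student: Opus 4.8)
The plan is to realise both $\Std_{\theta,\kappa}(\lambda)$ and $\SStd_{\theta,\kappa}(\lambda,\omega)$ via their \emph{reading words} and to observe that the two realisations coincide. A reading word is just a total ordering $(b_1,\dots,b_n)$ of the $n$ boxes of $\lambda$, i.e. a bijection $\{1,\dots,n\}\to[\lambda]$. The map $\varphi$ is then forced: if $r(\stt)=R(\SSTT)$ then $\SSTT$ must send $\stt^{-1}(k)$ to the $k$th smallest element of $\mathbf{I}_\omega$, so I would \emph{define} $\varphi(\stt)=\SSTT$ by $\SSTT(r,c,m)=v_{\stt(r,c,m)}$, where $\mathbf{I}_\omega=\{v_1<\dots<v_n\}$. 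Since $v_k$ is strictly increasing in $k$, the boxes are ordered the same way by $\SSTT$-value as by $\stt$-value, so automatically $r(\stt)=R(\varphi(\stt))$, which is the asserted defining property. It then suffices to prove (a) $\varphi(\stt)$ is semistandard whenever $\stt$ is standard, and (b) every semistandard $\SSTT$ arises this way, i.e. the $\stt$ determined by $\stt^{-1}(k)=\SSTT^{-1}(v_k)$ is standard; together with the obvious mutual inversion these give that $\varphi$ is a bijection.

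The one geometric input I would isolate first is the spacing of $\mathbf{I}_\omega$. After relabelling components we may assume $\theta_1<\cdots<\theta_\ell$, so that $\omega=(\varnothing,\dots,\varnothing,(1^n))$ and $v_k=\mathbf{I}_{(k,1,\ell)}=\theta_\ell+(k-1)+(k+1)\varepsilon$. Hence the consecutive gaps are $v_{k+1}-v_k=1+\varepsilon>\g$, so \emph{any} two distinct values of $\mathbf{I}_\omega$ differ by strictly more than $\g=1$; moreover $v_1=\theta_\ell+2\varepsilon>\theta_m$ for every $m$. These are the only two properties of $\omega$ I will use.

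For direction (a) I would verify the three semistandard conditions for $\SSTT=\varphi(\stt)$ directly. Condition $(i)$ follows from $v_1>\theta_m$. For $(ii)$, column-standardness of $\stt$ gives $\stt(r,c,m)\geq \stt(r-1,c,m)+1$, whence $\SSTT(r,c,m)=v_{\stt(r,c,m)}\geq v_{\stt(r-1,c,m)+1}=\SSTT(r-1,c,m)+1+\varepsilon>\SSTT(r-1,c,m)+\g$. Condition $(iii)$ is immediate, since $\stt(r,c,m)>\stt(r,c-1,m)$ forces $\SSTT(r,c,m)>\SSTT(r,c-1,m)>\SSTT(r,c-1,m)-\g$.

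Direction (b) is the crux, and amounts to showing that the reading word of a semistandard $\SSTT$ is a linear extension of the row/column poset of $[\lambda]$, so that $\stt$ is standard. The column part is easy: $(ii)$ gives $\SSTT(r,c,m)>\SSTT(r-1,c,m)$, hence $\stt(r,c,m)>\stt(r-1,c,m)$. The row part is where the spacing does the work: condition $(iii)$ yields only the slack inequality $\SSTT(r,c,m)>\SSTT(r,c-1,m)-\g$, which a priori permits $\SSTT(r,c,m)<\SSTT(r,c-1,m)$. But were this the case then, the two values being distinct, the gap bound would force $\SSTT(r,c-1,m)-\SSTT(r,c,m)\geq 1+\varepsilon>\g$, contradicting $(iii)$. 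Therefore $\SSTT(r,c,m)>\SSTT(r,c-1,m)$ and $\stt$ is row-standard. The main obstacle is precisely this upgrade of the slack condition $(iii)$ to a strict inequality, which is exactly what the normalisation of $\mathbf{I}_\omega$ by the diagonal width $\g$ is designed to supply.
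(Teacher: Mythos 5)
Your proposal is correct and follows essentially the same route as the paper: the paper likewise identifies the entry $r$ of $\stt$ with the ordered loading value $\mathbf{I}_{(r,1,\ell)}$, notes that condition $(i)$ of the semistandardness definition is automatic since $(1,1,\ell)\lhd_\theta(1,1,m)$ for all $m$, and converts conditions $(ii)$ and $(iii)$ into column- and row-standardness using the fact that consecutive points of $\mathbf{I}_\omega$ are more than $\g$ apart. Your explicit upgrade of the slack inequality in $(iii)$ to a strict one via the gap bound $v_{k+1}-v_k=1+\varepsilon>\g$ is exactly the step the paper compresses into its one-line appeal to the spacing of $\mathbf{I}_\omega$, and your version of the bound is the one consistent with the paper's normalisation $\g=1$.
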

 
 \begin{proof}
We order the boxes  $(r,1,\ell)$ for $1\leq r \leq n$ of the $\ell$-partition $\omega\in \mptn \ell n$ by the natural numbering on $\{1,\ldots, n\}$.  Clearly $\mathbf{I}^\theta_{(r,1,\ell)}<\mathbf{I}^\theta_{(r',1,\ell)}$ if and only if $1\leq r' < r\leq n$.  
Therefore the set of maps $\{\SSTT \mid  \SSTT: [\boxla]  \to \bf{I}^\theta_\omega\}$  is  in bijection with the set of
 tableaux of shape $\la$. 
  This map is simply given by identifying 
 the entry    $ \mathbf{I}^\theta_{(r,1,\ell)}\in {\mathbb R}$ in a box of $\SSTT$  with the entry  $r\in \NN$ in  a box of $\stt$.
  It remains to show that $\SSTT$ is semistandard if and only if $\stt$ is standard.  
Condition $(i)$ of \cref{semistandard:defn} is empty as  ${\bf I}_{(r,1,\ell)}<s_m $ for all $1\leq m \leq \ell$.  
Conditions  $(ii)$  and $(iii)$ of  \cref{semistandard:defn} simply  correspond to the 
   conditions that $\stt(r,c,m)>\stt(r-1,c,m)$ and  $\stt(r,c,m)>\stt(r,c-1,m)$ respectively 
  and the fact that ${\bf I}^{\theta}_{(r+1,1,\ell)}<{\bf I}^{\theta}_{(r,1,\ell)}-1$ for $1\leq r \leq n$.   
  \end{proof}
  
     Over a field, the theorem below follows  from \cite[Theorem 4.5]{MR3732238} and 
 \cite[Theorem 5.3]{Webster(b)}.  
Our proof proceeds by matching-up the presentations in \cref{defintino1} and \cref{defintino2}     and is valid over a   integral domain.  By matching up these presentations explicitly, we see   how Webster's diagrammatics generalises that of Khovanov--Lauda \cite{MR2525917}.  We  also generalise Webster's results to an arbitrary    integral domain.  
     
 \begin{thm}\label{isomer}   
Let $\Bbbk$ be a    integral domain.  
Let $\underline{s}\in \NN_{>1}\times(\ZZ/e\ZZ)^\ell$ and let    $\theta\in \weight$ be any integral lift.  We have  an isomorphism of graded $\Bbbk$-algebras  
 $$ \sigma   : \mathcal H_n^\Bbbk(\underline{s}) \to  {\sf E}^\theta_\omega \algebra^\Bbbk_n(\theta)  {\sf E}^\theta_\omega $$
 which is determined  as follows  
\begin{align*}
\sigma  ( e(\imath )) &= 1^{\imath}_\omega\\
\sigma  ( y_r e(\imath) ) &= 
\begin{minipage}{145mm} \scalefont{0.7}    \begin{tikzpicture}[xscale=-0.95,yscale=0.95] 
  \draw[thick] (-2,0) rectangle (12.6,2);
   \foreach \x in {-1.5,-0.5,2.5}  
     {\draw[wei2] (\x,0)--(\x,2); }
     \node [wei2,below] at (-1.5,-0.07)  {\tiny ${s}_0$};
          \node [wei2,below] at (2.5,-0.07)  {\tiny ${s}_{\ell\!-\!1}$};
        \draw[fill,white]   (0.2,1.9) rectangle (1.75,2.1);  
     \draw[fill,white] (0.2,0.1) rectangle (1.75,-0.1);  
     \draw[thick,dotted] (0.1,0.) --  (1.75,-0);       \draw[thick,dotted] (0.1,2) --  (1.75,2);  
          \node [wei2,below] at (-0.5,-0.07)  {\tiny ${s}_1$};
        \node [below] at (2.9,0)  {\tiny $i_1$};
                      \node [below] at (3.8,0)  {\tiny $i_2$};           \draw  (2.9,0)  to  (2.9,2) ;          \draw[densely dotted, black]  (2.3,0)  to  (2.3,2) ; 
                    \draw  (3.8,0)  to  (3.8,2) ;          \draw[densely dotted, black]  (3,0)  to  (3,2) ; 
                      \draw[densely dotted, black]  (3.9,0)  to  (3.9,2) ; 
                       \draw[densely dotted, black]  (3.9,0)  to  (3.9,2) ; 
                       \draw[fill,white]   (4.2,2.1) rectangle (-0.5+5.8,-0.1);  
     \draw[thick,dotted] (4.18,0.) --  (-0.5+5.8,-0);         \draw[thick,dotted] (-0.5+4.2,2) --  (-0.5+5.8,2);    
       \node [below] at (-0.5-0.9+7.1,0)  {\tiny $i_{r-2}$};
         \draw  (-0.5-0.9+7,0)  to  (-0.5-0.9+7,2) ; 
                  \draw[densely dotted, black]  (-0.4-0.9+7,0)  to  (-0.4-0.9+7,2) ; 
       \node [below] at (-0.5+7.2,0)  {\tiny $i_{r-1}$};
           \draw  (-0.5+7.3,0)  to  (-0.5+7.3,2) ;         
   \node [below] at (-0.5+8.1,0)  {\tiny $i_{r}$};  
    \draw[fill]   (-0.5+8.2,1)           circle (1.75pt); 
      \node [below] at (-0.5+9,0)  {\tiny $i_{r+1}$};
        \draw  (-0.5+8.2,0)  to  (-0.5+8.2,2) ;             \draw  (-0.5+9.1,2)  to  (-0.5+9.1,0) ;     
        \draw[densely dotted, black]    (-0.5-0.8+8.2,0)  to  (-0.5-0.8+8.2,2) ;             \draw[densely dotted, black]    (-0.5-0.8+9.13,2)  to  (-0.5-0.8+9.13,0) ;     
          \node [below] at (-0.5+10,0)  {\tiny $i_{r+2}$};
        \draw  (-0.5+10,0)  to  (-0.5+10,2) ;         \draw[densely dotted, black]  (-0.5+9.2,0)  to  (-0.5+9.2,2) ; 
         \draw[densely dotted, black]  (-0.5+10.1,0)  to  (-0.5+10.1,2) ; 
               \node [below] at (0.4+10,0)  {\tiny $i_{r+3}$};
        \draw  (0.4+10,0)  to  (0.4+10,2) ;         \draw[densely dotted, black]  (0.4+9.2,0)  to  (0.4+9.2,2) ; 
        \draw[densely dotted, black]  (0.4+10.1,0)  to  (0.4+10.1,2) ; 
                           \draw[fill,white]   (10.8,2.1) rectangle (11.6,-0.1);  
      \draw[thick,dotted](10.8,2) --  (11.6,2);         \draw[thick,dotted] (10.8,0) --  (11.6,0);    
                  \node [below] at (12,0)  {\tiny $i_{n}$};
        \draw  (12,0)  to  (12,2) ;    
   \end{tikzpicture}\end{minipage}\\
   \sigma  ( \psi_r e(\imath)) & = 
\begin{minipage}{145mm} \scalefont{0.7}    \begin{tikzpicture}[xscale=-0.95,yscale=0.95] 
  \draw[thick] (-2,0) rectangle (12.6,2);
   \foreach \x in {-1.5,-0.5,2.5}  
     {\draw[wei2] (\x,0)--(\x,2); }
     \node [wei2,below] at (-1.5,-0.07)  {\tiny ${s}_0$};
          \node [wei2,below] at (2.5,-0.07)  {\tiny ${s}_{\ell\!-\!1}$};
        \draw[fill,white]   (0.2,1.9) rectangle (1.75,2.1);  
     \draw[fill,white] (0.2,0.1) rectangle (1.75,-0.1);  
     \draw[thick,dotted] (0.1,0.) --  (1.75,-0);       \draw[thick,dotted] (0.1,2) --  (1.75,2);  
          \node [wei2,below] at (-0.5,-0.07)  {\tiny ${s}_1$};
           \node [below] at (2.9,0)  {\tiny $i_1$};
                      \node [below] at (3.8,0)  {\tiny $i_2$};
           \draw  (2.9,0)  to  (2.9,2) ;          \draw[densely dotted, black]  (2.3,0)  to  (2.3,2) ; 
                    \draw  (3.8,0)  to  (3.8,2) ;          \draw[densely dotted, black]  (3,0)  to  (3,2) ; 
                      \draw[densely dotted, black]  (3.9,0)  to  (3.9,2) ; 
                       \draw[densely dotted, black]  (3.9,0)  to  (3.9,2) ; 
                       \draw[fill,white]   (4.2,2.1) rectangle (-0.5+5.8,-0.1);  
     \draw[thick,dotted] (4.18,0.) --  (-0.5+5.8,-0);         \draw[thick,dotted] (-0.5+4.2,2) --  (-0.5+5.8,2);    
       \node [below] at (-0.5-0.9+7.1,0)  {\tiny $i_{r-2}$};
         \draw  (-0.5-0.9+7,0)  to  (-0.5-0.9+7,2) ; 
                  \draw[densely dotted, black]  (-0.4-0.9+7,0)  to  (-0.4-0.9+7,2) ; 
       \node [below] at (-0.5+7.2,0)  {\tiny $i_{r-1}$};
           \draw  (-0.5+7.3,0)  to  (-0.5+7.3,2) ;         
   \node [below] at (-0.5+8.1,0)  {\tiny $i_{r}$};
      \node [below] at (-0.5+9,0)  {\tiny $i_{r+1}$};
        \draw  (-0.5+8.2,0)  to [out=90,in=-90] (-0.5+9.1,2) ;             \draw  (-0.5+8.2,2)  to [out=-90,in=90] (-0.5+9.1,0) ;     
        \draw[densely dotted, black]  (-0.5+8.2-0.8,0)  to [out=90,in=-90] (-0.5+9.1-0.8,2) ;             \draw[densely dotted, black]  (-0.5+8.2-0.8,2)  to [out=-90,in=90] (-0.5+9.1-0.8,0) ;     
          \node [below] at (-0.5+10,0)  {\tiny $i_{r+2}$};
        \draw  (-0.5+10,0)  to  (-0.5+10,2) ;         \draw[densely dotted, black]  (-0.5+9.2,0)  to  (-0.5+9.2,2) ; 
         \draw[densely dotted, black]  (-0.5+10.1,0)  to  (-0.5+10.1,2) ; 
               \node [below] at (0.4+10,0)  {\tiny $i_{r+3}$};
        \draw  (0.4+10,0)  to  (0.4+10,2) ;         \draw[densely dotted, black]  (0.4+9.2,0)  to  (0.4+9.2,2) ; 
        \draw[densely dotted, black]  (0.4+10.1,0)  to  (0.4+10.1,2) ; 
                           \draw[fill,white]   (10.8,2.1) rectangle (11.6,-0.1);  
      \draw[thick,dotted](10.8,2) --  (11.6,2);         \draw[thick,dotted] (10.8,0) --  (11.6,0);    
                  \node [below] at (12,0)  {\tiny $i_{n}$};
        \draw  (12,0)  to  (12,2) ;    
   \end{tikzpicture}\end{minipage}
\end{align*}
 {Thus we obtain many distinct presentations for the same Hecke algebra, $\mathcal H_n^\Bbbk(\underline{s})$, 
 one for each possible lift of $\underline{s}\in(\ZZ/e\ZZ)^\ell$ to the integers.  While these algebras are all isomorphic, we have already seen that each of these distinct lifts   has a different combinatorial flavour.  We shall see what these different lifts tell us about the structure of $\mathcal  H_n^\Bbbk(\underline{s})$ in \cref{mainresults,manydiffgrad,isaididexplain,proj}. }
 \end{thm}

 \begin{rmk}
 The reader might think notice that the diagrams in \cref{isomer} are only dependent on
  $\underline{s}\in \NN_{>1}\times(\ZZ/e\ZZ)^\ell$ and not on the integral lift  $\theta\in \weight$.  
  We remind the reader that this is not the case because the $x$-coordinates of the red strands are determined by     $\theta\in \weight$ (even though their residue-decorations, $s_{\ell-1}, \dots s_0$, are independent of   the integral lift).  
 \end{rmk}

Before proving this result, we provide a new set of generators of  
${\sf E}^\theta_\omega \algebra^\Bbbk_n (\theta)  {\sf E}^\theta_\omega$ 
which is highly compatible with the desired isomorphism.  

  \begin{prop}\label{andrew}
The algebra ${\sf E}^\theta_\omega \algebra^\Bbbk_n (\theta)  {\sf E}^\theta_\omega$ is
 generated by 
   \begin{equation}\label{gens1}
   \langle {\sf 1}_\omega^{\imath}, \sigma  (y_s(e(\imath))), 
 \sigma  (\psi_r(e(\imath))) \mid \text{  $1\leq r <n$, $1\leq s \leq n$, and $\imath \in (\ZZ/e\ZZ)^\ell$ }\rangle
\end{equation}   subject to  \ref{rel1} to \ref{rel15}.  
In particular,  
 the  $\Bbbk$-linear map $\sigma : \mathcal  H_n^\Bbbk(\underline{s}) \to  {\sf E}^\theta_\omega \algebra^\Bbbk_n(\theta)  {\sf E}^\theta_\omega $ is surjective.  
  
  \end{prop}

\begin{proof}
We begin by rewriting every diagram in ${\sf E}^\theta_\omega \algebra^\Bbbk_n (\theta)  {\sf E}^\theta_\omega$ 
  so that the red strands are ``replaced" with 
something more akin to the usual cyclotomic relation for the classical KLR-algebra.  

\medskip
\noindent{\bf Claim:}
Let $\Diag \in {\sf 1}^{\imath}_\omega\algebra^\Bbbk_n(\theta){\sf 1}_\omega^{\jmath} $ be an arbitrary diagram.  
 Let $\overline{\Diag }$ be the diagram obtained from $\Diag $ by the following procedure:  
 \begin{itemize}[leftmargin=*]
 \item [(1)]
 pull  each red strands  sufficiently 
 far to the right that   it longer intersects any solid or ghost strands;
  \item [(2)] for each crossing   involving a solid $i$-strand (from southwest to northeast) and a red-strand (from southeast to northwest) place a dot at the position of the crossing in $\Diag $.  
 \end{itemize}
We claim   that $\Diag =\overline{\Diag }$.  
 
\begin{figure}[ht!] \[\begin{tikzpicture}[baseline, thick,yscale=0.45,xscale=0.75]
  
          \draw[wei2 ] (0.6,-2)--(0.6,2)   node[below,at start]{$0 $ } ;

     \draw  (0,-2) to[out=90,in=-60] 
       node[below,at start]{$0 $ }  (1,0)   to[out=120,in=-90] 
        (-0.8,2);

   \draw  (0,2) to[out=-90,in=60] 
 (1,0)   to[out=-120,in=90] 
        node[below,at end]{$0 $ }         (-0.8,-2);

\end{tikzpicture}
\; =
 \begin{tikzpicture}[baseline, thick,yscale=0.45,xscale=0.75]
  
          \draw[wei2] (0.7,-2)--(0.7,2)   node[below,at start]{$0 $ } ;

     \draw  (0,-2) to[out=60,in=-30] 
       node[below,at start]{$0 $ } 
 (1,-0.4)        to[out=150,in=-30]  
       (0.25,0)  to[out=150,in=-90] 
        (-0.8,2);

   \draw  (0,2) to[out=-60,in=30] 
 (1,0.4)        to[out=-150,in= 30]  
       (0.25,0)  to[out=-150,in=90] 
           node[below,at end]{$0 $ }     (-0.8,-2);

\end{tikzpicture}
-
 \begin{tikzpicture}[baseline, thick,yscale=0.45,xscale=0.75]
  
          \draw[wei2] (0.7,-2)--(0.7,2)   node[below,at start]{$0 $ } ;

  \draw  (-0.8,-2) to[out=90,in=-90] 
       node[below,at start]{$0 $ } 
 (1.3-1.1,0)       to[out=90,in=-90] 
        (-0.8,2);

 \draw  (0,-2) to[out=90,in=-90] 
       node[below,at start]{$0 $ } 
 (1.3,0)       to[out=90,in=-90] 
        (0,2);
 
 \end{tikzpicture} 
\; =
 \begin{tikzpicture}[baseline, thick,yscale=0.45,xscale=0.75]
  
          \draw[wei2] (0.7,-2)--(0.7,2)   node[below,at start]{$0 $ } ;

     \draw  (0,-2) to[out=60,in=-60] 
       node[below,at start]{$0 $ }      (-0.8,2);

     \draw  (0,2) to[out=-60,in=60]  
 node[below,at end]{$0 $ }            (-0.8,-2) ; 
            
            \fill(0.34,1) ellipse (4pt and 7pt);
            \fill(0.34,-1) ellipse (4pt and 7pt);

\end{tikzpicture}
-
 \begin{tikzpicture}[baseline, thick,yscale=0.45,xscale=0.75]
  
          \draw[wei2] (0.7,-2)--(0.7,2)   node[below,at start]{$0 $ } ;

  \draw  (-0.8,-2) to[out=90,in=-90] 
       node[below,at start]{$0 $ } 
 (-0.8,0)       to[out=90,in=-90] 
        (-0.8,2);

 \draw  (0,-2) to[out=90,in=-90] 
       node[below,at start]{$0 $ } 
 (0,0)       to[out=90,in=-90] 
        (0,2);
        
        \fill(0,0) ellipse (4pt and 7pt);
 
 \end{tikzpicture}\;=
  \begin{tikzpicture}[baseline, thick,yscale=0.45,xscale=0.75]
  
          \draw[wei2] (0.7,-2)--(0.7,2)   node[below,at start]{$0 $ } ;

     \draw  (0,-2) to[out=70,in=-70] 
   node[below,at start]{$0 $ }          (-0.8,2);

     \draw  (0,2) to[out=-70,in=70]  
     node[below,at end]{$0 $ }       coordinate [pos=.85]   (hi)  (-0.8,-2) ; 
            
            \fill(-0.38,-1) ellipse (4pt and 7pt);
            \fill(0.2,-1) ellipse (4pt and 7pt);

\end{tikzpicture}
 \]
 \caption{Passing a crossing through a red strand using relation \ref{rel12}, then relation  \ref{rel11}, then \ref{rel6}).  All ghost   strands commute  with red strands, and so we do not picture these here.  }\label{forandrew}
 \end{figure}
 
 We now prove the claim.   
 We consider the effect of pulling the $m$th red strand to the right.  
 We can restrict our attention to the effect on the sub-diagram of $\Diag _{\underline{w}}$ (with underlying word, $[\Diag ]$, equal to  $\underline{w}$) consisting only of the  solid strands of residue $s_m\in \ZZ/e\ZZ$  which cross the $m$th red strand (we can ignore  their ghosts, by \ref{rel13}).  
 We can further  restrict our attention to only consider the  crossings  of these strands within the    region $  (\sigma_m-m/\ell, \infty)\times [0,1]$ as  it is only these strands within this region   to which  we need apply    non-trivial relations (while pulling the $m$th red strand to the right).   
    Thus we write $\Diag _{\underline{w}}= 
 \Diag _{\underline{u}} \Diag _{\underline{w'}} \Diag _{\underline{v}} $ where $\Diag _{\underline{w'}} $ consists of the  
 $s_m$-strands intersected with the region $  (\sigma_m-m/\ell, \infty)\times [y_1,y_2]$ 
 (where $y_1$ and $y_2$ are chosen appropriately) and 
 $\Diag _{\underline{u}}$ and $\Diag _{\underline{v}} $ consist of the $s_m$-strands 
 with the regions $  (-\infty, \sigma_m-m/\ell,  ]\times [y_2,1]$  and 
  $  (-\infty, \sigma_m-m/\ell,  ]\times  [0,y_1] $ 
respectively.   
We set $k$ to be the number of solid $i$-strands in $\Diag _{\underline{w}'}$. We proceed by induction on  $\ell(\underline{w'})$.  
 
 We have that $\ell(\underline{w'})\geq 2k$ because each solid strand intersects the red strand at least twice.  
 If $\ell(\underline{w'})=2k$ then the claim follows by $k$ applications of the leftmost relation in \ref{rel11}.  
We can now suppose $\underline{w'}=\underline{x}  s_{1,2} \underline{y}$
 (if $s_{1,2}$ does not appear in $\underline{w'}$ then we can  apply the leftmost relation of \ref{rel11}) for $\underline{x}$ and $\underline{y}$ subwords such that  $\ell(\underline{x})+\ell(\underline{y})<\ell(\underline{w})$.  
We pull the red-strand through the crossing corresponding to this  $s_{1,2}$  at the expense of an error term (with coefficient $-1$) in which we undo this crossing.  
 By our inductive assumption we can 
  move the red strand through 
 $\Diag _{\underline{x}\underline{y}}$ and obtain  
$\overline{\Diag }_{\underline{x}}   \Diag _{s_{1,2}}  \overline{\Diag }_{\underline{y}}
- \overline{\Diag }_{{\underline{x}}{\underline{y}}}$ but with the red strand all the way to the right.  
By construction, we have a dot   at the bottom of $\Diag _{\underline{x}}'$ on the leftmost strand; 
using \ref{rel6}   we pull this dot down to the left through $ \Diag _{s_{1,2}} $  to obtain the required 
diagram
 $\overline{\Diag }_{\underline{w'}}$ at the expense of an error term equal to $ \overline{\Diag }_{{\underline{x}}{\underline{y}}}$ (with coefficient $+1$).  The error terms cancel and the claim holds.  An example  is depicted in \cref{forandrew} (for $
 {\underline{w}}={\underline{w'}}=s_{1,2}$).

 \begin{figure}[ht!]
   \[
\scalefont{0.8}\begin{tikzpicture}[very thick,xscale=1.6,yscale=0.7,baseline]
\draw[densely dotted, ]   (-2.6,-1) -- +(-.8,2)node[below,at start]{$i$};
\draw[densely dotted, ]   (-3.4,-1) -- +(.8,2)node[below,at start]{$i$}; 
 \draw (-3,-1) .. controls (-2.5,0) ..  +(0,2)
node[below,at start]{$j$};
\end{tikzpicture} 
 \qquad \quad
\begin{tikzpicture}[very thick,xscale=1.6,yscale=0.7,baseline]
\draw[densely dotted, ]  (-1.5+.4,-1) -- +(+-.8,2)node[below,at start]{$i $};
\draw[densely dotted, ]   (-1.5+-.4,-1) -- +(+.8,2)node[below,at start]{$i $};
 \draw  (-1.5+0,-1) .. controls (-1.5-.5,0) ..  +(0,2)
node[below,at start]{$j $};
 
\end{tikzpicture}
\qquad \quad  \begin{tikzpicture}[very thick,xscale=-0.5,yscale=0.5,baseline,yscale=1.3]
\draw (-2.8,-1) .. controls (-1.2,0) ..  +(0,2)
node[below,at start]{$i$};
\draw[densely dotted, ] (-1.2,-1) .. controls (-2.8,0) ..  +(0,2)
node[below,at start]{$j$};
\end{tikzpicture}
\qquad  \begin{tikzpicture}[very thick,xscale=0.5,yscale=0.5,baseline,yscale=1.3]
\draw (-2.8,-1) .. controls (-1.2,0) ..  +(0,2)
node[below,at start]{$i$};
\draw[densely dotted, ] (-1.2,-1) .. controls (-2.8,0) ..  +(0,2)
node[below,at start]{$j$};
\end{tikzpicture}\]
\caption{Crossings within a  diagram  $\Diag  \in {\sf E}^\theta_\omega\algebra^\Bbbk_n (\theta)  {\sf E}^\theta_\omega$. }
\label{critical}
\end{figure}

 With the claim in place, we may now assume   $\Diag  \in {\sf E}^\theta_\omega\algebra^\Bbbk_n (\theta)  {\sf E}^\theta_\omega$
   has no crossings involving red strands.  Any crossing in such a diagram is of one of the forms depicted in \cref{critical}.

We can undo any double-crossing using relation \ref{rel4} and \ref{rel5} to obtain a sum of diagrams of the required form.  
We must now consider any triple-crossings as in \cref{critical} and show that this belongs to a wider product  of the form 
\begin{equation}\label{reqforms}    \sigma  (\psi_{r+1}(e(\imath)))   \sigma  (\psi_r(e(\imath)))   \sigma  (\psi_{r+1}(e(\imath))) 
\qquad \text{or} \qquad 
 \sigma  (\psi_r(e(\imath)))  \sigma  (\psi_{r+1}(e(\imath)))  \sigma  (\psi_r(e(\imath)))    \end{equation} for $1\leq r <n$.  
 There are precisely 4 different wider products (up to isotopy) 
 to which such a diagram can belong, these are depicted in \cref{critical2}.  The first and fourth of these diagrams are already of the respective forms  in \cref{reqforms}.

 \begin{figure}[ht!]
   \[\scalefont{0.8}\begin{tikzpicture}[scale=0.6]
\draw[densely dotted, rounded corners]   (3.2-1.4,-0.3)--  (3.2-1.4,0) to[out=90,in=-90]  (0-1.4,4) -- (0-1.4,4.3)   ;
\draw[densely dotted, rounded corners ]      (3.2-1.4,4.3) --   (3.2-1.4,4) to[out=-90,in=90 ] (0-1.4,0)--(0-1.4,-0.3) ;
\draw[very thick, rounded corners ]   (3.2,-0.3) -- (3.2,0)  to[out=90,in=-90]  (0,4)--  (0,4.3) ;
\draw[very thick, rounded corners ]   (3.2,4.3) -- (3.2,4)    to[out=-90,in=90 ]   (0,0)--  (0,-0.3)  ;  

\draw[very thick, rounded corners ]  (1.6 ,-0.3) node[below] {$j$}; \draw(0 ,0-0.3) node[below] {$i$}; \draw(3.2 ,0-0.3) node[below] {$i$}; 
\draw[very thick, rounded corners ]   (1.6 ,0-0.3) to   (1.6 ,0) to [out=60,in=-90]	( 3.4,2)		to [out=90,in=-60]  (1.6 ,4)--(1.6 ,4.3) ; 
\draw[densely dotted, rounded corners] (1.6 -1.4,-0.3)-- (1.6 -1.4,0) to [out=60,in=-90]	( 3.4 -1.4,2)		to [out=90,in=-60]  (1.6 -1.4 ,4)--(1.6 -1.4 ,4.3);
 
\end{tikzpicture}
\qquad\qquad
\begin{tikzpicture}[scale=0.6]
\draw[densely dotted, rounded corners]   (3.2-1.4,-0.3)--  (3.2-1.4,0) to[out=110,in=-90]  (0-1.4,4) -- (0-1.4,4.3)   ;
\draw[densely dotted, rounded corners ]      (3.2-1.4,4.3) --   (3.2-1.4,4) to[out=-110,in=90 ] (0-1.4,0)--(0-1.4,-0.3) ;
\draw[very thick, rounded corners ]   (3.2,-0.3) -- (3.2,0)  to[out=110,in=-90]  (0,4)--  (0,4.3) ;
\draw[very thick, rounded corners ]   (3.2,4.3) -- (3.2,4)    to[out=-110,in=90 ]   (0,0)--  (0,-0.3)  ;  

\draw[very thick, rounded corners ]  (1.6 ,-0.3) node[below] {$j$}; \draw(0 ,0-0.3) node[below] {$i$}; \draw(3.2 ,0-0.3) node[below] {$i$};

\draw[very thick  ]   (1.6 ,0-0.3) to   (1.6 ,0) to [out=90,in=-90]	( 2.3,2)		to [out=90,in=-90]    (1.6 ,4)-- (1.6 ,4.3) ; 
\draw[densely dotted ] (1.6 -1.4,-0.3)-- (1.6 -1.4,0) to [out=90,in=-90]	( 2.3  -1.4,2)		to [out=90,in=-90]  (1.6 -1.4 ,4)--    (1.6 -1.4 ,4.3);
 
\end{tikzpicture} 
\qquad\qquad
\begin{tikzpicture}[scale=0.6]
\draw[densely dotted, rounded corners]   (3.2-1.4,-0.3)--  (3.2-1.4,0) to[out=90,in=-90]  (0-1.4,4) -- (0-1.4,4.3)   ;
\draw[densely dotted, rounded corners ]      (3.2-1.4,4.3) --   (3.2-1.4,4) to[out=-90,in=90 ] (0-1.4,0)--(0-1.4,-0.3) ;
\draw[very thick, rounded corners ]   (3.2,-0.3) -- (3.2,0)  to[out=90,in=-90]  (0,4)--  (0,4.3) ;
\draw[very thick, rounded corners ]   (3.2,4.3) -- (3.2,4)    to[out=-90,in=90 ]   (0,0)--  (0,-0.3)  ;  

\draw[very thick, rounded corners ]  (1.6 ,-0.3) node[below] {$j$}; \draw(0 ,0-0.3) node[below] {$i$}; \draw(3.2 ,0-0.3) node[below] {$i$};

\draw[very thick  ]   (1.6 ,0-0.3) to   (1.6 ,0) to [out=90,in=-90]	( 0.55,2)		to [out=90,in=-90]    (1.6 ,4)-- (1.6 ,4.3) ; 
\draw[densely dotted ] (1.6 -1.4,-0.3)-- (1.6 -1.4,0) to [out=90,in=-90]	( 0.55  -1.4,2)		to [out=90,in=-90]  (1.6 -1.4 ,4)--    (1.6 -1.4 ,4.3);
 
\end{tikzpicture} 
\qquad\qquad
\begin{tikzpicture}[scale=0.6]
\draw[densely dotted, rounded corners]   (3.2-1.4,-0.3)--  (3.2-1.4,0) to[out=90,in=-60]  (0-1.4,4) -- (0-1.4,4.3)   ;
\draw[densely dotted, rounded corners ]      (3.2-1.4,4.3) --   (3.2-1.4,4) to[out=-90,in=60 ] (0-1.4,0)--(0-1.4,-0.3) ;
\draw[very thick, rounded corners ]   (3.2,-0.3) -- (3.2,0)  to[out=90,in=-60]  (0,4)--  (0,4.3) ;
\draw[very thick, rounded corners ]   (3.2,4.3) -- (3.2,4)    to[out=-90,in=60 ]   (0,0)--  (0,-0.3)  ;  

\draw[very thick, rounded corners ]  (1.6 ,-0.3) node[below] {$j$}; \draw(0 ,0-0.3) node[below] {$i$}; \draw(3.2 ,0-0.3) node[below] {$i$};

\draw[very thick  ]   (1.6 ,0-0.3) to
 [out=90,in=-90]	( .3 ,2)		to [out=90,in=-90]   (1.6 ,4.3) ; 
\draw[densely dotted ] (1.6 -1.4,-0.3) 
to [out=90,in=-90]	( .3 -1.4,2)		to [out=90,in=-90]  (1.6 -1.4 ,4.3);
 
\end{tikzpicture}   \]
\caption{The wider diagrams    $\Diag \in {\sf E}^\theta_\omega\algebra^\Bbbk_n (\theta)  {\sf E}^\theta_\omega$ 
 containing a triple-crossing of the form depicted in \cref{critical}. The first and final of which are  
already of the required form.   }
\label{critical2}
\end{figure}
For the remaining two diagrams in \cref{critical2}, we can pull the ghost $j$-strand (respectively, solid $j$-strand)  to the right 
(to the left) of the  solid (respectively ghost) $i$-crossing using \ref{rel6} or \ref{rel5}.  In either case, the resulting diagram is now of the required form. 
 \end{proof}

\begin{proof} [Proof of Theorem 6.14]
 We have already seen that ${\sf E}^\theta_\omega \algebra^\Bbbk_n (\theta)  {\sf E}^\theta_\omega$ is
 generated by  
  the elements $ {\sf 1}_\omega^{\imath},$   $\sigma  (y_s(e(\imath))), $ 
  $
 \sigma  (\psi_r(e(\imath))) $ 
 for $1\leq s \leq n$ and $1\leq r <n$ and $\imath\in (\ZZ/e\ZZ)^n$.  
 We define $\sigma^{-1}$ to be the obvious inverse map.  We will now verify that  $\sigma$ respects \ref{rel1.1} to \ref{rel1.12} and that 
 $\sigma^{-1}$ respects  relations \ref{rel1} to \ref{rel15}.

The images of relations  \ref{rel1.1}  to \ref{rel1.7}  under $\sigma$ 
  follow from the diagrammatic definition of the multiplication and \ref{rel1}.   
  Conversely, the image under $\sigma^{-1}$ of the implicit diagrammatic   relations 
   (i.e. that strands carry residues, products are zero for non-matching  residue sequences,  and \ref{rel1})
   immediately follow from  \ref{rel1.1}  to \ref{rel1.7}.  
The images of relations  \ref{rel1.8}  to \ref{rel1.9}  under $\sigma$   
  follow from \ref{rel2} and   \ref{rel3}.  
  Conversely the image  of relations  \ref{rel2}  and \ref{rel3}  under $\sigma^{-1}$   
  follow from \ref{rel1.8} and   \ref{rel1.9}.

We shall now show that the image of \ref{rel1.10} under $\sigma$ holds in ${\sf E}^\theta_\omega \algebra^\Bbbk_n (\theta)  {\sf E}^\theta_\omega$ and 
that the images of  \ref{rel4}, \ref{rel5} and  \ref{rel6} under $\sigma^{-1}$ hold in $\mathcal H_n^\Bbbk(\underline{s})$.  
Relation \ref{rel1.10} has four parts; the images of the   
   $i_r\neq i_{r+1}\pm 1$ cases follow from   relation  \ref{rel4}   and  \ref{rel5}.    
   If $ {i}_r -1 = i_{r+1}$, then 
 we first apply relation  \ref{rel7}  to the diagram   $\sigma   (\psi_r e(\imath) )\sigma   (  \psi_r  e(s_{r,r+1}\imath))$ in order to undo the double-crossing  of the   ghost 
$(i-1)$-strand with the solid $i$-strand; now if $e\neq 2$, then this implies that $i_r  \neq i_{r+1}-1$ and so  the double-crossing  of the 
ghost $i$-strand with the solid $(i-1)$-strand can be undone without cost by relation  \ref{rel5}.  We hence obtain that   $\sigma   (\psi_r e(\imath) )\sigma   (  \psi_r  e(s_{r,r+1}\imath)) = 
\sigma   (y_{r+1} e(\imath))- \sigma   (y_{r} e(\imath))$, as required.   
  We now assume that  $ {i}_r +1 = i_{r+1}$ with $e\neq 2$.     Here we have that the 
 double-crossing  of the   ghost 
$i $-strand with the solid $( i+1)$-strand can be undone without cost by relation \ref{rel5}; now   the double-crossing  of the 
ghost $(i-1)$-strand with the solid $i$-strand can be undone   by relation  \ref{rel6} to
 obtain    $\sigma   (\psi_r e(\imath) )\sigma   (  \psi_r  e(s_{r,r+1}\imath)) = 
\sigma   (y_{r} e(\imath))- \sigma   (y_{r+1} e(\imath))$, as required.  
 Finally the $e=2$ case can be obtained in the same fashion as above, except noting that $i_r=i_{r+1}+1=i_{r+1}-1$ and so we need  apply relation  \ref{rel7} twice and hence obtain four terms. 
 
  Conversely, 
 let $\Diag  \in {\sf E}^\theta_\omega \algebra^\Bbbk_n(\theta )  {\sf E}^\theta_\omega$ be any diagram written as a product of the generators in \cref{gens1}.   
Any double crossing  in $\Diag $ of the form  depicted in \ref{rel4}, \ref{rel5}, \ref{rel7} must occur within a region of $\Diag $ of the form 
\begin{equation}\label{pairs12}
 \sigma (\psi_r) 
\sigma (\psi_r) 
{\sf 1}_\omega^{(\dots, i_{r-1},i_r, \dots )  } .  
\end{equation}
 In particular,  
none of the double-crossings in  \ref{rel4}, \ref{rel5}, \ref{rel7} ever appear by themselves; they always appear with a complementary pair of double-crossings strands (depending on $i_{r-1}=i_r-1, i_r, i_r+1$ or otherwise).
  Thus we do not need to show that  the images under $\sigma^{-1}$ of \ref{rel4}, \ref{rel5}, \ref{rel7} themselves hold, but rather we need only check that all possible pairs of these relations (which can appear in \cref{pairs12}) hold.  
These pairs correspond precisely to the subcases of \ref{rel1.10} and  can be  argued identically to the above (but backwards).

Now we consider relation \ref{rel1.11} at the same time as    \ref{rel9} and \ref{rel8}.    
We first check that $\sigma$ respects \ref{rel1.11} for each of the four cases.  
In the first case  of \ref{rel1.11}, we can move the ghost $(i-1)$-strand in  $\sigma  ( \psi_{r+1}\psi_{r} \psi_{r+1}e(\dots, i ,i-1,i \dots ))$    through the solid $i$-crossing
 using 
   \ref{rel9}    at the expense of an error term (with coefficient $-1$) in which we undo the solid $i$-crossing; 
  we can then simplify the former diagram  using   \ref{rel8}  and the error diagram using \ref{rel5} in order  to obtain $\sigma (  (\psi_{r} \psi_{r+1}\psi_{r}-1) e(\dots, i ,i+1,i \dots ))$.   \Cref{critical2} depicts the three steps in this process, with only the   step from the first to the second diagram producing an error term (i.e. we can get from the second to the third to the fourth  diagram in \cref{critical2} using only \ref{rel8}).  The second case is similar.  
 The third case is an amalgamation of cases 1 and 2, 
but the error terms must be simplified with \ref{rel7} due to the additional residue adjacencies.  
  The fourth case follows directly from relation  \ref{rel8}.  
  Thus $\sigma$ preserves relation \ref{rel1.11}.  

   Conversely, by  \cref{andrew} 
 any triple crossing of the form in   \ref{rel9} or \ref{rel8} 
must occur within a region of the diagram of the form 
\begin{equation}\label{residuu}
\sigma (\psi_r) 
\sigma (\psi_{r+1}) \sigma (\psi_r) 
{\sf 1}_\omega^{(\dots, i_{r-1},i_r,i_{r+1} \dots )  }
\qquad \text{or} \qquad 
\sigma (\psi_{r+1})  \sigma (\psi_r) \sigma (\psi_{r+1})
{\sf 1}_\omega^{(\dots, i_{r-1},i_r,i_{r+1} \dots )  }
\end{equation} for some $i_{r-1},i_r, i_{r+1}\in \ZZ/e\ZZ$.  
In particular, 
none of the triple-crossings  in  \ref{rel9} or \ref{rel8}   ever appear by themselves; they always appear with 
 a complementary triple-crossing strands (breaking down into cases according to whether  $i_{r-1}=i_r-1, i_r, i_r+1$ or otherwise).
  Thus we do not need to show that  the images under $\sigma^{-1}$ of  \ref{rel9} or \ref{rel8} 
   themselves hold, but rather we need only check that the possible combinations of these relations (which can appear in \cref{residuu})  hold.  
These pairs correspond precisely to the subcases of \ref{rel1.11} and  can be  argued identically to the  above (but backwards).

Finally it remains to consider  relation \ref{rel1.12} for $\mathcal H^\Bbbk_n(\underline{s})$ at the same time as the  red strand relations in 
${\sf E}^\theta_\omega \algebra^\Bbbk_n (\theta)  {\sf E}^\theta_\omega$.  
We have that 
$$\sigma( y_1^{\sharp\{s_m \mid s_m= i_1 	\}} e(\imath))  =0$$
by $\ell$ applications of \ref{rel11}.  We now consider the image under $\sigma^{-1}$ of the relations involving red strands.  
We have shown in the proof of \cref{andrew} that  any diagram 
   $\Diag \in {\sf E}^\theta_\omega \algebra^\Bbbk_n (\theta)  {\sf E}^\theta_\omega$ 
 is equal to some (decorated) diagram $\overline{\Diag }$ with no crossings involving red strands.  
Thus 
 we need only to verify that if $\Diag $ is unsteady, then 
$\sigma^{-1}(\overline{\Diag })=0$.  If $\Diag $ is unsteady, then we can move the  strands  
back towards the left, with the rightmost solid strand (of residue $i\in \ZZ/e\ZZ$, say)
 picking up a total of ${\sharp\{s_m \mid s_m= i	\}}$ dots; the resulting diagram factors through an idempotent of the form  
 $  y_{(1,1,\ell)}^{\sharp\{s_m \mid s_m= i_1 	\}} {\sf 1}^ \imath_\omega  =\sigma( y_1^{\sharp\{s_m \mid s_m= i_1 	\}} e(\imath)) $ and thus 
  $\sigma^{-1} (\Diag ) =0$ by  \ref{rel1.12}, as required.  
  \end{proof}

\subsection{Cellularity and quasi-heredity of quiver Cherednik algebras}

We shall now show that the spanning set of \cref{cellularitybreedscontempt} is in fact a cellular basis of the quiver Cherednik algebra.  
We first require a new ordering on the boxes in an $\ell$-multipartition.

\begin{defn}\label{defsucc}  
Given $\SSTS\in \SStd_{\theta}(\la,\mu)$  and   two distinct boxes $(r,c,m), (r',c',m') \in \boxla $ we write 
$(r,c,m)\succ (r',c',m')$ if one of the following holds 
 \begin{itemize} 
 \item[$(i)$]  ${\bf I}^{\theta}_{(r,c,m)}> {\bf I}^{\theta}_{(r',c',m')}  \pm  \g$ and $\SSTS(r,c,m) < \SSTS(r',c',m')  \pm  \g$ or  
 \item[$(ii)$]  $  {\bf I}^{\theta}_{(r,c,m)} > {\bf I}^{\theta}_{(r',c',m')}  $ and $\SSTS(r,c,m) < \SSTS(r',c',m')  $ or 
 \item[$(iii)$] $(r,c,m)$ and $(r',c',m')$ appear in the same row (respectively column) of the same component of $\la$  and $c=c'+1$ (respectively $r=r'+1$).
 \end{itemize}
We write $(r,c,m)\succeq (r',c',m')$ if either  $(r,c,m)= (r',c',m')$ or  $(r,c,m)\succeq (r',c',m')$.  
\end{defn}
 \begin{defn} Given $\SSTS\in \SStd_{\theta}(\la,\mu)$ and $\stt\in \Std_{\theta}(\la)$, 
  we say that that   $\stt$ factors through $\SSTS$ if  
  $(r,c,m)\succ (r',c',m')$ implies $\stt(r,c,m) > \stt(r',c',m')$ for all 
  pairs of distinct   $(r,c,m), (r',c',m') \in \boxla $. 

\end{defn}
%

\begin{prop}\label{complement}
Given $\SSTS\in \SStd_{\theta}(\la,\mu)$   there exists 
 an   ${\overline{\sts}} \in \Std_{\theta}(\la)$ such that 
 ${\overline{\sts}} $ factors through $\SSTS$.  
For such a pair, $(\SSTS,{\overline{\sts}})$, there exists a  tableau $ \SSTS^c$ of shape $\mu$ and weight $\omega$  
 such that  $ \Cell_{\SSTS^c}   \Cell_\SSTS  =  \Cell_{\varphi({\overline{\sts}} )}$. 
\end{prop}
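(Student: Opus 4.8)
### Proof Proposal for Proposition 2.27 (complement)

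The plan is to construct $\overline{\sts}$ by a greedy topological-sorting argument on the boxes of $\lambda$, and then to read off $\SSTS^c$ directly from the resolved diagram $C_{\overline{\sts}}\cdot (C_\SSTS)^{-1}$, so to speak. First I would verify that the relation $\succ$ of \cref{defsucc}, restricted to the finite box-set $[\lambda]$, generates a strict partial order. The transitivity is the content to check: conditions $(i)$ and $(ii)$ are inherited from the two total orders (the ordering of the $\mathbf{I}$-coordinates on $\RR$ and the ordering of the entries $\SSTS(r,c,m)$ in the fixed semistandard tableau $\SSTS$), while condition $(iii)$ is the covering relation within a single row or column. The only possible failure of antisymmetry would be a $\succ$-cycle, and since $\SSTS$ is a genuine bijection $[\lambda]\to\mathbf{I}_\mu$ one sees that a cycle would force a contradiction between the strict inequalities on $\SSTS$-values in $(i)$--$(ii)$ and the fixed arithmetic offsets in $(iii)$. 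Once $\succ$ is known to be acyclic, any linear extension of it produces a labelling of $[\lambda]$ by $\{1,\dots,n\}$; I would then check this labelling satisfies the row- and column-increasing conditions of a standard tableau, which is immediate because condition $(iii)$ forces the box one step to the right (or below) to receive the larger label. Declaring $\overline{\sts}$ to be this standard tableau, the defining implication ``$(r,c,m)\succ (r',c',m')\implies \overline{\sts}(r,c,m)>\overline{\sts}(r',c',m')$'' holds by construction, so $\overline{\sts}$ factors through $\SSTS$.

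For the second, diagrammatic, assertion I would argue as follows. By \cref{comb} the standard tableau $\overline{\sts}$ corresponds under $\varphi$ to a semistandard tableau $\varphi(\overline{\sts})\in\SStd_{\theta,\kappa}(\lambda,\omega)$, whose associated diagram $C_{\varphi(\overline{\sts})}$ connects $\mathbf{I}_\omega$ on the north to $\mathbf{I}_\lambda$ on the south. The diagram $C_\SSTS$ connects $\mathbf{I}_\mu$ (north) to $\mathbf{I}_\lambda$ (south). The idea is to define $\SSTS^c$ as the tableau of shape $\mu$ and weight $\omega$ whose strands, when composed on top of $C_\SSTS$, realise exactly the permutation recorded by $\varphi(\overline{\sts})$. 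Concretely: each box $b\in[\lambda]$ is sent by $C_\SSTS$ to a point of $\mathbf{I}_\mu$, and by $C_{\varphi(\overline{\sts})}$ to a point of $\mathbf{I}_\omega$; composing requires a diagram from $\mathbf{I}_\omega$ to $\mathbf{I}_\mu$ which, box-by-box, matches these up. Since $C_\SSTS$ has shape $\mu$ on its northern edge, I would let $\SSTS^c$ be the tableau of shape $\mu$ recording this matching, so that $C_{\SSTS^c}$ has northern edge $\mathbf{I}_\omega$ and southern edge $\mathbf{I}_\mu$. The product $C_{\SSTS^c}C_\SSTS$ then has the correct boundaries and realises the same strand-bijection as $C_{\varphi(\overline{\sts})}$.

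The remaining, and genuinely delicate, point is that $C_{\SSTS^c}C_\SSTS$ equals $C_{\varphi(\overline{\sts})}$ \emph{on the nose} (up to isotopy), not merely modulo more dominant terms $\algebra^{\vartriangleright\lambda}$. This is where the ``factors through'' hypothesis does all the work. I would show that because $\overline{\sts}$ respects $\succ$, the strands of the composite can be drawn with the minimal number of crossings and with no bigons: whenever two strands would cross twice (producing a double-crossing and hence an error term or a descent to a more dominant $\mu'\rhd\lambda$), condition $(i)$ or $(ii)$ of \cref{defsucc} guarantees that the relevant entries are already correctly ordered in $\overline{\sts}$, so the crossing does not occur. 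The compatibility of the $\mathbf{I}$-ordering with the $\theta$-dominance order recorded in \cref{dom=leq}, together with the $\g=\ell$ spacing, ensures that the ``$\pm\g$'' slack in conditions $(i)$ and $(iii)$ exactly accounts for the width-$\ell$ ghost offsets. I expect this no-double-crossing verification to be the main obstacle: one must check each of the three clauses of $\succ$ produces the right local picture (solid/solid, solid/ghost, and same-row/column adjacencies) and confirm that the resulting reduced diagram coincides with the canonical reduced diagram $C_{\varphi(\overline{\sts})}$. Granting this, we conclude $C_{\SSTS^c}C_\SSTS=C_{\varphi(\overline{\sts})}$, completing the proof.
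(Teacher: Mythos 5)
Your overall architecture is the same as the paper's (build the partial order $\succ$ of \cref{defsucc}, take a linear extension to get $\overline{\sts}$, encode a compatible total order on the boxes of $\mu$ as $\SSTS^c$, and argue that the composite has no double-crossings and hence realises $C_{\varphi(\overline{\sts})}$), but there is a genuine gap exactly where the real content of the proposition lies: the acyclicity of $\succ$. Clauses $(i)$ and $(ii)$ alone are harmless, since each such step strictly decreases the $\mathbf{I}$-coordinate (up to the $\pm\g$ offsets) while strictly increasing the $\SSTS$-value. But clause $(iii)$ imposes \emph{no} inequality on $\SSTS$-values, and it moves the $\mathbf{I}$-coordinate in either direction: stepping rightward along a row decreases $\mathbf{I}$, stepping down a column increases it. A chain mixing $(iii)$-steps with $(i)$/$(ii)$-steps is therefore controlled by no single one of the two total orders you invoke, and your sentence asserting that a cycle ``would force a contradiction between the strict inequalities on $\SSTS$-values in $(i)$--$(ii)$ and the fixed arithmetic offsets in $(iii)$'' does not engage with this; as written, the existence of the linear extension $\overline{\sts}$ — the first assertion of the proposition — is unproven.

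The missing idea, which is the heart of the paper's proof, is a monotone invariant: the displacement $|\SSTS(r,c,m)-\mathbf{I}_{(r,c,m)}|$ of each strand. The semistandardness inequalities of \cref{semistandard:defn} show this quantity is weakly decreasing across $(iii)$-steps and strictly decreasing across $(i)$- and $(ii)$-steps (this is \cref{succ} in the paper's proof), with equality along a chain only under positional constraints ($m_1=m_k$, $r_1\geq r_k$, $c_1\geq c_k$) that rule out returning to the starting box. This gives antisymmetry of the transitive closure in one stroke; note this is precisely the delicate point, as the proposition exists to repair a gap in \cite[Lemma 2.24]{Webster}. Once acyclicity is secured, the rest of your argument goes through essentially as in the paper: your $\SSTS^c$ (reading off the matching $\mathbf{I}_\mu\to\mathbf{I}_\omega$ from $\varphi(\overline{\sts})\circ\SSTS^{-1}$) is one particular total refinement of $\succ$ transported to the boxes of $\mu$, every crossing in $C_\SSTS$ arises from a pair comparable under clause $(i)$ or $(ii)$, so $\SSTS^c$ orders those strands consistently and no double-crossing is created; the paper settles this in a few lines. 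In other words, the step you flag as ``the main obstacle'' is in fact the easy half, and the step you dispatch in a clause is the hard one.
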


\begin{proof}
 Let  $\SSTS\in\SStd_{\theta}(\la,\mu)$. 
Let $(r_1,c_1,m_1), (r_2,c_2,m_2) \in \la$ be any pair of distinct boxes such that  $(r_1,c_1,m_1) \succ (r_1,c_2,m_2)$.  
By  \cref{semistandard:defn}, if $(r_1,c_1,m_1), (r_2,c_2,m_2)$ are as in $(iii)$ then 
$$|\SSTS(r_1,c_1,m_1)-\mathbf{I}^\theta_{(r_1,c_1,m_1)}|\geq |\SSTS(r_2,c_2,m_2)-\mathbf{I}^\theta_{(r_2,c_2,m_2)}|$$
 and if $(r_1,c_1,m_1), (r_1,c_2,m_2)$ are as in $(i)$ or $(ii)$ then
$$|\SSTS(r_1,c_1,m_1)-\mathbf{I}^\theta_{(r_1,c_1,m_1)}|>|\SSTS(r_2,c_2,m_2)-\mathbf{I}^\theta_{(r_2,c_2,m_2)}|.$$
Therefore if 
$ (r_1,c_1,m_1)\succ(r_2,c_2,m_2)\succ \dots \succ (r_k,c_k,m_k)$ 
then 
\begin{equation}\label{succ}
|\SSTS(r_1,c_1,m_1)-\mathbf{I}^\theta_{(r_1,c_1,m_1)}|\geq |\SSTS(r_k,c_k,m_k)-\mathbf{I}^\theta_{(r_k,c_k,m_k)}|
\end{equation}
with equality only if $m_1=m_k$ and  $r_1 \geq r_k$, $c_1\geq c_k$ (and there are no crossings between the  strands labelled by these boxes).  
 
 We consider the transitive closure of the relation $\succeq$ (by abuse of notation we also denote this by $\succeq$); this relation is transitive and reflexive by definition.  
 If $(r,c,m) \succeq (r',c',m')$ and $(r,c,m) \preceq (r',c',m')$ then 
 by \cref{succ} we have that $(r,c,m)=(r',c',m')$; hence the relation is antisymmetric.
 Therefore $\succeq$ defines a partial ordering on 
  the boxes of  $\la \in \mptn \ell n$.  
 
 Regard $\succ$ as a partial ordering on the boxes of  $\SSTS(\la)=\mu$ by identifying the nodes of $\mu$ with the corresponding nodes of $\la$.  
We can encode any  total refinement, $\succ_{\rm t}$, of  $\succ$  as a tableau,
 $\SSTS^c$, of shape $\mu$  and weight ${\omega}$. 
  This is simply given by  letting  $\SSTS^c(r,c,m) < \SSTS^c (r',c',m') $ if  and only if 
  $(r,c,m) \succ_{\rm t} (r',c',m')$ for $(r,c,m), (r',c',m')\in \mu$.  

It remains to show that $ \Cell_{\SSTS ^c}  \Cell_\SSTS  = \Cell_{\varphi({\overline{\sts}})}$  for some ${\overline{\sts}} \in \Std_{\theta}(\la)$.  
Suppose $(r,c,m)$ and $(r',c',m')$ are two boxes in $\la$ whose solid or ghost strands cross in the diagram $ \Cell_\SSTS$.  
In which case,   $(r,c,m) \succ (r',c',m')$ (or vice versa) and   we are as in one of  cases $(i)$,  $(ii)$, or  $(iii)$ of \cref{defsucc}.  
By definition, $\SSTS^c(r,c,m)<\SSTS^c (r',c',m') $ and so the crossing strands from $ \Cell_{\SSTS}$ do not
cross again in  $ \Cell_{\SSTS^c}$.  Therefore the diagram $ \Cell_{\SSTS ^c}  \Cell_\SSTS  $ contains no double-crossings and so is equal to 
$ \Cell_{\overline{\SSTS}}$ for ${\overline{\SSTS}}$ some tableau of shape $\la$ and weight $\omega$.  
Now, by condition $(iii)$ of \cref{defsucc}, 
 we have that ${\overline{\SSTS}}(r,c+1,m)< {\overline{\SSTS}}(r,c,m)$ and ${\overline{\SSTS}}(r+1,c,m)< {\overline{\SSTS}}(r,c,m)$  for all $1\leq r,c\leq n$ and $1\leq m \leq \ell$.  
 Since any pair of  boxes of the $\ell$-partition $\omega$ 
 are at   $\g $ unit  apart, ${\overline{\SSTS}}$ satisfies conditions $(i)$ and $(ii)$ of \cref{semistandard:defn}.  
Finally for  any  $1\leq m \leq \ell$,  we have that
  ${\overline{\SSTS}}(1,1,m) = (r,1,\ell)$ for some $1\leq r \leq n$ and so
${\overline{\SSTS}}$ satisfies condition $(iii)$ of  \cref{semistandard:defn}.  
  Therefore ${\overline{\SSTS}}$ is semistandard.   
Finally, we let ${\overline{\sts}}$ be the standard tableau determined by $\varphi({\overline{\sts}})={\overline{\SSTS}}$ and this completes the proof.  
 \end{proof}

Finally,  we generalise  {\cite[Theorem 4.11]{MR3732238}}  to an arbitrary     integral domain.  

\begin{thm} \label{cellularitybreedscontempt2}
Let $\Bbbk$ be an arbitrary   integral domain.   
The algebra $\algebra^\Bbbk_n(\theta )$ is free as an $\Bbbk$-module and  has a  graded cellular    basis 
\[
  \{{ \Cell}_{\SSTS  \SSTT} \mid \SSTS \in \SStd_{\theta}(\lambda,\mu), \SSTT\in \SStd_{\theta}(\lambda,\nu), 
 \lambda  \in \mptn {\ell}n,  \mu, \nu  \in \con {\ell}n\} 
\]
 with respect to the $\theta$-dominance order on $\mptn \ell n$ and the involution $\ast$ given by
 horizontal reflection. 
 We let $\Delta_{\sigma }^\Bbbk(\la)$ denote the corresponding cell-module for $\la \in \mptn \ell n$.
\end{thm}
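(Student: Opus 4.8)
```latex
The plan is to assemble the cellular datum $(\mptn\ell n,\TSStd,C,\degr)$ out of the pieces already in hand and to check the four axioms of \cref{defn1} in turn. The weight poset is $(\mptn\ell n,\trianglerighteq_\theta)$, the indexing set for $\lambda$ is $\TSStd(\lambda)=\coprod_{\mu\in\con\ell n}\SStd_{\theta,\kappa}(\lambda,\mu)$ (so a ``cell-tableau'' is a semistandard tableau of shape $\lambda$ of any weight), the map is $C(\SSTS,\SSTT)=\C_{\SSTS\SSTT}=C^\ast_\SSTS C_\SSTT$, and the degree is $\degr(\SSTS)=\deg(C_\SSTS)$ as declared just before \cref{cellularitybreedscontempt}. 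The involution $\ast$ is horizontal reflection of diagrams, which visibly sends $\C_{\SSTS\SSTT}=C^\ast_\SSTS C_\SSTT$ to $C^\ast_\SSTT C_\SSTS=\C_{\SSTT\SSTS}$ and squares to the identity, giving axiom $(4)$ for free; that it is an algebra anti-automorphism follows because flipping reverses the vertical stacking that defines the product and preserves all the defining relations \ref{rel1}--\ref{rel15} (each relation in \cref{defintino2} is invariant under horizontal reflection up to relabelling).

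First I would establish the basis axiom $(2)$. \Cref{cellularitybreedscontempt} already shows the set $\{\C_{\SSTS\SSTT}\}$ \emph{spans} $\algebra(n,\theta,\kappa)$ over $R$; it remains to see it is $R$-linearly independent, equivalently that $\algebra(n,\theta,\kappa)$ is free of the expected rank. Here I would use \cref{corrrrrrrrrr}, which pins down the rank of the idempotent truncation ${\sf E}_\omega\algebra{\sf E}_\omega$, together with a standard double-centraliser/dimension-count comparison: the number of spanning elements $\C_{\SSTS\SSTT}$ equals $\sum_{\lambda}\bigl(\sum_\mu|\SStd_{\theta,\kappa}(\lambda,\mu)|\bigr)^2$, and one argues that a proper spanning set of a free module of this rank over an integral domain must be a basis. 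Concretely I would either invoke Webster's rank computation over a field together with freeness over $\ZZ$ (reducing modulo the generic point of $R$), or run the base-change argument: spanning is integral, and linear independence may be checked over the fraction field of $R$, where one compares against the known dimension.

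Next comes the cellularity axiom $(3)$, the genuine content. One must show that for any diagram $a\in\algebra(n,\theta,\kappa)$ and any $\SSTS,\SSTT\in\TSStd(\lambda)$,
\[
a\,\C_{\SSTS\SSTT}\equiv\sum_{\SSTU\in\TSStd(\lambda)}r_{\SSTS\SSTU}(a)\,\C_{\SSTU\SSTT}\pmod{\algebra^{\vartriangleright\lambda}(n,\theta,\kappa)},
\]
with $r_{\SSTS\SSTU}(a)$ independent of $\SSTT$. The point is that $a\C_{\SSTS\SSTT}=(a\,C^\ast_\SSTS)C_\SSTT$, and stacking $a$ on top of $C^\ast_\SSTS$ gives a diagram whose lower half is $C_\SSTT$; applying the straightening procedure of \cref{cellularitybreedscontempt,cellularitybreedscontempt0.5} to the upper part $a\,C^\ast_\SSTS$ rewrites it as a linear combination of $C^\ast_\SSTU$ (shape $\lambda$) plus terms factoring through more dominant box-configurations $\nu\rhd_\theta\lambda$ (which land in $\algebra^{\vartriangleright\lambda}$ by \cref{cellularitybreedscontempt0.5}), and crucially the rewriting of the upper half never touches $C_\SSTT$, so the scalars $r_{\SSTS\SSTU}(a)$ depend only on $a$ and $\SSTS$. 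I expect \textbf{this independence-of-$\SSTT$ statement to be the main obstacle}: one must argue carefully that the reduction of $a\,C^\ast_\SSTS$ to the spanning form can be carried out entirely within the top portion of the diagram, i.e.\ that the triple-point and double-crossing resolutions used in \cref{cellularitybreedscontempt} can be localised away from the $C_\SSTT$ factor. This is exactly where \cref{complement} enters (it is flagged as correcting the mistake in \cite[Lemma 2.24]{Webster}): given a semistandard $\SSTS$ one produces a standard $\overline{\sts}$ factoring through it and a complementary tableau $\SSTS^c$ with $C_{\SSTS^c}C_\SSTS=C_{\varphi(\overline{\sts})}$, which lets one reduce the general module action to the action on the standard pieces and thereby separate the top and bottom halves.

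Finally, axiom $(1)$, homogeneity of degree, is immediate from \cref{grsubsec}: the grading on $\algebra(n,\theta,\kappa)$ is additive under stacking diagrams, so $\deg(\C_{\SSTS\SSTT})=\deg(C^\ast_\SSTS)+\deg(C_\SSTT)=\degr(\SSTS)+\degr(\SSTT)$, using that horizontal reflection preserves degree. Assembling the four axioms, together with the freeness over $R$ proved along the way, yields the theorem. I would close by remarking that the cell modules $\Delta^\algebra(\lambda)$ then have bases indexed by $\coprod_\mu\SStd_{\theta,\kappa}(\lambda,\mu)$, which is the form needed for the Schur-functor passage to $H_n(\kappa)$ in the subsequent section.
```
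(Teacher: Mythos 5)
Your skeleton agrees with the paper's on most counts: spanning is \cref{cellularitybreedscontempt}, homogeneity (axiom $(1)$) and the anti-involution (axiom $(4)$) are immediate from the diagrammatics, and for axiom $(3)$ the paper is even terser than you are --- it simply observes that the span of $\{C_{\SSTS\SSTT^\la}\mid \SSTS\in\SStd_{\theta,\kappa}(\la,-)\}$ is realised as the quotient of the left ideal $\algebra(n,\theta,\kappa){\sf 1}_\la$ by its intersection with $\algebra(n,\theta,\kappa)(\sum_{\mu\rhd\la}{\sf 1}_\mu)\algebra(n,\theta,\kappa)$, which yields the independence-of-$\SSTT$ statement at once; your ``straighten the top half without touching $C_\SSTT$'' argument is a workable alternative, but it is not where the difficulty lies, and \cref{complement} plays no role in axiom $(3)$.

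The genuine gap is in your treatment of axiom $(2)$, which is the heart of the theorem. Your first formulation (``a spanning set of a free module of this rank must be a basis'') is circular: freeness of $\algebra(n,\theta,\kappa)$ over $R$ is part of the statement being proved, not a given. Your fallback --- base-change to $\mathrm{Frac}(R)$ and compare with ``the known dimension'' --- outsources precisely the step this section exists to establish: the only source for that dimension is Webster's field-level cellularity theorem, whose proof runs through \cite[Lemma 2.24]{Webster}, the very lemma whose proof the paper flags as mistaken and which \cref{complement} is introduced to repair. So the dimension over $\mathrm{Frac}(R)$ cannot be imported; it must be reproved, and the paper does so as follows. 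Given a putative relation $\sum a_{\SSTU\SSTV}\C_{\SSTU\SSTV}\equiv 0 \pmod{\algebra^{\rhd\la}(n,\theta,\kappa)}$, choose $(\SSTS,\SSTT)$ maximising the crossing number $\sharp(\C_{\SSTS\SSTT})$ among nonzero coefficients, and multiply on the left by $C^\ast_{\SSTS^c}$ and on the right by $C_{\SSTT^c}$, where $\SSTS^c,\SSTT^c$ are the complementary tableaux of \cref{complement} satisfying $C_{\SSTS^c}C_\SSTS=C_{\varphi(\overline\sts)}$. Every other product $C_{\SSTS^c}\C_{\SSTU\SSTV}C^\ast_{\SSTT^c}$ either acquires a double-crossing (hence rewrites with strictly fewer crossings, by maximality) or equals $C_{\overline\SSTU\,\overline\SSTV}$ for a pair distinct from $(\overline\SSTS,\overline\SSTT)$; the whole relation therefore lands in ${\sf E}_\omega\algebra(n,\theta,\kappa){\sf E}_\omega$, where $\{C_{\SSTQ\SSTR}\mid\SSTQ,\SSTR\in\SStd_{\theta,\kappa}(\la,\omega)\}$ is an honest $R$-basis by \cref{isomer} together with the $\ZZ$-freeness of $H_n(\kappa)$ from \cite{Li} (this is \cref{corrrrrrrrrr}), forcing $a_{\SSTS\SSTT}=0$. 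In short: you correctly identified \cref{complement} as the key new ingredient, but deployed it on axiom $(3)$, when its actual job is to make the linear-independence reduction to the corner algebra work --- replacing a global dimension count (unavailable over an arbitrary integral domain) by a self-contained argument anchored only in \cite{Li}.
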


\begin{proof}
 We shall prove this by contradiction.    By \cref{cellularitybreedscontempt0.5} and the fact that $\sum_{\alpha \in \con \ell n,  {\imath} \in (\ZZ/e\ZZ)^n} {\sf 1}^{\imath}_\alpha$ is the identity of $\algebra^\Bbbk_n(\theta )$, it is enough to show that if  there exist   $\alpha_{\SSTU\SSTV}\in \Bbbk$ such that 
\begin{align}\label{an equation}
\sum_{
\begin{subarray}c
\SSTU\in \SStd_{\theta}(\la,\mu) \\
\SSTV\in \SStd_{\theta}(\la,\nu) 
\end{subarray}
}
\alpha_{\SSTU\SSTV}{ \Cell}_{\SSTU\SSTV} = 0  \mod \algebra^{\rhd \la}_n(\theta)  
\end{align}
then this implies that  $\alpha_{\SSTU\SSTV}=0$ for all $\SSTU\in \SStd_{\theta}(\la,\mu), \SSTV\in \SStd_{\theta}(\la,\nu)$.  
We set  
$\sharp(\SSTS , 
 \SSTT)=\ell[ \Cell_\SSTS]+\ell[ \Cell_\SSTT] $. 
 We let $\SSTS, \SSTT$ be any pair such that $\sharp(\SSTS , 
 \SSTT)\geq \sharp(\SSTU , 
 \SSTV)$  for any pair  of tableaux $\SSTU , 
 \SSTV$ with $\alpha_{\SSTU\SSTV}\neq0$.  We let $\SSTS^c$ (respectively $\SSTT^c$) denote any tableau of shape $\mu$ (respectively $\nu$) and shape $\omega$ as in \cref{complement}.   
We shall show   that the coefficient $\alpha_{\SSTS\SSTT}$ is necessarily zero
   (and so the result immediately follows by repeating this argument).  We multiply 
   \cref{an equation} on the left by $ \Cell^\ast_{\SSTS^c}$ and on the  right  by $ \Cell_{\SSTT^c}$; 
      it is  enough to show that if 
\begin{align}\label{another eqution}
\alpha_{\SSTS\SSTT} \Cell_{\overline\SSTS\; \overline\SSTT} + 
\sum_{
\begin{subarray}c
\SSTU\in \SStd_{\theta}(\la,\mu) \\
\SSTV\in \SStd_{\theta}(\la,\nu) 
\end{subarray}
}
\alpha_{\SSTU\SSTV}  \Cell_{\SSTS^c}{ \Cell}_{\SSTU\SSTV}  \Cell^\ast_{\SSTT^c}= 0  \mod \algebra^{\rhd \la}_n(\theta) , 
\end{align} then $\alpha_{\SSTS\SSTT}=0$ (where $\overline\SSTS=\varphi(\overline\sts)$ and $\overline\SSTT=\varphi(\overline\stt)$ as in \cref{complement}).  
There are two cases to consider.  Firstly, if one of $ \Cell_{\SSTS^c}   \Cell_{\SSTU}$ or  
 $ \Cell_{\SSTV}^\ast  \Cell_{\SSTT^c}^\ast$ contains a double-crossing, then 
$$
\Cell _{\SSTS^c} {\Cell}_{\SSTU\SSTV} \Cell^\ast_{\SSTT^c}=
 \sum_{\begin{subarray}c
\SSTU', \SSTV'\in \SStd_{\theta}(\la,\omega) \\
\sharp(\SSTU' ,
 \SSTV')< 
\sharp(\SSTT , 
 \SSTV)
\end{subarray}} \beta_{\SSTU'\SSTV'} {\Cell}_{\SSTU'\SSTV'} \mod\algebra _n^{\rhd \la}(\sigma)
$$
for some $\beta_{\SSTU'\SSTV'}\in \Bbbk$, by \cref{mastumoto2}.     
 We now  consider the case in which 
$ \Cell_{\SSTS^c} \Cell_{\SSTU}$ and   $ \Cell_{\SSTV}^\ast  \Cell_{\SSTT^c}^\ast$ 
 contain no double-crossings. 
We have that $(\SSTS,\SSTT)\neq (\SSTU,\SSTV)$.
  Assume $\SSTU\neq \SSTS$, 
then the bijection traced out by $\SSTU$ is different to that traced out by $\SSTS$; therefore the bijection traced out by $  \Cell_{\SSTS^c}  \Cell_\SSTU   $  is not equal to that traced out by  $   \Cell_{\SSTS^c} \Cell_\SSTS = \Cell_{\varphi(\overline{\sts})}$.  In particular, if $  \Cell_{\SSTS^c}  \Cell_\SSTU  $  contains no double-crossings, then it is equal to $ \Cell_{{\overline{\SSTU}}}$ for ${{\overline{\SSTU}}}$ some   (not necessarily semistandard) tableau of shape $\la $ and weight $\omega$ which is {\em not} equal to ${\varphi(\overline{\sts})}$. 
 Arguing similarly  for the case  $\SSTV\neq \SSTT$, we
therefore deduce that  $$ \Cell _{\SSTS^c}{\Cell}_{\SSTU\SSTV}  \Cell_{\SSTT^c}^\ast =  \Cell_{{\overline{\SSTU}}\; {\overline{\SSTV}}}$$ for ${\overline{\SSTU}}, {\overline{\SSTV}}$ two (not necessarily semistandard) tableaux of shape $\la$ such that $({\overline{\SSTU}} ,{\overline{\SSTV}})\neq (\varphi(\sts),\varphi(\stt))$.  
  Now, if ${\overline{\SSTU}}$ and ${\overline{\SSTV}}$ are not semistandard, then 
 $$ \Cell_{{\overline{\SSTU}}\; {\overline{\SSTV}}} = \sum_{\begin{subarray}c
\SSTU', \SSTV'\in \SStd_{\theta}(\la,\omega) \\
 \sharp(\SSTU',
 \SSTV')< 
\sharp(\SSTS ,
 \SSTT)
\end{subarray}} \gamma_{\SSTU'\SSTV'} {\Cell}_{\SSTU'\SSTV'} \mod  \algebra_n^{\rhd \la}(\sigma)
$$
for some $\gamma_{\SSTU'\SSTV'}\in \Bbbk$, by  as in the proof of \cref{cellularitybreedscontempt}.  If ${\overline{\SSTU}}$ and ${\overline{\SSTV}}$ are   semistandard, then we  set 
$ \overline{\SSTU} =\SSTU'$ and $  \overline{\SSTV} =\SSTV'$     for convenience.   
Putting all of this  together,  we have that \cref{another eqution} is equivalent to 
 $$
  \alpha_{\SSTS\SSTT} {\Cell}_{\overline\SSTS\;\overline\SSTT}   +  \sum_{
 \begin{subarray}c
\SSTU', \SSTV'\in \SStd_{\theta}(\la,\omega) \\
(\SSTU' , \SSTV') \neq  (\SSTS,\SSTT)
 \end{subarray} 
} 
\alpha_{\SSTU\SSTV}(\beta_{\SSTU'\SSTV'} +\gamma_{\SSTU'\SSTV'}) {\Cell}_{\SSTU'\SSTV'}=0 \mod  \algebra_n^{\rhd \la}(\sigma).
$$
Now, the set $\{ \Cell_{\SSTQ\SSTR} \mid \SSTQ,\SSTR\in\SStd_{\theta}(\la,\omega)\}$ is a basis of 
${\sf E}^\theta_\omega \algebra^\Bbbk_n(\theta ){\sf E}^\theta_\omega$ by \cref{isomer} and so $\alpha_{\SSTS\SSTT}=0$, as required.  
Therefore we have verified condition $(2)$ of \cref{defn1}.  
 Conditions $(1)$ and $(4)$ of  \cref{defn1} follow immediately from the diagrammatic definitions.  
 Condition $(3)$ follows from   \cref{mastumoto2,move a dot down}.     \end{proof}

\begin{cor}[{\cite[Cor  2.26]{MR3732238}}]\label{qaausfuduifddfheredtiary}
Let $\Bbbk$ be a field.  The algebra $\algebra^\Bbbk_n(\theta )$ is quasi-hereditary and 
the $   L^\Bbbk_\sigma(\la)=\Delta_{\sigma }^\Bbbk(\la) / \rad(\Delta_{\sigma }^\Bbbk(\la))\text{ for }\la \in \mptn \ell n $
provide   a complete set of non-isomorphic  irreducible  modules.  
\end{cor}
 
\begin{proof}Let    $\SSTT$ denote the unique element of   $\SStd_\theta(\la,\la)$.  
The  element $\Cell_{\SSTT \SSTT} ={\sf 1}_\la \in \algebra^{\trianglerighteq \la} _n(\theta)  
 $ is an idempotent.  
Therefore the radical of the bilinear form is not the whole cell module. Therefore the algebra is quasi-hereditary with the prescribed set of  irreducible  modules.  
\end{proof}

  \renewcommand{\H}{{\mathcal H}}
  
\section{The many integral cellular bases of quiver Hecke algebras}  \label{mainresults}

 We now proceed to apply  the many Schur functors in order to obtain our many graded cellular bases of Hecke algebras.  Given $\underline{s}=(e;s_0,s_2,\dots,s_{\ell-1})\in \NN_{>1} \times (\ZZ/e\ZZ)^\ell$ we let 
$\theta=(e;\sigma_0,\sigma_1,\dots,\sigma_{\ell-1}) \in \weight$ denote any choice of  {integral lift}.  
We have seen that $\mathcal{H}_n ^\Bbbk (\underline{s}) \cong {\sf E}^\theta_\omega \algebra^\Bbbk_n(\theta ){\sf E}^\theta_\omega$ is   generated by 
 $$ 
   \langle \sigma (y_1),\dots,\sigma (y_n),  \sigma (\psi_1), \dots, \sigma (\psi_{n-1}),   \sigma (e(\imath)) \mid  \imath\in (\ZZ/e\ZZ)^n 
\rangle  $$ 
subject to relations \ref{rel1} to \ref{rel15}.  
This idea should be very familiar to those working with Cherednik algebras.  Given a fixed Hecke algebra $\mathcal H_n^\Bbbk(\underline{s})$ there are many associated quiver Cherednik algebras 
  $ \algebra^\Bbbk_n(\theta )$ (namely, one for each integral lift $\theta\in \weight$).  
 Each of these   distinct quiver Cherednik algebras casts its own ``charged  shadow" on the representation theory of our fixed Hecke algebra.   
    Given $\sts,\stt \in \Std_{\theta}(\la)$
 we set $$\cell^\theta_{\sts\stt}:={\sf E}^\theta_\omega \Cell_{\SSTS\SSTT}{\sf E}^\theta_\omega \in \H_n^\Bbbk(\underline s) $$   
 where $\varphi(\sts)=\SSTS \in \SStd_{\theta}(\la,\omega) $ and  $\varphi(\stt)=\SSTT \in \SStd_{\theta}(\la,\omega) $.

 \begin{thm}\label{corollary}
  For  a charge $\theta\in \weight$,  
  the $\Bbbk$-algebra  $\H_n^\Bbbk(\underline s) $ 
    admits a  graded cellular structure with respect to the poset     $(\mptn \ell n,\rhd_\theta)$ and  the     basis
   $$ 
   \{ \cell^\theta_{ \sts\stt}  \mid \lambda \in \mptn \ell n, \sts,\stt \in \Std_{\theta}(\lambda)\}
   $$  
   and the involution $\ast$.
 In particular,  $\deg(\cell^{\theta }_{\sts \stt})=\deg(\sts)+\deg(\stt)$ for  $\sts,\stt \in  \Std_{\theta}(\lambda)$. 
 \end{thm}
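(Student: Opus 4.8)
The plan is to obtain the cellular structure on $H^\theta_n(\kappa)$ by idempotent truncation of the cellular structure on $\algebra(n,\theta,\kappa)$ already established in \cref{cellularitybreedscontempt2}. By \cref{isomer} we have $H^\theta_n(\kappa)={\sf E}_\omega\algebra(n,\theta,\kappa){\sf E}_\omega$ as graded algebras, where ${\sf E}_\omega=\sum_{\underline i\in I^n}{\sf 1}^{\underline i}_\omega$. First I would record that ${\sf E}_\omega$ is an idempotent which is homogeneous of degree $0$ and fixed by the anti-involution $\ast$: each summand ${\sf 1}^{\underline i}_\omega$ is a crossingless diagram of vertical strands, hence carries no crossings or dots (so has degree $0$) and is visibly invariant under horizontal reflection. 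With this in hand, the general principle that an idempotent truncation $eAe$ of a graded cellular algebra $A$ by a $\ast$-fixed homogeneous idempotent $e$ is again graded cellular (the graded refinement of K\"onig--Xi's truncation theorem) applies verbatim: it produces a cell datum for $H^\theta_n(\kappa)$ whose basis consists of the nonzero elements $e\,C^\lambda_{\SSTS\SSTT}\,e$, with the weight poset, degree function and involution inherited from $\algebra(n,\theta,\kappa)$, and the straightening axiom $(3)$ of \cref{defn1} obtained from that in $\algebra(n,\theta,\kappa)$ by multiplying on both sides by $e$.

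The next step is to identify precisely which basis elements $C_{\SSTS\SSTT}$ of \cref{cellularitybreedscontempt2} survive the truncation. Writing $C_{\SSTS\SSTT}=C^\ast_\SSTS C_\SSTT$ with $\SSTS\in\SStd_{\theta,\kappa}(\la,\mu)$ and $\SSTT\in\SStd_{\theta,\kappa}(\la,\nu)$, the diagram $C_{\SSTS\SSTT}$ has northern loading ${\bf I}_\mu$ and southern loading ${\bf I}_\nu$; hence ${\sf E}_\omega C_{\SSTS\SSTT}{\sf E}_\omega$ is zero unless $\mu=\nu=\omega$, in which case ${\sf E}_\omega$ merely selects the matching border idempotent and ${\sf E}_\omega C_{\SSTS\SSTT}{\sf E}_\omega=C_{\SSTS\SSTT}$. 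Thus for shape $\la$ the surviving index set is $\SStd_{\theta,\kappa}(\la,\omega)\times\SStd_{\theta,\kappa}(\la,\omega)$, and since $\Std_{\theta,\kappa}(\la)$ is nonempty for every $\la\in\mptn\ell n$, every multipartition contributes a cell; the weight poset of \cref{cellularitybreedscontempt2} therefore restricts to $(\mptn\ell n,\rhd_\theta)$. Applying the bijection $\varphi\colon\Std_{\theta,\kappa}(\la)\to\SStd_{\theta,\kappa}(\la,\omega)$ of \cref{comb} to relabel by standard tableaux yields exactly the claimed set $\{c^\theta_{\sts\stt}={\sf E}_\omega C_{\varphi(\sts)\varphi(\stt)}{\sf E}_\omega\}$, with involution $\ast$ and poset $(\mptn\ell n,\rhd_\theta)$; that this set is a free $R$-module basis of $H^\theta_n(\kappa)$ is precisely \cref{corrrrrrrrrr}, so no independent linear-algebra argument is needed.

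It remains to verify the degree statement. Since the grading on $H^\theta_n(\kappa)={\sf E}_\omega\algebra(n,\theta,\kappa){\sf E}_\omega$ is inherited and ${\sf E}_\omega$ is homogeneous of degree $0$, we have $\deg(c^\theta_{\sts\stt})=\deg(C_{\SSTS\SSTT})$ for $\SSTS=\varphi(\sts)$, $\SSTT=\varphi(\stt)$, and by axiom $(1)$ of \cref{defn1} applied to the basis of \cref{cellularitybreedscontempt2} this equals $\deg(\SSTS)+\deg(\SSTT)$, where $\deg(\SSTS):=\deg(C_\SSTS)$ is the diagrammatic degree. The theorem is therefore reduced to the purely combinatorial identity $\deg(C_{\varphi(\stt)})=\deg(\stt)$, matching the diagrammatic degree of the loading diagram against the combinatorial degree $\sum_{k}(|{\mathcal A}_\stt(k)|-|{\mathcal R}_\stt(k)|)$ of the standard tableau. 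I would prove this by induction on $n$: the strand labelled by the box $\stt^{-1}(n)$ is drawn from its source in the single column $\omega$ to its target ${\bf I}_{\stt^{-1}(n)}$, and its crossings with the previously placed strands, weighted by the rules of \cref{grsubsec} (a solid $i$-strand meeting a ghost $(i-1)$-strand, or a like-labelled red strand, each contributing $+1$, and like-labelled solid crossings contributing $-2$), must be shown to count exactly the addable $\res(\stt^{-1}(n))$-boxes minus the removable $\res(\stt^{-1}(n))$-boxes lying to the right of $\stt^{-1}(n)$ in the $\theta$-dominance order. This bookkeeping, reconciling the residue-theoretic crossing degrees with the addable/removable box counts, is the main obstacle; every other step is formal.
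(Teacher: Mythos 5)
Your proposal follows essentially the same route as the paper: the paper likewise verifies axioms $(2)$ and $(4)$ of \cref{defn1} immediately for the elements ${\sf E}_\omega C_{\SSTS\SSTT}{\sf E}_\omega$, obtains axiom $(3)$ by the truncation identity $({\sf E}_\omega a{\sf E}_\omega)({\sf E}_\omega C_{\SSTS\SSTT}{\sf E}_\omega)=({\sf E}_\omega a)(C_{\SSTS\SSTT})$ rather than by invoking a general K\"onig--Xi-type theorem (a purely presentational difference), and relies on \cref{comb} and \cref{corrrrrrrrrr} exactly as you do. Your identification of the surviving basis elements is sound: the loading ${\bf I}_\mu$ determines the box configuration $\mu$, so ${\sf E}_\omega C_{\SSTS\SSTT}{\sf E}_\omega$ vanishes unless $\SSTS,\SSTT\in\SStd_{\theta,\kappa}(\la,\omega)$.

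The one place where you stop short is precisely where the paper's real work lies: the identity $\deg(C_{\varphi(\stt)})=\deg(\stt)$, which you correctly reduce to and set up by induction on $n$, but then defer as ``bookkeeping.'' This is not formal, and it is exactly what the brick-and-diagonal machinery of Section 3 was built to handle. The paper writes $c^\theta_\sts$ as $\overline{c}^\theta_{\sts{\downarrow}_{\{1,\dots,n-1\}}}$ times a diagram ${\sf 1}^{\la+(n,1,\ell)}_{\la+(r,c,m)}$ containing the single new strand $A_n$, and computes the degree of $A_n$ diagonal-at-a-time: passing through a $\mathbf{B}_1$ brick contributes $0$ (the $-2$ from the like-labelled solid crossing cancels the two $+1$'s from the ghost crossings), a $\mathbf{B}_2$ or $\mathbf{B}_3$ brick contributes $-1$, a $\mathbf{B}_4$ or $\mathbf{B}_5$ brick $+1$, and a $\mathbf{B}_6$ brick $-2$; summing over the bricks in a given $i$-diagonal shows that an addable, removable, or invisible $i$-diagonal contributes $+1$, $-1$, or $0$ respectively. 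Since the $i$-diagonals crossed by $A_n$ are exactly those to the right of ${\bf I}_{\stt^{-1}(n)}$, the total is $|{\mathcal A}_\stt(n)|-|{\mathcal R}_\stt(n)|$, completing your induction. So your plan is correct and would close with precisely this computation; without it, the degree clause of the theorem --- the only part that goes beyond formal truncation --- remains unproved.
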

 
 \begin{proof}
 The  elements ${\sf E}^\theta_\omega \Cell_{\SSTS\SSTT} {\sf E}^\theta_\omega =\cell^\theta_{\SSTS\SSTT}  $ satisfy property $(2)$  for  ${\sf E}^\theta_\omega \algebra^\Bbbk_n(\theta ){\sf E}^\theta_\omega \cong \H_n^\Bbbk(\underline s) $ and property $(4)$   immediately.    
 We have that  $$({\sf E}^\theta_\omega    A {\sf E}^\theta_\omega)(   {\sf E}^\theta_\omega \Cell_{\SSTS\SSTT} {\sf E}^\theta_\omega  )
 = ({\sf E}^\theta_\omega    A)( \Cell_{\SSTS\SSTT})$$ for $\SSTS, \SSTT \in \SStd_{\theta}(\lambda,\omega)$ 
  and therefore $(3)$ for ${\sf E}^\theta_\omega \algebra^\Bbbk_n(\theta ){\sf E}^\theta_\omega$ follows from condition $(3)$ for $  \algebra^\Bbbk_n(\theta )$.  
 To see that condition $(1)$ holds, we proceed by induction on $n\in \NN$. 
  The $n=0$ case holds trivially.   
 Now, let $\sts \in \Std_{\theta}(\la)$ and  let $(r,c,m)\in \boxla $ be such that 
  $\sts(r,c,m)=n$.  
   By induction, we may assume that 
   $$\deg(\sts{\downarrow}_{\{1,\dots,n-1\}})=\deg (\cell^\theta_{\sts{\downarrow}_{\{1,\dots,n-1\}}}) $$
having trivially verified that the $n=1$ case holds.     Now, we have that 
   $$
\cell^\theta_{\sts}=    \overline{\cell}^\theta_{\sts{\downarrow}_{\{1,\dots,n-1\}}} \times
 {\sf 1}^{\la +(n,1,\ell)}_{\la +(r,c,m)}
   $$
   where the diagrams on the  right\color{black}hand-side  are constructed as follows 
    \begin{itemize}[leftmargin=*] \item  we obtain $\overline{\cell}^\theta_{\sts{\downarrow}_{\{1,\dots,n-1\}}}$
    from  
   $\cell ^\theta_{\sts{\downarrow}_{\{1,\dots,n-1\}}}  $ by adding a  vertical solid strand with $x$-coordinate 
    $ {\bf I}^{\theta}_{(n,1,\ell)}$;
    \item   we obtain
${\sf 1}^{\la +(n,1,\ell)}_{\la +(r,c,m)}$  from  ${\sf 1}_\la$
by adding a solid strand, $S_n$,  from        $(\mathbf{I}^\theta_{(r,c,m)},0)$ to  $({\bf I}^{\theta}_{(n,1,\ell)} ,1)$; 
\end{itemize}
and both diagrams are drawn  in such a way as to create no double-crossings.  
The degree of $\cell^\theta_{\sts}$ can be calculated inductively as follows, 
$$
\deg(\cell^\theta_{\sts})=    \deg(\overline{\cell}^\theta_{\sts{\downarrow}_{\{1,\dots,n-1\}}})
+ \deg({\sf 1}^{\la +(n,1,\ell)}_{\la +(r,c,m)})
\quad\text{
where} \quad\deg(\overline{\cell}^\theta_{\sts{\downarrow}_{\{1,\dots,n-1\}}})= \deg(\cell^\theta_{\sts{\downarrow}_{\{1,\dots,n-1\}}})$$ by construction.   The degree of $  {\sf 1}^{\la +(n,1,\ell)}_{\la +(r,c,m)}$ is calculated in terms of the 
number of crossings as in \cref{grsubsec}.    
We calculate this brick-by-brick and diagonal-at-a-time as follows.

 If  $S_n$  passes through a brick $\mathbf{B}_k$ for $k=1$, $k=2,3$,
  $k=4,5$ or $k=6$, then the degree contribution of this crossing is
$0$, $-1$, $+1$, or $-2$ respectively.  
  Let  $\mathbf{D}$ be a  diagonal in the diagram $ {\sf 1}_{\la }$  and suppose that 
 $S_n$  passes through $\mathbf{D}$.   
An addable   diagonal is built out of 
  a single  $\mathbf{B}_k$ brick  for $k\in\{4,5,6\}$ and
  some number (possibly zero) of $\mathbf{B}_1$ bricks.   
 A removable (respectively  invisible)  diagonal  has an extra  
  single   $\mathbf{B}_k$ brick    for $k=6$ (respectively $k\in\{2,3\}$).  
  Summing over the degrees, we conclude that the 
  crossing of  $S_n$  with an addable, removable, or invisible 
  $i$-diagonal has degree $+1$, $-1$, or $0$. 
    Finally, we observe that the $i$-diagonals in  $  {\sf 1}_{\la}$ which the   $S_n$ strand crosses are precisely those to the left of ${\bf I}^{\theta}_{(r,c,m)}$ and so the total degree contribution of this strand is 
$	|{\mathcal A}_\stt(n)|-|{\mathcal R}_\stt(n)|	$.  Therefore condition $(1)$ holds.  
   \end{proof}
 
   \renewcommand{\Theta}{{\Sigma^\ell_n}}
Using the standard facts about cellular algebras which we recalled in \cref{gradedcell}, we are now able to define the many different families of  cell/Specht modules and many different parameterisations/constructions of  irreducible  modules promised in the introduction.  

 \begin{defn}\label{construction}
Fix an $e$-charge 
 $\underline{s}=(e;s_0,s_1,\dots,s_{\ell-1})\in \NN_{>1}\times(\ZZ/e\ZZ)^\ell$.  
 For each integral lift $\sigma\in\weight$ of the $e$-charge  and $\lambda \in \mptn \ell n $, we let 
\begin{equation} 
\Specht^\Bbbk_{\theta} (\la)=\{ \cell^\theta_{\sts} \mid \sts \in \Std_{\theta}(\la)\}   
\end{equation}
denote the corresponding  $\H^\Bbbk_n(\underline{s})$ cell-module.   
For  $\Bbbk$   a  field and an  integral lift $\sigma\in\weight$ of the $e$-charge, we set   
$$\Theta   =\{ \lambda\in \mptn \ell n\mid {\rm rad}(\langle\ ,\ \rangle_\lambda)\neq \Specht^\Bbbk_{\theta}(\lambda)\}\subseteq \mptn \ell n $$
 and we let 
$$\{ \Simple^\Bbbk  _\theta (\la):=   \Specht^\Bbbk_\theta(\lambda)/ {\rm rad}^\Bbbk(\langle\ ,\ \rangle_\lambda)  \mid \lambda \in \Theta \}.$$

  \end{defn}

%

\begin{prop}\label{huuuuu}
For $\sigma_0 \gg \sigma_1 \gg \dots \gg \sigma_{\ell-1}$ an asymptotic charge, the basis 
  $$ 
   \{ \cell^\theta_{ \sts\stt}  \mid \lambda \in \mptn \ell n, \sts,\stt \in \Std_{\theta}(\lambda)\}
   $$  
differs from that of \cite[Main Theorem]{hm10} by $\rhd_\sigma$-unitriangular change of basis matrix.  
For each $\mu \in \mptn \ell n$ the cell module  $\Specht_\sigma(\mu)$ is isomorphic  as a graded $\mathcal{H}_n ^\Bbbk (\underline{s}) $-module  to the usual graded Specht module 
(with the same label)    defined in \cite[Theorem 4.10]{bkw11}. 
\end{prop}

\begin{proof}
For $\stt^\la\in \Std_\sigma(\la)$ the {\em unique} tableau satisfying $\stt^\la \trianglerighteq \sts$ for all $\sts \in  \Std_\sigma(\la)$. 
 The  chain of 2-sided cell ideals in   \cite{hm10} is  given by the  $\mathcal{H}^\Bbbk_n(\underline{s}) y_\la \mathcal{H}^\Bbbk_n(\underline{s} )$ for 
$$y_\la = \prod _{k=1}^n y_k ^{|{\mathcal A}_{\stt^\la}(k)|} 
e_{{\rm res}( \stt^\la)} \quad  \text{and we claim that}
\quad y_\la =\cell^\sigma_{\stt^\la\stt^\la}+\sum _{
\begin{subarray}c 
\mu\rhd_\sigma \la \\  \sts,\stt \in \Std_\sigma (\mu)
\end{subarray}}\alpha_{\stu\stv}\cell^\sigma_{\sts\stt}$$ for $\alpha_{\stu\stv}\in \ZZ$.  
Once the claim is established, we have that the chains of two-sided ideals are isomorphic and the result follows.  
We now prove the claim by induction on $\rhd _\sigma$ and $1\leq k\leq n$.  At the $k$th step,  we pull the $k$th strand to the right using the leftmost relation in \ref{rel11} until we encounter a solid $j$-strand (from some earlier step in the process) with $|j-i_k|\leq 1$.
If the $k$th strand is undecorated (either because $|{\mathcal A}_{\stt^\la}(k)|=0$ or because 
we have already applied \ref{rel11} a total of $|{\mathcal A}_{\stt^\la}(k)|$ times) then the $k$th strand is now part of
 either an ${\sf M}_2$-brick or has comes to rest in the required position (in which case we are done).  
 In the former case, we use the ${\bf M}_2$ analogue of \cref{M1relation} to move the strand rightwards at the expense of an error term (which is zero by  \cref{aprop}).   
Otherwise, we have that the $k$th strand carries a {\em single} dot (by definition of $\stt^\la$) and we use \ref{rel7} to move the strand rightwards at the expense of an error term (which is again zero by  \cref{aprop}).   
Repeating as necessary, the process terminates when the $k$th strand reaches the $x$-coordinate of the box $(\stt^\la)^{-1}(k)$.  
 \end{proof}

We now consider 
$\H_2^\Bbbk(\underline{s} )$ for  $\underline{s}= (e; s_0,s_1)=(2;0,1)$.  This algebra has two    irreducible  modules, $\Simple^\Bbbk(0,1)$ and $\Simple^\Bbbk(1,0)$, 
 which are generated by  $e(0,1)$ and $e(1,0)$ respectively, and which are annihilated by all the other generators of $
 \H_2^\Bbbk(\underline{s})$.   
 Here we   label  the  irreducibles   by the corresponding idempotents (not by $\ell$-partitions) because this labelling is independent of the charge; we will  reconcile this with the charged labelings in \cref{labelllllleres,labelllllleres2}.  
 There are two charges, $\sigma=(e;\theta_0,\sigma_1)=(2;4,1) $ and $\sigma=(e;\theta_0',\sigma_1')=(2;2,1)$, which give rise to distinct    cellular structures (every other charge gives   a basis  equivalent to one of these).   
We  show that  there   is no isomorphism relating the sets of cell modules obtained from these distinct charges.  
  
\begin{eg}\label{countereg1} 
Let  $\sigma=(e; \theta_0,\sigma_1)=(2;4,1)$, we remark that this is a well-separated charge.  
The $\sigma$-dominance order is   as follows,
$$
((2),\varnothing)\rhd_{\sigma} 
((1^2),\varnothing)\rhd_{\sigma} 
((1),(1))\rhd_{\sigma} 
( \varnothing,(2))\rhd_{\sigma} 
( \varnothing,(1^2)).   
$$
We let 
\begin{align*}
 \stw \in \Std_{\sigma}((2),\varnothing))  \quad \stv \in \Std_{\sigma}((1^2),\varnothing)  \quad
 \stt,\stu \in \Std_{\sigma}((1),(1))  \quad 
 \sts \in   \Std_{\sigma}( \varnothing, (2))  \quad
 \str \in  \Std_{\sigma}( \varnothing,(1^2)).
 \end{align*}   We choose $\stt$ so that the box $\stt^{-1}(1)$ has residue 0.  
     We leave constructing the diagrammatic version of this  basis as an exercise for the reader.  Instead, 
we describe the basis  as a linear combination of products of the KLR generators 
(using the process described in \cref{isomer}) as follows, 
 
 $$
  \addtolength{\tabcolsep}{3pt}    
\setlength\extrarowheight{5pt}  \begin{tabular}{|rlrl|}
 \hline $\cell^{{\sigma}}_{\str\str}$ &$=
  e(1,0)$&&
\\
\hline  $\cell^{{\sigma}}_{\sts\sts}$ &$=
 y_2 e(1,0)$  && \\ \hline
  $\cell^{{\sigma}}_{\stt\stt}$ &$=
  e(0,1) 						$ \quad \quad 
  &
   $\cell^{{\sigma}}_{\stt\stu}$ &$=
   \psi_1 e(1,0)$
  \\
 $\cell^{{\sigma}}_{\stu\stt}$ &$=
  \psi_1e(0,1)$  
&  $\cell^{{\sigma}}_{\stt\stt}$ &$=
 (y_2-y_1)(y_1-y_2)e(1,0)$
  \\  \hline
   $\cell^{{\sigma}}_{\stv\stv}$ &$=
  y_2 e(1,0)$  &&
\\  \hline $\cell^{{\sigma}}_{\stw\stw}$ &$=
  y_2^2   e(1,0)$ && \\ \hline
\end{tabular} 
 $$
   \end{eg}

 \label{many examples}

\begin{eg}\label{countereg2} 
Let $ \sigma=(e;\theta_0,\sigma_1)=(2;2,1)$.  
The $(2;2,1)$-dominance order is given as follows,
$$
 ((1^2),\varnothing), 
( \varnothing,(1^2)) \lhd_{\sigma} 
((1),(1))\lhd_{\sigma} 
((2),\varnothing),
( \varnothing,(2)).
$$
We let 
$$ \cell_{\sf rr}^\sigma=\begin{minipage}{6.4cm}
  \begin{tikzpicture}[baseline, thick,yscale=0.6,xscale=1.6]
\scalefont{0.8}
 
  \draw(0,0) rectangle (4,4);
 
  \draw[wei]  (1.5,4)  -- (1.5,0)   [below, at end] node{$1$};
  
  \draw[wei]  (2,4)  -- (2,0)   [below, at end] node{$0$}; 
 
 \draw[thick](0.5-0.06*3,4) to 
 [out=-90, in =90] (0.5-0.06*3,0)  [below, at end] node{$0$}; 
 
 \draw[densely dotted,thick](1.5-0.06*3,4) to 
 [out=-90, in =90] (1.5-0.06*3,0);

 \draw[thick](1.5-2*0.06,4) to 
 [out=-90, in =90] (1.5-2*0.06,0)  [below, at end] node{$1$}; 
 \draw[thick,densely dotted](2.5-2*0.06,4) to 
 [out=-90, in =90] (2.5-2*0.06,0);

\end{tikzpicture}\end{minipage}
\qquad
\cell_{\sf ss}^\sigma=
\begin{minipage}{6.4cm} \begin{tikzpicture}[baseline, thick,yscale=0.6,xscale=1.6]
\scalefont{0.8}
 
  \draw(0,0) rectangle (4,4);
 
  \draw[wei]  (1.5,4)  -- (1.5,0)   [below, at end] node{$1$};
  
  \draw[wei]  (2,4)  -- (2,0)   [below, at end] node{$0$}; 
 
 \draw[thick](0.5-0.06*3,4) to  [out=-90, in =90] (1-3*0.06,2) to 
 [out=-90, in =90] (0.5-0.06*3,0)  [below, at end] node{$1$}; 
 
 \draw[densely dotted,thick](1.5-0.06*3,4)  to  [out=-90, in =90] (2-3*0.06,2) to 
 [out=-90, in =90] (1.5-0.06*3,0);

 \draw[thick](1.5-2*0.06,4) to  [out=-90, in =90] (2-2*0.06,2) to 
 [out=-90, in =90] (1.5-2*0.06,0)  [below, at end] node{$0$}; 
 \draw[thick,densely dotted](2.5-2*0.06,4) to  [out=-90, in =90](3-2*0.06,2)  to 
 [out=-90, in =90] (2.5-2*0.06,0);

\end{tikzpicture}\end{minipage}
$$
$$ \cell_{\sf tt}^\sigma=\begin{minipage}{6.4cm}
  \begin{tikzpicture}[baseline, thick,yscale=0.6,xscale=1.6]
\scalefont{0.8}
 
  \draw(0,0) rectangle (4,4);
 
  \draw[wei]  (1.5,4)  -- (1.5,0)   [below, at end] node{$1$};
  
  \draw[wei]  (2,4)  -- (2,0)   [below, at end] node{$0$}; 
 
 \draw[thick](0.5-0.06*3,4)to  [out=-90, in =90] (2-2*0.06,2) to 
 [out=-90, in =90] (0.5-0.06*3,0)  [below, at end] node{$0$}; 
 
 \draw[densely dotted,thick](1.5-0.06*3,4)to  [out=-90, in =90] (3-2*0.06,2) to 
 [out=-90, in =90] (1.5-0.06*3,0);

 \draw[thick](1.5-2*0.06,4)  to 
 [out=-90, in =90] (1.5-2*0.06,0)  [below, at end] node{$1$}; 
 \draw[thick,densely dotted](2.5-2*0.06,4) to 
 [out=-90, in =90] (2.5-2*0.06,0);

\end{tikzpicture}\end{minipage}
\qquad
\cell_{\sf tu}^\sigma=
\begin{minipage}{6.4cm} \begin{tikzpicture}[baseline, thick,yscale=0.6,xscale=1.6]
\scalefont{0.8}
 
  \draw(0,0) rectangle (4,4);
 
  \draw[wei]  (1.5,4)  -- (1.5,0)   [below, at end] node{$1$};
  
  \draw[wei]  (2,4)  -- (2,0)   [below, at end] node{$0$}; 
 
  \draw[thick](0.5-0.06*3,4)to  [out=-90, in =90] (2-2*0.06,2) to 
 [out=-90, in =90]  		(1.5-2*0.06,0.4) --++(-90:0.4)	  [below, at end] node{$0$}; 
 
 \draw[densely dotted,thick](1.5-0.06*3,4)to  [out=-90, in =90] (3-2*0.06,2) to 
 [out=-90, in =90] (2.5-2*0.06,0.4)-++(-90:0.4);

 \draw[thick](1.5-2*0.06,4) 
to  [out=-90, in =90]   (1.5-2*0.06,2)to
 [out=-90, in =90] (0.5-0.06*3,0)  [below, at end] node{$1$}; 
 \draw[thick,densely dotted](2.5-2*0.06,4)to  [out=-90, in =90]   (2.5-2*0.06,2)to      
 [out=-90, in =90]  (1.5-0.06*3,0);

\end{tikzpicture}\end{minipage}
$$
$$
\cell_{\sf ut}^\sigma=
\begin{minipage}{6.4cm} \begin{tikzpicture}[baseline, thick,yscale=0.6,xscale=1.6]
\scalefont{0.8}
 
  \draw(0,0) rectangle (4,4);
 
  \draw[wei]  (1.5,4)  -- (1.5,0)   [below, at end] node{$1$};
  
  \draw[wei]  (2,4)  -- (2,0)   [below, at end] node{$0$};

  \draw[thick](0.5-0.06*3,0)to  [out=90, in =-90] (2-2*0.06,2) to 
 [out=90, in =-90]  		(1.5-2*0.06,4) ;
 
 \draw[densely dotted,thick](1.5-0.06*3,0)to  [out=90, in =-90] (3-2*0.06,2) to 
 [out=90, in =-90] (2.5-2*0.06,4);   
   \draw[thick](0.5-3*0.06,0)   [below, at start] node{$0$};

 \draw[thick](1.5-2*0.06,0) 
to  [out=90, in =-90]   (1.5-2*0.06,2)to
 [out=90, in =-90] (0.5-0.06*3,4-0.4)--++(90:0.4) ; 
 \draw[thick](1.5-2*0.06,0)   [below, at start] node{$1$}; 
 \draw[thick,densely dotted](2.5-2*0.06,0)to  [out=90, in =-90]   (2.5-2*0.06,2)to      
 [out=90, in =-90]  (1.5-0.06*3,4-0.4)--++(90:0.4) ;

\end{tikzpicture}\end{minipage}
\qquad
\cell_{\sf uu}^\sigma=
\begin{minipage}{6.4cm} \begin{tikzpicture}[baseline, thick,yscale=0.6,xscale=1.6]
\scalefont{0.8}
 
  \draw(0,0) rectangle (4,4);
 
  \draw[wei]  (1.5,4)  -- (1.5,0)   [below, at end] node{$1$};
  
  \draw[wei]  (2,4)  -- (2,0)   [below, at end] node{$0$};

  \draw[thick](1.5-2*0.06,0)--++(90:0.3) to  [out=90, in =-90] (2-2*0.06,2) to 
 [out=90, in =-90]  		(1.5-2*0.06,4-0.3)--++(90:0.3)  ;
 
 \draw[densely dotted,thick](2.5-2*0.06,0)--++(90:0.3) to  [out=90, in =-90] (3-2*0.06,2) to 
 [out=90, in =-90] (2.5-2*0.06,4-0.3)--++(90:0.3) ;   
   \draw[thick](0.5-3*0.06,0)   [below, at start] node{$1$};

 \draw[thick](0.5-0.06*3,0) 
to  [out=90, in =-90]   (1.5-2*0.06,2)to
 [out=90, in =-90] (0.5-0.06*3,4)  ; 
  \draw[thick](1.5-2*0.06,0)   [below, at start] node{$0$}; 
 \draw[thick,densely dotted](1.5-0.06*3,0)to  [out=90, in =-90]   (2.5-2*0.06,2)to      
  [out=90, in =-90]  (1.5-0.06*3,4) ;

\end{tikzpicture}\end{minipage}$$
$$
\cell_{\sf vv}^\sigma=
\begin{minipage}{6.4cm} \begin{tikzpicture}[baseline, thick,yscale=0.6,xscale=1.6]
\scalefont{0.8}
 
  \draw(0,0) rectangle (4,4);
 
  \draw[wei]  (1.5,4)  -- (1.5,0)   [below, at end] node{$1$};
  
  \draw[wei]  (2,4)  -- (2,0)   [below, at end] node{$0$};

  \draw[thick](1.5-2*0.06,0) to 
 [out=90, in =-90]  	(1.5-2*0.06,2)
  to 
 [out=90, in =-90]  	(1.5-2*0.06,4-0.3)--++(90:0.3)  ;
 
 \draw[densely dotted,thick](2.5-2*0.06,0)--  (2.5-2*0.06,4-0.3)--++(90:0.3) ;   
   \draw[thick](0.5-3*0.06,0)   [below, at start] node{$0$};

 \draw[thick](0.5-0.06*3,0) 
to  [out=90, in =-90]   (2.5-3*0.06,2)to
 [out=90, in =-90] (0.5-0.06*3,4)  ; 
  \draw[thick](1.5-2*0.06,0)   [below, at start] node{$1$}; 
 \draw[thick,densely dotted](1.5-0.06*3,0)to  [out=90, in =-90]   (3.5-3*0.06,2)to      
  [out=90, in =-90]  (1.5-0.06*3,4) ;

\end{tikzpicture}\end{minipage}\qquad
\cell_{\sf ww}^\sigma=
\begin{minipage}{6.4cm} \begin{tikzpicture}[baseline, thick,yscale=0.6,xscale=1.6]
\scalefont{0.8}
 
  \draw(0,0) rectangle (4,4);
 
  \draw[wei]  (1.5,4)  -- (1.5,0)   [below, at end] node{$1$};
  
  \draw[wei]  (2,4)  -- (2,0)   [below, at end] node{$0$};

  \draw[thick](1.5-2*0.06,0) to 
 [out=90, in =-90]  	(2-2*0.06,2)
  to 
 [out=90, in =-90]  	(1.5-2*0.06,4-0.3)--++(90:0.3)  ;
  
   \draw[densely dotted,thick](2.5-2*0.06,0) to 
 [out=90, in =-90]  
 (3-2*0.06,2)to 
 [out=90, in =-90]  
   (2.5-2*0.06,4) ;   
   \draw[thick](0.5-3*0.06,0)   [below, at start] node{$1$};

 \draw[thick](0.5-0.06*3,0) 
to  [out=90, in =-90]   (3-3*0.06,2)to
  [out=90, in =-90] (0.5-0.06*3,4)  ; 
  \draw[thick](1.5-2*0.06,0)   [below, at start] node{$0$}; 
 \draw[thick,densely dotted](1.5-0.06*3,0)to  [out=90, in =-90]   (4-3*0.06,2)to      
  [out=90, in =-90]  (1.5-0.06*3,4) ;

\end{tikzpicture}\end{minipage}
$$ where  we have chosen $\stt$ so that the box $\stt^{-1}(1)$ has residue 1.
 The elements $\cell^{\theta}_{\stw\stw}, \cell^{\theta}_{\stv\stv}$ are equal to the idempotents $e(1,0)$ and $e(0,1)$ respectively.   Therefore these basis elements generate the corresponding  irreducible  modules $L^\Bbbk(1,0)$ and $L^\Bbbk(0,1)$ 
 (modulo more dominant terms).  One can rewrite the above basis elements using relation \ref{rel1} to \ref{rel15} 
 (as in the proof of \cref{isomer}) to obtain a  basis of this algebra in terms of a  linear combination of  products of the  KLR-generators  as follows,
 $$
 \addtolength{\tabcolsep}{3pt}    
\setlength\extrarowheight{5pt}
  \begin{tabular}{|rlrl|}
 \hline$\cell^{\sigma}_{\str\str}$ &$=
  e(1,0)$
 & 
  $\cell^{\sigma}_{\sts\sts}$ &$=
 e(0,1)$ \\ \hline
  $\cell^{\sigma}_{\stt\stt}$ &$=
  y_2e(1,0)- y_1e(1,0)$ \quad 
  &
   $\cell^{\sigma}_{\stt\stu}$ &$=
  \psi_1e(0,1)$
  \\
 $\cell^{\sigma}_{\stu\stt}$ &$=
  \psi_1e(1,0)$  
&  $\cell^{\sigma}_{\stt\stt}$ &$=
  y_2e(0,1)- y_1e(0,1)$
  \\  \hline
   $\cell^{\sigma}_{\stv\stv}$ &$=
  y_2^2e(1,0)- y_1y_2e(1,0)$ \quad  
&  $\cell^{\sigma}_{\stw\stw}$ &$=
  y_2^2e(0,1)- y_1y_2e(0,1)$ 
\\ \hline  \end{tabular} 
 $$
 We remark that any  term  with a $y_1$ in the product is zero by relation \ref{rel1.12}. These terms have been included in order to facilitate comparison with the diagrams.  
  \end{eg}

  \begin{eg}The graded dimension of $\H_2^\Bbbk(2;0,1)$   can be calculated using either the $(2;4,1)$ or $(2;2,1)$  cellular structure  
$$
(1)^2 +(t)^2 + (1+t^2)^2 +(t)^2+(t^2)^2 
=
2+ 4t^2 + 2t^4
=(1)^2 +(1)^2 + (t+t)^2 +(t^2)^2+(t^2)^2, 
 $$
respectively and  is (of course!) independent of the choice of cellular structure.  
   \end{eg}

  In  \cite{bkw11} the authors prove a series of results on asymptotic cellular structures:
they construct graded tableaux-theoretic bases, analyse the restriction of asymptotic cell modules, 
  and provide quasi-Garnir relations which serve as  a warm-up to \cite{MR3004104}.  We have already generalised the graded tableaux and resulting bases in \cref{corollary} and we will prove the generalised  restriction rule in \cref{brancher}.  We now generalise their quasi-Garnir relations to all charges.

\begin{thm}\label{almostgarnir}
Given  $\stt$ a   tableau of shape $\la$, we  let $\cell_\stt$ be a   reduced diagram for $\stt$.   We have that 
$$ 
e(\imath)  \cell^{\theta}_{\stt}
 =\delta_{\imath,\res(\stt)}\cell^{\theta}_{\stt}
\quad
y_r \cell^{\theta}_{\stt}
=
 \sum_{
 \sts\rhd_\theta \stt
 }  \alpha_\sts \cell^{\theta}_\sts 
 \quad
 \psi_r \cell^{\theta}_{\stt}
=
\begin{cases}
 \cell^{\theta}_{s_{r,r+1}(\stt)}		&\text{if }  \stt \rhd_\theta s_{r,r+1}(\stt)\in \Std_\theta(\la) \\
 \sum_{\sts\rhd_\theta s_{r,r+1}(\stt)}\beta_\sts \cell^{\theta}_\sts
	&\text{otherwise. }  
\end{cases}
   $$
 \end{thm}

\begin{proof}
 This result follows directly from \cref{move a dot down,mastumoto2} once we have shown that the  Bruhat ordering on diagrams  coincides with the dominance order on $\Std_\theta(\la)$.
 Each solid (respectively ghost) strand in  
  $\cell_\stt^\sigma$ terminates at some northern point  ${\bf I}^\sigma_{(p,1,\ell)}$
  (respectively ${\bf I}^\sigma_{(p,1,\ell)}+1$) and
the corresponding southern point ${\bf I}^\sigma_{\stt^{-1}(p,1,\ell)}$
  (respectively ${\bf I}^\sigma_{\stt^{-1}(p,1,\ell)}+1$)   for some associated integer $1\leq p  \leq n$. 
A pair of solid or ghost strands in $\cell_\stt$  associated to integers $1\leq p < q \leq n$ 
crosses if and only if   ${\bf I}^\sigma_{(p,1,\ell)}<{\bf I}^\sigma_{(q,1,\ell)}$. 
   Thus undoing a crossing of  strands is equivalent to swapping the entries $p$ and $q$ in $\stt$ to obtain a diagram $\cell_{\sts}$ associated to  $\sts=s_{p,q}(\stt)$.  Finally, we observe that ${\bf I}^\sigma_{(p,1,\ell)}<{\bf I}^\sigma_{(q,1,\ell)}$ implies  $\sts \lhd _\sigma \stt$, as required.    
 \end{proof}

\tikzset{wei3/.style={darkgreen,double=white,double
distance=0.7pt}}

  \section{  Generic semisimplicity and 
  the decomposition map  over $\mathbb Q$}\label{Section2}
\label{Brelationspageofstuff}   

   We now recall Webster's definition of a generically semisimple algebra
   which specialises to be isomorphic to the (graded) quiver Cherednik and Hecke algebras of this paper (over $\mathbb Q$ or $\mathbb  C$).  
  This    generic  semisimplicity  allows us to  
 understand  the many  ``charged'' families of Specht modules as   specialisations 
 of a single family of 
 semisimple modules (via many {\em different}  integral forms on these modules).    
In more detail, we  now recall Webster's  definition of an algebra    $\Balgebra^{\Bbbk}_n(\sigma,(q;Q_0,Q_1,\dots, Q_{\ell-1}))$ associated to
  $n\in \mathbb{N}$ and parameters $ q$ and $Q_0,Q_1,\dots, Q_{\ell-1}$.  
When we specialise  $q=\xi$ to a primitive $e$th root of unity and $Q_m= q^{\sigma_m}$ for $0\leq m <\ell$ we will see that 
$
\algebra^{\mathbb Q}_n({\sigma})\cong 
\Balgebra^{\mathbb Q}_n(\sigma,(\xi;\xi^{\sigma _0},\xi^{\sigma _1},\dots, \xi^{\sigma _{\ell-1}} ))$.

\begin{figure}[ht]\captionsetup{width=0.9\textwidth}
\!\!\!\!\!\!\!\!\!\!\!
$$\scalefont{0.8}
\begin{tikzpicture}[scale=1]
\clip(-2.5,-4.5) rectangle (3.5,4);
 \draw[wei3](-2,-4)--(4,-4);
 \draw (0,-4.25) node {0};
  \path(0,-4.25)--++(45:0.5)--++(-45:0.5) coordinate (hello) node {1};
  \path(0,-4.25)--++(45:-0.5)--++(-45:-0.5) coordinate (hello2) node {$-1$};
  \path(hello2)--++(45:-0.5)--++(-45:-0.5) coordinate (hello2) node {$-2$};  
    \path(hello)--++(45:0.5)--++(-45:0.5) coordinate (hello) node {2};
        \path(hello)--++(45:0.5)--++(-45:0.5) coordinate (hello) node {3};
            \path(hello)--++(45:0.5)--++(-45:0.5) coordinate (hello) node {4};
 
\begin{scope}{   \path(0,0) coordinate (origin);
 \clip(origin)--(45:8)--(135:8)--(-135:8)--(origin);
  \clip(origin)--++(45:4)--++(135:1)--++(-135:3)--++(135:2)--++(-135:1)--(origin);

  \foreach \i\j in {1,...,19}
  {
    \path (origin)++(45:1*\i cm)  coordinate (a\i);
    \path (origin)++(135:1*\i cm)  coordinate (b\i);
    \path (a\i)++(135:1*\j cm) coordinate (ca\i,\j);
    \path (b\i)++(45:1*\j cm) coordinate (cb\i,\j);

  }
    \foreach \i in {1,...,19}
{  \draw[darkgreen,thick] (a\i)--++(135:8);
    \draw[darkgreen,thick] (b\i)--++(45:8);}
      \draw[darkgreen,thick] (origin)--++(135:8);
            \draw[darkgreen,thick] (origin)--++(45:8);

}
\end{scope}
\draw[wei](origin)--++(-90:4);

\path (origin)--++(45:0.5)--++(135:0.5) coordinate (origin);

  \foreach \i in {1,...,4}
  {
     \path (origin)++(45:1*\i cm)  coordinate (xx\i);
    \path (origin)++(135:1*\i cm)  coordinate (xy\i);

  }
\draw[wei3](xx1) node {$q  Q_0$};
\draw[wei3](xx2) node {$q^{2} Q_0$};
\draw[wei3](xx3) node {$q^{3} Q_0$};
\draw[wei3](xy1) node {$q^{-1} Q_0$};  
\draw[wei3](xy2) node {$q^{-2} Q_0$};  
\path(origin)--++(45:0.5)--++(135:0.5) coordinate (xc3);
\draw[wei3](origin) node {$  Q_0$};  
\path(xc3)--++(45:0.5)--++(135:0.5) coordinate (xcd);


\path(0,0)--++(45:0.25)--++(-45:0.25)--++(-90:3.2) coordinate (origin);
\draw[wei](origin)--++(-90:0.8);
\begin{scope}
{   
\draw[darkgreen,thick](origin)--++(45:3)--++(135:1)--++(-135:1)--++(135:1)--++(-135:1)--++(135:1)--++(-135:1)--(origin);
 \clip(origin)--++(45:8)--++(135:8)--++(-135:8)--(origin);
 \clip(origin)--++(45:3)--++(135:1)--++(-135:1)--++(135:1)--++(-135:1)--++(135:1)--++(-135:1)--(origin);
   \foreach \i\j in {1,...,19}
  {
    \path (origin)++(45:1*\i cm)  coordinate (a\i);
    \path (origin)++(135:1*\i cm)  coordinate (b\i);
    \path (a\i)++(135:1*\j cm) coordinate (ca\i,\j);
    \path (b\i)++(45:1*\j cm) coordinate (cb\i,\j);

  }
    \foreach \i in {1,...,19}
{  \draw[darkgreen,thick] (a\i)--++(135:8);
    \draw[darkgreen,thick] (b\i)--++(45:8);}
      \draw[darkgreen,thick] (origin)--++(135:8);
            \draw[darkgreen,thick] (origin)--++(45:8);
}

  \clip(origin)--++(45:8)--++(135:8)--++(-135:8)--(origin);

\path (origin)--++(45:0.5)--++(135:0.5) coordinate (origin);

  \foreach \i in {1,...,4}
  {
     \path (origin)++(45:1*\i cm)  coordinate (x\i);
    \path (origin)++(135:1*\i cm)  coordinate (y\i);

  }
\draw[wei3](x1) node {$ qQ_1$};
\draw[wei3](x2) node {$ q^{2}Q_1$};
\draw[wei3](y1) node {$ q^{-1}Q_1$};  
\draw[wei3](y2) node {$ q^{-2}Q_1$};  
\path(origin)--++(45:0.5)--++(135:0.5) coordinate (c3);
\draw[wei3](origin) node {$   Q_1$};  
\path(c3)--++(45:0.5)--++(135:0.5) coordinate (cd);
\draw[wei3](cd) node {$  Q_1$};

\end{scope}
\end{tikzpicture} 
\qquad
\begin{tikzpicture}[scale=1.]
\clip(-2.5,-4.5) rectangle (4.7,4);
 \draw[wei3](-2,-4)--(4,-4);
 \draw (0,-4.25) node {0};
  \path(0,-4.25)--++(45:0.5)--++(-45:0.5) coordinate (hello) node {1};
  \path(0,-4.25)--++(45:-0.5)--++(-45:-0.5) coordinate (hello2) node {$-1$};
  \path(hello2)--++(45:-0.5)--++(-45:-0.5) coordinate (hello2) node {$-2$};  
    \path(hello)--++(45:0.5)--++(-45:0.5) coordinate (hello) node {2};
        \path(hello)--++(45:0.5)--++(-45:0.5) coordinate (hello) node {3};
            \path(hello)--++(45:0.5)--++(-45:0.5) coordinate (hello) node {4};
            \path(hello)--++(45:0.5)--++(-45:0.5) coordinate (hello) node {5};
 
\begin{scope}{   \path(0,0) coordinate (origin);
 \clip(origin)--(45:8)--(135:8)--(-135:8)--(origin);
  \clip(origin)--++(45:4)--++(135:1)--++(-135:3)--++(135:2)--++(-135:1)--(origin);

  \foreach \i\j in {1,...,19}
  {
    \path (origin)++(45:1*\i cm)  coordinate (a\i);
    \path (origin)++(135:1*\i cm)  coordinate (b\i);
    \path (a\i)++(135:1*\j cm) coordinate (ca\i,\j);
    \path (b\i)++(45:1*\j cm) coordinate (cb\i,\j);

  }
    \foreach \i in {1,...,19}
{  \draw[darkgreen,thick] (a\i)--++(135:8);
    \draw[darkgreen,thick] (b\i)--++(45:8);}
      \draw[darkgreen,thick] (origin)--++(135:8);
            \draw[darkgreen,thick] (origin)--++(45:8);

}
\end{scope}
\draw[wei](origin)--++(-90:4);

\path (origin)--++(45:0.5)--++(135:0.5) coordinate (origin);

  \foreach \i in {1,...,4}
  {
     \path (origin)++(45:1*\i cm)  coordinate (xx\i);
    \path (origin)++(135:1*\i cm)  coordinate (xy\i);

  }
\draw[wei3](xx1) node {$q  Q_0$};
\draw[wei3](xx2) node {$q^{2} Q_0$};
\draw[wei3](xx3) node {$q^{3} Q_0$};
\draw[wei3](xy1) node {$q^{-1} Q_0$};  
\draw[wei3](xy2) node {$q^{-2} Q_0$};  
\path(origin)--++(45:0.5)--++(135:0.5) coordinate (xc3);
\draw[wei3](origin) node {$  Q_0$};  
\path(xc3)--++(45:0.5)--++(135:0.5) coordinate (xcd);


\path(0,0)--++(45:1.75)--++(-45:1.75)--++(-90:3.2) coordinate (origin);
\draw[wei](origin)--++(-90:0.8);
\begin{scope}
{   
\draw[darkgreen,thick](origin)--++(45:3)--++(135:1)--++(-135:1)--++(135:1)--++(-135:1)--++(135:1)--++(-135:1)--(origin);
 \clip(origin)--++(45:8)--++(135:8)--++(-135:8)--(origin);
 \clip(origin)--++(45:3)--++(135:1)--++(-135:1)--++(135:1)--++(-135:1)--++(135:1)--++(-135:1)--(origin);
   \foreach \i\j in {1,...,19}
  {
    \path (origin)++(45:1*\i cm)  coordinate (a\i);
    \path (origin)++(135:1*\i cm)  coordinate (b\i);
    \path (a\i)++(135:1*\j cm) coordinate (ca\i,\j);
    \path (b\i)++(45:1*\j cm) coordinate (cb\i,\j);

  }
    \foreach \i in {1,...,19}
{  \draw[darkgreen,thick] (a\i)--++(135:8);
    \draw[darkgreen,thick] (b\i)--++(45:8);}
      \draw[darkgreen,thick] (origin)--++(135:8);
            \draw[darkgreen,thick] (origin)--++(45:8);
}

  \clip(origin)--++(45:8)--++(135:8)--++(-135:8)--(origin);

\path (origin)--++(45:0.5)--++(135:0.5) coordinate (origin);

  \foreach \i in {1,...,4}
  {
     \path (origin)++(45:1*\i cm)  coordinate (x\i);
    \path (origin)++(135:1*\i cm)  coordinate (y\i);

  }
\draw[wei3](x1) node {$ qQ_1$};
\draw[wei3](x2) node {$ q^{2}Q_1$};
\draw[wei3](y1) node {$ q^{-1}Q_1$};  
\draw[wei3](y2) node {$ q^{-2}Q_1$};  
\path(origin)--++(45:0.5)--++(135:0.5) coordinate (c3);
\draw[wei3](origin) node {$   Q_1$};  
\path(c3)--++(45:0.5)--++(135:0.5) coordinate (cd);
\draw[wei3](cd) node {$  Q_1$};

\end{scope}
\end{tikzpicture} 
$$

\!\!\!\caption{We picture the $(q,Q)$-contents of the  2-partition $((4,1^2) \mid (3,2,1))$ 
 for $\sigma=(0,1)$ and $(0,4)$ respectively.  In each box we have placed the $(q,Q)$-content of the box.  
}\label{ALoading3333345}
\end{figure}

  \begin{figure}[ht!]
 \[
 \begin{tikzpicture}[baseline, thick,yscale=-0.9,xscale=-3.3]
\scalefont{0.8}

  \draw[wei]  (-1.1+0.5, -2)  to[out=90,in=-90] (-1.1+0.5, 2)  [below, at end] node{$\; Q_0$};
  
 \draw[darkgreen](-1,-2) rectangle (1.6,2);

\draw[gray,  densely dotted, very thick, darkgreen!80]  (-.5-0.4,-2)    to[out=60,in=-90](1-0.4,.2) to[out=90,in=-90] 
   (0-0.4,2);

   \draw[  densely dotted, very thick, darkgreen!80] (.5-0.4,-2) to[out=93,in=-93]   (.5-0.4,0) to[out=87,in=-90]  
  (1-0.4,2);

  \draw[  densely dotted, very thick, darkgreen!80]  (-0.4+1,-2) to[out=90,in=-90]  (-0.5-0.4,1) to[out=90,in=-90] (-0.4+.5,2);

  \draw[   densely dotted, very thick, darkgreen!80] (-0.4+0,-2)to     (-0.4,0) to[out=90,in=-90]  node[midway,circle,fill=darkgreen!55, densely dotted, very thick,  inner sep=2pt]{}
  (-0.4+-.5,2);

  \draw[wei3] (-.5,-2)    to[out=60,in=-90](1,.2) to[out=90,in=-90] 
     (0,2)  ; 
    \draw[wei3]    (0,2)  [below ] node{$j_2$}; 
 
  \draw[wei3] (.5,-2) to[out=93,in=-93]   (.5,0) to[out=87,in=-90]  
    (1,2)  [below, at end] node{$j_4$};
 
  \draw[wei3]  (1,-2) to[out=90,in=-90]    (-0.5,1) to[out=90,in=-90]
     (.5,2)  [below, at end] node{$j_3$};
  \draw[wei3] (0,-2) to[out=90,in=-90]     (-0.05,0)  to[out=90,in=-90]  
   node[midway,circle,fill=darkgreen,inner sep=2pt]{}  (-.5,2)
   [below, at end] node{$j_1$};
   
  \draw[wei3]  (1.5,-2) to[out=90,in=-90]    (1.15,0.2) to[out=90,in=-90]
    (1.5,2) [below, at end] node{$j_5$};

  \draw[ densely dotted, very thick, darkgreen!80]  (1.5-0.4,-2) to[out=90,in=-90]    (1.15-0.4,0.2) to[out=90,in=-90]
    (1.5-0.4,2);

\end{tikzpicture} \]

\!\!\!\!\!\caption{
A $\theta$-diagram, $B\in \Balgebra^\Bbbk_5(0,(q;Q_0))$,   with northern and southern loading
  ${\bf I}^{\theta}_\omega$ for $\omega= (1^5)$.   Here $j_k=q^{i_k}Q_0 $ for some $i_k \in \ZZ$  and $1\leq k \leq 5$.    }
\label{initialdiagram2}
\end{figure}

\subsection{Algebra definition}
We now define the algebra of interest.  Our definition  is slightly reverse-engineered in order to make it easier to understand the isomorphism (see \cref{whyican laebel}).  
We first require a ``generic" versions of   definitions of the residues and contents from \cref{sec1}.    We assume all the notation and definitions of \cref{sec1}.  
We define the $(q,Q)$-{\sf content of a box} as follows, 
$$
{\sf ct}_{q,Q}(r,c,m)= q^{c-r}Q_m
$$ 
and we set $\ct_{q,Q}(\la)=\sum_{(r,c,m)\in\la}{\sf ct}_{q,Q}(r,c,m)$.  
Upon specialisation of $q=\xi$   and  $Q_m = \xi^{\sigma_m}$ for $0\leq m <\ell$,   we have 
$$
 {\sf ct}_{q,Q}(r,c,m)|_{q=\xi,Q_m=\xi^m } 
:= \xi^{c-r}\xi^{\sigma_m} = \xi^{{\sf res}(r,c,m)}.
$$

\begin{defn}
Given   a    $\theta$-{\sf diagram} of type $(\mu,\la)$, we define a 
corresponding {\sf  degraded}   
  $\theta$-{\sf diagram} to be any diagram obtained by relabelling (and recolouring) as follows.  
  We recolour  each solid strand as a 
   green double-edged line 
and  replace the residue   of this solid strand with some $(q,Q)$-{\sf content} $q^i Q_m$ for $i\in \ZZ$ and  $0\leq m <\ell$; we  relabel  the residue of the $\sigma_m$ red strand with $Q_m$.  
  \end{defn}

  \!\!
 \begin{defn}  \label{defintino2}
 The     associative   $ \Bbbk$-algebra,  $\Balgebra^{ \Bbbk}_n( \sigma,(q;Q_0,Q_1,\dots, Q_{\ell-1})  )$, is  generated (as a $ { \Bbbk}$-module) by   all inequivalent    degraded $\theta$-diagrams  modulo the   local relations  \ref{Brel1} to \ref{Brel15} below.
    The product $b_1 b_2$ of two diagrams $b_1,b_2 \in \Balgebra^{ \Bbbk}_n( \sigma,(q;Q_0,Q_1,\dots, Q_{\ell-1})  )$ is then given by putting $b_1$ on top of $b_2$.
This product is defined to be $0$ unless the southern border of $b_1$ is given by the same loading as the northern border of $b_2$ with  $(q;Q)$-contents matching in the obvious manner,
 in which case we obtain a new diagram with loading   inherited from those of $b_1$ and $b_2$.  
\end{defn}
    
      \begin{enumerate}[label=(B\arabic*),leftmargin=*] 
\item\label{Brel1}  Any diagram may be deformed isotopically; that is,
 by a continuous deformation
 of the diagram which  
 avoids  tangencies, double points and dots on crossings. 
\item\label{Brel2} 
Any solid dot     can pass through  an arbitrary crossing involving a  ghost strand.  Namely: 
\[   \scalefont{0.8} \begin{tikzpicture}[yscale=0.45,xscale=-0.45,baseline]
  \draw[darkgreen, densely dotted, very thick,very thick](-4,0) +(-1,-1) -- +(1,1) node [at start,below] {$j$} ;
   \draw[wei3] (-4,0) +(1,-1) -- +(-1,1)  node [at start,below] {$i$} ; 
    \fill[darkgreen]  (-4.5,.5) circle  (5pt);
    \node at (-2,0){$=$}; 
    \draw[darkgreen, densely dotted, very thick,very thick](0,0) +(-1,-1) -- +(1,1)
node [at start,below] {$j$} ; 
   \draw[wei3] (0,0) +(1,-1) --
  +(-1,1)node [at start,below] {$i$} ; 
 ;
    \fill[darkgreen]   (.5,-.5) circle (5pt);
  \node at (4,0){ };
\end{tikzpicture} \quad
\begin{tikzpicture}[yscale=-0.45,xscale=-0.45,baseline]
  \draw[darkgreen, densely dotted, very thick,very thick](-4,0) +(-1,-1) -- +(1,1) node [at end,below] {$i$} ; 
; \draw[wei3] (-4,0) +(1,-1) -- +(-1,1) node [at end,below] {$j$} ; 
;\fill [darkgreen] (-4.5,.5) circle  (5pt);
    \node at (-2,0){$=$}; \draw[darkgreen, densely dotted, very thick,very thick](0,0) +(-1,-1) -- +(1,1)
node [at end,below] {$i$} ; 
 ; \draw[wei3] (0,0) +(1,-1) --
  +(-1,1) node [at end,below] {$j$} ; 
; \fill [darkgreen]   (.5,-.5) circle (5pt);
  \node at (4,0){ };
\end{tikzpicture}\]
for   $i,j$ any $(q,Q)$-contents and their  images through reflection in the vertical axis hold.  
 \item\label{Brel3}  We can pass a solid  dot through a
  crossing   at the expense of  an error term:
\[
\scalefont{0.8}\begin{tikzpicture}[yscale=0.45,xscale=0.45,baseline]
  \draw[wei3](-4,0) +(-1,-1) -- +(1,1) node [at start,below] {$i$} ; 
; \draw[wei3](-4,0) +(1,-1) -- +(-1,1)node [at start,below] {$j$} ; 
 ;\fill [darkgreen] (-3.5,.5) circle (5pt);
   \node at (-2,0){$=$}; 
   \draw[wei3](0,0) +(-1,-1) -- +(1,1)node [at start,below] {$i$} ; 
 ; \draw[wei3](0,0) +(1,-1) --
  +(-1,1)node [at start,below] {$j$} ; 
; \fill [darkgreen] (-.5,-.5) circle (5pt);
  \node at (2,0){$+$};
   \draw[wei3](4,0) +(-1,-1) -- +(-1,1)
node [at start,below] {$i$} ; 
 ;
   \draw[wei3](4,0) +(0,-1) --
  +(0,1) node [at start,below] {$j$} ; 
;
\end{tikzpicture}  \quad \quad \quad
\scalefont{0.8}\begin{tikzpicture}[yscale=0.45,xscale=0.45,baseline]
  \draw[wei3](-4,0) +(-1,-1) -- +(1,1) node [at start,below] {$i$} ; 
   \draw[wei3](-4,0) +(1,-1) -- +(-1,1) node [at start,below] {$j$} ; 
   \fill [darkgreen] (-3.5,-.5) circle (5pt);
       \node at (-2,0){$=$}; 
       \draw[wei3](0,0) +(-1,-1) -- +(1,1)
node [at start,below] {$i$} ;  
 \draw[wei3](0,0) +(1,-1) --
  +(-1,1)node [at start,below] {$j$} ;  
  \fill [darkgreen] (-.5,.5) circle (5pt);
  \node at (2,0){$+$}; 
  \draw[wei3](4,0) +(-1,-1) -- +(-1,1)node [at start,below] {$i$} ; 
 ; \draw[wei3](4,0) +(0,-1) --
  +(0,1)node [at start,below] {$j$} ; 
\end{tikzpicture}\]
for   $i,j$ any $(q,Q)$-contents.  Ghost dots can pass through any crossing of strands  freely. 
\end{enumerate} 
 \begin{enumerate}[resume, label=(B\arabic*),leftmargin=*]  
\item\label{Brel4} For $i,j$ any $(q,Q)$-contents, 
a double-crossings of solid strands is zero, that is   
\[
\scalefont{0.8}\begin{tikzpicture}[ xscale=-0.45,yscale=0.45,baseline ]
\draw[wei3](-2.8,-1.25) .. controls (-1.2,0) ..  +(0,2.5)
node [at start,below] {$j$} ; 
\draw[wei3](-1.2,-1.25) .. controls (-2.8,0) ..  +(0,2.5) node [at start,below] {$i$} ; 
\node at (-.5,0) {$=$};
\node at (0.4,0) {$0$};
\end{tikzpicture} 
\] 
\end{enumerate}
 \begin{enumerate}[resume, label=(B\arabic*),leftmargin=*]  
\item\label{Brel6}\label{Brel7} 
For       $i,j$ any $(q,Q)$-contents,  a  double-crossing of ghost and solid strands can be undone as follows:  
\[\scalefont{0.8}
 \begin{tikzpicture}[ xscale=0.45,yscale=0.45,baseline, ]

\draw[ densely dotted, very thick,darkgreen!80,  ] (-2.8-0.3,-1-0.25) .. controls (-1.2-0.3,0) ..  +(0,2+0.5)
node [at start,below] {$i$} ; 
 \draw[wei3]  (-1.2-0.3,-1-0.25) .. controls (-2.8-0.3,0) ..  +(0,2+0.5)
node [at start,below] {$j$} ; 
\node at (-.4,0) {$=$};

\draw[ densely dotted, very thick,darkgreen!80,  ] (0.9,-1-0.25) .. controls (1.4,0) ..  +(0,2+0.5)
node [at start,below] {$i$} ; ;
 \draw[wei3]  (2.5,-1-0.25) .. controls (2.1,0) ..  +(0,2+0.5)
   node[midway,fill=darkgreen,inner sep=2.5pt,circle]{} node [at start,below] {$j$} ;

\node at (4.4,0) {\scalefont{1.1}$- \quad q$}; 

\draw[ densely dotted, very thick,darkgreen!80,  ] (0.9+4.5,-1-0.25) .. controls (1.4+4.5,0) ..  +(0,2+0.5)
 node[midway,fill=darkgreen!45,inner sep=2.5pt,circle]{} node [at start,below] {$i$} ; ; 
 \draw[wei3]  (2.5+4.5,-1-0.25) .. controls (2.1+4.5,0) ..  +(0,2+0.5)
node [at start,below] {$j$} ; 
 
\end{tikzpicture}
 \qquad\qquad\qquad \begin{tikzpicture}[ xscale=0.5,yscale=0.5,baseline, ]
 \draw[wei3]  (-2.8-0.3,-1-0.25) .. controls (-1.2-0.3,0) ..  +(0,2.5)
 node [at start,below] {$i$} ; ;
\draw[ densely dotted, very thick,darkgreen!80,  ]  (-1.2-0.3,-1-0.25) .. controls (-2.8-0.3,0) ..  +(0,2.5)
node [at start,below] {$j$} ; 
\node at (-.4,0) {$=$}; 

  \draw[wei3]  (0.9,-1-0.25) .. controls (1.4,0) ..  +(0,2.5)
  node[midway,fill=darkgreen,inner sep=2.5pt,circle]{}node [at start,below] {$i$} ; 

\draw[ densely dotted, very thick,darkgreen!80,  ] (2.5,-1-0.25) .. controls (2.1,0) ..  +(0,2.5)
node [at start,below] {$j$} ;

\node at (4.4,0) {\scalefont{1.1}$- \quad q$};

 \draw[wei3]  (0.9+4.5,-1-0.25) .. controls (1.4+4.5,0) ..  +(0,2.5)
  node [at start,below] {$i$} ; 

\draw[ densely dotted, very thick,darkgreen!80,  ]  (2.5+4.5,-1-0.25) .. controls (2.1+4.5,0) ..  +(0,2.5)
  node[midway,fill=darkgreen!45,inner sep=2.5pt,circle]{} node [at start,below] {$j$} ;

\end{tikzpicture}
\] 
 \item\label{Brel9}\label{Brel10}    We can pull a solid strand through a  ghost-crossing (or a ghost strand through a solid-crossing) at the expense of an error term: for   $i,j,k$ any $(q,Q)$-contents we have
  \[
\scalefont{0.8}\begin{tikzpicture}[ xscale=1.3,yscale=0.525,baseline]
\draw[ densely dotted, very thick,darkgreen!80,  ]   (-2.6,-1) -- +(-.8,2)node [at start,below] {$k$} ;  
\draw[ densely dotted, very thick,darkgreen!80,  ]   (-3.4,-1) -- +(.8,2) 
node [at start,below] {$i$} ;   
 \draw[wei3] (-3,-1) .. controls (-2.5,0) ..  +(0,2)
node [at start,below] {$j$} ;  

\node at (-2.25,0) {$=$};

\draw[ densely dotted, very thick,darkgreen!80,  ]  (-1.5+.4,-1) -- +(+-.8,2)node [at start,below] {$k$} ;  
\draw[ densely dotted, very thick,darkgreen!80,  ]   (-1.5+-.4,-1) -- +(+.8,2)node [at start,below] {$i$} ;  
 \draw[wei3]  (-1.5+0,-1) .. controls (-1.5-.5,0) ..  +(0,2)
node [at start,below] {$j$} ;  

\node at (-.7,0) {$- \quad q$};

 \draw[ densely dotted, very thick,darkgreen!80,  ]  (-2.9+3.4,-1) -- +(0,2)node [at start,below] {$k$} ;  
\draw[ densely dotted, very thick,darkgreen!80,  ]   (-2.9+2.6,-1) -- +(0,2)node [at start,below] {$i$} ;   \draw[wei3] (-2.9+3,-1) -- +(0,2)  node [at start,below] {$j$} ;  
 \end{tikzpicture}
\qquad\qquad   \begin{tikzpicture}[ xscale=1.3,yscale=0.525,baseline]
 \draw[wei3]  (-2.6,-1) -- +(-.8,2)node [at start,below] {$k$} ;  
 \draw[wei3]  (-3.4,-1) -- +(.8,2)node [at start,below] {$i$} ;  
 \draw[ densely dotted, very thick,darkgreen!80,  ] (-3,-1) .. controls (-2.5,0) ..  +(0,2)
node [at start,below] {$j$} ;  

\node at (-2.25,0) {$=$};

 \draw[wei3] (-1.5+.4,-1) -- +(+-.8,2)node [at start,below] {$k$} ;  
 \draw[wei3]  (-1.5+-.4,-1) -- +(+.8,2) node [at start,below] {$i$} ;  
 \draw[ densely dotted, very thick,darkgreen!80,  ] (-1.5+0,-1) .. controls (-1.5-.5,0) ..  +(0,2)
node [at start,below] {$j$} ;  

\node at (-.7,0) {$+$};

 \draw[wei3] (-2.9+3.4,-1) -- +(0,2)node [at start,below] {$k$} ;  
 \draw[wei3]  (-2.9+2.6,-1) -- +(0,2) node [at start,below] {$i$} ;  
\draw[ densely dotted, very thick,darkgreen!80,very thick ] (-2.9+3,-1) -- +(0,2) node [at start,below] {$j$} ;  
\end{tikzpicture}
\] 
\end{enumerate}
    \begin{enumerate}[resume, label=(B\arabic*),leftmargin=*]  
\item \label{Brel8} All other triples of  solid and ghost strands satisfy the naive braid relation:
 \[\Yvcentermath1
\scalefont{0.8}\begin{tikzpicture}[ scale=0.5 ,baseline]
 \draw[wei3] (-2,-1) -- +(-2,2)  node [at start,below] {$k$} ;
 \draw[wei3] (-4,-1) -- +(2,2) node [at start,below] {$i$} ;
 \draw[wei3] (-3,-1) .. controls (-4,0) ..  +(0,2)
node [at start,below] {$j$} ;
\node at (-1,0) {$=$};
 \draw[wei3] (2,-1) -- +(-2,2)node [at start,below] {$k$} ;
 \draw[wei3] (0,-1) -- +(2,2)node [at start,below] {$i$} ;
 \draw[wei3] (1,-1) .. controls (2,0) ..  +(0,2)node [at start,below] {$j$} ;
\end{tikzpicture}\qquad
\quad
\begin{tikzpicture}[ scale=0.5 ,baseline]
\draw[ densely dotted, very thick,darkgreen!80,very thick] (-2,-1) -- +(-2,2)  node [at start,below] {$k$} ;
 \draw[wei3] (-4,-1) -- +(2,2) node [at start,below] {$i$} ;
 \draw[wei3] (-3,-1) .. controls (-4,0) ..  +(0,2)node [at start,below] {$j$} ;
\node at (-1,0) {$=$};
\draw[ densely dotted, very thick,darkgreen!80,very thick] (2,-1) -- +(-2,2)
node [at start,below] {$k$} ;
 \draw[wei3] (0,-1) -- +(2,2)node [at start,below] {$i$} ;
 \draw[wei3] (1,-1) .. controls (2,0) ..  +(0,2)node [at start,below] {$j$} ;
\end{tikzpicture}
\qquad
\quad
\begin{tikzpicture}[ scale=0.5 ,baseline]
 \draw[wei3] (-2,-1) -- +(-2,2)  node [at start,below] {$k$} ;
\draw[ densely dotted, very thick,darkgreen!80,very thick] (-4,-1) -- +(2,2) node [at start,below] {$i$} ;
\draw[ densely dotted, very thick,darkgreen!80,very thick] (-3,-1) .. controls (-4,0) ..  +(0,2)node [at start,below] {$j$} ;
\node at (-1,0) {$=$};
 \draw[wei3] (2,-1) -- +(-2,2)node [at start,below] {$k$} ;
\draw[ densely dotted, very thick,darkgreen!80,very thick] (0,-1) -- +(2,2)
node [at start,below] {$i$} ;
\draw[ densely dotted, very thick,darkgreen!80,very thick] (1,-1) .. controls (2,0) ..  +(0,2)node [at start,below] {$j$} ;
\end{tikzpicture}
\]
for any $i,j,k$ any $(q,Q)$-contents and their mirror images through reflection in the vertical axis hold.
  Performing the leftmost relation  implicitly involves manipulating  a braid  of three ghost strands at the same time (which we do not picture) and vice versa.  

 \item\label{Brel11}Double-crossings of solid and red strands can be undone at the expense of an error term 
\[
\scalefont{0.8}
\begin{tikzpicture}[ xscale=0.45,yscale=0.45  ]

\draw[  wei3,    ] (-2.8-0.3,-1-0.25) .. controls (-1.2-0.3,0) ..  +(0,2+0.5) node [at start,below] {$i$} ;
 \draw[wei]  (-1.2-0.3,-1-0.25) .. controls (-2.8-0.3,0) ..  +(0,2+0.5)
node[at start,below]{$Q_m$};  
\node at (-.4,0) {$=$};

\draw[   wei3  ] (0.9,-1-0.25) .. controls (1.4,0) ..  +(0,2+0.5)
node[midway,fill=darkgreen,inner sep=2.5pt,circle]{}
node [at start,below] {$i$} ;
 \draw[wei]  (2.5,-1-0.25) .. controls (2.1,0) ..  +(0,2+0.5)
node[at start,below]{$Q_m$};
    ;

\node at (4.4,0) {\scalefont{1.1}$- \!\!\!\quad Q_m  $}; 

 \draw[ wei3  ] (0.9+4.5,-1-0.25) .. controls (1.4+4.5,0) ..  +(0,2+0.5)
 node[midway,fill=darkgreen ,inner sep=2.5pt,circle] {} 
 node [at start,below] {$i$} ; 
 \draw[wei]  (2.5+4.5,-1-0.25) .. controls (2.1+4.5,0) ..  +(0,2+0.5)
node[at start,below]{$Q_m$};
  ;
 
\end{tikzpicture}  \]
for $0\leq m <\ell$ and $i $ any $(q,Q)$-content;  the  mirror image  through reflection through the vertical axis also holds.   Ghost strands and ghost dots may pass through red strands freely.   
 \Item\label{Brel12}Solid crossings and dots can pass through  red strands, with a
correction term,
\[
\scalefont{0.8}\begin{tikzpicture}[  scale=0.55,baseline]
\path(0,2.2)--(1,2.2);
\draw[wei] (-3,-1) .. controls (-2,0) ..  +(0,2)
node[at start,below]{$Q_m$};
 \draw[wei3] (-2,-1)  -- +(-2,2) node [at start,below] {$j$} ; 
 \draw[wei3] (-4,-1) -- +(2,2)  node [at start,below] {$i$} ; 
 
\node at (-1,0) {$=$};
\draw[wei] (1,-1) .. controls (0,0) .. +(0,2)
node[at start,below]{$Q_m$};

 \draw[wei3] (2,-1) -- +(-2,2) node [at start,below] {$j$} ; 
 \draw[wei3] (0,-1) -- +(2,2) node [at start,below] {$i$} ; 
 
\node at (2.8,0) {$+ $};
\draw[wei] (6.5-0.5-0.5,-1) -- +(0,2)
node[at start,below]{$Q_m$};

 \draw[wei3] (7.5-0.5-0.5,-1) -- +(0,2) node [at start,below] {$j$} ; 
 \draw[wei3] (5.5-0.5-0.5,-1) -- +(0,2) node [at start,below] {$i$} ;

\end{tikzpicture}
\]
for $0\leq m <\ell$ and   $i,j$ any $(q,Q)$-contents. 
 \item\label{Brel13}
 Any   braid involving a red strand and   not of the form in \ref{Brel12} can be undone without cost:  
\[
\scalefont{0.8}\begin{tikzpicture}[scale=0.5, baseline=2cm]
\draw[wei] (-2,2) -- +(-2,2)node[at start,below]{\scalefont{0.8} $Q_m$};
 \draw[wei3] (-3,2) .. controls (-4,3) ..  +(0,2)node[at start,below]{$j$};
 \draw[wei3] (-4,2) -- +(2,2) node[at start,below]{$i$};
\node at (-1,3) {$=$};

\draw[wei] (2,2) -- +(-2,2) node[at start,below]{\scalefont{0.8} $Q_m$};
 \draw[wei3] (1,2) .. controls (2,3) ..  +(0,2) node[at start,below]{  $j$};
 \draw[wei3] (0,2) -- +(2,2) node[at start,below]{  $i$};

\end{tikzpicture}
\quad\quad\quad  
\begin{tikzpicture}[scale=0.5, baseline=2cm]
\draw[wei] (-2,2) -- +(-2,2)node[at start,below]{\scalefont{0.8} $Q_m$};
\draw[ densely dotted, very thick,darkgreen!80,very thick] (-3,2) .. controls (-4,3) ..  +(0,2) node[at start,below]{  $j$};
 \draw[wei3] (-4,2) -- +(2,2)node[at start,below]{  $i$};

\node at (-1,3) {$=$};

\draw[wei] (2,2) -- +(-2,2)node[at start,below]{\scalefont{0.8} $Q_m$};
\draw[ densely dotted, very thick,darkgreen!80,very thick] (1,2) .. controls (2,3) ..  +(0,2)node[at start,below]{  $j$};
 \draw[wei3] (0,2) -- +(2,2)node[at start,below]{  $i$};

\end{tikzpicture}
\quad\quad\quad
\begin{tikzpicture}[scale=0.5, baseline=2cm]
\draw[wei] (-2,2) -- +(-2,2)node[at start,below]{\scalefont{0.8} $Q_m$};
\draw[ densely dotted, very thick,darkgreen!80,very thick] (-3,2) .. controls (-4,3) ..  +(0,2)node[at start,below]{  $j$};
\draw[ densely dotted, very thick,darkgreen!80,very thick] (-4,2) -- +(2,2)node[at start,below]{  $i$};

\node at (-1,3) {$=$};

\draw[wei] (2,2) -- +(-2,2)node[at start,below]{\scalefont{0.8} $Q_m$};
\draw[ densely dotted, very thick,darkgreen!80,very thick] (1,2) .. controls (2,3) ..  +(0,2)node[at start,below]{  $j$};
\draw[ densely dotted, very thick,darkgreen!80,very thick] (0,2) -- +(2,2)node[at start,below]{  $i$};
 
\end{tikzpicture}
\]

\!\!\!\[\scalefont{0.8}
\begin{tikzpicture}[scale=0.5, baseline=2cm]
 \draw[wei3]  (-2,2) -- +(-2,2) node[at start,below]{  $j$};;
\draw[wei]  (-3,2) .. controls (-4,3) ..  +(0,2)node[at start,below]{\scalefont{0.8} $Q_m$};
 \draw[wei3] (-4,2) -- +(2,2)node[at start,below]{  $i$};;

\node at (-1,3) {$=$};

 \draw[wei3]   (2,2) -- +(-2,2)node[at start,below]{  $j$};;
\draw[wei] (1,2) .. controls (2,3) ..  +(0,2)node[at start,below]{\scalefont{0.8} $Q_m$};
 \draw[wei3] (0,2) -- +(2,2)node[at start,below]{  $i$};;
 
\end{tikzpicture}
\quad\quad\quad
\begin{tikzpicture}[scale=0.5, baseline=2cm]
\draw[ densely dotted, very thick,darkgreen!80,very thick]  (-2,2) -- +(-2,2)node[at start,below]{  $j$};;
\draw[wei]  (-3,2) .. controls (-4,3) ..  +(0,2)node[at start,below]{\scalefont{0.8} $Q_m$};
\draw[ densely dotted, very thick,darkgreen!80,very thick] (-4,2) -- +(2,2)node[at start,below]{  $i$};;

\node at (-1,3) {$=$};

\draw[ densely dotted, very thick,darkgreen!80,very thick]   (2,2) -- +(-2,2)node[at start,below]{  $j$};;
\draw[wei] (1,2) .. controls (2,3) ..  +(0,2)node[at start,below]{\scalefont{0.8} $Q_m$};
\draw[ densely dotted, very thick,darkgreen!80,very thick] (0,2) -- +(2,2)node[at start,below]{  $i$};;
 
\end{tikzpicture}
\quad\quad\quad
\begin{tikzpicture}[scale=0.5, baseline=2cm]
 \draw[wei3]   (-2,2) -- +(-2,2)node[at start,below]{  $j$};;
\draw[wei]  (-3,2) .. controls (-4,3) ..  +(0,2)node[at start,below]{\scalefont{0.8} $Q_m$};
\draw[ densely dotted, very thick,darkgreen!80,very thick] (-4,2) -- +(2,2)node[at start,below]{  $i$};;

\node at (-1,3) {$=$};

 \draw[wei3]  (2,2) -- +(-2,2)node[at start,below]{  $j$};;
\draw[wei] (1,2) .. controls (2,3) ..  +(0,2)node[at start,below]{\scalefont{0.8} $Q_m$};
\draw[ densely dotted, very thick,darkgreen!80,very thick] (0,2) -- +(2,2)node[at start,below]{  $i$};;
 
\end{tikzpicture}
\]
for $0\leq m <\ell$ and   $i,j$ any $(q,Q)$-contents;    their   reflections through the vertical axis   hold.    \item
\label{Brel14} Finally, any solid or ghost dot can be pulled through a red strand without cost: 
\[
\scalefont{0.8}\begin{tikzpicture}[ baseline,scale=0.5]
\draw[wei](-3,0) +(1,-1) -- +(-1,1)node[at start,below]{\scalefont{0.8} $Q_m$};
\draw[wei3](-3,0) +(-1,-1) -- +(1,1)node[at start,below]{  $i$};;
\draw[wei](1,0) +(1,-1) -- +(-1,1)node[at start,below]{\scalefont{0.8} $Q_m$};

\fill [darkgreen]  (-3.5,-.5) circle (5pt);
\node at (-1,0) {$=$};
\draw[wei3](1,0) +(-1,-1) -- +(1,1)node[at start,below]{  $i$};;
 \fill [darkgreen] (1.5,.5) circle (5pt);

\draw[wei](1,0) +(3,-1) -- +(5,1)node[at start,below]{\scalefont{0.8} $Q_m$};
\draw[ densely dotted, very thick,darkgreen!80,very thick](1,0) +(5,-1) -- +(3,1)node[at start,below]{  $i$};;
\fill[darkgreen!45] (5.5,-.5) circle (5pt);
\node at (7,0) {$=$};
\draw[wei](5,0) +(3,-1) -- +(5,1)node[at start,below]{\scalefont{0.8} $Q_m$};
\draw[ densely dotted, very thick,darkgreen!80,very thick] (5,0) +(5,-1) -- +(3,1)node[at start,below]{  $i$};;
\fill [darkgreen] [darkgreen!45] (8.5,.5) circle (5pt);
\end{tikzpicture}
\]
for $0\leq m <\ell$ and   $i $ any $(q,Q)$-content;  their   reflections through the vertical axis also hold.
\end{enumerate}
   We note that the diagram 
  $\upepsilon_\la  $ whose solid points along the  northern and southern boundaries are given by 
  $\mathbf{I}^\theta_\la$ for $\la \in \mathscr{C}^\ell _n$ 
and with {\em no crossing strands} is an idempotent by construction.  We refer to any such  diagram as a {\sf degraded weight idempotent}.  
Finally, we have the following non-local idempotent relation.    
 \begin{enumerate}[resume, label=(B\arabic*),leftmargin=*]
\item\label{Brel15}
Any degraded weight idempotent,   $\upepsilon_\la $,   in which a solid strand is at least  $  n$ units to the  right  of the  rightmost red-strand is referred to as unsteady and set to be equal to zero.
\end{enumerate}

\begin{rmk}
Given   (degraded) weight idempotents 
 $\upepsilon_\la^\imath$ 
and 
 ${\sf 1}_\la^\imath$, we enumerate the solid
 (green or black, respectively)  strands $1,\dots, n$ 
  from right-to-left.  
  We  let $X_p \upepsilon_\la^\imath$  
  and $y_p{\sf 1}_\la^\imath$ denote the 
  diagrams obtained by adding a single dot on the $p$th solid strand (and a corresponding ghost dot on its ghost strand) and we let  
$e^{y_p}{\sf 1}_\la^\imath = \sum_{i\geq 0} \tfrac{1}{(i!)}y_p^ i {\sf 1}_\la^\imath $.  
We set $\upepsilon  ^\theta_\omega
= \sum_{\imath  } \upepsilon ^{\imath}_{\omega} $ where the sum is over all $(q,Q)$-content  sequences.   
\end{rmk}

  \begin{thm}[{\cite[Theorem 4.6]{MR3732238} and 
  \cite[Theorem 6.9]{Webster(b)} 
  }]
   We have an isomorphism of $\mathbb Q$-algebras  $
\zeta : \Balgebra^{\mathbb Q}_n(\sigma(\xi; \xi^{\sigma _0},\xi^{\sigma _1},\dots, \xi^{\sigma _{\ell-1}}) )\xrightarrow{} \algebra^{\mathbb Q}_n({\sigma})$.  This isomorphism is given by specifying what happens on every local region  of a diagram,  as follows.  We have that 
    \vspace{-0.4cm}$$
      \upepsilon_\la^\imath\mapsto {\sf 1}_\la^\imath 
 \quad  \quad 
 X_p  \upepsilon_\la^\imath
 \mapsto e^{y_p}{\sf 1}_\la^\imath 
\qquad\quad 
 \begin{minipage}{1.2cm} \begin{tikzpicture}[yscale=0.425,xscale=-0.425,baseline]
  \draw[densely dotted, darkgreen, very thick](-4,0) +(-1,-1) -- +(1,1) node [at start,below] { \scalefont{0.8} $\ \xi^{i_p}$} ; 
  \draw[densely dotted, very thick, darkgreen](-4,0) +(1,-1) -- +(-1,1) node [at start,below] { \scalefont{0.8} $\ \xi^{ i_{p+1}}$} 
  node [at end,above] { $ \color{white}\xi^{i_r}$} ;   
 \end{tikzpicture}\end{minipage}
 \;  \;  \;  \;  \;  \;  \mapsto   \;  \; 
  \begin{minipage}{1.2cm} \begin{tikzpicture}[yscale=0.425,xscale=-0.425,baseline]
  \draw[densely dotted,very thick](-4,0) +(-1,-1) -- +(1,1) node [at start,below] {$i_p$} ; \draw[densely dotted, very thick] (-4,0) +(1,-1) -- +(-1,1) node [at start,below] { $ i_{p+1}$} 
  node [at end,above] { $ \color{white}i_r$} ;   
 \end{tikzpicture}\end{minipage}
 \qquad\qquad 
  \begin{minipage}{1.2cm} \begin{tikzpicture}[yscale=0.425,xscale=-0.425,baseline]
  \draw[darkgreen, very thick ,densely dotted](-4,0) +(-1,-1) -- +(1,1) node [at start,below] { \scalefont{0.8} $\ \xi^{i_p}$} ;
   \draw[wei](-4,0) +(1,-1) -- +(-1,1) node [at start,below] {\scalefont{0.8}$ {\color{white}\xi^{i_p} \; }Q_m{\color{white}i_p}$} 
  node [at end,above] {\scalefont{0.8}$ \color{white}Q_m$} ;   
 \end{tikzpicture}\end{minipage}
 \;  \;  \;   \;  \;  \;   \;  \;  \;  \mapsto  \!\!\!
  \begin{minipage}{1.2cm} \begin{tikzpicture}[yscale=0.425,xscale=-0.425,baseline]
  \draw[very thick,densely dotted] (-4,0) +(-1,-1) -- +(1,1) node [at start,below] {$i_p$} ; \draw[wei](-4,0) +(1,-1) -- +(-1,1) node [at start,below] { $ { \color{white}i_p \; } s_m{\color{white}i_p}$} 
  node [at end,above] { $ \color{white}s_m$} ;   
 \end{tikzpicture}\end{minipage}$$
together with the    flips of the latter two diagrams  through the horizontal axis.  We have that 
  \vspace{-0.4cm}$$
 \begin{minipage}{1.2cm} \begin{tikzpicture}[yscale=0.425,xscale=-0.425,baseline]
  \draw[wei3](-4,0) +(-1,-1) -- +(1,1) node [at start,below] { \scalefont{0.8} $\  \xi^{i_p}$} ; \draw[wei](-4,0) +(1,-1) -- +(-1,1) node [at start,below] {\scalefont{0.8}$ {\color{white}i_p \; }Q_m{\color{white}\scalefont{0.8} \xi^{i_p}}$} 
  node [at end,above] {\scalefont{0.8}$ \color{white}Q_m$} ;   
 \end{tikzpicture}\end{minipage}
 \;  \;  \;  \;  \;  \;  \;  \;  \;  \mapsto  \!\!
  \begin{minipage}{1.2cm} \begin{tikzpicture}[yscale=0.425,xscale=-0.425,baseline]
  \draw[very thick] (-4,0) +(-1,-1) -- +(1,1) node [at start,below] {$i_p$} ; \draw[wei](-4,0) +(1,-1) -- +(-1,1) node [at start,below] { $ { \color{white}i_p \; } s_m{\color{white}i_p}$} 
  node [at end,above] { $ \color{white}s_m$} ;   
 \end{tikzpicture}\end{minipage}
  \qquad \qquad \qquad 
   \begin{minipage}{1.2cm} \begin{tikzpicture}[yscale=0.425,xscale=0.4,baseline]
  \draw[wei3](-4,0) +(-1,-1) -- +(1,1) node [at start,below] { \scalefont{0.8} $ \xi^{i_p}$} ; \draw[wei](-4,0) +(1,-1) -- +(-1,1) node [at start,below] {\scalefont{0.8}$ {\color{white}i_p \; }Q_m{\color{white}\scalefont{0.8} \xi^{i_p}}$} 
  node [at end,above] {\scalefont{0.8}$ \color{white}Q_m$} ;   
 \end{tikzpicture}\end{minipage}
 \;  \;  \; \mapsto  \; \; 
\frac{  (i_p  e^{y_p} - Q_m)}
{  y_p^{\delta_{i_p,Q_m}}}
\!\!\!\!  \begin{minipage}{1.2cm} \begin{tikzpicture}[yscale=0.425,xscale=0.4,baseline]
  \draw[very thick] (-4,0) +(-1,-1) -- +(1,1) node [at start,below] {$i_p$} ; \draw[wei](-4,0) +(1,-1) -- +(-1,1) node [at start,below] { $ { \color{white}i_p \; } s_m{\color{white}i_p}$} 
  node [at end,above] { $ \color{white}s_m$} ;   
 \end{tikzpicture}\end{minipage}
 $$
 \vspace{-0.2cm}
 $$ \quad \;\; \begin{minipage}{1.2cm} \begin{tikzpicture}[yscale=0.425,xscale=-0.425,baseline]
  \draw[wei3](-4,0) +(-1,-1) -- +(1,1) node [at start,below] {  \scalefont{0.8} $\;  \xi^{i_p}$} ; \draw[densely dotted, very thick, darkgreen](-4,0) +(1,-1) -- +(-1,1) node [at start,below] { \scalefont{0.8} $\;  \xi^{i_r}$} 
  node [at end,above] { $  \color{white}\scalefont{0.8} \xi^{i_p}$} ;   
 \end{tikzpicture}\end{minipage}
\; \;  \;  \;  \;  \mapsto 
  \begin{minipage}{1.2cm} \begin{tikzpicture}[yscale=0.425,xscale=-0.425,baseline]
  \draw[very thick](-4,0) +(-1,-1) -- +(1,1) node [at start,below] {$i_p$} ; \draw[densely dotted, very thick] (-4,0) +(1,-1) -- +(-1,1) node [at start,below] { $ i_r$} 
  node [at end,above] { $ \color{white}i_r$} ;   
 \end{tikzpicture}\end{minipage}
 \qquad \qquad\quad
  \begin{minipage}{1.2cm} \begin{tikzpicture}[yscale=0.425,xscale=0.425,baseline]
  \draw[wei3](-4,0) +(-1,-1) -- +(1,1) node [at start,below] {  \scalefont{0.8} $\;  \xi^{i_p}$} ;
   \draw[densely dotted, very thick, darkgreen](-4,0) +(1,-1) -- +(-1,1) node [at start,below] { \scalefont{0.8} $\;  \xi^{i_r}$} 
  node [at end,above] { $  \color{white}\scalefont{0.8} \xi^{i_p}$} ;   
 \end{tikzpicture}\end{minipage}
\; \;  \;  \;  \;  \mapsto 
\frac{ i_p e^{y_p} -q i_r e^{y_r} }   { 	(y_r - y_p)	^{\delta_ { qi_r, i_p		}		}	}   
  \begin{minipage}{1.2cm} \begin{tikzpicture}[yscale=0.425,xscale=0.425,baseline]
  \draw[black, very thick](-4,0) +(-1,-1) -- +(1,1) node [at start,below] {  \scalefont{0.8} $ {i_p}$} ;
   \draw[densely dotted, very thick ](-4,0) +(1,-1) -- +(-1,1) node [at start,below] { \scalefont{0.8} $ {i_r}$} 
  node [at end,above] { $  \color{white}\scalefont{0.8} \xi^{i_p}$} ;   
 \end{tikzpicture}\end{minipage}$$
 and finally, we have that 
    \vspace{-0.2cm}$$ 
     \begin{minipage}{1.2cm} \begin{tikzpicture}[yscale=0.425,xscale=-0.425,baseline]
  \draw[wei3](-4,0) +(-1,-1) -- +(1,1) node [at start,below] {\scalefont{0.8}$\xi^{i_p}$} ; 
  \draw[wei3](-4,0) +(1,-1) -- +(-1,1) node [at start,below] {\scalefont{0.8}$ \xi^{i_{p+1}}$} 
  node [at end,above] { $ \color{white}i_r$} ;   
 \end{tikzpicture}\end{minipage}
 \;  \;  \;   \mapsto  
  \begin{cases}
   \begin{minipage}{7cm} \begin{tikzpicture}[yscale=0.425,xscale=0.425,baseline]
  
  \clip(-12,-2.2) rectangle (-2+9,1.6);

  \draw[very thick](-4,0) +(-1,-1) -- +(1,1) node [at start,below] {\scalefont{0.9}$i_{p+1}$} ; \draw[ very thick] (-4,0) +(1,-1) -- +(-1,1) node [at start,below] {\scalefont{0.9} $ i_p$} 
  node [at end,above] { $ \color{white}i_r$} ;   
\draw[thick] (-5.6-0.1,-2.1) to [out=110,in=-110] (-5.6-0.1,1.4); 
 \draw(-2,0) node {$-$}; 
\draw(-9.1,-0.2) node {\scalefont{1.3}$ \frac{   
1	}
{ i_{p+1} 		e^{y_{p+1}}	-	 i_{p} 		e^{y_{p}}		}
$};

\draw(5,0) node {$i_p\neq i_{p+1} $};  
 \draw[ very thick](-5+4+0.2,-1) to [out=50,in=-50] (-5+4+0.2,1) 
;
  \draw[  very thick] (-3+4,-1) to [out=130,in=-130]
   (-3+4,1);
   \draw(-3+4,-1) node [below] { \scalefont{0.9}$ i_p$} ;
      \draw(-5+4+0.2,-1) node [below] { \scalefont{0.9}$\; i_{p+1}$} ;
      
\draw(-3+4,1) node [above] {\color{white} $ i_r$} ;

\draw[thick] (-3+.6+4,-2.1) to [out=70,in=-70] (-3+.6+4,1.4);

  \end{tikzpicture}\end{minipage}
  \\
   \begin{minipage}{1.2cm} \begin{tikzpicture}[yscale=0.425,xscale=0.425,baseline]
  \draw[very thick](-4,0) +(-1,-1) -- +(1,1) node [at start,below] {\scalefont{0.9}$i_{p+1}$} ; \draw[ very thick] (-4,0) +(1,-1) -- +(-1,1) node [at start,below] { \scalefont{0.9} $ i_{p }$} 
  node [at end,above] { $ \color{white}i_r$} ;

\draw(-9.1,-0.2) node {\scalefont{1.3}$ \frac{   
y_{p+1}-y_{p}	}
{ i_{p+1} 		e^{y_{p+1}}	-	 i_{p} 		e^{y_{p}}		}
$};

  \clip(-12,-2.2) rectangle (-2+9,1.6);  

\draw(5,0) node {$i_p= i_{p+1} $}; 

  \end{tikzpicture}\end{minipage}
  \end{cases}
 $$

  \end{thm}

\begin{rmk}  \label{whyican laebel}
Webster takes a slightly different approach the definition of this algebra.  He defines the algebra as above, but does  not attach $(q;Q)$-contents to the strands.  
Webster then observes the following.
Let   $M$ be a finite dimensional $\Balgebra^{\Bbbk}_n( \sigma,(q;Q_0,Q_1,\dots, Q_{\ell-1})  )$-module,  the eigenvalues of each $X_p \upepsilon_\la$ on $M$ are of the form 
  $q^iQ_m$ for some $i\in \ZZ$ and $0\leq m <\ell$.  
  So $M$ decomposes as a direct sum 
  of its  weight spaces 
  $$M_{\underline{i}}=\{ v\in M \mid (X_p-q^{i_p}Q^{m_p})^ N v =0 \text{ for all } p=1,\dots, n \text { and }N\gg0			\}$$
  Considering the weight-space decomposition of the regular module, one deduces that there is a system $\{
   \upepsilon_\la^\imath \mid \imath_p = 
  q^{i_p}Q_{m_p} \text{   for some $i_p\in \ZZ$ and $0\leq m_p <\ell$}, 1\leq p \leq n\}$.  With this notation in place, Webster  then decomposes the identity of his algebra as a sum of these idempotents --- thus obtaining our $(q;Q)$-content decorated diagrams.  
   In what follows, we will consider the  
  summation  over all possible the   $(q,Q)$-content  decorations on any given solid green strand; we denote the resulting 
   diagram  without decorations on solid strands.  
\end{rmk}

\begin{rmk}
The following proposition and theorem are due to Webster and we provide citations here.  However, we remark that,
 with only a few minor modifications, one can repeat all the arguments of  \cref{cellllllllllll}  almost  verbatim (one simply has to ``forget the residues" of solid and ghost strands on account of  relations \ref{Brel1} to \ref{Brel15} being residue-free).  Indeed, all the results and proofs  of \cref{cellllllllllll} were very heavily based on ideas from \cite{MR3732238} and \cite{bkw11}.   

 There is only one significant change to the analogues of the results from \cref{cellllllllllll}.  Namely, we must replace the statement $y_{(r,c,m)}{\sf 1} _\mu \in \algebra^{\rhd \mu}_n(\theta)$ in  \cref{aprop}
with 
$X_{(r,c,m)}\upepsilon _\mu \in \ct_{q,Q}(r,c,m)\upepsilon _\mu+ \Balgebra^{\rhd \mu}_n(\theta,((q;Q_0,Q_1,\dots, Q_{\ell-1})))$.  To see this, one should compare the extra   scalar  on the righthand-side of relation \ref{Brel11} versus its  analogue   
in relation \ref{rel11} and the scalar on the righthand-side of 
relation \ref{Brel7} versus its analogue, the pair of relations  \ref{rel5}, \ref{rel6}.  We revisit this idea in \cref{scakarrrr}, below.  
\end{rmk}

 \begin{prop}[{\cite[Proposition 5.7]{Webster(b)}}]\label{ppfpfpfpddddd}
 Let $\Bbbk$ be a    integral domain.  
  We have  an isomorphism of   $\Bbbk$-algebras  
 $$   \sigma_{q,Q} :   H_n^\Bbbk(q;Q_0,Q_1,\dots,Q_{\ell-1}) \to  \upepsilon ^\theta_\omega \Balgebra^\Bbbk_n( \sigma, ( q;Q_0,Q_1,\dots,Q_{\ell-1}) )  \upepsilon  ^\theta_\omega. $$
  \begin{align*}
 \sigma_{q,Q}  ( X_k ) &= 
\begin{minipage}{145mm} \scalefont{0.7}    \begin{tikzpicture}[xscale=-0.95,yscale=0.95] 
  \draw[thick,darkgreen] (-2,0) rectangle (12.6,2);
   \foreach \x in {-1.5,-0.5,2.5}  
     {\draw[wei] (\x,0)--(\x,2); }
     \node [wei,below] at (-1.5,-0.07)  {\tiny ${s}_0$};
          \node [wei,below] at (2.5,-0.07)  {\tiny ${s}_{\ell\!-\!1}$};
        \draw[fill,white]   (0.2,1.9) rectangle (1.75,2.1);  
     \draw[fill,white] (0.2,0.1) rectangle (1.75,-0.1);  
     \draw[thick,densely dotted,darkgreen]  (0.1,0.) --  (1.75,-0);       \draw[thick,densely dotted,darkgreen]  (0.1,2) --  (1.75,2);  
          \node [wei,below] at (-0.5,-0.07)  {\tiny ${s}_1$};
            \draw[wei3]  (2.9,0)  to  (2.9,2) ;          \draw[thick,densely dotted,darkgreen]   (2.3,0)  to  (2.3,2) ; 
                    \draw[wei3]  (3.8,0)  to  (3.8,2) ;          \draw[thick,densely dotted,darkgreen]   (3,0)  to  (3,2) ; 
                      \draw[thick,densely dotted,darkgreen]   (3.9,0)  to  (3.9,2) ; 
                        \draw[thick,densely dotted,darkgreen]   (3.9,0)  to  (3.9,2) ; 
                       \draw[fill,white]   (4.2,2.1) rectangle (-0.5+5.8,-0.1);  
     \draw[thick,densely dotted,darkgreen]  (4.18,0.) --  (-0.5+5.8,-0);         \draw[thick,densely dotted,darkgreen]  (-0.5+4.2,2) --  (-0.5+5.8,2);    
         \draw[wei3]  (-0.5-0.9+7,0)  to  (-0.5-0.9+7,2) ; 
                  \draw[thick,densely dotted,darkgreen]   (-0.4-0.9+7,0)  to  (-0.4-0.9+7,2) ; 
            \draw[wei3]  (-0.5+7.3,0)  to  (-0.5+7.3,2) ;                  
        \draw[wei3]  (-0.5+8.2,0)  to  (-0.5+8.2,2) ;             \draw[wei3]  (-0.5+9.1,2)  to  (-0.5+9.1,0) ;     
        \draw[thick,densely dotted,darkgreen]     (-0.5-0.8+8.2,0)  to  (-0.5-0.8+8.2,2) ;             \draw[thick,densely dotted,darkgreen]     (-0.5-0.8+9.13,2)  to  (-0.5-0.8+9.13,0) ;      
        \draw[wei3]  (-0.5+10,0)  to  (-0.5+10,2) ;         \draw[thick,densely dotted,darkgreen]   (-0.5+9.2,0)  to  (-0.5+9.2,2) ; 
         \draw[thick,densely dotted,darkgreen]   (-0.5+10.1,0)  to  (-0.5+10.1,2) ; 
         \draw[wei3]  (0.4+10,0)  to  (0.4+10,2) ;         \draw[thick,densely dotted,darkgreen]   (0.4+9.2,0)  to  (0.4+9.2,2) ; 
        \draw[thick,densely dotted,darkgreen]   (0.4+10.1,0)  to  (0.4+10.1,2) ; 
                           \draw[fill,white]   (10.8,2.1) rectangle (11.6,-0.1);  
      \draw[thick,densely dotted,darkgreen] (10.8,2) --  (11.6,2);         \draw[thick,densely dotted,darkgreen]  (10.8,0) --  (11.6,0);    
         \draw[wei3]  (12,0)  to  (12,2) ;        \draw[fill,darkgreen]   (-0.5+8.2,1)           circle (2pt); 
   \end{tikzpicture}\end{minipage}\\
  \sigma_{q,Q}  (T_k+1) & = 
\begin{minipage}{145mm} \scalefont{0.7}    \begin{tikzpicture}[xscale=-0.95,yscale=0.95] 
  \draw[thick,darkgreen] (-2,0) rectangle (12.6,2);
   \foreach \x in {-1.5,-0.5,2.5}  
     {\draw[wei] (\x,0)--(\x,2); }
     \node [wei,below] at (-1.5,-0.07)  {\tiny ${s}_0$};
          \node [wei,below] at (2.5,-0.07)  {\tiny ${s}_{\ell\!-\!1}$};
        \draw[fill,white]   (0.2,1.9) rectangle (1.75,2.1);  
     \draw[fill,white] (0.2,0.1) rectangle (1.75,-0.1);  
     \draw[thick,densely dotted,darkgreen]  (0.1,0.) --  (1.75,-0);       \draw[thick,densely dotted,darkgreen]  (0.1,2) --  (1.75,2);  
          \node [wei,below] at (-0.5,-0.07)  {\tiny ${s}_1$};
                 \draw[wei3]  (2.9,0)  to  (2.9,2) ;          \draw[thick,densely dotted,darkgreen]   (2.3,0)  to  (2.3,2) ; 
                    \draw[wei3]  (3.8,0)  to  (3.8,2) ;          \draw[thick,densely dotted,darkgreen]   (3,0)  to  (3,2) ; 
                      \draw[thick,densely dotted,darkgreen]   (3.9,0)  to  (3.9,2) ; 
                           \draw[thick,densely dotted,darkgreen]   (3.9,0)  to  (3.9,2) ; 
                       \draw[fill,white]   (4.2,2.1) rectangle (-0.5+5.8,-0.1);  
     \draw[thick,densely dotted,darkgreen]  (4.18,0.) --  (-0.5+5.8,-0);         \draw[thick,densely dotted,darkgreen]  (-0.5+4.2,2) --  (-0.5+5.8,2);    
          \draw[wei3]  (-0.5-0.9+7,0)  to  (-0.5-0.9+7,2) ; 
                  \draw[thick,densely dotted,darkgreen]   (-0.4-0.9+7,0)  to  (-0.4-0.9+7,2) ; 
            \draw[wei3]  (-0.5+7.3,0)  to  (-0.5+7.3,2) ;       
         \draw[wei3]  (-0.5+8.2,0)  to [out=90,in=-90] (-0.5+9.1,2) ;             \draw[wei3]  (-0.5+8.2,2)  to [out=-90,in=90] (-0.5+9.1,0) ;     
        \draw[thick,densely dotted,darkgreen]   (-0.5+8.2-0.8,0)  to [out=90,in=-90] (-0.5+9.1-0.8,2) ;             \draw[thick,densely dotted,darkgreen]   (-0.5+8.2-0.8,2)  to [out=-90,in=90] (-0.5+9.1-0.8,0) ;     
         \draw[wei3]  (-0.5+10,0)  to  (-0.5+10,2) ;         \draw[thick,densely dotted,darkgreen]   (-0.5+9.2,0)  to  (-0.5+9.2,2) ; 
         \draw[thick,densely dotted,darkgreen]   (-0.5+10.1,0)  to  (-0.5+10.1,2) ; 
        \draw[wei3]  (0.4+10,0)  to  (0.4+10,2) ;         \draw[thick,densely dotted,darkgreen]   (0.4+9.2,0)  to  (0.4+9.2,2) ; 
        \draw[thick,densely dotted,darkgreen]   (0.4+10.1,0)  to  (0.4+10.1,2) ; 
                           \draw[fill,white]   (10.8,2.1) rectangle (11.6,-0.1);  
      \draw[thick,densely dotted,darkgreen] (10.8,2) --  (11.6,2);    
           \draw[thick,densely dotted,darkgreen]  (10.8,0) --  (11.6,0);    
         \draw[wei3]  (12,0)  to  (12,2) ;    
   \end{tikzpicture}\end{minipage}
\end{align*}
where the dot is on the $k$th  solid strand (from the right) and the crossing strands are the $k$th and $(k+1)$st solid  strands from the right.

\end{prop}

We let $B_{\SSTS\SSTT}$ denote the degraded $\sigma$-diagram obtained from $A_{\SSTS\SSTT}$ by relabelling each $s_m$-strand with the content $Q_m$ and forgetting the residues of all other strands.  
In other words, we sum over all possible $(q,Q)$-contents on each green strand in $B_{\SSTS\SSTT}$.

 \begin{thm}{\cite[Theorem 2.24]{MR3732238}}\label{glkjhdfsglkjshdfglskdfhj}
Let $\Bbbk$ be an integral domain. 
 The $\Bbbk$-algebra  $\Balgebra^{\Bbbk}_n(
\sigma,(q;Q_0,Q_1,\dots, Q_{\ell-1}))$ 
   is free as an $\Bbbk$-module and  has a cellular    basis 
\[
  \{ B_{\SSTS  \SSTT} \mid \SSTS \in \SStd_{\theta}(\lambda,\mu), \SSTT\in \SStd_{\theta}(\lambda,\nu), 
 \lambda  \in \mptn {\ell}n,  \mu, \nu  \in \con {\ell}n\} 
\]
 with respect to the $\theta$-dominance order on $\mptn \ell n$ and the involution $\ast$ given by
 horizontal reflection. We let $\Delta^\Bbbk_{\sigma,q,Q}(\la)$ denote the corresponding cell-module for $\la \in \mptn \ell n$.
     \end{thm}

\begin{cor}\label{semisimpleeeeee2}
Specialise  $q=\xi$ and $Q_m=\xi^m$ for $0\leq m <\ell$ and $\Bbbk=\mathbb Q$. 
 The $\H_n^{\mathbb Q}(\underline{s} )$-modules   
$\Specht ^{\mathbb Q}_{\sigma }(\la)$  and  $
\zeta({\upepsilon}_\omega\Delta^{\mathbb Q}_{\sigma,q,Q}(\la))   $ are isomorphic.  
\end{cor}
\begin{proof}
We  use the notation from the proof of  \cref{cellularitybreedscontempt2} and we  let $\SSTS,\SSTT \in \SStd_{\theta}(\la,\omega)$.  
We let $\widehat{ A}_{\SSTS'\SSTT'}\in \H_n^{\mathbb Q}(\underline{s} )$ denote some diagram obtained from the graded cellular basis element ${ A}_{\SSTS'\SSTT'}\in \H_n^{\mathbb Q}(\underline{s} )$ by adding some number (possibly zero) of dots along the strands.  
By the definition of the map $\zeta $,
we have  that  $$\zeta (B_{\SSTS\SSTT}) = A_{\SSTS\SSTT}+ \sum_{\begin{subarray}c
\SSTU', \SSTT'\in \SStd_{\theta}(\la,\omega) \\
 \sharp(\SSTS',
 \SSTT')< 
\sharp(\SSTS ,
 \SSTT)
\end{subarray}}\widehat{ A}_{\SSTS'\SSTT'}
.$$    
Therefore  by  \cref{move a dot down,aprop}, we have that 
 $$\zeta (B_{\SSTS\SSTT}) \in A_{\SSTS\SSTT}+ \sum_{\begin{subarray}c
\SSTU', \SSTT'\in \SStd_{\theta}(\la,\omega) \\
 \sharp(\SSTS',
 \SSTT')< 
\sharp(\SSTS ,
 \SSTT)
\end{subarray}}  k_{\SSTS'\SSTT'}{ A}_{\SSTS'\SSTT'}+  {\sf E}_\omega \algebra^{\rhd \la}_n(\theta){\sf E}_\omega $$    for some $k_{\SSTS'\SSTT'} \in \Bbbk$.   
 Thus the bases $\{ \zeta (B_{\SSTS\SSTT}) \mid \SSTS,\SSTT \in \SStd_{\theta}(\lambda,\omega) ,\la\in \mptn \ell n  \}  $ and $\{A_{\SSTS\SSTT}  \mid \SSTS,\SSTT \in \SStd_{\theta}(\lambda,\omega),\la\in \mptn \ell n  \} $   differ by uni-triangular change of basis matrix and the result follows.  
 \end{proof}


\begin{prop}\label{scakarrrr}
For each $\la \in \mathscr{P}_\ell(n)$, the element 
 $
X_\la =  \textstyle \sum_{(r,c,m)\in \la}X_{(r,c,m)}  \upepsilon_\lambda
 $
is central within $\upepsilon_\la 
\Balgebra^\Bbbk_n( \sigma(q;Q_0,Q_1,\dots,Q_{\ell-1}) )
\upepsilon_\la $ and acts on  
$\upepsilon_\la \Delta^\Bbbk_{\sigma,q,Q}(\mu)$ as the scalar 
$\ct_{q,Q}(\mu)$.  
\end{prop}

\begin{proof}
Centrality follows immediately from the relations \ref{Brel1} to \ref{Brel15} and therefore $X_\la$ acts as a scalar on any module, it remains to calculate this scalar.  
 We have that
  $$X_{(r,c,m)} \upepsilon_\la=
  \begin{cases}
   q X_{(r-1,c,m)}\upepsilon_\la + \Balgebra^{\rhd \la}		& \text{ for }r>1 \\
 q^{-1} X_{(r,c-1,m)}\upepsilon_\la + \Balgebra^{\rhd \la}		&\text{ for }c>1\\
 Q_m X_{(r,c,m)}\upepsilon_\la + \Balgebra^{\rhd \la}		&\text{ for }r=c=1 
   \end{cases}
   $$
by the first   case  of  relation \ref{Brel7}, second  case  of relation  \ref{Brel7}, and relation \ref{Brel11} respectively.  
It follows that  $X_\la  \upepsilon_\lambda = \ct_{q,Q}(\la)   \upepsilon_\lambda+ \Balgebra^{\rhd \la}	$. 
 For  a cellular basis element  of   $\upepsilon_\la \Delta^\Bbbk_{\sigma,q,Q}(\mu)$, the result follows 
by induction on the Bruhat order and the analogue  of   \cref{move a dot down}.  
\end{proof}

 \noindent Bringing together \cref{ppfpfpfpddddd,glkjhdfsglkjshdfglskdfhj,scakarrrr} we immediately deduce the following: 

\begin{thm}\label{semisimpleeeeee}
  For $\la \in \mptn \ell n$, the  $H_n^{{\mathbb Q}}(q;Q_0,Q_1,\dots,Q_{\ell-1})   $-module   ${\upepsilon}_\omega\Delta^{\mathbb Q}_{\sigma,q,Q}(\la)   $
  is  isomorphic to the  irreducible  module   
   upon which the central element $X_1+\dots + X_n$ acts as the scalar $\ct_{q,Q}(\la)$.  

\end{thm}

\section{The many different graded decomposition matrices}\label{manydiffgrad}

In this section let $\Bbbk$ be an arbitrary field.  We now prove the first statement of Theorem~B: namely that the decomposition matrices of Hecke algebras are uni-triangular with respect to any of Lusztig's ${\bf a}_\sigma$-orderings.   
 By \cref{semisimpleeeeee2,semisimpleeeeee} the    
modules $ \Specht     _\theta^\C(\la)$ are obtained from the usual semisimple Specht modules    after  specialisation of $q=\xi$ and $Q_m=\xi^m$ for $0\leq m <\ell$.  
Thus by \cref{uniqueneess},  upon forgetting the grading, the  cellular  decomposition matrices coincide with the usual definition of a decomposition matrix coming from a modular system.  

   \begin{thm}\label{QUIVERTLdecompositionmatrix}
 Given a fixed $\theta \in \weight$,  the graded decomposition matrix of $\H_n^\Bbbk(\underline s) $ with respect to the $\theta$-cellular structure appears as a submatrix of the decomposition matrix of  $\algebra^\Bbbk_n(\theta )$   as follows, 
$$
\sum_{k\in\ZZ}[\Specht_{\sigma}^\Bbbk(\la):  \Simple_\sigma^\Bbbk (\mu) \langle k \rangle ]= \sum_{k\in\ZZ}[\Delta_{\sigma}^\Bbbk(\la): L^\Bbbk_\sigma(\mu) \langle k \rangle ]
$$
for $\la\in \mptn \ell n$, $\mu \in \Theta $.  Here $\Sigma^\ell_n \subseteq   \mptn \ell n$ is the  subset  for which
$ \Simple^\Bbbk_\sigma (\mu):={\sf E}^\sigma_\omega  L^\Bbbk_\sigma(\mu) \neq 0$; the set $\{  \Simple^\Bbbk_\sigma (\mu)\mid \mu \in \Sigma^\ell_n\}$ provides a complete set of non-isomorphic  irreducible  $\H_n^\Bbbk(\underline s) $-modules.  
This matrix is uni-triangular with respect to  the ordering $\rhd_\theta$  on $\mptn\ell n$.  
 
\end{thm}
\begin{proof}
The unitriangularity result is immediate   from  \cref{corollary} and standard results on cellular algebras recalled  explicitly in \cref{gradedcell}.  The equality is immediate from    \cite[(6.6b)Lemma]{MR2349209}.    
\end{proof}

%
%
 In \cite{MR1443748,MR3732238,RSVV,losev} it is shown that the decomposition matrix of $\algebra^\Bbbk_n(\theta)$ 
  is given by the Kashiwara--Lusztig canonical basis for an irreducible highest weight $U(\widehat{\mathfrak{sl}}_e)$-module
and the entries are given by certain Kazhdan--Lusztig polynomials; these can be computed using an 
algorithm due to Uglov \cite{Uglov}.   

One of the main advantages of our new  $\ZZ$-lattices is that they allow us to  define generalisations of James' adjustment matrices.    The theory of adjustment matrices gives us a way of factorising representation theoretic questions into two steps:
 firstly {\em specialise the   parameter} $\theta\in\weight$  and study the non-semisimple algebra $\H^{\mathbb{Q}}_n(\sigma)$;
 then {\em reduce modulo $p$}  by studying $\H^\Bbbk_n(\theta)=\H^\ZZ_n(\theta) \otimes _\ZZ \Bbbk$.
This allows us to  factorise the problem of understanding decomposition matrices  as follows,
\begin{align}\label{adjust}
[\Specht     ^\Bbbk_\theta(\la): \Simple _\theta^\Bbbk(\mu)]   &=\sum_\nu
[\Specht     _\theta^\C(\la): \Simple ^\C_\theta(\nu)] \times
[\Simple _\theta^\C(\nu)\otimes_\ZZ\Bbbk: \Simple ^\Bbbk_\theta(\mu)] .
 \end{align}
 On the right-hand side of the equality we have two matrices: the first is the {$\sigma$-decomposition matrix for $\H^{\mathbb Q}_n(\underline{s})$} 
and the second is what we refer to as the  {\sf generalised James' $\theta$-adjustment matrix} ${\bf Ad}^\Bbbk_{\theta}(t) $.    
We emphasise  that the definition of ${\bf Ad}^\Bbbk_{\theta}(t) $ only makes sense   because, by Theorem A,  we have  $\ZZ$-forms for the cell and  irreducible  modules which allow us to reduce modulo $p$ in \cref{adjust}.

\begin{eg}\label{countereg} 
   The action of the generators on the basis of $\Specht^\Bbbk _{(2;2,1)} ((1),(1))$ in  \cref{countereg2} is given as follows, 
$$
\psi_1, y_1, y_2, e(0,0), e(1,1) \mapsto \left(\begin{array}{cc}0 & 0 \\0 & 0\end{array}\right) \quad 
e(0,1)\mapsto \left(\begin{array}{cc}1 & 0 \\0 & 0\end{array}\right)\quad 
e(1,0)\mapsto \left(\begin{array}{cc}0 & 0 \\0 & 1\end{array}\right)\quad 
$$
and therefore this module is a direct sum of the  irreducible  modules $L^\Bbbk(0,1)$ and $L^\Bbbk(1,0)$.  Clearly the module   $\Specht^\Bbbk _{(2;2,1)} ((1),(1))$ is not cyclic.  
 The action of the generators on the basis of  $\Specht _{(2;4,1)} ((1),(1))$ in  \cref{countereg1} is given as follows, 
$$
y_1, y_2, e(0,0), e(1,1) \mapsto \left(\begin{array}{cc}0 & 0 \\0 & 0\end{array}\right) \quad 
e(0,1)\mapsto \left(\begin{array}{cc}1 & 0 \\0 & 0\end{array}\right)\quad 
e(1,0)\mapsto \left(\begin{array}{cc}0 & 0 \\0 & 1\end{array}\right)\quad 
\psi_1 \mapsto \left(\begin{array}{cc}0 & 0 \\1 & 0\end{array}\right)
$$
and therefore this module is a non-split extension of the  irreducible  modules $L^\Bbbk(0,1)$ and $L^\Bbbk(1,0)$.   
All  the cell-modules for the charge $ (2;2,1)$ are all indecomposable, whereas this is not the case for the charge $ (2;4,1)$. 
Hence,  there is no isomorphism relating the sets of cell modules  from these two distinct charges.  Notice that the two modules have the same composition factors, but not the same structure.  
 \end{eg}

 \begin{eg}\label{countereg3}
The graded decomposition matrices with respect to  these cellular bases are,  $$
{\bf  D}_{ (2;2,1)}(t)
 =
 \begin{array}{r|cc}
& \Simple^\Bbbk (1,0) &  \Simple^\Bbbk(0,1) \\ \hline 
 (\varnothing,(1^2)) & 1 & 0 \\
 ((1^2),\varnothing)& 0 & 1 \\
((1),(1))& t & t \\
(\varnothing,(2))& t^2 & 0 \\
 ((2),\varnothing) & 0 & t^2 
 \end{array} 
\qquad
 \Simple _{ (2;4,1)}(t)
 =
 \begin{array}{r|cc}
&  \Simple^\Bbbk(1,0) &  \Simple^\Bbbk(0,1) \\ \hline 
 (\varnothing,(1^2)) & 1 & 0 \\
(\varnothing,(2)) & t & 0 \\
((1),(1))& t^2  & 1 \\
 ((1^2),\varnothing) &  0 & t \\
 ((2),\varnothing) & 0 & t^2
 \end{array}.
 $$
We cannot obtain $ {\bf D} ^\Bbbk_{ (2;2,1)}(t)$ by   permuting  the rows of  $ {\bf D}  ^\Bbbk_{ (2;4,1)}(t)$.  
However,  letting $t=1$ we find that 
  $ {\bf D}  ^\Bbbk_{ (2;2,1)}(1)$ can be obtained from  by   permuting  the rows of  ${\bf D}  ^\Bbbk_{ (2;4,1)}(1)$.  
   \end{eg}

 \section{Uglov  combinatorics  and the many different \\ constructions of  irreducible  modules    } 
\label{isaididexplain}

\newcommand{\lhdc}{\lhd_s}
\newcommand{\rhdc}{\rhd_s}
\newcommand{\leqc}{\leq_s}
\newcommand{\geqc}{\geq_s}

In this section let $\Bbbk$ be an arbitrary field. 
In this section we  complete the proof of  Theorem B of the introduction.  Namely,  we provide many explicit constructions of the  irreducible  modules of (quiver) Hecke algebras   (in terms of cellular bilinear forms) over arbitrary fields.

\begin{defn}\label{uglovv1}
 Fix $\theta\in\weight $. Given $\lambda \in \mptn \ell n$ and $i \in \ZZ/e\ZZ$, we define the $i$-sequence of $\lambda$ to be the sequence of addable and removable nodes (recorded by $A$ and $R$ respectively) in increasing order  with respect to $\lhd_\theta$.   
We define the reduced $i$-sequence to be the sequence of the form 
$ R,R,\dots,R, A,A,\dots,A$    obtained from the above by repeatedly removing all pairs of the form $(A,R)$.    
We say that the  removable $i$-node of $\lambda$ is $\theta$-{\sf good} if it corresponds to the rightmost   $R$ in the reduced  $i$-sequence.   
\end{defn}

\begin{defn}\label{uglovv}
Given a fixed  $ \theta \in\weight $, the set of Uglov $\ell$-partitions $\Theta\subseteq \mptn \ell n$ is defined recursively as follows.  
We have that $\varnothing\in \Theta$.  
For   $\la\in \mptn \ell n$, we have that $\la \in \Theta$ if and only if there   exists  $i\in \ZZ/e\ZZ$  and  a good  $i$-node $\upalpha \in {\rm Rem}_i(\la)$ such that $\lambda -\upalpha  \in \Theta$.  
\end{defn}

\begin{eg}
For asymptotic charges the  Uglov $\ell$-partitions defined above are better known as the   {\em Kleshchev} $\ell$-partitions. 
\end{eg}

   We now recall \cite[Main Theorem]{MR2350227}, modifying the statement slightly by  inputting the definition of a canonical basic set (\cref{uglovv2}) and by having  explicitly defined $\Theta$ in \cref{uglovv} using the ``crystal combinatorics" made explicit in \cref{uglovv1}.

\begin{thm}[{\cite[Main Theorem]{MR2350227}}]\label{uglox}
For each $\sigma \in \weight$, the algebra $H_n^{\mathbb Q} (\underline{s})$ has canonical basic set 
 $\Theta\subseteq \mptn \ell n$ with respect to the ordering $>_\sigma$.  
\end{thm}

   The following theorem extends \cref{uglox} to arbitrary fields and also gives an explicit construction of the
   irreducible modules labelled by  $\Theta\subseteq \mptn \ell n$ 
   (this is new even in the case of $\Bbbk=\mathbb{Q}$).  
 This 
 extends  Ariki--Mathas's results for {\em asymptotic} charges \cite{MR1864465,MR1750939} to arbitrary charges.  

\begin{thm}\label{simplllle}
Let $\Bbbk$ be an arbitrary field and let $\theta \in \weight$.  
 The  irreducible   $\H_n^\Bbbk (\underline{s})$-modules are constructed as follows
  $$  \Simple ^\Bbbk  _\theta (\la):=   \Specht^\Bbbk_\theta(\lambda)/ {\rm rad}^\Bbbk(\langle\ ,\ \rangle_\lambda)  $$  
 are indexed by the set 
 $  \Theta=\{\text{Uglov  $\ell$-partitions with respect to the pair 
  $\theta\in\weight $}\}.$ 
  \end{thm}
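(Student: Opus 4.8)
The plan is to identify the index set $\Theta=\{\lambda\in\mptn\ell n\mid \rad\langle\ ,\ \rangle_\lambda\neq\Delta^{H_n(\kappa)}_\theta(\lambda)\}$ of nonzero simples, furnished by the cellular structure of \cref{corollary} and recorded in \cref{construction}, with the set of Uglov multipartitions of \cref{uglovv}, and to do this by induction on $n$ along the tower $H_{n-1}(\kappa)\subset H_n(\kappa)$. First I would set up the residue-refined restriction and induction functors: write $e_i$ for the composite of $\Res_{H_{n-1}(\kappa)}$ with multiplication by $\sum_{\underline i:\, i_n=i}e(\underline i)$, and $f_i$ for its two-sided adjoint induction functor. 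These are exact, $\Res=\bigoplus_i e_i$, and on Grothendieck groups they categorify the $\widehat{\mathfrak{sl}}_e$-action on Fock space, so that the bookkeeping of addable and removable $i$-nodes controls them.

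The first input is that Theorem~B computes $e_i$ on cell modules: $e_i\Delta^{H_n(\kappa)}_\theta(\lambda)$ has a filtration whose sections are $\Delta^{H_{n-1}(\kappa)}_\theta(\lambda-A)\langle\deg(A)\rangle$ as $A$ runs over $\mathrm{Rem}_i(\lambda)$, taken in decreasing $\rhd_\theta$-order. Combined with the unitriangularity of the decomposition matrix from Theorem~A, this determines the values $\varepsilon_i(L^{H_n(\kappa)}_\theta(\lambda))$ and pins down which cell modules can contribute to the socle of $e_iL^{H_n(\kappa)}_\theta(\lambda)$; since $\Res L^{H_n(\kappa)}_\theta(\lambda)\neq0$ for $n\geq1$, at least one $e_iL^{H_n(\kappa)}_\theta(\lambda)$ is nonzero.

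The heart of the proof is the modular branching rule: if $\square\in\mathrm{Rem}_i(\lambda)$ is the $(\theta,\kappa)$-good node (the rightmost $R$ of the reduced $i$-sequence read in $\rhd_\theta$-order), then
\[
\tilde e_i L^{H_n(\kappa)}_\theta(\lambda):=\mathrm{soc}\big(e_iL^{H_n(\kappa)}_\theta(\lambda)\big)\cong L^{H_{n-1}(\kappa)}_\theta(\lambda-\square),
\]
and, dually, the head of $f_iL^{H_{n-1}(\kappa)}_\theta(\mu)$ is $L^{H_n(\kappa)}_\theta(\mu+\blacksquare)$ for $\blacksquare$ the cogood $i$-node of $\mu$. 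I would prove the socle statement by adapting Kleshchev's argument to the graded cellular setting: among the sections of the filtration of $e_i\Delta^{H_n(\kappa)}_\theta(\lambda)$, the $\rhd_\theta$-maximal one surviving the $(A,R)$-cancellation is $\Delta^{H_{n-1}(\kappa)}_\theta(\lambda-\square)$, and a direct comparison of the cellular forms before and after restriction shows that the form induced on this section is nonzero precisely when $\langle\ ,\ \rangle_\lambda$ is; the head statement for $f_i$ then follows by biadjunction. These yield the two implications
\[
L^{H_n(\kappa)}_\theta(\lambda)\neq0\ \Longrightarrow\ L^{H_{n-1}(\kappa)}_\theta(\lambda-\square)\neq0,
\qquad
L^{H_{n-1}(\kappa)}_\theta(\mu)\neq0\ \Longrightarrow\ L^{H_n(\kappa)}_\theta(\mu+\blacksquare)\neq0,
\]
the good and cogood nodes being mutually inverse so that $\mu+\blacksquare=\lambda$ whenever $\mu=\lambda-\square$.

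Finally, these two implications close the induction against \cref{uglovv}: a multipartition is Uglov exactly when it is reached from $\varnothing$ by successive good-node additions, which we have shown is equivalent to nonvanishing of the corresponding simple, anchored by $L^{H_0(\kappa)}_\theta(\varnothing)\neq0$. Because $\rhd_\theta$ coincides with the Lusztig $a$-function order (\cref{isaididexplain}), the good-node recursion here is literally the JMMO--Uglov crystal $B(\Lambda(\kappa))$ on multipartitions, matching the classical parameterisations \cite{MR1911030,MR2905554} but now over an arbitrary field. The main obstacle is the socle computation in the third paragraph: the crystal combinatorics is characteristic-free and formal, but determining the \emph{socle} of $e_iL^{H_n(\kappa)}_\theta(\lambda)$ --- not merely its composition multiplicities --- over an arbitrary $\Bbbk$ is where the real work lies, and it is exactly here that the explicit control of the integral cellular form is indispensable.
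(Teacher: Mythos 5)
Your route is genuinely different from the paper's, but it contains a genuine gap at its central step. Everything hinges on the modular branching rule in your third paragraph: that $\mathrm{soc}\bigl(e_i L^{H_n(\kappa)}_\theta(\lambda)\bigr)\cong L^{H_{n-1}(\kappa)}_\theta(\lambda-\square)$ for $\square$ the $(\theta,\kappa)$-good node, over an arbitrary field. This cannot be obtained by ``a direct comparison of the cellular forms before and after restriction''. First, the good node is the surviving $R$ of the reduced $i$-sequence and is in general \emph{not} a $\rhd_\theta$-extremal removable $i$-node: the section $\Delta_\theta^{H_{n-1}(\kappa)}(\lambda-\square)$ typically sits strictly in the middle of the Theorem~B filtration of $e_i\Delta^{H_n(\kappa)}_\theta(\lambda)$, so no formal head/socle argument singles it out, and non-vanishing of the form induced on a middle subquotient is not inherited from non-vanishing of $\langle\ ,\ \rangle_\lambda$ in any direct way. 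Second, even in the asymptotic level-one case this statement is Kleshchev's modular branching theorem, whose proof occupies a series of papers (with Grojnowski--Vazirani's affine machinery as the alternative), and none of those arguments is adapted here to general $\theta$; what you are asserting --- that the $\theta$-cellular labelling intertwines the abstract crystal on simples with the Uglov crystal over \emph{every} field --- is strictly stronger than \cref{simplllle} itself, so the argument is circular in difficulty. Two smaller slips: the cyclotomic functors $e_i,f_i$ categorify the highest-weight module $V(\Lambda(\kappa))$, not the whole Fock space; and $\varepsilon_i(L)$ is determined by the character of $L$, which over an arbitrary field is exactly the unknown decomposition-theoretic data, so unitriangularity of the decomposition matrix does not by itself ``pin down'' these values.

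For contrast, the paper sidesteps branching entirely and the proof is short. Over $\mathbb{C}$, the cellular structure of \cref{corollary} produces a canonical basic set in the sense of \cite{MR1889346} for the order $\rhd_\theta$, which \cref{slgdhfjkldshgfjklsdkfhgjlsdkfghj2,slgdhfjkldshgfjklsdkfhgjlsdkfghj} identify with the Lusztig $a$-function order; Jacon's theorem \cite{MR2350227} together with uniqueness of canonical basic sets then gives $\Theta_{\mathbb{C}}=\{\text{Uglov multipartitions}\}$. The passage to arbitrary $\Bbbk$ uses the integral lattice of Theorem~A: for non-Uglov $\lambda$ one has ${\rm rad}\langle\ ,\ \rangle_\lambda=\Delta^{H_n(\kappa)}_\theta(\lambda)$ over $\mathbb{C}$, so the Gram matrix of the form in the integral basis is the zero integer matrix and hence vanishes over every field, giving $\Theta_{\Bbbk}\subseteq\Theta_{\mathbb{C}}$; equality then follows because the number of simple $H_n(\kappa)$-modules is independent of the characteristic \cite{am00} and cellularity realises every simple as such a quotient. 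If you want to pursue your approach, you would first have to prove the $\theta$-branching rule for simples --- a worthwhile theorem in its own right, but not a step one can assume inside this proof.
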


\begin{proof}
We first fix our field to be $\QQ$.  
In \cref{semisimpleeeeee,semisimpleeeeee2} we proved that   our cell-modules 
$\Specht_\sigma^{\mathbb Q}(\la)$ 
are obtained via specialisation from  the 
   irreducible  modules 
 of the semisimple Hecke algebra; moreover we showed that this preserved the labelling of these modules.  
 We saw in \cref{QUIVERTLdecompositionmatrix} that  the cellular structure of \cref{corollary} 
gives  rise to a unitriangular decomposition matrix (and hence a canonical basic set) with respect to  to the ordering $\rhd_\theta$ (which is a coarsening  of $>_\sigma$)  on $\mptn \ell n$.  
By \cref{uniqueneess,uglox} it follows that $  \Theta=\{\text{Uglov  $\ell$-partitions with respect to the pair 
  $\theta\in\weight $}\} $ for $\Bbbk=\mathbb{Q}$.   
 It remains to prove that the result extends to arbitrary fields, by reduction modulo $p$ (with respect to the $\ZZ$-lattices of \cref{corollary}).  

Let $\Bbbk$ be an arbitrary field.  From the above, we know that 
${\rm rad}^{\mathbb Q}(\langle\ ,\ \rangle_\lambda)= \Specht^{\mathbb Q}_{\theta} (\lambda)$ for any $\lambda\not \in \Theta$.  
We also know that the number of  irreducibles   of the Hecke algebra is independent of the characteristic of the field \cite{MR1750939} 
and that all  irreducibles   (regardless of the field) are obtained as quotients of these radicals by cellularity.   
Therefore, 
${\rm rad}^\Bbbk(\langle\ ,\ \rangle_\lambda)= \Specht^\Bbbk_{\theta} (\lambda)$ for any $\lambda\not \in \Theta$   
 by base change (as our bases are constructible  over $\ZZ$).  
 \end{proof}

 \begin{eg}\label{labelllllleres}
The irreducible modules for $\theta=(2;2,1)$ are labelled by $  \{ ((1^2),\varnothing), 
( \varnothing,(1^2)) \}$. 
\end{eg}
 \begin{eg}\label{labelllllleres2}
The  irreducible  modules for $\theta=(2;4,1)$ are labelled  by $\{( \varnothing,(2)), (\varnothing,(1^2)) \}$. 
\end{eg} 
To summarise:  the parameterisations of   irreducible  modules given by 
\cref{simplllle}  are precisely those of Ariki's categorification theorem. 
  Thus  \cref{corollary} provides the  integral cellular bases  ``predicted" by Ariki's categorification theorem.  
 \cref{simplllle} explicitly constructs these  irreducible  modules in terms of radicals of cellular bilinear forms for the first time.

\section{The many different   filtrations of projective modules}\label{proj}
Our many cellular bases allow us to obtain many different filtrations on any fixed projective $\H_n^\Bbbk(\underline s) $-module.
 The search for these   different filtrations was initiated by Geck--Rouquier \cite{MR1889346}. 

\begin{thm}
 Fix  $\Bbbk$ a field, ${\underline{s}}=(e;s_0,s_1,\dots,s_{\ell-1})\in \mathbb{N}_{>1}\times  (\ZZ/e\ZZ)^\ell$,   and 
let $ {\bf P}_{\underline{s}} $ be a fixed projective indecomposable $\H^\Bbbk_n({\underline{s}})$-module. 
For each and every integral lift, $\sigma\in \weight$, the  projective module $ {\bf P}_{\underline{s}} $ admits a filtration  
$$
0= M^{\sigma}_1  \subset
M^{\sigma}_2
\subset \dots
\subset M^{\sigma}_z= {\bf P}_{ \underline{s}}
$$
such that for each $1\leq r\leq z$, we have
$
M^{\sigma}_r/ M^{\sigma}_{r-1}
$ is isomorphic to some $
 \Specht _{\sigma}^\Bbbk(\mu^{(r)})$  for  $\mu^{(r)}\in \mptn \ell n$ 
  up to grading shift.    
We have  that $ \mu^{(r)}\lhd_{\sigma}\mu^{(r-1)}$ for $1\leq r \leq z$.  
 In particular, every projective module admits many different cell-filtrations (up to grading shift), one for each cellular structure in Theorem A, or equivalently, one for each quasi-hereditary cover  $\algebra^\Bbbk_n(\theta)$ of $\H_n^\Bbbk(\underline{s})$.  
 \end{thm}

\begin{proof}
Fix an arbitrary integral lift $\theta \in \weight$. 
For $\lambda \in \mptn \ell n$, let ${\bf P}_\theta(\lambda)$ denote    
the corresponding projective $\algebra^\Bbbk_n(\theta)$-module.
Then ${\bf P}_\theta(\lambda)$ admits a cell-filtration 
 (with respect to the cellular structure of \cref{cellularitybreedscontempt}) by standard facts concerning quasi-hereditary algebras (and \cref{qaausfuduifddfheredtiary}).
Therefore $  {\sf E}^\theta_\omega{\bf P}_\theta(\lambda)$ is 
 an indecomposable $\H_n^\Bbbk(\underline s) $-module with a 
filtration by $\Specht^\Bbbk_\theta(\mu)$ such that $\mu \trianglerighteq_\theta \lambda$.   
 Given any integral lift  of our $e$-charge, a full set of   projective $H_n^\Bbbk(\underline{s})$-modules are 
given by $\{{\sf E}^\theta_\omega{\bf P}_\theta(\lambda) \mid \lambda \in \Theta		\}$ and so the result follows.  
\end{proof}

\begin{figure}[ht!]
$$\scalefont{0.9}
\begin{tikzpicture} 
\draw(0,0)--(1,1);
\draw(0,0)--(-1,1);
\draw(0,2)--(1,1);
\draw(0,2)--(-1,1);
\fill[white] (0,0) circle (9pt); 
\fill[white](1,1) circle (9pt); 
\fill[white](-1,1) circle (9pt); 
\fill[white](0,2) circle (9pt); 

\draw (0,0) node {$L^\Bbbk(1,0)$};
\draw(1,1) node {$L^\Bbbk(1,0)$};
\draw(-1,1) node {$L^\Bbbk(0,1)$};
\draw(0,2) node {$L^\Bbbk(1,0)$};

\end{tikzpicture}
\qquad
\begin{tikzpicture} 
\draw(0,0)--(1,1);
\draw(0,0)--(-1,1);
\draw(0,2)--(1,1);
\draw(0,2)--(-1,1);
\fill[white] (0,0) circle (9pt); 
\fill[white](1,1) circle (9pt); 
\fill[white](-1,1) circle (9pt); 
\fill[white](0,2) circle (9pt); 

\draw[rounded corners] (-.35, 0.3)--(.65, 1.3)--(2, 1.3)--(1.35, 0.7)--(.35, -0.3)-- (-1, -0.3)-- (-.35, 0.3)  ;

\draw(-2.75,0) node  {$\Specht_{0,3} ( (1),(1))$};
\draw[rounded corners] (-1.65, -0.3+1) rectangle(-0.35,0.3+1)  ; \draw(-2.75,1) node  {$\Specht_{0,3}(\varnothing,(2))$};
\draw[rounded corners] (-1.65, -0.3+1+1) rectangle(1.65,0.3+1+1)  ; \draw(-2.75,1+1) node  {$\Specht_{0,3}
(  \varnothing,(1^2)) $};

\draw (0,0) node {$L^\Bbbk(1,0)$};
\draw(1,1) node {$L^\Bbbk(1,0)$};
\draw(-1,1) node {$L^\Bbbk(0,1)$};
\draw(0,2) node {$L^\Bbbk(1,0)$};

\end{tikzpicture}
\qquad
\begin{tikzpicture} 
\draw(0,0)--(1,1);
\draw(0,0)--(-1,1);
\draw(0,2)--(1,1);
\draw(0,2)--(-1,1);
\fill[white] (0,0) circle (9pt); 
\fill[white](1,1) circle (9pt); 
\fill[white](-1,1) circle (9pt); 
\fill[white](0,2) circle (9pt); 

\draw[rounded corners] (-1.65, -0.3) rectangle(1.65,0.3)  ; \draw(-2.75,0) node  {$\Specht_{0,1}((2),\varnothing)$};
\draw[rounded corners] (-1.65, -0.3+1) rectangle(1.65,0.3+1)  ; \draw(-2.75,1) node  {$\Specht_{0,1}((1),(1))$};
\draw[rounded corners] (-1.65, -0.3+1+1) rectangle(1.65,0.3+1+1)  ; \draw(-2.75,1+1) node  {$\Specht_{0,1}( \varnothing,(1^2))$};

\draw (0,0) node {$L^\Bbbk(1,0)$};
\draw(1,1) node {$L^\Bbbk(1,0)$};
\draw(-1,1) node {$L^\Bbbk(0,1)$};
\draw(0,2) node {$L^\Bbbk(1,0)$};

\end{tikzpicture}
$$ 
\caption{The projective cover of the  irreducible  module $L^\Bbbk(1,0)$ and its 2 distinct cell-filtrations. 
As both $n$ and $\ell$  increase, we obtain many more distinct filtrations on each projective module.}
\label{distinctfiltration}
\end{figure}

 \begin{eg}\label{projfilterer}
The algebra $\H_2^\Bbbk(2;0,1)$ has two indecomposable projective modules.  We picture the full submodule  structure of the projective $P^\Bbbk(1,0)$ in    \cref{distinctfiltration}.  We also picture the two distinct cell-filtrations of this module for  $\theta=(2;4,1)$ and $\theta=(2;2,1)$.

 \end{eg}

\section{The restriction of a cell module for the quiver Hecke algebra}
\label{brancher}

For every charge $\theta\in\weight$, we prove that  the (graded) restriction of cell-module  (down the tower of Hecke algebras) has a cell-filtration. We thus complete our program of generalising all the results from \cite{bkw11} to arbitrary charges.  
 This result is to be expected, given the  2-categorical origins of our $\ZZ$-bases \cite{MR3732238} (where  $\theta$-diagrams arise in  categorifying quantum knot variants).  
 This result provides the key ingredient to the   construction of resolutions of unitary modules for Cherednik algebras and 
  algebraic varieties in 
 \cite{BNS}.    

\begin{thm}\label{hfsaklhsalhskafhjksdlhjsadahlfdshjksadflhafskhsfajk}Let $\Bbbk$ be a   integral domain.
Let  $\lambda \in \mptn \ell n$ and let $\upalpha_1\rhd_\theta  \upalpha_2 \rhd_\theta \dots \rhd_\theta \upalpha_z$ denote the removable boxes of $\lambda$, totally ordered according to the $\theta$-dominance ordering.  Then
the restriction of a cell-module has  an $\H_{n-1}^\Bbbk(\underline{s})$-module filtration 
\begin{align}\label{restriction}
0= \Specht^{z+1,\lambda}_\theta \subset 
\Specht^{z,\lambda}_\theta
\subset \dots 
\subset \Specht^{1,\lambda}_\theta=\Res_{\H _{n-1}^\Bbbk(\underline{s})}( \Specht^\Bbbk_{\theta} (\la))
\end{align}such that, for each $1\leq r\leq z$, we have that  
\begin{align}\label{restriction2}
\Specht_{\theta}^\Bbbk (\la-\upalpha_r)\langle 	 \deg (\upalpha_r)	\rangle  
\cong
\Specht^{r,\lambda}_\theta / \Specht^{r+1,\lambda}_\theta .
\end{align} 
\end{thm}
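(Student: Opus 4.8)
The plan is to realise the restriction filtration diagrammatically, using the cellular basis of \cref{corollary} together with the explicit inductive construction of $c^\theta_{\sts}$ given in the proof of that theorem. Recall that for $\stt\in\Std_{\theta,\kappa}(\la)$, the basis element $c^\theta_{\stt}$ of the cell-module $\Delta^{H_n(\kappa)}_\theta(\la)$ was built by first constructing $c^\theta_{\stt{\downarrow}_{\{1,\dots,n-1\}}}$ and then adjoining a single solid strand $A_n$ running from the box $\stt^{-1}(n)$ to the point ${\bf I}_{(n,1,\ell)}$. The key combinatorial observation is that $\stt^{-1}(n)$ must be one of the removable boxes $A_1,\dots,A_z$ of $\la$, and that the $\theta$-dominance order on these removable boxes refines the filtration. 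Concretely, I would set
\[
\Delta^{r,\lambda}_\theta = \operatorname{Span}_R\{ c^\theta_{\stt} \mid \stt\in\Std_{\theta,\kappa}(\la),\ \stt^{-1}(n)\in\{A_r,A_{r+1},\dots,A_z\}\}.
\]
First I would verify that each $\Delta^{r,\lambda}_\theta$ is an $H_{n-1}(\kappa)$-submodule. This is where the cellularity axioms do the work: $H_{n-1}(\kappa)$ sits inside $H_n(\kappa)$ as the subalgebra of diagrams in which the strand terminating at ${\bf I}_{(n,1,\ell)}$ is vertical and non-interacting (it is the rightmost strand in the $\theta$-Russian/loading picture, corresponding to the minimal box of $\omega$), so acting by $H_{n-1}(\kappa)$ only rearranges the strands $A_1,\dots,A_{n-1}$ and cannot change which removable box $\stt^{-1}(n)$ occupies modulo more dominant terms. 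The inclusions $\Delta^{r+1,\lambda}_\theta\subset\Delta^{r,\lambda}_\theta$ are then immediate from the definition.

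Next I would identify the subquotients. For fixed $r$, the set $\{\stt\in\Std_{\theta,\kappa}(\la)\mid \stt^{-1}(n)=A_r\}$ is in natural bijection with $\Std_{\theta,\kappa}(\la-A_r)$, by simply deleting the entry $n$; this bijection is $\stt\mapsto \stt{\downarrow}_{\{1,\dots,n-1\}}$. I would define the $H_{n-1}(\kappa)$-module map
\[
\Delta^{r,\lambda}_\theta/\Delta^{r+1,\lambda}_\theta \longrightarrow \Delta^{H_{n-1}(\kappa)}_\theta(\la-A_r)\langle\deg(A_r)\rangle
\]
sending $c^\theta_{\stt}+\Delta^{r+1,\lambda}_\theta \mapsto c^\theta_{\stt{\downarrow}_{\{1,\dots,n-1\}}}$. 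That this is a bijection on basis elements is clear; that it intertwines the $H_{n-1}(\kappa)$-action follows from the factorisation $c^\theta_{\stt}=\overline{c}^\theta_{\stt{\downarrow}}\times {\sf 1}^{\la+(n,1,\ell)}_{\la+(r,c,m)}$ appearing in the proof of \cref{corollary}, since acting on the left by $H_{n-1}(\kappa)$ leaves the appended strand $A_n$ untouched up to more dominant error terms, which vanish in the subquotient. The grading shift $\langle\deg(A_r)\rangle$ is precisely the degree contribution computed brick-by-brick at the end of that proof, namely $\deg(A_r)=\deg({\sf 1}^{\la+(n,1,\ell)}_{\la+(r,c,m)})=|\mathcal{A}_{\stt}(n)|-|\mathcal{R}_{\stt}(n)|$ evaluated for the removable box $A_r$.

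The main obstacle will be proving that the action of $H_{n-1}(\kappa)$ genuinely preserves each $\Delta^{r,\lambda}_\theta$ and descends correctly to the subquotient, rather than merely permuting basis vectors: one must control the ``more dominant'' correction terms produced when resolving crossings created by the $H_{n-1}(\kappa)$-action against the appended strand $A_n$. The resolution is that any such correction term either lies in $\algebra^{\rhd\la}$ (hence is zero in $\Delta^{H_n(\kappa)}_\theta(\la)$ by cellularity) or moves $\stt^{-1}(n)$ to a strictly more $\theta$-dominant removable box, hence lands in some $\Delta^{r',\lambda}_\theta$ with $r'<r$ and is killed in the quotient $\Delta^{r,\lambda}_\theta/\Delta^{r+1,\lambda}_\theta$. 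Here I would invoke the total ordering of the removable boxes $A_1\rhd_\theta\dots\rhd_\theta A_z$ together with \cref{cellularitybreedscontempt0.5} to organise the induction, exactly as in the spanning-set arguments of \cref{span}. Verifying that no correction term escapes into a \emph{less} dominant box — which would break the submodule property — is the delicate point, and it rests on the geometric fact that the rightmost strand $A_n$ can only be pulled leftwards (towards more dominant configurations) by the diagrammatic relations, never rightwards past its starting diagonal.
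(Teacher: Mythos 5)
Your set-up coincides with the paper's: the filtration by the position of the entry $n$ among the removable boxes, the bijection $\stt \mapsto \stt{\downarrow}_{\{1,\dots,n-1\}}$, the factorisation of $c^\theta_{\varphi_r(\stt)}$ through an appended strand, and the degree computation are all exactly the paper's proof. However, there is a genuine error at precisely the step you yourself flag as the delicate point: you claim the correction terms ``move $\stt^{-1}(n)$ to a strictly \emph{more} $\theta$-dominant removable box, hence land in some $\Delta^{r',\lambda}_\theta$ with $r'<r$ and are killed in the quotient $\Delta^{r,\lambda}_\theta/\Delta^{r+1,\lambda}_\theta$.'' This has the direction reversed, and the conclusion drawn from it is false on its own terms: with your definition $\Delta^{r,\lambda}_\theta = \operatorname{Span}_R\{c^\theta_\stt \mid \stt^{-1}(n)\in\{A_r,\dots,A_z\}\}$, an element of $\Delta^{r',\lambda}_\theta$ with $r'<r$ does not even lie in $\Delta^{r,\lambda}_\theta$ — what is quotiented out is $\Delta^{r+1,\lambda}_\theta$, i.e.\ terms with $n$ in a \emph{less} dominant box $A_s \lhd_\theta A_r$. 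If correction terms really rerouted $n$ to more dominant removable boxes, $\Delta^{r,\lambda}_\theta$ would fail to be an $H_{n-1}(\kappa)$-submodule and the whole filtration would collapse; your opening assertion that the $H_{n-1}(\kappa)$-action ``cannot change which removable box $\stt^{-1}(n)$ occupies modulo more dominant terms'' quietly presupposes the submodule property rather than proving it.

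The correct dichotomy, which is what the paper establishes, is the following. Applying cellularity at level $n-1$ produces terms labelled by shapes $\nu \trianglerighteq \lambda - A_r$, and the constraint $\lambda \trianglerighteq \nu + (n,1,\ell)$ (from the ideal filtration of \cref{cellularitybreedscontempt0.5}) forces $\nu$ to be obtained from $\lambda - A_r$ by removing removable $i$-boxes $A_{i_s}$ with $A_r \rhd_\theta A_{i_s}$ and adding addable $i$-boxes in between. Resolving the crossing of the $n$-strand $X$ (rooted at $A_r$) with the strand $Y_S$ rooted at the least dominant such box, one gets two kinds of terms: the term in which the crossing is pulled through the $i$-diagonal factors through an idempotent strictly dominating $\lambda$ and dies in $\algebra^{\rhd\la}(n,\theta,\kappa)$ (this is where your geometric intuition about pulling strands leftwards belongs — it kills terms, it does not place them in the filtration); and the term in which the crossing is \emph{undone}, which reroutes the southern endpoint of the $n$-strand from $A_r$ to $A_{i_S}$. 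Since $A_r \rhd_\theta A_{r+1} \trianglerighteq_\theta A_{i_S}$, this error term lies in $\Delta^{r+1,\lambda}_\theta$ modulo $\algebra^{\rhd\la}$ — strictly \emph{deeper} in the filtration, hence zero in the subquotient. So your architecture is sound, but the mechanism you give for disposing of the error terms is the mirror image of the one that actually works, and as stated it would not yield well-defined submodules.
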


\begin{proof}

For $1\leq r \leq z$, we  define 
$$\Specht^{r,\lambda}_\theta
=
R\{ \Cell_{\stu\SSTT^\lambda} \mid \stu\in \Std_{\theta}(\lambda) \text{ and } \Shape( \stu{\downarrow}_{\{1,\dots, n-1\}})
\trianglerighteq\lambda-\upalpha_r \} .
$$
On the level of graded $\Bbbk$-modules, the chain of inclusions in \cref{restriction} is clear.  
For $\stt \in \Std_{\theta}(\lambda-\upalpha_r)$, we define 
$\varphi_r(\stt) \in \Std_{\theta}(\lambda)$ to be the tableau obtained
by adding the box $\upalpha_r$ with entry $n$   to the tableau
$\stt  \in \Std_{\theta}(\lambda-\upalpha_r)$. 
Abusing notation, we define 
 $$
\varphi_r(\Cell_{ \stt\SSTT^{(\lambda-\upalpha_r)} })=  
\cell^\theta_{\varphi_r(\stt) \SSTT^\lambda }.
$$
 We assume that $\upalpha_r$ is a box of residue $i\in  \ZZ/e\ZZ$.  
It is clear that $\varphi_r$ provides the required graded $\Bbbk$-module isomorphism of  \cref{restriction2}.  
It remains to verify that the chain of inclusions and the resulting isomorphisms hold on the level of $\H_{n-1}^\Bbbk(\underline{s})$-modules.  
We shall prove this by downward induction on the ordering on the removable nodes of $\lambda$.
Let $\rho ,\tau \in \con \ell {n }$   
and suppose that $\rho \setminus (\rho \cap \tau )= \square_\rho :=(r_\rho ,c_\rho ,m_\rho ) $
and 
$\tau \setminus (\rho \cap \tau )= \square_\rho :=(r_\tau ,c_\tau ,m_\tau ) $. 
 Let $\gamma\in \con \ell {n}$ and 
Given 
$\SSTS\in \SStd _{\theta}(\rho , \tau )$,  
 $\stt\in \Std _{\theta}(\gamma)$  
we define 
$$
\overline{\SSTS}_{\rho }^{\tau } (r,c,m)
=
\begin{cases}
{\bf I}^\sigma_{ (r,c,m)} &\text{if  }(r,c,m) \in \rho \cap \tau  \\
{\bf I}^\sigma_{(r_\tau ,c_\tau ,m_\tau ) }	&\text{if  }(r,c,m) = \square_\rho  
\end{cases}
\quad
\overline{\stt}(r,c,m)
=
\begin{cases}
\stt(r,c,m) &\text{if  }(r,c,m) \in \nu \\
n	&\text{if  }(r,c,m) = \square_\rho 
\end{cases}
$$
    We have that 
   \begin{align}\label{423570937045905374928}
\Cell_{\varphi_r(\stt)}=    {\Cell} _{ \overline\stt  }
 \times  \Cell_{\SSTT^{\lambda-\upalpha_r+(n,1,\ell)}_\lambda} 
\end{align}   for  $\stt \in \Std_{\theta}(\lambda-\upalpha_r)$. 
For $a \in \mathscr{H}^\Bbbk_{n-1} (\theta)$ and $\stt \in \Std_{\theta}(\lambda-\upalpha_r)$, it follows from \cref{corollary} that 
\begin{align} \label{some coefffs}
a 
\Cell_{ \stt }  
=
\sum_
{
\begin{subarray}c
\nu \trianglerighteq\lambda-\upalpha_r \\
\sts \in \Std_{\theta}(\nu)
\\
\SSTS \in \SStd_{\theta}(\nu, \lambda-\upalpha_r)
\end{subarray}
}
k_{\sts \SSTS}\Cell_{\sts \SSTS}
\end{align}
for some $k_{\sts \SSTS} \in \Bbbk$.  By \cref{423570937045905374928}, we have that 
\begin{align}\label{sfadhjklafsdhljkafdshjlkdafshjkldsaflkhjfdas}
\varphi_r(a 
\Cell_{ \stt  } )
=
\sum_
{
\begin{subarray}c
\nu \trianglerighteq\lambda-\upalpha_r \\
\sts \in \Std_{\theta}(\nu)
\\
\SSTS \in \SStd_{\theta}(\nu, \lambda-\upalpha_r)
\end{subarray}
}
k_{\sts \SSTS} {\Cell}_{\overline\sts \SSTS}\Cell_{\SSTT^{\nu+(n,1,\ell)}_{\la}} 
\end{align}
 and so it will suffice to show that if $k_{\sts\SSTS}\neq0$ and $\nu \neq \lambda-\upalpha_r$, then
\begin{align}\label{moduloshit}
   \Cell_{\overline\sts}   \Cell^\ast _{\SSTS}   \Cell _{\SSTT_{\nu+(n,1,\ell)}^{\la}}  \in 
\Specht^{r+1,\lambda}_\theta 
\qquad \text{\rm mod }  \algebra ^{\rhd \lambda}_n(\theta) .
\end{align}
 By \cref{aprop}, a necessary condition for
$ {   \Cell}_ { \SSTS}^\ast    \Cell _{\SSTT^{\nu+(n,1,\ell)}_{\la}} \not \in \algebra ^{\rhd \lambda}_n(\theta) $ is that 
 $ 
\lambda 
\trianglerighteq 
\nu + (n,1,\ell) 
.$    
 On the other hand, we have that $\nu \vartriangleright  \lambda-\upalpha_r$ by \cref{some coefffs}. 
Therefore, we need only consider terms in the sum  \ref{sfadhjklafsdhljkafdshjlkdafshjkldsaflkhjfdas} labelled by  
$\nu \in \mptn \ell n$ such that 
both 
\begin{align}\label{oiuyqwyuqwoiutrqoiuyrteiuyoertqoiuyertw} 
\nu \vartriangleright  \lambda-\upalpha_r \text{ and }\lambda 
\trianglerighteq
\nu + (n,1,\ell). 
\end{align}  
  \cref{oiuyqwyuqwoiutrqoiuyrteiuyoertqoiuyertw}  implies that  
   $\lambda$ and $\nu$   only  
    differ by moving   some number (possibly zero) of  boxes  of residue  $\res(\upalpha_r)=i \in \ZZ/e\ZZ$.  
 Again by \cref{oiuyqwyuqwoiutrqoiuyrteiuyoertqoiuyertw}, this implies that 
  $\nu$ is obtained from $\lambda-\upalpha_r$   by  removing  a  non-zero (since $\nu=\la-\upalpha_r $) set of $i$-boxes 
 \begin{equation}\label{afhljkhjkladfls}
\mathcal{R}=\{\upalpha_{i_1}\rhd_\theta \upalpha_{i_2} \rhd_\theta  \dots \rhd_\theta  \upalpha_{i_S}
 \mid  {\upalpha_{r}}\vartriangleright_\theta 
  {\upalpha_{i_s}} \text{ for }1\leq s \leq S  \} \subset   {\rm Rem}_{i}(\lambda-\upalpha_r)
  \end{equation}
   and  adding a set of  $i$-boxes
 \begin{equation}\label{afhljkhjkladfls}
\mathcal{A}=\{\upalpha_{j_1} \rhd_\theta  \upalpha_{j_2} \rhd_\theta  \dots \rhd_\theta  \upalpha_{j_S}\mid 
  {\upalpha_{r}}\trianglerighteq_\theta 
 {\upalpha_{j_s}}\rhd_\theta  {\upalpha_{i_s}} \text{ for }1\leq s \leq S\} \subseteq   {\rm Add}_{i}(\lambda-\upalpha_r) 
 \end{equation}
such that $\mathcal{R}\neq \mathcal{A}$. We let $N$ denote the set of all $\nu \in \mptn \ell {n-1} \setminus\{\lambda-\upalpha_r\}$ which can be obtained from $\lambda-\upalpha_r$ in this fashion.  
Putting all this together   
it will suffices to show that if 
$k_{\sts\SSTS}\neq 0$ and $\SSTS \in \SStd_\theta(\nu,\la-\upalpha_r)$ then  
 in \ref{sfadhjklafsdhljkafdshjlkdafshjkldsaflkhjfdas}  and  $\nu\in N$,
  then \begin{equation}\label{diagram} 
 {   \Cell}_{  \SSTS}^\ast \Cell_{\SSTT^{\nu+(n,1,\ell)}_{\la}}      = 
 \Cell_{\SSTT_{\nu+(n,1,\ell)}^{\la}}  {\Cell}_{  \SSTS}   \in \algebra^{\vartriangleright \lambda } \end{equation}where we have applied the involution $\ast$ to simplify notation.  
By \cref{mastumoto}, we have that 
$$
 \Cell_{\SSTT_{\nu+(n,1,\ell)}^{\la}}  {\Cell}_{  \SSTS}   =
 {\Cell}_{  \SSTS}   \Cell_{\SSTT_{\nu+(n,1,\ell)}^{\nu+\upalpha_r}}  
 +\sum_{\Diag''  } \Diag'' f_{\Diag''}(y) \quad \text{ for some }
 \Diag' \rhd_\sigma {\Cell}_{  \SSTS}   \Cell_{\SSTT_{\nu+(n,1,\ell)}^{\nu+\upalpha_r}} \text { and }f_{\Diag''}(y) \in \mathcal{Y}_{\nu+(n,1,\ell)} .
$$
All terms on the righthand-side factor through the idempotent labelled by $\nu+\upalpha_r$ which is strictly more dominant than $\lambda$ by \cref{oiuyqwyuqwoiutrqoiuyrteiuyoertqoiuyertw} and the result follows.     
  \end{proof}

    \section{The generalised blob algebras and beyond}\label{sec2}

   In the case of the symmetric groups  
  modular representation theorists have long focussed on the subcategory  of   representations  labelled by partitions with at
  most $h$ columns for some $h\in \ZZ_{>0}$ over a field, $\Bbbk$,  of characteristic (possibly much) greater than $h$.  
  This subcategory is highest weight and  far more amenable to study via the tools of Kazhdan--Lusztig theory  \cite{MR1272539,geordie} (in terms of  the alcove geometry of type $A_{h-1} \subseteq \widehat{A}_{h -1}$).  
   However, there is no obvious analogous subcategory/quotient algebra of $\H_n^\Bbbk(\underline s) $ in higher levels; hence    almost nothing is   known or even conjectured about   such Hecke algebras in positive characteristic.  
   The purpose of this section is to introduce a candidate for such a quotient algebra and prove Martin--Woodcock's conjecture.  
   The results of this section have been used by Libedinsky--Plaza as the basis of a modular analogue of Martin--Woodock's conjecture \cite{blobby}.  Cox, Hazi and the author have subsequently proven this conjecture in \cite{cell4us2} (using the ideas  from this section).

\subsection{ The {cylindric} charge}  \label{quiverTL defintion}
Given $h\in \NN$  we  assume that  our charge  satisfies 
 \label{quiverTL defintion}
  $h <  |\sigma_i - \sigma_{j}| <e-h$ for $0\leq i<j <\ell$.  
    Let $\mptn \ell n (h)\subseteq \mptn \ell n$ denote the saturated subset consisting of all $\ell$-partitions with at most $h$ columns in any given component, that is,
 $$\mptn \ell n (h)= \{\la=(\la^{(0)}, \dots , \la^{(\ell-1)}) \mid \lambda^{(m)}_1\leq h \text{ for all }0\leq m<\ell\}.$$
 For such a $\theta\in \weight$ our candidate quotient algebra is as follows: 
$$ \mathcal{Q}_{\ell,h, n}(\theta)= \H_n^\Bbbk(\underline s) /
\langle A^\sigma_{\sts\stt}
\mid \sts,\stt \in \Std_\sigma(\la),  \la \not \in \mptn \ell n (h)
 \rangle.  
$$ We will show in future work \cite{CoxBowman,cell4us2} that the category  $\mathcal{Q}_{\ell,h, n}(\theta)$-${\rm mod}$  is 
incredibly rich and yet far more tractable than the category  $\H_n^\Bbbk(\underline{s})$-${\rm mod}$: Under the restriction that $e>(h+1)\ell$,  we shall cast representation theoretic questions in terms of an alcove geometry of type 
$$
A_{h-1}\times A_{h-1}\dots \times A_{h-1}\subseteq \widehat{A}_{\ell h-1}.
$$
 In this section, we  prove that  $\mathcal{Q}_{\ell,h, n}(\theta)$-$\rm mod$ is a highest-weight category   over arbitrary field; thus generalises results on symmetric groups from \cite[Theorem 4.4]{MR1611011} and results on the blob  algebras of statistical mechanics \cite{MS94,MW03}.  We shall then prove Martin--Woodcock's conjecture.

 \subsection{The representation theory of the algebras  $\mathcal{Q}_{\ell,h, n}(\theta)$}
 With our definitions in place, we are now ready to prove that these algebras are quasi-hereditary and provide 
 presentations of these algebras solely in terms of the classical KLR generators.  
  
\begin{thm}\label{QUIVERTL}
 For  $\theta\in \weight$ such that $h<|\sigma_i-\sigma_j |<e-h$ for $0 \leq i <j  < \ell$, the  algebra $\mathcal{Q}_{\ell,h, n}(\theta)$ has a presentation  solely in terms of  the classical KLR generators as follows, 
\begin{equation}\label{theoremequation}
\mathcal{Q}_{\ell,h, n}(\theta)
=
\H_n^\Bbbk(\underline s) 		/
\langle e(\imath) 
\mid 
\imath\in (\ZZ/e\ZZ)^{h+1} \text{ and }
 \imath_{k+1} = \imath_{k}+1 \text{ for } 1\leq k \leq h 
\rangle.  
\end{equation}
Over a field, the algebra 
$\mathcal{Q}_{\ell,h, n}(\theta)$  
 is   quasi-hereditary  with  irreducible  modules indexed by $\mptn \ell n (h)$.      

 \end{thm}
\begin{proof}
   Consider an idempotent $e(\imath) $ of the   form stated in \cref{theoremequation}.  We pull the right most strand in $e(\imath) $ rightwards using the non-interacting relations.  
 If  $\imath_1\neq \theta_m$ for some $1\leq m \leq \ell$ and 
   we pull this strand rightwards until it is $>  n$ units right  of the red strand $\theta_0$ and the diagram is zero by relation \ref{rel15}.
 Otherwise     $\imath_1=s_m$ for some $1\leq m \leq \ell$, and  this process terminates when    the solid
   $\theta_m$-strand comes to rest upon  reaching the vertical line with $x$-coordinate $(1,1,m)$.  
     In other words, once it is 
  ever-so-slightly to the  right  of  the red $s_m$-strand with $x$-coordinate $\theta_m -m/\ell  $.   
 By our assumptions on $\theta \in \weight$,    we can pull  the solid
   $(s_m+1)$-strand rightwards until it 
  reaches the vertical line with $x$-coordinate $(1,2,m)$. 
    We then repeat this process until we have moved   the right most $(h+1)$ solid strands as far right  as possible.  
   We let  $\la=(\varnothing,\dots, \varnothing, (h+1), \varnothing,  \dots , \varnothing, (0^{h+1},1^{n-1-h}))$ where the $m$th and $\ell$th are the only non-empty components of $\la$.    
   The  diagram produced by the process above is equal to ${\cell}^\theta_{\stt\stt}$ where  $\varphi(\stt)=\SSTT$ is the tableau of shape $\la$ and weight $\omega$ which takes
   $\SSTT(r,1,m)={\bf I}^{\theta}_{(r,1,\ell)}$ for $1\leq r \leq h+1$ and $\SSTT(r,1,\ell)={\bf I}^{\theta}_{(r,1,\ell)}$ for $h+1<r \leq n$; therefore    $e(\imath)\in \{\cell_{\sts\stt}^\theta\mid  \sts,\stt\in\Std_\theta(\la),   \la \not\in \mptn \ell n(h)\}$.  
 
 Now for the reverse containment.   By definition,  there is no tableau $\SSTT \in \Std_\sigma(\la)$ for $\la \in  \mptn \ell n(h)$ with  residue sequence   $(s_m,s_m+1,\dots,s_m+h) $ and so $e(s_m,s_m+1,\dots,s_m+h) $ annihilates all these cell-modules.  
Hence the ideal by which we quotient in \cref{theoremequation} has   dimension less than or equal to $\sum_{\la \in \mptn \ell n \setminus \mptn \ell n(h)}|\Std_\sigma(\la)|^2$ and the reverse containment holds.  
   Finally, the $\la$th cell layer contains an idempotent $e_{\SSTT_\la}$ for   $ \mptn \ell n(h)$ and so the algebra is quasi-hereditary, as required.   
    \end{proof}
  
\begin{cor}
The algebra $\mathcal{Q}_{\ell,1, n}(\theta)$ is isomorphic to the generalised blob algebra of \cite{MW03}.  
\end{cor}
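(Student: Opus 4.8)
The plan is to deduce the isomorphism as the case $h=1$ of \cref{QUIVERTL} together with a comparison of presentations. Specialising \cref{QUIVERTL} to $h=1$ yields the purely KLR-theoretic description
\[
\mathcal{Q}_{\ell,1,n}(\kappa)=H_n(\kappa)\big/\big\langle e(\underline{i})\mid \underline{i}\in I^{2},\ \underline{i}_2=\underline{i}_1+1\big\rangle,
\]
so that $\mathcal{Q}_{\ell,1,n}(\kappa)$ is the quotient of the quiver Hecke algebra by the two-sided ideal generated by all idempotents whose first two residues are consecutive. First I would recall Martin--Woodcock's definition of the generalised blob algebra from \cite{MW03}, which is given as an explicit quotient of the cyclotomic Hecke algebra $\mathcal{H}_n(\underline{s})$ of type $G(\ell,1,n)$ by the relation enforcing the ``one-column'' restriction (equivalently, forbidding a horizontal domino among the leftmost boxes).

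Next I would invoke the Brundan--Kleshchev isomorphism $H_n(\kappa)\cong\mathcal{H}_n(\underline{s})$ of \cite{MR2551762}, valid whenever $\kappa\equiv\underline{s}\pmod e$, as recalled in \cref{KLsec}. Under this isomorphism each KLR idempotent $e(\underline{i})$ is the spectral projection onto the simultaneous generalised eigenspace of the Jucys--Murphy elements with prescribed residues, so the generating set $\{e(\underline{i})\mid \underline{i}_2=\underline{i}_1+1\}$ can be rewritten in terms of the generators $T_0,\dots,T_{n-1}$. The conceptual content is that killing precisely these idempotents is exactly the requirement that no two of the leftmost boxes occupy adjacent columns in the same row, which is Martin--Woodcock's higher-level blob relation.

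The main obstacle will be matching the two defining ideals, since \cite{MW03} employ a Kazhdan--Lusztig/diagrammatic cellular structure rather than our Murphy-type basis \cref{daggerbasis}; a relation-by-relation computation would be delicate. I would avoid this by a cellular dimension argument. One direction is immediate: Martin--Woodcock's defining relations force every $e(\underline{i})$ with $\underline{i}_2=\underline{i}_1+1$ to act as zero (such weight spaces correspond to multipartitions outside $\mptn{\ell}{n}(1)$), so the quotient map $\mathcal{H}_n(\underline{s})\to b_n$ onto the generalised blob algebra factors through a surjection $\mathcal{Q}_{\ell,1,n}(\kappa)\twoheadrightarrow b_n$. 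By \cref{QUIVERTL} the source is cellular with basis \cref{daggerbasis} indexed by $\mptn{\ell}{n}(1)$, hence of dimension $\sum_{\lambda\in\mptn{\ell}{n}(1)}|\Std_{\theta,\kappa}(\lambda)|^{2}$; Martin--Woodcock's cellular structure gives $b_n$ the same dimension, indexed by the same set of one-column multipartitions. A surjection of algebras of equal finite dimension is an isomorphism, which completes the proof.

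For $\ell=2$ and $h=1$ the construction recovers the classical blob algebra, and the quasi-heredity and one-column cell labelling asserted here are consistent with \cref{QUIVERTL}, so no separate verification of those structural points is required.
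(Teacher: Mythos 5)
Your opening is the same as the paper's: specialise \cref{QUIVERTL} to $h=1$ and, via the Brundan--Kleshchev identification, recognise the surviving generators of the ideal. (Note two small points you elide: by the cyclotomic relation \ref{rel1.12}, $e(\underline{i})=0$ unless $i_1\in\{\kappa_1,\dots,\kappa_\ell\}$, so the ideal is generated by the $e(\kappa_m,\kappa_m+1)$ alone; and your parenthetical claim that residue sequences beginning $(i,i+1)$ only occur for shapes outside $\mptn \ell n (1)$ uses the separation hypothesis $(\Lambda(\kappa),\alpha_i+\alpha_{i+1})\leq 1$ of \cref{quiverTL defintion} --- without it, a second box $(1,1,m')$ with $\kappa_{m'}=\kappa_m+1$ could realise such a sequence inside a one-column multipartition.) The paper's proof then finishes in one step: each $e(\kappa_m,\kappa_m+1)$ spans the one-dimensional simple $H^\theta_2(\kappa)$-module labelled by $(\varnothing,\dots,\varnothing,(2),\varnothing,\dots,\varnothing)$, and these are \emph{exactly} the primitive idempotents by which Martin--Woodcock define the generalised blob algebra; the two defining ideals therefore have the same generating set, are equal, and the quotients coincide on the nose. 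No surjection-plus-counting is needed, because the containment you establish in one direction holds verbatim in the other.

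Your substitute for that second containment --- the dimension count --- is where the genuine gap lies. You attribute to \cite{MW03} a cellular structure on $b_n$ with basis indexed by pairs of standard one-column tableaux, giving $\dim b_n=\sum_{\lambda\in\mptn \ell n (1)}|\Std_{\theta,\kappa}(\lambda)|^2$. No such cellular basis appears in \cite{MW03}; indeed, producing integral cellular structures for these algebras over arbitrary fields (for the FLOTW weighting, away from the asymptotic case) is precisely an output of the present paper, so invoking it would be circular. Without it, the dimension of a quotient $H_n(\kappa)/\langle E\rangle$ by an idempotent-generated ideal is not a priori independent of the ground field or parameters --- $\dim H_n(\kappa)EH_n(\kappa)$ can jump under specialisation --- so ``a surjection of algebras of equal finite dimension'' is unjustified exactly where the corollary is strongest, namely over arbitrary $\Bbbk$. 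The fix is simply to delete the counting argument: having identified the generators $e(\kappa_m,\kappa_m+1)$ with Martin--Woodcock's defining idempotents, conclude equality of the two two-sided ideals directly, which is the paper's (shorter) proof.
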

\begin{proof}
 We  have seen that  $\mathcal{Q}_{\ell,1, n}(\theta)$  is the quotient of $\H_n^\Bbbk(\underline s) $   by the two-sided ideal generated by 
  $\sum_{0\leq m < \ell}e(s_m,s_m+1)$.  Each  idempotent  $e(s_m,s_m+1)$ in this sum spans 
  a 1-dimensional  irreducible  $\H_{2}^\Bbbk(\underline{s}) $-module labelled by the $\ell$-partition $(\varnothing, \dots ,\varnothing,(2), \varnothing, \dots,\varnothing)$.  
  The result follows.  
\end{proof}

The grading on  tableaux  and the corresponding  graded cellular bases for the blob algebra  were first constructed  in the papers of 
Ryom-Hansen and Ryom-Hansen--Plaza \cite{RH12,PRH14}.

\begin{cor}
The algebra $\mathcal{Q}_{1,h, n}(\theta)$ is isomorphic to the generalised Temperley--Lieb algebra  of \cite{MR1680384,MR1611011}
and is Morita equivalent to the Ringel dual of the $q$-Schur algebra of ${\rm GL}_h$ (acting on   $n$-fold $q$-tensor space).  
\end{cor}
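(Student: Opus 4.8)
The plan is to identify $\mathcal{Q}_{1,h,n}(\kappa)$ with a known algebra by combining the KLR-presentation of \cref{QUIVERTL} with the classical theory of $q$-Schur algebras. Setting $\ell=1$, the weighting $\theta=(0)\in\weight^1$ is trivial and the multicharge condition in \cref{quiverTL defintion} reduces to a single dominant weight $\Lambda_{\kappa_1}$ of level one, so that $H_n(\kappa)$ is simply the ordinary (type $A$) cyclotomic Hecke algebra of the symmetric group $\mathfrak{S}_n$ at an $e$th root of unity. By \cref{QUIVERTL}, the algebra $\mathcal{Q}_{1,h,n}(\kappa)$ is the quotient of this Hecke algebra by the two-sided ideal generated by the idempotents $e(\underline{i})$ with $\underline{i}_{k+1}=\underline{i}_k+1$ for $1\leq k\leq h$; equivalently, by \cref{QUIVERTL} again, it is cellular with cell modules indexed by the partitions $\mptn{1}{n}(h)$ having at most $h$ columns.

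First I would recall the classical description of the generalised Temperley--Lieb algebra of \cite{MR1680384,MR1611011} as the quotient of the Hecke algebra $H_n$ by the two-sided ideal generated by the $q$-symmetrisers on $h+1$ adjacent strands; concretely, this is the image of $H_n$ acting on the $h$-restricted part of tensor space, or equivalently the quotient killing all Specht modules labelled by partitions with more than $h$ columns. I would then match the two ideals directly: the idempotent $e(\kappa_1,\kappa_1+1,\dots,\kappa_1+h)$ appearing in the presentation of \cref{QUIVERTL} spans a one-dimensional module for $H_{h+1}(\kappa)$ labelled by the single-row (equivalently, in the $\ell=1$ graded combinatorics, the $(h+1)$-column) partition, exactly the $q$-symmetriser data cutting out the Temperley--Lieb quotient. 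Thus both algebras are the quotient of $H_n$ by the same ideal, giving the first isomorphism. This step is essentially a dictionary-matching exercise analogous to the $\mathcal{Q}_{\ell,1,n}(\kappa)$ case already treated in the preceding corollary.

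For the Ringel-dual statement I would invoke Schur--Weyl duality between the $q$-Schur algebra $S_q(h,n)$ of $\mathrm{GL}_h$ acting on $n$-fold $q$-tensor space $V^{\otimes n}$ and the Hecke algebra $H_n$. The generalised Temperley--Lieb algebra is, by the first part, $\mathrm{End}_{H_n}$ of the truncation of $V^{\otimes n}$ to its $h$-column part, and the standard highest-weight structure of $S_q(h,n)$-$\mathrm{mod}$ (with standard modules the $q$-Weyl modules for partitions with at most $h$ columns) dualises under Ringel duality to a highest-weight category whose standard objects are the tilting-filtration pieces. I would then cite the established identification (for example via \cite{MR1611011,MR1680384}) of the endomorphism algebra of the restricted tensor space with a Ringel dual of $S_q(h,n)$, using that the quasi-hereditary structure of $\mathcal{Q}_{1,h,n}(\kappa)$ established in \cref{QUIVERTL} (with standard modules indexed by $\mptn{1}{n}(h)$ and the coset-like cellular basis of \cref{daggerbasis}) has cell-poset and graded-dimension data matching those of the Ringel dual.

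The main obstacle will be pinning down the Morita-equivalence (rather than isomorphism) carefully: the $q$-Schur algebra and its Ringel dual need not be isomorphic to our endomorphism algebra on the nose, only Morita equivalent, so I would need to check that the truncation functor $V^{\otimes n}\mapsto {\sf E}_\omega(\,\cdot\,)$ induced by the Schur idempotent of \cref{schurdefn} restricts to an equivalence on the relevant block-idempotent summands, and that no projective-generator multiplicities are lost in passing to the Temperley--Lieb quotient. I expect this to reduce to a dimension count of the kind already carried out at the end of the proof of \cref{QUIVERTL}, comparing $\sum_{\mu}|\Std_{\theta,\kappa}(\mu)|^2$ over the surviving $\mu\in\mptn{1}{n}(h)$ with the dimension of the Ringel dual, together with a verification that the highest-weight orderings agree; once these numerical and combinatorial checks are in place the Morita equivalence follows from the uniqueness of the basic algebra of a highest-weight category.
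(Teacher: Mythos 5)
Your proposal is correct and takes essentially the same route as the paper, whose proof is a two-line citation: the paper identifies $\mathcal{Q}_{1,h,n}(\kappa)$ with the generalised Temperley--Lieb algebra by matching its coset-like basis with the Murphy basis via \cref{maincor} (for $\ell=1$ every weighting is trivially asymptotic), and the isomorphism together with the Ringel-duality/Morita statement is then exactly \cite[Theorem 4.4]{MR1611011} with \cite{MR1680384} --- your matching of the two-sided ideals via the $q$-symmetriser against the KLR idempotent $e(\kappa_1,\kappa_1+1,\dots,\kappa_1+h)$ is the same identification phrased on ideal generators rather than on cellular bases. Your final paragraph of verifications (behaviour of the truncation functor, dimension counts, uniqueness of the basic algebra) is unnecessary, since the Morita equivalence is part of the cited results and not something the paper re-proves.
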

\begin{proof}
 We  have seen that  $\mathcal{Q}_{1,h, n}(\theta)$  is the quotient of $\H_n^\Bbbk(\underline s) $   by the two-sided ideal generated by 
  $ e(s_m,s_m+1, \dots, s_m+h )$ which labels the trivial representation of  $\H_{h+1}^\Bbbk(e;0)$.   
  The result follows by \cite[Theorem 4]{MR1680384}.   
\end{proof}


\begin{thm}[Martin Woodcock's conjecture \cite{MW03}]Let $\theta\in \weight$ be such that $1< |\sigma_i - \sigma_{j}|< e-1$ for $0\leq i<j \leq \ell$.  
 The graded decomposition matrix of $\mathcal{Q}_{\ell,1, n}(\theta)$  appears as a square submatrix of  that  of 
$\mathcal{H}^\Bbbk_n (\underline{s})$ with respect to the $\sigma$-cell structure.  
We have that
 $$
\sum_{k\in\ZZ}[\Specht_{\sigma}^{\mathbb Q}(\la):  \Simple_\sigma^{\mathbb Q} (\mu) \langle k \rangle ]=   n_{\lambda\mu}(t)
 $$ 
 for $\lambda, \mu \in \mptn \ell n (1)$  
where $n_{\lambda\mu}(t)$ is equal to a  non-parabolic  affine  Kazhdan--Lusztig polynomial of type $\widehat{A}_{\ell-1}$.  
 The   action of the affine Weyl group on   $\mptn \ell n (1)$   is given as in \cite[Section 3]{bcs15}.  
\end{thm}

\begin{proof}
The square shape of the decomposition matrix follows from quasi-heredity   \cref{QUIVERTL}.
The algebra   $\mathcal{Q}_{\ell,1, n}(\theta)$ is the quotient of $\H_n^\Bbbk(\underline s) $ 
 by the cell-ideal labelled by multipartitions with more than 1 column  in some component.  Thus the decomposition  matrix  of  $\mathcal{Q}_{\ell,1, n}(\theta)$ appears as the submatrix of that of  $\H_n^\Bbbk(\underline s) $ labelled by pairs $\lambda, \mu \in \mptn \ell n (1)$. 
 By \cref{QUIVERTLdecompositionmatrix},  the decomposition matrix 
 of  $\H_n^\Bbbk(\underline s) $ appears as a submatrix of that of  $\algebra^\Bbbk_n(\theta )$.  
 The  entries of this submatrix of the decomposition matrix of $\algebra^\Bbbk_n(\theta )$   
 were shown to be equal to $n_{\lambda\mu}(t)$  in \cite[Theorem 3.16]{bcs15}.  
\end{proof}

\appendix
\section{The many versions of this paper}
This paper has gone through many arXiv iterations, through which we have developed the combinatorics and diagrammatics.  The main results throughout  versions 1 to 4 were Theorems  A and C; we added   Theorem B  from  version 5 onwards.  
 Throughout versions 1 to 5, the diagrammatics and combinatorics remained very similar: we followed Webster's conventions from \cite{MR3732238} where ghost strands are drawn on the left and we encoded the ``charged" information   via a  weighting $\vartheta\in \RR^\ell$ and a separate $e$-charge in $(\ZZ/e\ZZ)^\ell$.   
  
 The most significant changes in the presentation of this work came in version 6 of the arXiv paper.  The  central  idea  was to dispense  with Webster's weighting  $\vartheta\in \RR^\ell$ in favour of the integral lifts (of $e$-charges in $(\ZZ/e\ZZ)^\ell$ to charges in $ \ZZ ^\ell$) used here, 
this allows us to transfer between the combinatorics of Webster, Lusztig, and Uglov seamlessly.   
In order to match-up this combinatorics with the diagrammatics, we had to   reflect   the earlier diagrams through the vertical axis (drawing ghost strands to the right) and hence obtained the diagrams which we work with in this paper (which is now available as version 7 on the arXiv).  This final big change in the diagrammatics and combinatorics 
 was inspired by the desire for an elementary proof of Theorem B and prompted by conversations with Nicolas Jacon and Maria Chlouveraki.  
In version 6 we introduced the idea of discretisation and we strengthened many of our intermediary results, hence 
simplifying  the presentation of \cref{span} --- we were inspired by (and mimicked) the presentation of similar material from  \cite{bkw11} (for well-separated charges).

In version 6 of the paper,  we chose to restrict to the reduced diagrams as our generating set for the algebra.  
 This was in order to highlight  the fact that the algebra is finitely generated, but this came  at the cost of having to prove associativity (which then follows from \cref{step1}).  In this paper, we have instead chosen to include all diagrams in the generating set and then deduce that the algebra is finitely generated as a corollary of \cref{step1}.  
 
  In this final version,  we also add a new Section 8 in which we 
 introduce the algebras $\Balgebra^\Bbbk_n(\sigma,(q:Q_0,\dots,Q_{\ell-1})$ 
 in order to explicitly match-up the cell-modules with   irreducible modules in the semisimple case.

\begin{Acknowledgements*}
This paper owes a special thanks to     Maria Chlouveraki, Joe Chuang,  Jun Hu,  Nicolas Jacon, Sinead Lyle,   Andrew Mathas,   Liron Speyer,   and  Ben Webster
   for   formative and interesting  conversations,  and for teaching me a great deal about cyclotomic Hecke and Cherednik algebras.   
 The anonymous referee went above and beyond with their detailed 
 mathematical feedback and suggestions for important improvements to      earlier versions of this paper;  I am thankful for their patience and hard work.  
 This research was funded by EPSRC fellowship grant EP/V00090X/1.
  \end{Acknowledgements*}


\end{document}